\documentclass[reqno]{amsart}
\usepackage{mathrsfs}
\usepackage{graphicx}
\usepackage{amsmath}
\usepackage{amscd}
\usepackage{cite}
\usepackage{hyperref}
\usepackage{epsfig}
\usepackage{subfigure}
\usepackage{caption}
\usepackage{tikz}
  \usepackage{float}
  \usepackage{amssymb}
\usepackage{amsfonts}
\usepackage[all,cmtip]{xy}
\usepackage{appendix}
\usepackage{bm}
\usepackage{geometry}
\newtheorem{Theorem}{Theorem}[section]

\newtheorem{Proposition}[Theorem]{Proposition}
\newtheorem{Lemma}[Theorem]{Lemma}
\newtheorem{Corollary}[Theorem]{Corollary}
\newtheorem{Conjecture}[Theorem]{Conjecture}
\newtheorem{Remark}[Theorem]{Remark}

\newcommand{\bC}{{\mathbb C}}

\newcommand{\bN}{{\mathbb N}}
\newcommand{\bP}{{\mathbb P}}

\newcommand{\bZ}{{\mathbb Z}}

\newcommand{\cM}{{\mathcal M}}

\newcommand{\cP}{{\mathcal P}}

\newcommand{\half}{\frac{1}{2}}

\newcommand{\Mbar}{\overline{\cM}}

\newcommand{\tW}{{\widetilde W}}

\newcommand{\tJ}{{\widetilde J}}

\newcommand{\be}{\begin{equation}}
\newcommand{\ee}{\end{equation}}
\newcommand{\bea}{\begin{eqnarray}}
\newcommand{\ben}{\begin{eqnarray*}}
\newcommand{\een}{\end{eqnarray*}}
\newcommand{\eea}{\end{eqnarray}}

\DeclareMathOperator{\vdim}{vdim}

\usepackage{color}

\definecolor{yellow}{rgb}{1,1,0}
\definecolor{orange}{rgb}{1,.7,0}
\definecolor{red}{rgb}{1,0,0}
\definecolor{green}{rgb}{0,1,1}
\definecolor{white}{rgb}{1,1,1}

\definecolor{A}{rgb}{.75,1,.75}

\begin{document}

\newtheorem{myDef}{Definition}
\newtheorem{thm}{Theorem}
\newtheorem{eqn}{equation}

\title[Leading Gopakumar--Vafa invariants and leading Betti numbers]
{On leading Gopakumar--Vafa invariants of local $\mathbb{P}^2$\\ and leading Betti numbers of moduli spaces\\ of one-dimensional sheaves on $\mathbb{P}^2$}

\author{Zhiyuan Wang}
\address{Zhiyuan Wang, School of Mathematics and Statistics,
	Huazhong University of Science and Technology,
	Wuhan, China}
\email{wangzy23@hust.edu.cn}

\author{Chenglang Yang}
\address{Chenglang Yang, Institute for Math and AI, Wuhan University, Wuhan, China}
\email{yangcl@whu.edu.cn}

\begin{abstract}

In this paper we use the topological vertex formalism to derive a structural formula
for the leading terms of the generating series of Gromov--Witten invariants of local $\mathbb P^2$.
Then we give two applications of this formula.
First we use this result to derive an explicit formula for the leading terms of the transformed Gopakumar--Vafa invariants of local $\mathbb P^2$.
This is an extension of a formula of Guo--Zhou \cite{gz} from degree $2d-5$ to $3d-1$,
and shares some similar features with the famous G\"ottsche--Yau--Zaslow formula.
And as the second application, we prove Guo--Wu--Moreira's Conjecture \cite{gw} 
about the leading Betti numbers of the moduli space
of one-dimensional Gieseker semistable sheaves on $\mathbb P^2$.

\end{abstract}

\maketitle

\tableofcontents

\section{Introduction}

\subsection{Backgrounds}

The Gromov--Witten invariants are the
virtual counting of holomorphic maps from nodal Riemann surfaces to a target space.
For a toric Calabi--Yau threefold $X$,
the all genera (open) Gromov--Witten invariants of $X$ can be studied using the topological vertex formalism.
Roughly speaking,
the topological vertex is the generating series of all genera open Gromov--Witten invariants of $\bC^3$,
and the all genera open Gromov--Witten theory of a general toric Calabi--Yau threefold
can be obtained from the topological vertex by certain gluing rules.
This formalism was first developed by string theorists Aganagic, Klemm, Mari\~{n}o, Vafa \cite{akmv}
in their study of the duality between open string theory and large $N$ Chern--Simons theory.
See \cite{lllz} and \cite{moop} for mathematical theories of the topological vertex formalism,
and see e.g. \cite{adkmv, dz, wyz} for the relation between the topological vertex formalism
and the multi-component KP integrability of Gromov--Witten invariants of toric Calabi--Yau threefolds.

A particularly interesting case is when $X = K_S$ is the total space of the canonical line bundle over
a toric Fano surface $S$.
In this case $X = K_S$ is a noncompact toric Calabi--Yau threefold,
and the Gromov--Witten invariants of $X$ are called the local Gromov--Witten invariants.
The definition and computation of such invariants can be localized to the surface $S$,
see \cite{ckyz, kz}.
And see \cite{wyz2} for the study of the multi-component KP integrability of local Gromov--Witten invariants
using the topological vertex formalism.

For a projective algebraic variety $X$,
the Gromov--Witten invariants of $X$ are known to be rational numbers
since they are mathematically defined by certain intersection numbers on the moduli spaces,
and these moduli spaces are in general Deligne-Mumford stacks.
In \cite{gv} Gopakumar and Vafa conjectured that these rational numbers can be recasted into
integer invariants in the following way:
\begin{equation*}
\sum_{\beta \not= 0}\sum_{g\geq 0}
K_\beta^g  \lambda^{2g-2} t^\beta =
\sum_{\beta \not= 0}\sum_{g\geq 0} n_\beta^g
\sum_{k>0} \frac{1}{k} \big( 2\sin(\frac{k\lambda}{2})\big)^{2g-2}
t^{k\beta},
\end{equation*}
where $K_\beta^g$ are the Gromov--Witten invariants,
and the (conjectural) integer invariants $n_\beta^g (X)$ are called the Gopakumar--Vafa BPS invariants of $X$.
Similar structures also appear in the case of local Gromov--Witten theory.
For a local toric Calabi--Yau threefold $X = K_S$,
one can similarly define the local Gopakumar--Vafa invariants which are conjectured to be integers.
In \cite{pe} Peng proved the integrality of these invariants for a local toric Calabi--Yau threefold,
and in \cite{koni} Konishi gave a proof of the integrality for all toric Calabi--Yau threefolds.
See e.g. \cite{kkv, koni2, zhou5, amv, bp, hiv, ip, gwz,gz25} for more about the Gopakumar--Vafa invariants.

In \cite{gz},
Guo and Zhou have used the topological vertex formalism
to study the leading terms of the Gromov--Witten invariants and transformed Gopakumar--Vafa invariants of local $\bP^2$,
i.e., the total space of the line bundle $\mathcal O(-3)$ over $\bP^2$.
The transformed Gopakumar--Vafa invariants $N_d^g (K_{\bP^2})$ of local $\bP^2$
are related to the Gopakumar--Vafa invariants $n_d^g (K_{\bP^2})$ by:
\be
\sum_{g=0}^{g(d)}
N_d^g(K_{\bP^2})\cdot (q^g +q^{g-2}+\cdots +q^{-g})
= \sum_{g=0}^{g(d)} n_d^g(K_{\bP^2}) \cdot(-1)^g (q^\half -q^{-\half})^{2g},
\ee
where $g(d) = \half (d-1)(d-2)$.
They proved the following explicit formula for the leading transformed Gopakumar--Vafa invariants
(see \cite[Theorem 6.1]{gz}):
\begin{align*}
	(-1)^{d-1} \sum_{\delta\geq0}
	&N^{g(d)-\delta}_d(K_{\bP^2}) q^{\delta}
	=\frac{[1]_{q}^2[2]_{q}}{[\infty]_{q}!^3}
	\bigg(\binom{d+2}{2}-3\sum_{i\geq1}\frac{q^i}{(1-q^i)^2}\bigg)\\
	&-\frac{3\cdot q^{d-1}[2]_{q}[3]_{q}}{[\infty]_{q}!^3}
	\bigg(\binom{d+1}{2}-3\sum_{i\geq1}\frac{q^i}{(1-q^i)^2}
	-3\frac{q^3}{1-q^3}\bigg) \Big|_{q^{\leq 2d-5}},
\end{align*}
where $[a]_{q}= 1-q^{a}$ and
$[\infty]_{q}! =\prod_{a=1}^\infty [a]_{q}$.
Guo--Zhou's formula shares some similar features with the famous G\"ottsche--Yau--Zaslow formula \cite{go, yz},
and also confirms and generalizes some observations in \cite{kkv, zhou5}.

Recently,
Guo--Zhou's formula for the leading Gopakumar--Vafa invariants was used by Guo--Wu \cite{gw}
to study the Betti numbers of moduli spaces of sheaves on $\mathbb{P}^2$,
and motive them, together with Moreira,
to propose a more general conjectural formula for the Betti numbers.
Here we briefly recall their conjecture and some backgrounds.
Let $S$ be a del Pezzo surface, 
and let $M_{\beta, \chi}$ be the moduli space of pure one dimensional Gieseker semistable sheaves over $S$
with support class $\beta$ and Euler characteristic $\chi$.
It is known that the Poincar\'e polynomial of the intersection cohomology of the moduli space $M_{\beta,\chi}$
is independent of $\chi$, see \cite{ms, yuan}.
Then one may simply take $\chi = 1$,
and denote by $M_\beta$ the corresponding moduli space.
Now let $P_\beta(y)$ be the Poincar\'e polynomial of $M_\beta$,
and denote $\omega = -K_S \cdot \beta$.
It is conjectured by Choi--van Garrel--Katz--Takahashi \cite{cgkt} that
\begin{equation*}
P_\beta (y) / \sum_{j=0}^{\omega-1} y^j
\end{equation*}
is a palindromic polynomial in $y$.
In \cite{gw}, Guo and Wu gave a proof of the case $S = \bP^2$ by showing that
Choi--van Garrel--Katz--Takahashi's conjecture can be implied by Bousseau's Conjecture \cite{bou20},
and in the literature it is known that Bousseau's Conjecture holds in the case of $\bP^2$
\cite{ms,yuan, bou}.

Moreover,
Guo--Wu carried out some explicit calculations of the Betti numbers of the moduli space in the case of $\bP^2$
using the relation to Gopakumar--Vafa invariants.
Denote by $H$ the hyperplane class on $\mathbb{P}^2$,
and let $M_{d}$ be
the moduli space of pure one dimensional Gieseker semi-stable sheaves with support class $dH$
and Euler characteristic $\chi=1$.
Following the notations in \cite{gw},
consider:
\begin{align}\label{eqn:def hatOmega main}
	\widehat{\Omega}_d(y)
	=\frac{1}{\sum_{i=0}^{3d-1} y^i} \cdot \sum_{j=0}^{d^2+1} b_{2j}(M_{d}) y^j,
\end{align}
where $g(d)=(d-1)(d-2)/2$ and $b_{2j}(M_{d})$ is the $2j$-th Betti number of $M_{d}$.
They derived a closed formula for the leading terms of $\widehat{\Omega}_d(y)$
of degree no more than $2d-11$, 
which is an extension of a formula of Yuan \cite{yuan, yuan2} up to degree $d$.
They then observed that their formula should hold for degree up to $2d-5$,
see \cite[Theorem 1.6; Remark 1.8]{gw}.
Together with Moreira,
they conjectured that:
\begin{Conjecture}[Conjecture 1.9 in \cite{gw}]
	\label{thm:main leading betti 3d-10}
	For $d\geq4$, one has:
	\begin{align}
		\widehat{\Omega}_d(y)
		=\frac{[1]_y^2}{[\infty]_y!^3}\bigg(1-3y^{d-1}\frac{[3]_y}{[1]_y^2}
		+3y^{2d-4}\frac{[3]_y(-2+2y+4y^2+2y^3+y^4+2y^5)}{[1]_y^2 [2]_y}\bigg)
		\Big|_{y^{\leq 3d-10}}.
	\end{align}
\end{Conjecture}

\subsection{Description of main results}

In this paper,
we derive a structural formula for the generating series of
all genera (closed) Gromov--Witten invariants of local $\bP^2$.
As applications,
we first extend Guo--Zhou's $2d$-level formula for the leading Gopakumar--Vafa invariants
to a more general $3d$-level formula,
and then use this formula to prove the conjecture of Guo--Wu--Moreira.
In what follows we briefly describe the main results of this paper.

Denote by $K^g_d(K_{\mathbb{P}^2})$ the genus $g$ and degree $d$ Gromov--Witten invariant of local $\bP^2$,
and let $I(d)$ be the generating series of possibly disconnected degree $d$ Gromov--Witten invariants:
\begin{align}\label{eqn:def I(d) main}
	1+\sum_{d=1}^\infty I(d) t^d
	=\exp\Big(\sum_{g=0}^\infty \lambda^{2g-2}
	\sum_{d=1}^\infty K^g_d (K_{\bP^2}) \cdot t^d\Big).
\end{align}
Here $q=e^{i\lambda}$, and $I(d) \in \bC[q,q^{-1}]]$ is known to be a formal Laurent series in a formal variable $q$ of degree:
\begin{equation*}
\deg I(d) = \half d(d-3).
\end{equation*}

Following the notations in \cite{gz},
for two Laurent series
$f(q),g(q) \in \bC[q,q^{-1}]]$
we denote:
\begin{equation*}
f(q) = g(q) \big|_{q^{\geq m}}
\end{equation*}
if the coefficient of $q^n$ in $f(q)$ equals the coefficient of $q^n$ in $g(q)$
for every $n\geq m$.
Then our first main result is the following structural formula for the leading terms of $I(d)$:
\begin{Theorem}
[=Theorem \ref{thm-main1-leadI}]
\label{thm:main GW}
For $d\geq 4$,
	the leading terms of $I(d)$ are given by:
	\be
\label{eq-intromain1-GW}
	\begin{split}
		I(d)
		= \frac{ (-1)^d \cdot q^{ \half d(d-3)} }{\prod_{i=1}^\infty (1-q^{-i})^3}
		\Big(  J_0(d) - q^{-d-1} I(1) J_1(d)
		+q^{-2d-1} I(2) J_2(d)
		\Big)
		\Big|_{q^{\geq \half d(d-3) -3d+1}},
	\end{split}
	\ee
	where $J_0,J_1,J_2$ are given by
\be
\label{eq-intro-Jidef}
		J_a(d) = \binom{d+2-a}{2} +3a- 3\sum_{i\geq 1}\frac{q^{-i}}{(1-q^{-i})^2},
		\qquad\qquad a=0,1,2,
\ee
	and $I(1), I(2)$ are exactly the special cases $d=1,2$ of $I(d)$ whose explicit formulas are:
	\begin{equation*}
		I(1) = - \frac{3q^{-1}}{(1-q^{-1})^2},
		\qquad\qquad
		I(2) =\frac{3q^{-1}(2+q^{-1}+q^{-3}+2q^{-4})}{(1-q^{-1})^2(1-q^{-2})^2}.
	\end{equation*}
\end{Theorem}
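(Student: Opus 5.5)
The plan is to start from the topological vertex expression for the partition function of local $\bP^2$. Gluing three vertices along the three edges of the toric diagram (each edge carrying framing $-1$ relative to the canonical one, as in \cite{gz}) gives
\[
1+\sum_{d\geq1} I(d)t^d=\sum_{\mu,\nu,\rho}(-t)^{|\mu|+|\nu|+|\rho|}\, q^{\sum \kappa/2}\, W_{\mu\nu}(q)W_{\nu\rho}(q)W_{\rho\mu}(q),
\]
with $|\mu|=|\nu|=|\rho|=d$ contributing to $t^d$, framing factors $q^{\kappa_\mu/2}$, and $W_{\mu\nu}=s_\mu(q^\rho)s_\nu(q^{\mu+\rho})$. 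The first step is to compute the top $q$-degree of each summand. Writing the framing exponent in terms of $\kappa_\mu=\sum_i \mu_i(\mu_i-2i+1)$ and the degrees of the Schur functions $s_{\nu}(q^{\mu+\rho})$, I will show that the maximal degree $\half d(d-3)$ is attained only at $\mu=\nu=\rho=(d)$ (or its conjugate, depending on orientation conventions). I will then show that the degree drop of a general triple is controlled by how far the partitions are from $(d)$. Concretely, the drop is bounded below by a quantity like $(d+1)\,\ell$, where $\ell$ measures the boxes outside the first row. As a consequence, only triples in which each of $\mu,\nu,\rho$ has the form $(d-k,\lambda)$ with $|\lambda|=k\le 2$ can contribute in degrees $\ge \half d(d-3)-3d+1$.

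Next, I will evaluate these surviving terms. For $\mu=(d-k,\lambda)$ with $k$ small, I expand $s_\mu(q^{\rho})$ and the skew/two-partition vertex $s_\nu(q^{\mu+\rho})$ as $d\to\infty$ in the $q^{-1}$-adic sense, using the hook-content formula. The resulting leading factors contribute:
\begin{itemize}
\item the universal prefactor $\prod_i(1-q^{-i})^{-3}$, which arises as the limit of the three vertex factors for the one-row partitions;
\item the terms $\binom{d+2-a}{2}+3a-3\sum_{i\ge1} q^{-i}/(1-q^{-i})^2$, which come from first-order corrections, namely the linear-in-content sums and the $d$-dependence of the hook products truncated at $q^{\ge\cdot}$.
\end{itemize}
The cases $k=0,1,2$ produce $J_0$, $J_1$, and $J_2$ respectively.

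The step I would organize most carefully is the identification of the coefficients of $J_1$ and $J_2$ with $I(1)$ and $I(2)$. The sum over the small tails $\lambda$ on the three legs, with $|\lambda_1|+|\lambda_2|+|\lambda_3|=a$, should reproduce exactly the degree-$a$ vertex sum. Near the leading corner, the small partitions glued along the three edges behave like a copy of the local $\bP^2$ partition function in degree $a$, shifted by $q^{-a d-\cdots}$. I would verify this by matching the factorization $s_{(d-k,\lambda)}\approx s_{(d)}\cdot s_\lambda(\cdot)\cdot(\text{correction})$ term by term, and confirming the explicit $I(1)$ and $I(2)$ by direct computation from the vertex.

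The main obstacle is the degree bound. It must be sharp enough to show that partitions with three or more boxes off the first row, or cross terms among the legs, contribute only below $\half d(d-3)-3d+1$. It must also control the error terms of the asymptotic expansions uniformly, which is where the hypothesis $d\ge4$ enters. I would first attempt this via an explicit formula for $\deg W_{\mu\nu}$ in terms of $n(\mu)$, $n(\mu^t)$, and $\kappa$, reducing the claim to a combinatorial inequality on partitions.
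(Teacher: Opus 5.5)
There is a genuine gap, and it starts with the setup. All three edges of the toric triangle carry the class $H$, so the coefficient of $t^d$ is a sum over triples with $|\mu^1|+|\mu^2|+|\mu^3|=d$. It is not a sum over triples with $|\mu|=|\nu|=|\rho|=d$, as your text says, although your displayed formula with $(-t)^{|\mu|+|\nu|+|\rho|}$ is correct.

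Consequently the top degree $\half d(d-3)$ is attained by \emph{every} one-row triple $((d_1),(d_2),(d_3))$ with $d_1+d_2+d_3=d$. This changes where the pieces of $J_0$ come from:
\begin{itemize}
\item The $\binom{d+2}{2}$ in $J_0$ is just the number of such compositions, not a first-order hook or content correction.
\item The term $-3\sum_{i\ge1}q^{-i}/(1-q^{-i})^2$ comes from summing $[\infty]_{q^{-1}}!\,\tW_{(d_1),(d_2)}-1$ over all $(d_1,d_2)\in\bN^2$ (Guo--Zhou's evaluation of $W^2_\infty$). The tail $d_1+d_2>d$ contributes only from order $q^{-d-2}$ down.
\end{itemize}
Because the leg sizes $d_i$ range over $0,\dots,d$, a per-vertex expansion ``as $d\to\infty$'' cannot capture the subleading terms. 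They come precisely from configurations where some $d_i$ is $0$, $1$ or $2$, and these must be evaluated exactly.

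The hook-content formula also only evaluates $s_\mu(q^\rho)$. The factor $s_\nu(q^{\mu+\rho})$ has no product formula; it must be computed from $\sum_n h_n(q^{\mu+\rho})t^n$ and Jacobi--Trudi, which is what produces the finite sums $\tW_{(m),(n)}$.

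Your degree bound has the wrong shape. The sharp drop is $\Delta(d-\Delta+3)$ with $\Delta=\sum_i(|\mu^i|-\mu^i_1)$. This already equals $3d<3(d+1)$ for $\mu^1=(d-3,3)$, $\mu^2=\mu^3=\emptyset$ (with $d\ge6$). To discard $\Delta\ge3$ you need the concavity of this quadratic together with $\Delta\le d-1$.

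More seriously, your central structural claim is false: the stratum with $a$ boxes off the first rows does not by itself produce the $I(a)J_a(d)$ term. Write $P=(-1)^dq^{\half d(d-3)}/([\infty]_{q^{-1}}!)^3$.
\begin{itemize}
\item The $\Delta=0$ part $(-1)^dq^{\half d(d-3)}W_d$ is $P$ times an expression containing $\frac{3q^{-d-2}}{[1]_{q^{-1}}^2}\big(d+3-2\sum_{i\ge2}\frac{q^{-i}}{1-q^{-i}}\big)$. It also contains several terms of order $q^{-2d-2}$, among them $\frac{3q^{-2d-2}[\infty]_{q^{-1}}!}{[1]_{q^{-1}}^2}$.
\item The $\Delta=1$ part is $P\cdot\frac{3q^{-d-2}}{[1]_{q^{-1}}^2}\big(\binom{d}{2}+2\sum_{i\ge2}\frac{q^{-i}}{1-q^{-i}}-3\sum_{i\ge1}\frac{q^{-i}}{(1-q^{-i})^2}\big)$, plus terms of order $q^{-2d-2}$.
\end{itemize}
Only their sum gives $P\cdot\frac{3q^{-d-2}}{[1]_{q^{-1}}^2}J_1(d)=P\cdot\big(-q^{-d-1}I(1)J_1(d)\big)$. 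The $J_2$ term likewise collects pieces from $\Delta=0,1,2$. The $\sum_{i\ge2}\frac{q^{-i}}{1-q^{-i}}$ and $[\infty]_{q^{-1}}!$ pieces cancel only in the total; for instance the $[\infty]_{q^{-1}}!$ coefficients from the three strata are $3-6+3$.

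So $I(1)$ and $I(2)$ are recognized only after the full computation. A ``small tails form a copy of the degree-$a$ partition function'' mechanism is exactly what would prove the general-$k$ formula, which the paper leaves as a conjecture.

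What actually closes the argument is the following:
\begin{itemize}
\item exact formulas for $W_{(m),(n)}$, $W_{(m),(n-1,1)}$, $W_{(m-2,2),(n)}$, $W_{(m-2,1,1),(n)}$ and $W_{(m-1,1),(n-1,1)}$ in terms of the $\tW$'s;
\item exact leading-term evaluation of each stratum, using $\sum_n z^n/[n]_{q^{-1}}!=1/(z;q^{-1})_\infty$, its $z$-derivative, and Guo--Zhou's $q$-series identities, with the small-$d_i$ cases handled separately;
\item adding the three strata.
\end{itemize}
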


This formula is an extension of Guo--Zhou \cite[Theorem 6.4]{gz}.
Here the summation
\begin{equation*}
\sum_{i\geq 1}\frac{q^{-i}}{(1-q^{-i})^2} =
\sum_{n=1}^\infty \Big( \sum_{d|n} d \Big) \cdot q^{-n}
\end{equation*}
is a shift of the second Eisenstein series written in the variable $q^{-1}$.
From our representation \eqref{eq-intromain1-GW} one may naturally expect
that a similar formula holds in the region $q^{\geq \frac{1}{2}d(d-3)-k(d-k+3) +1}$ for a general $k$.
Here the truncation $\frac{1}{2}d(d-3)-k(d-k+3)+1$ is suggested by
a concrete degree estimate of the two-legged topological vertices,
see Proposition \ref{prop-WWWdeg-ineq}.
More precisely,
it is natural to conjecture that:
\be
\begin{split}
I(d)
= \frac{ (-1)^d \cdot q^{ \half d(d-3)} }{\prod_{i=1}^\infty (1-q^{-i})^3}
\Big( \sum_{a=0}^{k-1} (-1)^a q^{-a(d-a+3) -\half a(a-3)}\cdot I(a)J_a(d) \Big)
\Big|_{q^{\geq \half (d^2-3d) -k(d-k+3)+1}},
\end{split}
\ee
where $I(0)=1$, and $J_a(d)$ for a general $a\in \bN$ is still given by \eqref{eq-intro-Jidef}.
The above theorem confirms this conjectural formula for $k\leq 3$.
However,
in order to check this formula for a larger $k$ or find a proof of the general case
one may need a more subtle degree estimate and more complicated computations
of explicit formulas for the $2$-legged topological vertices $W_{\mu,\nu}$,
and this is beyond the scope of the present paper.
It is also natural to expect that the leading terms of Gromov--Witten invariants
of a general (local) toric Calabi--Yau threefold can also be packaged in such a simple fashion,
and we hope to address this problem in future works.

Next,
we use the above theorem to derive a formula for the leading terms of the transformed Gopakumar--Vafa invariants
of local $\bP^2$,
and this result is an extension of \cite[Theorem 6.1]{gz}.
We prove the following:
\begin{Theorem}
[=Theorem \ref{thm-mainTGV-leading}]
\label{thm:main GV}
	For $d\geq 16$,
	the leading terms of the transformed Gopakumar--Vafa invariants of local $\bP^2$ are given by:
	\be
	\label{eq-LeaddingGVsum-thm main}
	\begin{split}
		\sum_{g = -3d+1}^0  N_d^{g+g(d)}(K_{\bP^2}) q^{g}
		= \frac{(-1)^{d-1} }{([\infty]_{q^{-1}}!)^3}
		\Big( \tJ_0  + q^{-d+1} \tJ_1  + q^{-2d+4} \tJ_2 + q^{-3d+9} \tJ_3
		\Big)\Big|_{q^{\geq -3d +1}},
	\end{split}
	\ee
	where $g(d)=\frac{1}{2}(d-1)(d-2)$ is the top degree of the transformed Gopakumar--Vafa invariants of local $\bP^2$,
and the functions $\tJ_i$ are given by:
	\begin{equation*}
		\begin{split}
			\tJ_0 =& [1]_{q^{-1}}^2 [2]_{q^{-1}} \Big(
			\binom{d+2}{2} - 3\sum_{i\geq 1}\frac{q^{-i}}{(1-q^{-i})^2} \Big),\\
			\tJ_1 =& -3 \cdot  [2]_{q^{-1}} [3]_{q^{-1}}
			\Big( \binom{d+1}{2} -3\sum_{i\geq 1} \frac{q^{-i}}{(1-q^{-i})^2} \Big)
			+ 9q^{-3} \cdot [2]_{q^{-1}}, \\
			\tJ_2 =& 3 \big(-2 +4q^{-2}+8q^{-3}+7q^{-4}+5q^{-5}+3q^{-6}+2q^{-7} \big) \cdot [1]_{q^{-1}}
			\Big( \binom{d}{2} -3\sum_{i\geq 1} \frac{q^{-i}}{(1-q^{-i})^2} \Big)  \\
			& - \frac{9q^{-4} (3+9 q^{-1}+8q^{-2}+6q^{-3}+6q^{-4}+4q^{-5} )}{1+q^{-1}},
\end{split}
	\end{equation*}
and
\begin{equation*}
		\begin{split}
			\tJ_3 =& -\frac{ 10+7q^{-1}-28q^{-2} -60q^{-3} -26q^{-4} +116q^{-5} +178q^{-6} +62q^{-7} -152q^{-8}}
			{[2]_{q^{-1}} [3]_{q^{-1}}^2} \\
			& \cdot \Big( \binom{d-1}{2} -3\sum_{i\geq 1} \frac{q^{-i}}{(1-q^{-i})^2} \Big)
			-\frac{54q^{-5} (1-q^{-1}-3q^{-2}+q^{-3})}{[1]_{q^{-1}}^2 [2]_{q^{-1}}}.
		\end{split}
	\end{equation*}
Here we use the notations:
\begin{equation*}
[a]_{q^{-1}}= 1-q^{-a},\qquad\qquad
[\infty]_{q^{-1}}! =\prod_{a=1}^\infty [a]_{q^{-1}}.
\end{equation*}
\end{Theorem}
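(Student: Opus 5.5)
The strategy is to deduce Theorem \ref{thm:main GV} from Theorem \ref{thm:main GW} by passing from disconnected Gromov--Witten invariants to connected ones, then to Gopakumar--Vafa form, and finally to the transformed invariants, tracking only the top $3d$ coefficients at each stage.

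\textbf{Step 1: from $I(d)$ to the connected series.} Write $F_d = \sum_g \lambda^{2g-2} K^g_d$, so that $\log(1+\sum I(d)t^d) = \sum_d F_d t^d$. Then
\begin{equation*}
F_d = I(d) - \sum_{d_1+d_2=d} \tfrac12 I(d_1)I(d_2) + \cdots .
\end{equation*}
Since $\deg I(a) = \frac12 a(a-3)$, a product $I(d_1)\cdots I(d_k)$ has top degree $\sum \frac12 d_i(d_i-3)$. For $d_1+d_2=d$ with $d_2 = a\le d_1$, this top degree is at most $\frac12 d(d-3) - a(d-a) + \frac{3}{2}\cdot 0$; more precisely the drop is $a(d-a)$ up to a linear correction. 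Consequently only the terms $I(d)$, $I(d-1)I(1)$, $I(d-2)I(2)$, $I(d-2)I(1)^2/2$, and the analogous degree-$3$ splittings can contribute in the window $q^{\ge \frac12 d(d-3)-3d+1}$. The hypothesis $d\ge 16$ should be exactly what guarantees that no further splitting enters this window, and that Theorem \ref{thm:main GW} may be applied to $I(d-1)$ and $I(d-2)$ in the reduced windows they require. Substituting the structural formula of Theorem \ref{thm:main GW} for each of $I(d)$, $I(d-1)$ and $I(d-2)$, I expect the terms $q^{-d-1}I(1)J_1(d)$ and $q^{-2d-1}I(2)J_2(d)$ to partially cancel against the cross terms. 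What should remain is a sum of expressions of the form $q^{\frac12 d(d-3)}(\prod(1-q^{-i})^{-3})\, q^{-k(d-\cdot)}R_k\,(\text{binomial} - 3E_2)$, for $k=0,1,2,3$. The $k=3$ term, which is new compared with \cite{gz}, comes from the products $I(d-3)\cdot(\text{degree-3 data})$ and from products of the lower corrections.

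\textbf{Step 2: multiple covers and the transform.} The Gopakumar--Vafa formula expresses $F_d$ as $\sum_{k|d}$ of multiple-cover contributions. For $k\ge2$ the contribution of class $d/k$ has $q$-degree roughly $\frac{1}{2k}d^2$, which for $d\ge16$ lies far below the window; hence $F_d$ agrees, in the window, with $\sum_g n^g_d (-1)^{g-1}(q^{1/2}-q^{-1/2})^{2g-2}$. Applying the defining relation of $N_d^g$, we have
\begin{equation*}
\sum_g N^g_d (q^g+\cdots+q^{-g}) = -(q^{1/2}-q^{-1/2})^{2}\,F_d ,
\end{equation*}
up to sign. Since $q^g+\cdots+q^{-g} = (q^{g+1}-q^{-g-1})/(q-q^{-1})$, extracting $N_d^{g(d)-\delta}$ for $\delta\le 3d-1$ amounts to multiplying by $(1-q^{-2})$ and reading off the positive-exponent part, shifted by $q^{-g(d)}$. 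This step is what produces the factors $[1]_{q^{-1}}^2[2]_{q^{-1}}$ in $\tJ_0$. The degree shifts $\frac12 d(d-3) - g(d) = -1$ account for the exponents $q^{-d+1}$, $q^{-2d+4}$ and $q^{-3d+9}$.

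\textbf{Main obstacle.} The hardest part is the bookkeeping in Step 1. One must verify that every cross term truncates correctly, which requires applying Theorem \ref{thm:main GW} for $I(d-a)$ only in windows where it is valid, and one must compute the rational prefactors of $\tJ_2$ and $\tJ_3$ exactly. For $\tJ_3$, the prefactor requires the degree-$3$ quantity $I(3)$ (or $F_3$), together with the products $I(1)I(2)$ and $I(1)^3$, all of which are known explicitly from the topological vertex. My plan is to compute $\tJ_3$ by collecting all contributions with $q$-shift $-3d+O(1)$ into one rational function. I would then check it against the numerical values of $N_d^g$ for a few $d\ge16$, computed directly from the topological vertex, to catch sign and shift errors.
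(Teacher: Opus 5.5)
Your plan follows the paper's own route. You expand $\log Z$ and keep only $I(d)$ and the products whose small parts total at most $3$; this suffices because $4(d-4)\ge 3d$ exactly when $d\ge 16$. You then drop the $k\ge 2$ multiple-cover terms, multiply by $-q[1]_{q^{-1}}^2[2]_{q^{-1}}$, and substitute Theorem \ref{thm:main GW} for $I(d-a)$, $a=0,\dots,3$, together with the explicit $I(1),I(2),I(3)$.

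Two small bookkeeping corrections. First, the coefficient of $I(1)^2I(d-2)$ in $F(d)$ is $+1$ (three orderings times $\tfrac13$), not $\tfrac12$. Second, the shifts $q^{-d+1},q^{-2d+4},q^{-3d+9}$ do not come from $\tfrac12 d(d-3)-g(d)=-1$, which is absorbed by the factor $q$. They come from $\deg I(d-a)-\deg I(d)=-ad+\tfrac12 a(a+3)$ plus the degrees $-1,-1,0$ of $I(1)$, $I(1)^2-I(2)$ and $I(3)-2I(1)I(2)+I(1)^3$ respectively.
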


As pointed out by Guo--Wu in \cite{gw},
the information of leading Gopakumar--Vafa invariants can be used to 
compute the leading Betti numbers of the moduli spaces of sheaves on $\mathbb{P}^2$.
As an application of Theorem \ref{thm:main GV},
we prove:
\begin{Theorem}
	[=Theorem \ref{eq-Confirmgwconj}]
\label{thm-intro-GWMConj}
Guo--Wu--Moreira's \cite[Conjecture 1.9]{gw}
(see Conjecture \ref{thm:main leading betti 3d-10}) holds.
\end{Theorem}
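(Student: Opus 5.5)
The approach is to pass from Theorem \ref{thm:main GV} to the Betti numbers through the known dictionary between Gopakumar--Vafa invariants of local $\bP^2$ and the Poincar\'e polynomials of $M_d$. That dictionary is Bousseau's conjecture, proved for $\bP^2$ in \cite{ms,yuan,bou}, in the form used by Guo--Wu \cite{gw}. Concretely, the refined BPS invariants of $K_{\bP^2}$ in class $dH$ are the (shifted, normalized) Poincar\'e polynomial of $M_d$. The unrefined limit, with its $\mathfrak{sl}_2$ decomposition, gives the transformed GV invariants. Guo--Wu show that after dividing $P_d(y)$ by $\sum_{i=0}^{3d-1}y^i$, the resulting palindromic $\widehat{\Omega}_d(y)$ determines, and in the leading range is determined by, the generating series $\sum_\delta N_d^{g(d)-\delta}q^\delta$.

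The first step is to recall precisely the identity from \cite{gw} that relates $\widehat{\Omega}_d(y)$ to $\sum_{g}N_d^{g+g(d)}(K_{\bP^2})q^{g}$ under $y=q^{-1}$. The identity in question has the shape
\begin{equation*}
(-1)^{d-1}\sum_{\delta\geq 0}N_d^{g(d)-\delta}q^{-\delta}\cdot(\text{factor})=\widehat{\Omega}_d(q^{-1})\cdot(\text{factor}'),
\end{equation*}
and it comes from the relation between the perverse/$\mathfrak{sl}_2$ Hodge data and the $\chi_y$-genus. The relevant part is that the full Betti numbers of $M_d$ are recovered from the asymmetric product of two Lefschetz-type factors. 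I need to check exactly which correction factor (likely $[1]_y^{-1}[2]_y^{-1}$ type or a $\chi_y$-to-Poincar\'e conversion) appears, and in what range of degrees the identity is exact. The sharpness of the final range $y^{\leq 3d-10}$ comes from this step.

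Next I substitute the formula of Theorem \ref{thm:main GV}, valid in $q^{\geq -3d+1}$, i.e. in degrees $y^{\leq 3d-1}$. The term $\tJ_3$ carries the prefactor $q^{-3d+9}$, so it contributes only from degree $3d-9$ onward. After truncating at $y^{\leq 3d-10}$, $\tJ_3$ therefore drops out. The conversion factor may also shift degrees, and this is presumably why the range is $3d-10$ rather than $3d-1$. What remains is the three terms $\tJ_0,\tJ_1,\tJ_2$, and these must be shown to match
\begin{equation*}
\frac{[1]_y^2}{[\infty]_y!^3}\Big(1-3y^{d-1}\frac{[3]_y}{[1]_y^2}+3y^{2d-4}\frac{[3]_y(-2+2y+4y^2+2y^3+y^4+2y^5)}{[1]_y^2[2]_y}\Big).
\end{equation*}

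The key simplification is that the $d$-dependent parts $\binom{d+2-a}{2}-3E(y)$, where $E(y)=\sum_{i\ge1} y^i/(1-y^i)^2$, are exactly the contributions coming from the $\chi_y$ versus Poincar\'e difference. Dividing by $\sum_{i=0}^{3d-1}y^i=[3d]_y/[1]_y$, which is $1/[1]_y$ modulo $y^{3d}$, and applying the Guo--Wu conversion should kill these Eisenstein-type factors. After that, only the rational prefactors of each $\tJ_a$ survive. I then check term by term.
\begin{itemize}
\item[(a)] $[1]^2[2]$ reduces to $[1]^2$.
\item[(b)] $-3[2][3]$, together with the correction $9y^3[2]$, yields $-3[3]/[1]^2$ times $[1]^2$.
\item[(c)] The polynomial $-2+4y^2+8y^3+\dots$, with its extra term, reorganizes into $[3](-2+2y+4y^2+2y^3+y^4+2y^5)/([1]^2[2])$.
\end{itemize}
Each check is a finite identity of rational functions, verifiable by clearing denominators.

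The main obstacle is the first step: pinning down the exact form of the Guo--Wu identity between $N_d^g$ and $b_{2j}(M_d)$, in particular how the $E(y)$-terms and the binomials $\binom{d+2-a}{2}$ cancel. If direct cancellation fails, my fallback is to use the refined version. I would rerun the argument of Guo--Wu's $2d-11$ proof in \cite{gw} with our $3d$-level input, tracking the degree loss that turns $3d-1$ into $3d-10$.
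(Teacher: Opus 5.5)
Your main route depends on an identity that does not exist. There is no relation of the form $(\text{factor})\cdot\sum_\delta N_d^{g(d)-\delta}q^{-\delta}=(\text{factor}')\cdot\widehat{\Omega}_d(q^{-1})$ with universal factors. It is also false that, in the leading range, $\widehat{\Omega}_d$ is determined by the degree-$d$ GV series alone.

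The constant term already shows this. We have $\widehat{\Omega}_d(0)=b_0(M_d)=1$, whereas the degree-$d$ data gives $A_d(0)=\binom{d+2}{2}$. Here $A_n=(-1)^{n-1}y^{g(n)}\mathcal{F}_n$, and with $q^{-1}=y$ one has $A_n=([2]_y([\infty]_y!)^3)^{-1}(\tJ_0+y^{n-1}\tJ_1+\cdots)$. More generally, $\tJ_0,\tJ_1,\tJ_2$ carry the different $d$-dependent factors $L_{d-a}(y)=\binom{d+2-a}{2}-3\sum_{i\ge1}y^i/(1-y^i)^2$. No conversion factor turns all of these into the $d$-independent rational functions of the conjecture.

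Betti numbers and unrefined GV invariants are different specializations of refined BPS data. For $\bP^2$ the bridge between them is Guo--Wu's formula (their (10)--(11)), built from the refined sheaves/GW correspondence, the local/relative correspondence of Bousseau--Fan--Guo--Wu, and the elliptic-curve factors $G^E_{d_E,\beta}$. Degree estimates then remove most terms. The multiple-cover terms $k>1$ start in degree at least $d^2/4$. All $d_E\ge2$ terms lie in $y^{3d-9}\mathbb{Q}[y]$, and this, not a conversion factor, is what forces the cutoff $3d-10$. Only eleven $d_E=1$ terms survive, giving
\begin{equation*}
3d\,\widehat{\Omega}_d=A_d-A_{d-3}-3y^{d-4}(1+y+y^2)^2A_{d-4}-y^{2d-10}P(y)A_{d-5}-y^{3d-18}Q(y)A_{d-6}\Big|_{y^{\leq 3d-10}}
\end{equation*}
for explicit polynomials $P,Q$. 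There is one further $y^{4d-28}A_{d-7}$ term, which matters only for $d\le18$.

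In this identity the Eisenstein-type terms cancel \emph{across degrees}, not through any conversion factor:
\begin{itemize}
\item At order $y^0$ one gets $[1]_y^2(L_d-L_{d-3})=3d[1]_y^2$.
\item The $y^{d-1}$ term of the conjecture appears only after combining $\tJ_1$ of $A_d$, $\tJ_1$ of $A_{d-3}$, and $\tJ_0$ of $A_{d-4}$, using $L_{d-1}-L_{d-4}=3(d-1)$.
\item $\tJ_3$ does not simply drop out. The copy coming from $A_{d-3}$ sits at $y^{3d-18}$, inside the range. It must cancel against $\tJ_2$ of $A_{d-4}$, $\tJ_1$ of $A_{d-5}$ and $\tJ_0$ of $A_{d-6}$.
\end{itemize}
So your checks (a)--(c), which compare the prefactor of each degree-$d$ $\tJ_a$ in isolation with the conjectured terms, are not the identities that need to hold.

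Your fallback, rerunning Guo--Wu's argument with the $3d$-level input, is the right route. As written, though, it contains none of the actual work: finding the contributing $(\beta,\alpha)$ via degree bounds, computing their $G^E_{1,\beta}$ factors, and verifying the block-by-block cancellations above.

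Finally, Theorem~\ref{thm:main GV} is proved only for $d\ge16$, and feeding it into $A_{d-3}$ needs $d\ge19$. The conjecture is stated for $d\ge4$, so the degrees $4\le d\le18$ need a separate argument. The paper handles them in two ways:
\begin{itemize}
\item for $d=17,18$ it shows the discrepancy in $A_{d-3}$ cancels the $A_{d-7}$ term;
\item for $d\le16$ it checks against Guo--Wu's tabulated data.
\end{itemize}
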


\subsection{Plan of the paper}

The main body of this paper consists of three parts.
We arrange the rest of the paper in the following way.

In the first part of this paper,
we recall some preliminary facts and the topological vertex formalism 
in \S \ref{sec-pre} and \S \ref{sec-TVformalism} respectively,
and then study the leading terms of the generating series $I(d)$ of Gromov--Witten invariants
of local $\bP^2$ (Theorem \ref{thm:main GW}) in \S \ref{sec-Leading-GW}.

In the second part of this paper,
we introduce two applications of the formula for leading terms of $I(d)$
-- we prove Theorem \ref{thm:main GV} and Theorem \ref{thm-intro-GWMConj} in
\S \ref{sec-LeadingTGV} and \S \ref{sec-betti} respectively.

And finally in \S \ref{app-eval-schur}, \S \ref{app-WdCompute}, \S \ref{app-secC-delta12}
we give the detailed proofs of two theorems.
These two proofs are lengthy and consists of a lot of subtle computations,
and thus we put them at the end of this paper to make the paper easier to read.

\section{Preliminaries}
\label{sec-pre}

In this section we recall the definition and some basic properties
of the Schur functions,
Gromov--Witten invariants and Gopakumar--Vafa invariants,
and the moduli spaces of one-dimensional Gieseker semistable sheaves on $\mathbb{P}^2$.

\subsection{Partitions and  Schur functions}
\label{subsec-Schur}

In this subsection we recall the
definition of Schur functions and their evaluations.
See the books \cite{sta, mac} for references.

Let $\lambda=(\lambda_1,\lambda_2,\cdots,\lambda_{l})$ be a partition of an integer
where $\lambda_1\geq \cdots\geq \lambda_{l}> 0$.
Denote:
\begin{equation*}
|\lambda| = \lambda_1+ \lambda_2 + \cdots +\lambda_l.
\end{equation*}
The number $l = l(\lambda)$ is called the length of this partition.
Here we allow the empty partition $(\emptyset)$ of $0$,
and we will denote by $\mathcal{P}$ the set of all partitions.
The transpose $\lambda^t$ is defined to be the partition whose Young diagram
is obtained by flipping the Young diagram associated with $\lambda$ along the diagonal.
Or equivalently,
$\lambda^t=(\lambda_1^t,\cdots,\lambda_m^t)$ is the partition defined by
$m=\lambda_1$
and $\lambda_j^t = |\{i| \lambda_i\geq j\}|$.
For a partition $\lambda = (\lambda_1,\lambda_2,\cdots,\lambda_l)$,
we will use the following notations:
\be
\label{eq-def-kappamu}
n_\lambda = \sum_{i=1}^l (i-1)\lambda_i, \qquad\qquad
\kappa_\lambda = \sum_{i=1}^{l} \lambda_i(\lambda_i -2i +1).
\ee
We use the notations $n_{(\emptyset)} = 0$ and $\kappa_{(\emptyset)} =0$.
The two numbers $\kappa_\lambda$ and $n_\lambda$ are related by:
\begin{equation*}
\kappa_\lambda = 2(n_{\lambda^t} -n_\lambda).
\end{equation*}

The Frobenius notation of a partition $\lambda$ is defined to be:
\begin{equation*}
\lambda= (m_1,m_2,\cdots,m_k | n_1,n_2,\cdots,n_k),
\end{equation*}
where $k$ is the number of boxes on the diagonal of the corresponding Young diagram,
and
\begin{equation*}
m_i = \lambda_i - i,
\qquad
\qquad
n_i=\lambda_i^t - i,
\end{equation*}
for $i=1,2,\cdots,k$.
For a partition $\lambda = (m_1,\cdots, m_k|n_1,\cdots, n_k)$,
the number $\kappa_\lambda$ can also be represented as:
\be
\label{eq-def-kappa2}
\kappa_\lambda
=\sum_{i=1}^k (m_i+\half)^2 - \sum_{i=1}^k (n_i+\half)^2.
\ee

For a partition $\lambda=(m|n) = (m+1,1^n)$ whose Young diagram is a hook,
the Schur function $ s_{(m|n)}$ is defined by:
\begin{equation*}
s_{(m|n)}= h_{m+1}e_n - h_{m+2}e_{n-1} + \cdots
+ (-1)^n h_{m+n+1},
\end{equation*}
where $h_n$ and $e_n$ are the
complete symmetric function and elementary symmetric function of degree $n$ respectively.
And for a general partition $\lambda=(m_1,\cdots,m_k|n_1,\cdots,n_k)$,
the Schur function $s_\lambda$ is defined by:
\begin{equation*}
s_\lambda  = \det (s_{(m_i|n_j)} )_{1\leq i,j\leq k}.
\end{equation*}
The Schur function indexed by the empty partition is defined to be $s_{(\emptyset)} = 1$.
Denote by $\Lambda$ the infinite-dimensional linear space of all symmetric functions,
then Schur functions $\{s_\lambda\}_{\lambda\in\mathcal{P}}$ form a basis of  $\Lambda$.
One can introduce a natural inner product $(\cdot,\cdot)$ on $\Lambda$ by
taking $\{s_\lambda\}_{\lambda\in\mathcal{P}}$ to be an orthonormal basis,
i.e., taking $( s_\lambda, s_\mu) = \delta_{\lambda,\mu}$.

Now denote by $p_n\in \Lambda$ the Newton symmetric function of degree $n$,
and $\{p_\lambda\}$ gives another basis for $\Lambda$,
where $p_\lambda = p_{\lambda_1}\cdots p_{\lambda_l}$ for a partition $\lambda = (\lambda_1,\cdots,\lambda_l)$.
The above two bases for $\Lambda$ are related by (see e.g. \cite{mac}):
\be
\label{eq-newton-schur}
s_\mu = \sum_{\lambda} \frac{\chi^\mu (C_\lambda)}{z_\lambda} p_\lambda,
\ee
where $\chi^\mu$ is the irreducible character of the symmetric group $S_{|\mu|}$ indexed by $\mu$
and $C_\lambda \subset S_{|\mu|}$ is the conjugacy class whose cycle type is $\lambda$.
Sometimes we regard $s_\mu$ as a function $s_\mu (\mathbf{t})$ in the variables $\mathbf{t} = (t_1,t_2,\cdots)$
via \eqref{eq-newton-schur} where $t_n = \frac{p_n}{n}$,
and call it the Schur polynomial indexed by $\mu$.
If one defines a degree by setting the degree of $t_n$ to be $n$,
then $s_\mu (\mathbf{t})$ is a weighted homogeneous polynomial of degree $|\mu|$ in
the variables $t_1,t_2,\cdots,t_{|\mu|}$.

Now we recall the definition of the skew Schur functions.
Let $\lambda,\mu$ be two partitions,
and then the skew Schur function $s_{\lambda/\mu} \in \Lambda$ is determined by:
\begin{equation*}
(s_{\lambda/\mu},s_\nu ) = (s_\lambda, s_\mu s_\nu),
\qquad \forall \text{ partition $\nu$},
\end{equation*}
where $(\cdot,\cdot)$ is the inner product defined above.
Or equivalently,
\begin{equation*}
s_{\lambda/\mu} = \sum_\nu c_{\mu\nu}^\lambda s_\nu
\end{equation*}
where $\{c_{\mu\nu}^\lambda\}$ are the Littlewood-Richardson coefficients
defined by
$s_\mu s_\nu = \sum_{\lambda} c_{\mu\nu}^\lambda s_\lambda.$

In the rest of this subsection,
we recall some results of the evaluations of symmetric functions.
The evaluation of Schur functions can be represented in terms of semistandard Young tableau,
see e.g. \cite[\S 7.10]{sta}.
Let $\mu$ be a partition of integer,
then a semistandard Young tableaux (SSYT) of shape $\mu$ is an assignment of a positive integer
to each box of the Young diagram associated with $\mu$,
such that the numbers in every row are non-decreasing from left to right,
and the numbers in every column are strictly increasing from top to bottom.
Assume that the positive integer $i$ appears $k_i$ times in a SSYT $T$,
then we denote $x^T = \prod_{i\geq 1} x_i^{k_i}$.
One has:
\begin{align*}
	s_\mu (x_1,x_2,x_3,\cdots) = \sum_T x^T,
\end{align*}
where the summation is over all SSYT of shape $\mu$.
For example,
a SSYT of shape $\mu = (2,1)$ must be of the following form:
\begin{equation*}
\begin{tikzpicture}[scale=0.45]
\draw (0,0) -- (2,0);
\draw (0,-1) -- (2,-1);
\draw (0,-2) -- (1,-2);
\draw (1,0) -- (1,-2);
\draw (0,0) -- (0,-2);
\draw (2,0) -- (2,-1);
\node [align=center,align=center] at (0.5,-0.5) {$i$};
\node [align=center,align=center] at (1.5,-0.5) {$j$};
\node [align=center,align=center] at (0.5,-1.5) {$k$};
\end{tikzpicture}
\end{equation*}
where $i,j,k$ are positive integers satisfying $j\geq i$ and $k >i$.
Thus one may obtain:
\begin{equation*}
s_{(2,1)}(x_1,x_2,x_3,\cdots) = \sum_{j>i\geq 1} (x_i^2 x_j + x_ix_j^2)
+ 2 \sum_{k>j>i\geq 1} x_i x_j x_k.
\end{equation*}

For our purpose,
we are particularly interested in the evaluation of Schur functions at
the following sequence:
\be
\label{eq-defqrho}
q^\rho = (q^{-\half}, q^{-\frac{3}{2}}, q^{-\frac{5}{2}}, \cdots).
\ee
We also denote by $\rho$ the sequence:
\be
\label{eq-defrho}
\rho = (-\half, -\frac{3}{2}, -\frac{5}{2}, \cdots).
\ee
In the literature it is known that
(see e.g. \cite[p.45, Exmaple 2]{mac}):
\be
\label{eq-eva-smurho}
s_\mu (q^\rho) = q^{ -\half |\mu| - n_\mu} \cdot
\prod_{x\in \mu} \frac{1}{1 - q^{- h(x)}},
\ee
where $x\in \mu$ means $x$ runs over boxes in the Young diagram associated with $\mu$,
and $n_\mu$ is defined by \eqref{eq-def-kappamu}.
For a box $x$ in the Young diagram,
we denote by $h(x)$ the hook length of $x$:
\begin{equation*}
h(x) = (\mu_i - i) + (\mu_j^t -j) +1,
\end{equation*}
if $x$ is in the $i$-th row and $j$-th column of the Young diagram.
By taking $\mu$ to be $(m)$ and $(1^m)$ one obtains the following identities respectively:
\be
\label{eq-hneneva}
h_m(q^\rho) = q^{-\half m} \cdot \prod_{i=1}^m \frac{1}{1-q^{-i}},
\qquad\qquad
e_m(q^\rho) = q^{-\half m^2} \cdot \prod_{i=1}^m \frac{1}{1-q^{-i}}.
\ee

\subsection{Gromov--Witten invariants and Gopakumar--Vafa invariants}

In this subsection we briefly recall the Gromov--Witten invariants and Gopakumar--Vafa invariants.

Let $X$ be a projective algebraic variety and $\beta \in H_2(X,\bZ)$,
and let $\Mbar_{g,n}(X,\beta)$ be the moduli space of stable maps from $n$-pointed curves of genus $g$ to $X$
with image in $\beta$:
\begin{equation*}
\Mbar_{g,n}(X,\beta) =
\{\text{stable map } f:(C;p_1,\cdots,p_n)\to X | f_*[C] = \beta \}/\sim.
\end{equation*}
The virtual dimension of this moduli space is
\begin{equation*}
\vdim \Mbar_{g,n}(X,\beta) = (1-g)(\dim X-3) +
\int_\beta c_1(TX) +n.
\end{equation*}
If $X$ is a Calabi--Yau threefold,
one has $\vdim \Mbar_{g,n}(X,\beta) = n$.
The $0$-pointed Gromov--Witten invariants of $X$ are defined by:
\begin{equation*}
K_\beta^g (X) = \int_{[\Mbar_{g,0}(X,\beta)]^{\text{vir}}} 1,
\end{equation*}
where $[\Mbar_{g,0}(X,\beta)]^{\text{vir}}$ is the virtual fundamental class on the moduli space.

Gromov--Witten invariants are known to be rational numbers.
It is conjectured that these rational numbers can be recasted into
integer invariants in the following way \cite{gv}:
\begin{equation*}
\sum_{\beta \not= 0}\sum_{g\geq 0}
K_\beta^g (X) \lambda^{2g-2} t^\beta =
\sum_{\beta \not= 0}\sum_{g\geq 0} n_\beta^g (X)
\sum_{k>0} \frac{1}{k} \big( 2\sin(\frac{k\lambda}{2})\big)^{2g-2}
t^{k\beta},
\end{equation*}
where $\lambda$ and $t$ are two formal variables,
and the (conjectural) integers $n_\beta^g(X)$ are called Gopakumar--Vafa BPS invariants.

A powerful tool to compute the Gromov--Witten invariants is the virtual localization technique \cite{ko, gp}.
Let $S$ be a toric Fano surface,
then the total space $X = K_S$ of the canonical line bundle over $S$ is a noncompact toric Calabi--Yau threefold.
For such a noncompact Calabi--Yau threefold $K_S$,
the definition and computation of their Gromov--Witten invariants $K_\beta^g (K_S)$
(now known as local Gromov--Witten invariants)
can be localized to the surface $S$,
see \cite{ckyz, kz}.
Similar to the compact case,
one can convert these invariants into some integer invariants:
\begin{equation*}
\sum_{\beta \not= 0}\sum_{g\geq 0}
K_\beta^g (K_S) \lambda^{2g-2} t^\beta =
\sum_{\beta \not= 0}\sum_{g\geq 0} n_\beta^g (K_S)
\sum_{k>0} \frac{1}{k} \big( 2\sin(\frac{k\lambda}{2})\big)^{2g-2}
t^{k\beta},
\end{equation*}
and the integer invariants $n_\beta^g (K_S)$ are called the local Gopakumar--Vafa invariants.
See \cite{pe} for a proof of the integrality for a local toric Calabi--Yau threefold,
and see \cite{koni} for a proof of the integrality for all toric Calabi--Yau threefolds.
Denote by $F^{K_S}$ the left-hand side of the above formula,
and take $q = e^{\sqrt{-1} \lambda}$,
then one has:
\be
\label{eq-def-GVlocal}
F^{K_S} =
\sum_{\beta \not= 0}\sum_{g\geq 0} n_\beta^g (K_S)
\sum_{k>0} \frac{(-1)^{g-1}}{k} \big( q^{\frac{k}{2}} - q^{-\frac{k}{2}}\big)^{2g-2}
t^{k\beta}.
\ee
This $F^{K_S}$ is called the free energy of the local Calabi--Yau threefold $K_S$.

\subsection{The moduli spaces of one-dimensional Gieseker semistable sheaves on $\mathbb{P}^2$}
In this subsection,
we review the moduli spaces of one-dimensional Gieseker semistable sheaves on $\mathbb{P}^2$.

Let $H$ be the line class on $\mathbb{P}^2$ and $d$ be a positive integer.
Denote by $M_{d,\chi}$ the moduli space
which parameterizes the Gieseker semistable pure one-dimensional sheaves $\mathcal{F}$ on $\mathbb{P}^2$ with $c_1(\mathcal{F})=dH$ and Euler characteristic $\chi$.
The stability condition is taken with respect to $\mathcal{O}_{\mathbb{P}^2}(1)$,
and the Hilbert polynomial is $dm+\chi$.
We recommend \cite{bou22,bou,gw,lp,mark,ms,yuan} for more details and properties of the moduli space.
If $\text{gcd}(d,\chi)=1$,
then the moduli space $M_{d,\chi}$ is a smooth irreducible projective variety \cite{lp}.
Moreover,
it is conjectured by Bousseau \cite{bou22} and proved by Maulik--Shen \cite{ms} that the intersection Betti numbers of $M_{d,\chi}$ are independent of $\chi$ (see also \cite{yuan}).
In this paper,
we are only interested in the Betti numbers of the moduli space,
therefore we set $M_d=M_{d,1}$ and work with its ordinary cohomology.

We consider the Fitting support morphism
\begin{align*}
	\pi: M_d \rightarrow |dH| \cong \mathbb{P}^{d(d+3)/2},
\end{align*}
which sends each sheaf $\mathcal{F}$ to its support $\text{Supp}(\mathcal{F})$.
Note that for a smooth curve $C\in|d H|$ with inclusion $i:C\hookrightarrow\mathbb{P}^2$,
every sheaf in the fiber over $C$ is the pushforward $i_*L$ of a line bundle $L$ with $\deg L=g(d)=(d-1)(d-2)/2$ on $C$,
thus the fiber $\pi^{-1}(C)$ is isomorphic to $\text{Pic}^{g(d)}(C)$.
Then the moduli space $M_d$ is a compactification of the relative Picard variety over smooth-curve locus (see Section 2 of \cite{ms} for more details).
In particular,
the moduli space $M_d$ is of dimension $d^2+1$.
Moreover, its odd Betti numbers vanish \cite{mark}.
Then the Poincar\'e polynomial of $M_{d}$ is given by
\begin{align*}
	P_d(y)
	=\sum_{j=0}^{d^2+1} b_{2j}(M_{d}) y^j,
	\qquad b_{2j}(M_{d})=\dim H^{2j}(M_d,\mathbb{Q}).
\end{align*}
The Poincar\'e duality gives $P_d(y)=y^{d^2+1}P_d(y^{-1})$.
Following the notation of Guo and Wu in \cite{gw},
we use
\begin{align*}
	\Omega_d(y^{1/2})
	=y^{-(d^2+1)/2}\cdot P_d(y),
	\qquad \widehat{\Omega}_d(y)
	=\frac{1}{\sum_{i=0}^{3d-1} y^i} \cdot P_d(y).
\end{align*}
Guo--Wu prove that $\widehat{\Omega}_d(y)$ is a palindromic polynomial of $y$ and its degree is $2g(d)=(d-1)(d-2)$ (see Theorem 1.3 in \cite{gw}).
This result proves a special case of a conjecture proposed by Choi--van Garrel--Katz--Takahashi \cite{cgkt}.
Note that under this notation, the variable $y$ records half the cohomological degree, so a formula through $y^m$ determines
Betti numbers of the moduli space $M_d$ through degree $2m$.

\section{The Topological Vertex Formalism}
\label{sec-TVformalism}

In this section we recall the topological vertex formalism for the
Gromov--Witten theory of toric Calabi--Yau threefolds,
especially the Gromov--Witten invariants of local $\bP^2$.
We also give some explicit formulas for certain $2$-legged topological vertices.

\subsection{The topological vertex}

First we recall the definition and combinatorial expressions
of the topological vertex.

Roughly speaking,
the open Gromov--Witten invariants of a toric Calabi--Yau threefold $X$
count maps from bordered Riemann surfaces to $X$
with boundary mapped into some specific Lagrangian submanifolds (D-branes).
The topological vertex is the generating series of open Gromov--Witten invariants of $\bC^3$
with three special D-branes.
The notion of the topological vertex was introduced by Aganagic {\em et al} \cite{akmv}
using a string theoretical approach.
A mathematical theory for the topological vertex was developed in the setting of
the open Gromov--Witten theory of $\mathbb{C}^3$ by Li {\em et al} in \cite{lllz},
and in that work the localization technique was used in
the study of the local relative Gromov--Witten theory of toric Calabi--Yau threefolds.
The combinatorial expression for the topological vertex constructed in \cite{lllz}
looks slightly different from the original version proposed in \cite{akmv,adkmv},
and the equivalence of these two expressions was first proved in \cite{moop}  using
GW/DT correspondence,
and then by a pure combinatorial approach in \cite{nt}.

Now we recall the combinatorial expressions of the topological vertex.
Let $\rho$ and $q^\rho$ be the sequences \eqref{eq-defrho} and \eqref{eq-defqrho} respectively.
For a partition $\mu = (\mu_1,\mu_2,\cdots,\mu_l)$ we denote:
\begin{align*}
	q^{\mu+\rho} =
	( q^{\mu_1 - \half}, q^{\mu_2-\frac{3}{2}}, \cdots, q^{\mu_l - l+\half}.
	q^{ -l-\half}, q^{-l-\frac{3}{2}},\cdots ).
\end{align*}
Let $\mu^1,\mu^2,\mu^3$ be three partitions,
and the topological vertex $W_{\mu^1,\mu^2,\mu^3} (q)$ labeled by $\mu^1$, $\mu^2$, $\mu^3$
is defined by:
\be
\label{eq-TVdef-1}
W_{\mu^1,\mu^2,\mu^3}(q) = q^{\kappa_{\mu^2}/2 + \kappa_{\mu^3}/2}
\sum_{\sigma^1,\sigma^3} \Big( \sum_\eta
c_{\eta \sigma^1}^{\mu^1} c_{\eta (\sigma^3)^t}^{(\mu^3)^t} \Big)
\frac{W_{(\mu^2)^t \sigma^1}(q)  W_{\mu^2 (\sigma^3)^t}(q)}{W_{\mu^2}(q)},
\ee
where $\{c^\lambda_{\mu\nu}\}$ are the Littlewood-Richardson coefficients,
and the one-legged and two-legged topological vertices are defined by
\be
\label{eq-W12inSchur}
W_{\mu}(q) = s_\mu (q^\rho),
\qquad\qquad
W_{\mu,\nu} (q) = s_\mu (q^\rho) \cdot s_\nu (q^{\mu + \rho})
\ee
respectively.
The one-legged and two-legged topological vertices can be related to $W_{\mu^1,\mu^2,\mu^3}(q)$ more straightforwardly by:
\be
\label{eq-2l3lrelation}
W_\mu(q) = W_{\mu,(\emptyset),(\emptyset)}(q),
\qquad\qquad
W_{\mu,\nu}(q) =
q^{ \half \kappa_\mu}  \cdot W_{\mu^t , (\emptyset), \nu}(q).
\ee
And the two-legged topological vertex are known to satisfy the symmetry condition:
\be
\label{eq-symm-Wmunu}
W_{\mu,\nu}(q) = W_{\nu,\mu}(q),
\qquad
\forall \mu,\nu\in \cP.
\ee

The topological vertex $W_{\mu^1,\mu^2,\mu^3}(q)$ can also be represented in terms of
skew-Schur functions (see e.g. \cite[Prop. 4.4]{zhou4}):
\be
\label{eqn:def W as skew}
W_{\mu^1,\mu^2,\mu^3}(q)
= (-1)^{|\mu^2|} q^{\kappa_{\mu^3}/2} s_{(\mu^2)^t} (q^{-\rho})
\sum_\eta s_{\mu^1 /\eta} (q^{(\mu^2)^t + \rho}) s_{(\mu^3)^t /\eta} (q^{\mu^2 + \rho}),
\ee
where $q^{-\rho}$ is the sequence $(q^{\half},q^{\frac{3}{2}},q^{\frac{5}{2}},\cdots)$.

\subsection{Gluing of topological vertices and Gromov--Witten invariants of local $\bP^2$}
\label{sec-glueKP2}

In this subsection,
we review the gluing rule for the topological vertex,
see \cite{adkmv}.

The simplest Calabi--Yau threefold is $\bC^3$,
and the toric diagram of $\bC^3$ is simply a single trivalent vertex.
It serves as the basic building block of general toric Calabi--Yau threefolds.
For an arbitrary toric Calabi--Yau threefold $X$,
the toric diagram of $X$ is a trivalent planar graph which can be constructed by gluing such blocks,
and geometrically $X$ can be constructed accordingly by gluing some copies of $\bC^3$  together.
The open Gromov--Witten invariants of $X$ can be obtained by certain gluing procedure
of the topological vertices \cite{akmv}.
In such a procedure,
one needs to multiply the topological vertex by some extra factors along the edges
to get the framed topological vertex.
This is a sort of Feynman rules as in quantum field theory.

In this paper we focus on the case of the total space of the canonical line bundle over $\bP^2$,
i.e., the local $\bP^2$.
Geometrically local $\bP^2$ is obtained by gluing three copies of $\bC^3$ together,
and the toric diagram in this case is:
\begin{equation*}
\begin{tikzpicture}[scale=0.75]
\draw [thick] (0,0) -- (0,2);
\draw [thick] (0,0) -- (2,0);
\draw [thick] (2,0) -- (0,2);
\draw [thick] (0,0) -- (-0.9,-0.9);
\draw [thick] (0,2) -- (-0.6,3.2);
\draw [thick] (2,0) -- (3.2,-0.6);
\node [align=center,align=center] at (-1.2,-1.05) {$\lambda^1$};
\node [align=center,align=center] at (0.25,-0.35) {$\mu^1$};
\node [align=center,align=center] at (1.6,-0.35) {$\mu^{1,t}$};
\node [align=center,align=center] at (3.5,-0.9) {$\lambda^2$};
\node [align=center,align=center] at (-1,3.2) {$\lambda^3$};
\node [align=center,align=center] at (1.85,0.8) {$\mu^2$};
\node [align=center,align=center] at (0.75,2) {$\mu^{2,t}$};
\node [align=center,align=center] at (-0.35,1.5) {$\mu^3$};
\node [align=center,align=center] at (-0.45,0.4) {$\mu^{3,t}$};
\end{tikzpicture}
\end{equation*}
where the three partitions $\lambda^1,\lambda^2,\lambda^3$
encode how the boundaries of the Riemann surface are mapped to the three specific Lagrangian submanifolds
of local $\bP^2$.
The partition function of the open Gromov--Witten invariants of local $\bP^2$ is:
\begin{equation*}
Z^{K_{\bP^2}}_{\text{open}} (\bm t^1,\bm t^2,\bm t^3) =
\sum_{\lambda^1,\lambda^2,\lambda^3\in \cP}
Z_{\lambda^1,\lambda^2,\lambda^3}^{K_{\bP^2}}(q)
 \cdot s_{\lambda^1}(\bm t^1)s_{\lambda^2}(\bm t^2)s_{\lambda^3}(\bm t^3),
\end{equation*}
where
\begin{equation*}
\begin{split}
&Z_{\lambda^1,\lambda^2,\lambda^3}^{K_{\bP^2}}(q) \\= &
 \sum_{\mu^1,\mu^2, \mu^3 \in \cP}
(-1)^{\sum_{i=1}^3  |\mu^i|}
q^{ \sum_{i=1}^3  \kappa_{\mu^i}}
\prod_{i=1}^3 Q_i^{|\mu^i|}
W_{\mu^{3,t},\lambda^1,\mu^1}(q) W_{\mu^{1,t},\lambda^2,\mu^2}(q)
W_{\mu^{2,t},\lambda^3,\mu^3}(q),
\end{split}
\end{equation*}
and $Q_i$ are some K\"ahler parameters.
Here $s_{\lambda^i}(\bm t^i)$ is the Schur polynomial in a sequence of
time variables $\bm t^i = (t^i_1,t^i_2,t^i_3,\cdots)$.
See \cite{akmv} for details,
and see also \cite{ek, gz, iq, pe, wyz2} for the study of
open Gromov--Witten invariants of local toric Calabi--Yau geometries.

From the above partition function $Z_{\lambda^1,\lambda^2,\lambda^3}^{K_{\bP^2}}(q)$
one can obtain the closed Gromov--Witten invariants of local $\bP^2$
by simply taking the partitions $\lambda^1$, $\lambda^2$, $\lambda^3$ to be the empty partition.
By simplicity we will take $Q_1 = Q_2 = Q_3=t$ where $t$ is a formal variable.
Then applying \eqref{eq-2l3lrelation} one can represent the results
in terms of two-legged topological vertices
(see e.g. \cite{amv, iq, zhou3, gz}):
\be
\label{eq-partitionfn-kp2}
Z^{K_{\bP^2}}_{\text{closed}} =
\sum_{\mu^1,\mu^2, \mu^3 \in \cP}
(-t)^{\sum_{i=1}^3  |\mu^i|}
q^{ \half \sum_{i=1}^3  \kappa_{\mu^i}}
W_{\mu^{3},\mu^1}(q) W_{\mu^{1}\mu^2}(q)
W_{\mu^{2},\mu^3}(q).
\ee

\subsection{Explicit formula for $2$-legged topological vertex with $\Delta \leq 2$}

In this subsection we recall Guo--Zhou's explicit formula for
the $2$-legged topological vertex $W_{\mu,\nu}(q)$ with $\Delta \leq 1$,
and also describe the results for $\Delta =2$.
Here for two partitions $\mu,\nu$ the notation $\Delta$ means:
\begin{equation*}
\Delta = |\mu|+|\nu|-\mu_1-\nu_1
= \sum_{i=2}^{l(\mu)} \mu_i + \sum_{j=2}^{l(\nu)} \nu_j.
\end{equation*}

For a positive integer $k$,
denote:
\be
\label{eq-notation-[k]q}
[k]_{q^{-1}} = 1-q^{-k},
\qquad
[k]_{q^{-1}}! = \prod_{j=1}^k [j]_{q^{-1}},
\ee
and we also use the notations $[0]_{q^{-1}} = [0]_{q^{-1}}! =1$.
First consider the case $\Delta = 0$,
i.e., the case that $l(\mu) = l(\nu) =1$.
By applying the evaluation formulas \eqref{eq-hneneva},
Guo--Zhou derived the following explicit formula for these two-legged topological vertex
(see \cite[(183), (184)]{gz}):
\be
\label{eq-gz-183184}
W_{(m),(n)} =
q^{mn-\half (m+n)} \cdot \tW_{(m),(n)}(q)
\ee
for $m,n\in \bN$,
where
\be
\label{eq-notationtW}
\tW_{(m),(n)}(q) = \sum_{k=0}^m \frac{q^{-k(n+1)}}{[n]_{q^{-1}}! [k]_{q^{-1}}!}
= \sum_{k=0}^n \frac{q^{-k(m+1)}}{[m]_{q^{-1}}! [k]_{q^{-1}}!}
\ee
is a Laurent series in $q$ of degree $0$.
By the symmetry property \eqref{eq-symm-Wmunu} is clear that:
\begin{equation*}
\tW_{(m),(n)} = \tW_{(n),(m)}.
\end{equation*}

Similarly for $\Delta=1$,
Guo--Zhou derived the following (see \cite[Lemma 6.7]{gz}):
\be
\label{eq-gzlem6.7}
W_{(m),(n-1,1)} = q^{mn-\frac{3}{2}m -\half n-1} \cdot
\Big( \frac{\tW_{(m),(n-1)}}{[1]_{q^{-1}}} -
\frac{q^{-(m+1)(n-1)}}{[m]_{q^{-1}}! [n]_{q^{-1}}!}
\Big).
\ee
For our purpose,
we will also need the explicit formulas in the case $\Delta=2$.
The results are:
\begin{Theorem}
\label{thm-Delta2-expl}
The two-legged topological vertices $W_{\mu,\nu}(q)$ with $\Delta=2$ are given by
the following explicit formulas:
\begin{itemize}
\item[1)]
For $m\geq 4$ and $n\geq 0$ we have:
\begin{equation*}
W_{(m-2,2),(n)} (q) =
\frac{q^{mn-\half m-\frac{5}{2} n -2}}{[2]_{q^{-1}}!} \cdot \tW_{(m-1),(n)}(q)
- \frac{q^{-\frac{3}{2}m +\half n +1}}{[m-1]_{q^{-1}}!} \cdot \tW_{(2),(n)}(q).
\end{equation*}

\item[2)]
For $m\geq 3$ and $n\geq 0$ we have:
\begin{equation*}
\begin{split}
W_{(m-2,1,1),(n)}(q) =&
\frac{[m-1]_{q^{-1}} [m-2]_{q^{-1}}}{[2]_{q^{-1}}!}
\cdot \frac{q^{-\half m -\half n -3}}{[m]_{q^{-1}}! \cdot [n]_{q^{-1}}!} \\
&+ \frac{q^{mn-\half m - \frac{5}{2}n -3}}{[2]_{q^{-1}}!} \cdot \tW_{(m-3),(n-1)}(q).
\end{split}
\end{equation*}
Here for $n=0$ we use the notation $\tW_{(m-3),(-1)}=0$.

\item[3)]
For $m\geq 2$ and $n\geq 2$ we have:
\begin{equation*}
\begin{split}
W_{(m-1,1),(n-1,1)}(q) =&
q^{-\half m-\half n-2}\cdot
\frac{[m-1]_{q^{-1}} \cdot [n-1]_{q^{-1}}}{ [m]_{q^{-1}}! \cdot [n]_{q^{-1}}! \cdot [1]_{q^{-1}}^2 } \\
& + q^{mn-\frac{3}{2}m-\frac{3}{2}n}\cdot \frac{1-q^{-1}+q^{-2}}{[1]_{q^{-1}}^2}
\cdot \tW_{(m-2),(n-2)}(q).
\end{split}
\end{equation*}
\end{itemize}
\end{Theorem}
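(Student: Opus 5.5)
We compute each $W_{\mu,\nu}(q)=s_\mu(q^\rho)\,s_\nu(q^{\mu+\rho})$ directly, following the route Guo--Zhou used for $\Delta\le 1$. The factor $s_\mu(q^\rho)$ is given by the hook formula \eqref{eq-eva-smurho}. The second factor is reduced to one-row evaluations. When $\nu=(n)$ we use $s_{(n)}=h_n$ and the generating function
\begin{equation*}
\sum_n h_n(q^{\mu+\rho})z^n=\prod_i \frac{1}{1-zq^{\mu_i-i+\half}}
=\prod_{i=1}^{l(\mu)}\frac{1-zq^{-i+\half}}{1-zq^{\mu_i-i+\half}}\cdot\prod_{i\ge1}\frac{1}{1-zq^{-i+\half}}.
\end{equation*}
For $\mu$ with $l(\mu)\le 3$, the first product is a ratio of at most three binomials in $z$. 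Expanding it against $\prod_i(1-zq^{-i+\half})^{-1}=\sum_k h_k(q^\rho)z^k$, and then using \eqref{eq-hneneva}, expresses $h_n(q^{\mu+\rho})$ as a finite sum of $q$-hypergeometric terms of the shape appearing in $\tW$.

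Case 1), $\mu=(m-2,2)$, $\nu=(n)$. The relevant ratio is
\begin{equation*}
\frac{(1-zq^{-1/2})(1-zq^{-3/2})}{(1-zq^{m-5/2})(1-zq^{1/2})}.
\end{equation*}
We split it by partial fractions in $z$ into a constant plus simple poles at $z=q^{-m+5/2}$ and $z=q^{-1/2}$. Each pole term, convolved with $h_k(q^\rho)$, gives a truncated sum $\sum_{k\le n}$. Comparing with \eqref{eq-notationtW}, the pole at $q^{m-5/2}$ produces $\tW_{(m-1),(n)}$, the one at $q^{1/2}$ produces $\tW_{(2),(n)}$, and the partial-fraction coefficients combine with the hook product $s_{(m-2,2)}(q^\rho)$ to give the stated prefactors $1/[2]_{q^{-1}}!$ and $1/[m-1]_{q^{-1}}!$.

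Case 2), $\mu=(m-2,1,1)$. The ratio is $(1-zq^{-1/2})/(1-zq^{m-5/2})$, because the factors for rows 2 and 3 cancel against $i=2,3$ shifted by one; only the boundary term survives. The single pole gives the $\tW_{(m-3),(n-1)}$ term, and the constant part gives the first term, using
\begin{equation*}
s_{(m-2,1,1)}(q^\rho)\propto\frac{[m-1]_{q^{-1}}[m-2]_{q^{-1}}}{[m]_{q^{-1}}!\,[2]_{q^{-1}}!}
\end{equation*}
from the hook lengths. A cleaner alternative is to use the symmetry \eqref{eq-symm-Wmunu}, writing $W_{(m-2,1,1),(n)}=W_{(n),(m-2,1,1)}=h_n(q^\rho)\,s_{(m-2|2)}(q^{(n)+\rho})$. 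We then expand the hook Schur function as $\sum_j(-1)^j h_{m-1+j}e_{2-j}$, evaluated at $q^{(n)+\rho}$, where only the first variable is shifted. Either route works, and we choose whichever gives shorter sums.

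Case 3), $\mu=(m-1,1)$, $\nu=(n-1,1)$. Here we use the hook determinant $s_{(n-2|1)}=h_{n-1}e_1-h_n$, evaluated at $q^{\mu+\rho}$. Computing $e_1(q^{\mu+\rho})$ is trivial: it equals $e_1(q^\rho)$ plus two finite corrections. The values $h_{n-1},h_n$ come from the generating function with ratio
\begin{equation*}
\frac{(1-zq^{-1/2})(1-zq^{-3/2})}{(1-zq^{m-3/2})(1-zq^{-1/2})}=\frac{1-zq^{-3/2}}{1-zq^{m-3/2}}.
\end{equation*}
We multiply everything out, collect the terms that assemble into $\tW_{(m-2),(n-2)}$, and identify the remaining closed-form term $q^{-\half m-\half n-2}[m-1][n-1]/([m]![n]![1]^2)$.

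The main obstacle is the bookkeeping: after partial fractions one gets several truncated $q$-series, and these must be recombined, with index shifts such as $k\mapsto k\pm1$ and the identity $1/[k-1]!=[k]/[k]!$, into exactly one $\tW$ plus a single closed term with the right power of $q$. This is especially delicate in case 3, where the factor $(1-q^{-1}+q^{-2})/[1]^2$ must emerge from combining the $h_{n-1}e_1$ and $h_n$ contributions. To guard against sign and exponent errors, we will check each identity on small values of $m$ and $n$ against direct SSYT evaluation.
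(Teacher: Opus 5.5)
Your overall route is the paper's: a generating function for $h_n(q^{\mu+\rho})$ convolved with $h_k(q^\rho)=q^{-k/2}/[k]_{q^{-1}}!$, plus Jacobi--Trudi or the hook formula for the other factor. Case 3 is essentially the paper's computation, with the correct ratio $(1-zq^{-3/2})/(1-zq^{m-3/2})$. But two steps fail as written, and each needs a concrete fix.

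\textbf{Case 2.} For $\mu=(m-2,1,1)$ the variables of $q^{\mu+\rho}$ are $q^{m-5/2},q^{-1/2},q^{-3/2},q^{-7/2},\dots$. The denominators from rows 2 and 3 are $1-zq^{-1/2}$ and $1-zq^{-3/2}$. These cancel the numerators for $i=1,2$, so the survivor is the $i=3$ numerator and the ratio is $(1-zq^{-5/2})/(1-zq^{m-5/2})$. Your ratio $(1-zq^{-1/2})/(1-zq^{m-5/2})$ is the one for the one-row partition $(m-2)$, and it gives a wrong formula. With the correct ratio one gets $h_n=q^{-m}h_n(q^\rho)+[m]_{q^{-1}}q^{(m-\frac52)n}\sum_{k=0}^{n}q^{-(m-2)k}/[k]_{q^{-1}}!$. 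The constant part alone does not give the first term of the statement. You must split off the $k=n$ summand and use $q^{-m}+[m]_{q^{-1}}=1$; the remaining sum is then $[m-3]_{q^{-1}}!\,\tW_{(m-3),(n-1)}$. Your symmetric alternative also has an index slip: $(m-2,1,1)=(m-3|2)$, so $s_{(m-2,1,1)}=h_{m-2}e_2-h_{m-1}e_1+h_m$, not $\sum_j(-1)^jh_{m-1+j}e_{2-j}$.

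\textbf{Case 1.} With your normalization the $\tW$ indices come out differently from what you claim. Your numerator cancels the first two factors of $\prod_{i\ge1}(1-zq^{-i+\half})^{-1}$, so each pole is convolved with the unshifted $h_k(q^\rho)$:
the pole at $z=q^{-m+5/2}$ gives $q^{(m-\frac52)n}\sum_{k\le n}q^{-(m-2)k}/[k]_{q^{-1}}!$, a multiple of $\tW_{(m-3),(n)}$;
the pole at $z=q^{-1/2}$ gives $q^{n/2}\sum_{k\le n}q^{-k}/[k]_{q^{-1}}!=q^{n/2}/[n]_{q^{-1}}!$, which is $\tW_{(0),(n)}$;
there is also a constant-term piece $q^{-m}h_n(q^\rho)$.
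The prefactors do match the statement, so you land on a correct but different formula. Reaching the stated one needs an identity you never state. Put $S_a(n)=\sum_{k=0}^nq^{-ak}/[k]_{q^{-1}}!$. From $(q^{-ak}-q^{-(a+1)k})/[k]_{q^{-1}}!=q^{-ak}/[k-1]_{q^{-1}}!$ one gets $S_{a+1}(n)=[a]_{q^{-1}}S_a(n)+q^{-a(n+1)}/[n]_{q^{-1}}!$. Applied twice, this turns $\tW_{(m-3),(n)}$ into $\tW_{(m-1),(n)}$ plus closed terms. Those closed terms, together with the other two pieces, then assemble into $-q^{-\frac{3}{2}m+\half n+1}\tW_{(2),(n)}/[m-1]_{q^{-1}}!$; note that $\tW_{(2),(n)}$ is itself a closed form.

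The paper avoids this detour by not cancelling anything. It writes the tail $q^{-5/2},q^{-7/2},\dots$ as $q^{-2}\cdot q^\rho$, so the series is $(1-zq^{m-5/2})^{-1}(1-zq^{1/2})^{-1}\sum_k q^{-2k}h_k(q^\rho)z^k$. One geometric sum over the middle index then produces $\sum_k q^{-mk}/[k]_{q^{-1}}!$ and $\sum_k q^{-3k}/[k]_{q^{-1}}!$, which are exactly $\tW_{(m-1),(n)}$ and $\tW_{(2),(n)}$.
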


The proof of this theorem consists of a lot of concrete combinatorial computations,
and we put the detailed proof in \S \ref{app-eval-schur}
to make the main body of this paper easier to read.

\section{Leading Terms of Gromov--Witten Invariants of Local $\bP^2$}
\label{sec-Leading-GW}

In this section we give a structural formula for the leading terms of
the generating series of possibly disconnected Gromov--Witten invariants of local $\bP^2$.
More precisely, let
\begin{equation*}
\begin{split}
Z^{K_{\bP^2}}_{\text{closed}} = 1+ \sum_{d=1}^\infty I(d)t^d
\end{split}
\end{equation*}
where $Z^{K_{\bP^2}}_{\text{closed}}$ is the partition function of local $\bP^2$,
and we will give an explicit formula for the leading terms:
\begin{equation*}
I(d) \big|_{q^{ \geq \half d(d-3) -3d+1}}.
\end{equation*}
Here $I(d)$ is a Laurent series in $q$ of degree $\half d(d-3)$.

Following the notations in \cite{gz},
for a Laurent polynomial
\begin{equation*}
f(q) = \sum_{i=-\infty}^N a_i q^i \in \bC[q,q^{-1}]]
\end{equation*}
we denote:
\begin{equation*}
f(q)|_{q^{\geq n}} = \sum_{i=n}^N a_i q^i.
\end{equation*}
And for two Laurent polynomials $f(q),g(q)$ satisfying $f(q)|_{q^{\geq n}} = g(q)|_{q^{\geq n}}$,
we denote:
\begin{equation*}
f(q) = g(q) \big|_{q^{\geq n}}.
\end{equation*}

\subsection{Degree estimates of the $2$-legged topological vertex}

In \S \ref{sec-glueKP2} we already see that
the $2$-legged topological vertex $W_{\mu,\nu}(q)$ is the building block of the
(closed) Gromov-Witten invariants of local $\bP^2$.
Now in this subsection we compute the degree of $W_{\mu,\nu}(q)$.

We will follow the notations in \cite{gz}.
Let $q$ be a formal variable,
then for a nonzero series $f(q) \in \bC [q,q^{-1}]]$
we denote by $\deg f(q)$ the degree of $f(q)$ as a Laurent series in $q$.
Then it is easy to see that
\begin{equation*}
\deg \frac{f(q)}{g(q)} = \deg f(q) - \deg g(q)
\end{equation*}
for nonzero $f,g\in \bC [q,q^{-1}]]$.
We also use the following notation:
\begin{equation*}
\deg \frac{q^\half  f(q)}{g(q)} = \half + \deg f(q) - \deg g(q).
\end{equation*}

Now let $\mu,\nu\ \in \cP$ be two partitions,
and denote:
\be
\mu\cdot \nu = \sum_{i\geq 1} \mu_i\nu_i
\ee
where we use the convention $\mu_j = 0$ for $j> l(\mu)$.
Then we have:
\begin{Proposition}
The degree of the two-legged topological vertex $W_{\mu,\nu}(q)$ is:
\be
\label{eq-prop-degW}
\deg W_{\mu,\nu}(q) = \mu\cdot \nu
- n_\mu - n_\nu -\half(|\mu|+|\nu|).
\ee
\end{Proposition}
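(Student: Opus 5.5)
We prove \eqref{eq-prop-degW} from the factorization \eqref{eq-W12inSchur}, $W_{\mu,\nu}(q)=s_\mu(q^\rho)\cdot s_\nu(q^{\mu+\rho})$, computing the degree of each factor separately. Here $\deg$ is the top power of $q$, and by the multiplicativity of $\deg$ recalled above the two degrees add.

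\textbf{First factor.} This is immediate from the hook formula \eqref{eq-eva-smurho}. Each factor $1/(1-q^{-h(x)})$ is a Laurent series of degree $0$ with leading coefficient $1$. Hence
\[
\deg s_\mu(q^\rho) = -\tfrac12|\mu|-n_\mu .
\]

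\textbf{Second factor.} We use the SSYT expansion $s_\nu(x_1,x_2,\dots)=\sum_T x^T$ with the specialization $x_i=q^{\mu_i-i+\frac12}$, where $\mu_i=0$ for $i>l(\mu)$. The exponents $e_i=\mu_i-i+\frac12$ are strictly decreasing in $i$ and tend to $-\infty$. So $s_\nu(q^{\mu+\rho})$ is a well-defined Laurent series in $q^{-1}$ (times $q^{|\nu|/2}$), since only finitely many monomials contribute above any given degree. The degree of $x^T$ is $\sum_{x\in\nu}e_{T(x)}$.

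To maximize this we must place small labels. Columns are strictly increasing, so a box in row $i$ has label $T(x)\ge i$. Since $e$ is strictly decreasing, $e_{T(x)}\le e_i$, with equality for every box exactly when $T$ is the tableau with all entries of row $i$ equal to $i$. Thus there is a unique SSYT $T_0$ attaining the maximal exponent, and no cancellation can occur because all coefficients are positive. The maximal exponent is
\[
\sum_i \nu_i\bigl(\mu_i-i+\tfrac12\bigr)=\mu\cdot\nu-n_\nu-\tfrac12|\nu| .
\]
Hence $\deg s_\nu(q^{\mu+\rho})=\mu\cdot\nu-n_\nu-\frac12|\nu|$, with leading coefficient $1$.

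\textbf{Conclusion.} Adding the two degrees gives exactly \eqref{eq-prop-degW}. The formula is manifestly symmetric in $\mu$ and $\nu$, consistent with \eqref{eq-symm-Wmunu}.

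The only point requiring care is the second factor. One must make sure that the infinite sum over tableaux defines an element of $q^{|\nu|/2}\bC[q,q^{-1}]]$ in which the top term is isolated. This needs two facts:
\begin{itemize}
\item[1)] The set of tableaux with exponent above any fixed bound is finite. This holds because $e_i\to-\infty$.
\item[2)] The top exponent is strictly attained only by $T_0$. This uses the strict decrease $e_{i+1}<e_i$, which holds even when $\mu_i=\mu_{i+1}$.
\end{itemize}

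As a check, in the case $\mu=(m)$, $\nu=(n)$ the formula gives $mn-\frac12(m+n)$, which matches \eqref{eq-gz-183184}, since $\tW_{(m),(n)}$ has degree $0$.
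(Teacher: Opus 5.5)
Your proof is correct and follows the paper's argument. You read the degree of $s_\mu(q^\rho)$ off the hook-length formula, and you get the degree of $s_\nu(q^{\mu+\rho})$ from the unique semistandard tableau whose $i$-th row is filled with $i$, using that the exponents $\mu_i-i+\frac12$ strictly decrease; your remarks on finiteness of the tableau sum and on the absence of cancellation just make explicit what the paper leaves implicit.
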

\begin{proof}

Recall that $W_{\mu,\nu} (q) = s_\mu (q^\rho) \cdot s_\nu (q^{\mu+\rho})$.
By \eqref{eq-eva-smurho} the degree of the first factor is:
\begin{equation*}
\deg s_\mu(q^\rho) = -\half |\mu| -n_\mu.
\end{equation*}
Now consider the second factor
$s_\nu (q^{\mu+\rho})$.
Denote $x_i (q) = q^{\mu_i-i+\half}$,
and one has:
\begin{equation*}
s_\nu (q^{\mu+\rho}) =
 s_\nu (x_1(q),x_2(q),x_3(q),\cdots)
=\sum_T x^T,
\end{equation*}
where $T$ runs over all  possible semistandard Young tableau of shape $\nu$ (see \S \ref{subsec-Schur}).
Here it is clear that $\deg x_1 > \deg x_2 > \deg x_3 >\cdots$,
and thus the SSYT $T$ whose weight $x^T$ is of the highest degree
is exactly the tableau whose entries in the $i$-th row are all $i$.
Thus:
\begin{equation*}
\deg s_\nu (q^{\mu+\rho}) = \deg \prod_{i\geq 1} x_i(q)^{v_i}
= \sum_{i\geq 1} v_i (\mu_i - i +\half)
= \mu\cdot \nu -n_\nu -\half |\nu|.
\end{equation*}
This completes the proof.
\end{proof}

The following are some examples for $l(\mu)$ and $l(\nu)$ small:
\begin{equation*}
\begin{split}
& \deg W_{(m),(n)}(q) = mn -\half (m+n),\\
& \deg W_{(m-1,1),(n)}(q) =  mn -\half m -\frac{3}{2} n -1, \\
& \deg W_{(m-2,2),(n)}(q) = mn -\half m -\frac{5}{2} n -2, \\
& \deg W_{(m-2,1^2),(n)}(q) = mn -\half m -\frac{5}{2} n -3, \\
& \deg W_{(m-1,1),(n-1,1)}(q) = mn -\frac{3}{2} m -\frac{3}{2} n. \\
\end{split}
\end{equation*}
By the above proposition we can prove the following:
\begin{Proposition}
\label{prop-degWineq}
Let $m,n$ be two positive integers.
Let $\mu = (\mu_1,\cdots,\mu_{l(\mu)})$ be a partition of $m$,
and $\nu = (\nu_1,\cdots,\nu_{l(\nu)})$ be a partition of $n$.
Denote:
\begin{equation*}
\delta_\mu = m-\mu_1 =\sum_{i=2}^{l(\mu)} \mu_i,
\qquad\qquad
\delta_\nu = n-\nu_1 =\sum_{i=2}^{l(\nu)} \nu_i,
\end{equation*}
then one has:
\be
\label{eq-prop-degWineq}
\deg W_{(m),(n)}(q) - \deg W_{\mu,\nu}(q) \geq
\delta_\mu \cdot (\nu_1 +1) +  \delta_\nu \cdot (\mu_1 +1),
\ee
and the equality holds if and only if $l(\mu) \leq 2$ and $l(\nu) \leq 2$.
\end{Proposition}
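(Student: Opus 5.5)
The plan is to compute both degrees explicitly with the degree formula \eqref{eq-prop-degW} and to reduce \eqref{eq-prop-degWineq} to a sum of manifestly nonnegative terms; the equality case can then be read off termwise. By \eqref{eq-prop-degW} we have $\deg W_{(m),(n)}(q) = mn - \half(m+n)$ and $\deg W_{\mu,\nu}(q) = \mu\cdot\nu - n_\mu - n_\nu - \half(m+n)$. Since $|\mu|=m$ and $|\nu|=n$, the half-integer terms cancel, and the left-hand side of \eqref{eq-prop-degWineq} equals
\begin{equation*}
mn - \mu\cdot\nu + n_\mu + n_\nu .
\end{equation*}

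Next, write $m=\mu_1+\delta_\mu$ and $n=\nu_1+\delta_\nu$, and expand:
\begin{equation*}
mn = \mu_1\nu_1 + \mu_1\delta_\nu + \nu_1\delta_\mu + \delta_\mu\delta_\nu,
\qquad
\mu\cdot\nu = \mu_1\nu_1 + \sum_{i\geq 2}\mu_i\nu_i .
\end{equation*}
Subtracting the right-hand side $\delta_\mu(\nu_1+1)+\delta_\nu(\mu_1+1)$ of \eqref{eq-prop-degWineq}, the difference becomes
\begin{equation*}
\Big(\delta_\mu\delta_\nu - \sum_{i\geq 2}\mu_i\nu_i\Big)
+ \big(n_\mu - \delta_\mu\big) + \big(n_\nu - \delta_\nu\big).
\end{equation*}

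Now identify each bracket as a nonnegative sum. For the first, $\delta_\mu\delta_\nu = \sum_{i,j\geq 2}\mu_i\nu_j$, so
\begin{equation*}
\delta_\mu\delta_\nu - \sum_{i\geq 2}\mu_i\nu_i = \sum_{i\neq j,\ i,j\geq 2}\mu_i\nu_j \geq 0 .
\end{equation*}
For the second, $n_\mu = \sum_{i\geq 1}(i-1)\mu_i$, so $n_\mu - \delta_\mu = \sum_{i\geq 3}(i-2)\mu_i \geq 0$. The same computation gives $n_\nu - \delta_\nu \geq 0$. This proves the inequality.

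For the equality statement, suppose first that $l(\mu)\leq 2$ and $l(\nu)\leq 2$. Then the only index $\geq 2$ carrying nonzero parts is $2$, so the off-diagonal sum vanishes. The sums over $i\geq 3$ vanish as well, and equality holds. Conversely, if $l(\mu)\geq 3$, then $n_\mu-\delta_\mu \geq \mu_3 > 0$, so the inequality is strict; the case $l(\nu)\geq 3$ is symmetric.

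There is no real obstacle here: the only step requiring care is the bookkeeping in the expansion, so that the cross terms $\mu_1\delta_\nu$ and $\nu_1\delta_\mu$ cancel exactly against the right-hand side. The whole argument is a direct algebraic rearrangement on top of \eqref{eq-prop-degW}.
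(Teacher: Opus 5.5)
Your proof is correct and follows essentially the same route as the paper's: both use \eqref{eq-prop-degW} to reduce the left-hand side to $\sum_{i\neq j}\mu_i\nu_j + n_\mu + n_\nu$, then compare the cross-term sum with $\mu_1\delta_\nu+\nu_1\delta_\mu$ and compare $n_\mu, n_\nu$ with $\delta_\mu, \delta_\nu$. The only difference is that you write the slack exactly as $\sum_{i\neq j,\, i,j\geq 2}\mu_i\nu_j+\sum_{i\geq 3}(i-2)(\mu_i+\nu_i)$, which makes the equality case a little more transparent than the paper's separate case analysis.
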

\begin{proof}
By \eqref{eq-prop-degW} one has:
\begin{equation*}
\begin{split}
\deg W_{(m),(n)}(q) - \deg W_{\mu,\nu}
= & \big(mn-\frac{m+n}{2}\big) -
\big(  \sum_i \mu_i \nu_i - n_\mu-n_\nu -\frac{m+n}{2}\big) \\
= & \sum_{i\not= j}\mu_i\nu_j +n_\mu +n_\nu.
\end{split}
\end{equation*}
Notice that
\begin{equation*}
\begin{split}
\sum_{i\not= j}\mu_i\nu_j \geq
\mu_1 \cdot \sum_{j\geq 2}\nu_j + \nu_1 \cdot \sum_{j\geq 2}\mu_j
= \mu_1 \cdot \delta_\nu + \nu_1 \cdot \delta_\mu,
\end{split}
\end{equation*}
and the equality holds if and only if $l(\mu) \leq 1$,
or $l(\nu) \leq 1$,
or $l(\mu) = l(\nu) =2$.

Moreover,
from the definition \eqref{eq-def-kappamu} of $n_\nu$ one easily sees that
\begin{equation*}
n_\mu \geq n-\mu_1 =\delta_\mu,
\qquad\qquad
n_\nu \geq n-\nu_1 =\delta_\nu,
\end{equation*}
and these two equalities hold if and only if $l(\mu) \leq 2$ and $l(\nu) \leq 2$.
\end{proof}

By taking $\mu = (m)$,
one obtains the following generalization of Guo--Zhou \cite[Lemma 4.3]{gz}:
\begin{Corollary}
Let $m,n$ be two positive integers,
and let $\nu$ be a partition of $n$ with $l(\nu)\geq 2$.
Denote $\delta_\nu = n- \nu_1 = \sum_{i=2}^{l(\nu)} \nu_i$,
then one has:
\be
\label{eq-deggap-Wm}
\deg W_{(m),(n)}(q) - \deg W_{(m), \nu} (q) \geq  \delta_\nu \cdot (m+1),
\ee
and the equality holds if and only if $l(\nu) =2$,
i.e.,
$\nu$ is of the form $(\nu_1, n-\nu_1)$.
\end{Corollary}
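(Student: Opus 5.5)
This is the special case $\mu=(m)$ of Proposition \ref{prop-degWineq}, and I will obtain it by specializing that proposition. For $\mu=(m)$ we have $\mu_1=m$ and $\delta_\mu=m-\mu_1=0$. Substituting into \eqref{eq-prop-degWineq} gives
\begin{equation*}
\deg W_{(m),(n)}(q)-\deg W_{(m),\nu}(q)\geq 0\cdot(\nu_1+1)+\delta_\nu\cdot(m+1)=\delta_\nu\cdot(m+1),
\end{equation*}
which is \eqref{eq-deggap-Wm}.

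For the equality case, Proposition \ref{prop-degWineq} says equality holds if and only if $l(\mu)\leq 2$ and $l(\nu)\leq 2$. Here $l(\mu)=1$, so the first condition is automatic, and equality holds if and only if $l(\nu)\leq 2$. Since the corollary assumes $l(\nu)\geq 2$, this is equivalent to $l(\nu)=2$, that is, $\nu=(\nu_1,n-\nu_1)$ with $n-\nu_1\geq 1$.

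As a sanity check, independent of the proposition, one can use \eqref{eq-prop-degW} directly. With $\mu=(m)$ we get $\mu\cdot\nu=m\nu_1$ and $n_\mu=0$, so the difference of degrees is
\begin{equation*}
mn-m\nu_1+n_\nu=m\,\delta_\nu+n_\nu .
\end{equation*}
It then suffices to show $n_\nu=\sum_i(i-1)\nu_i\geq\sum_{i\geq2}\nu_i=\delta_\nu$. This holds term by term, since $i-1\geq 1$ for $i\geq 2$. Equality requires $(i-1)\nu_i=\nu_i$ for all $i\geq 3$, which forces $\nu_i=0$ for $i\geq3$, i.e. $l(\nu)\leq 2$.

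There is no real obstacle. The only point needing care is the equality statement: the hypothesis $l(\nu)\geq 2$ is what turns "$l(\nu)\leq2$" into "$l(\nu)=2$".
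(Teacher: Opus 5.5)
Your proposal is correct and follows the paper's route: the paper obtains this corollary exactly by setting $\mu=(m)$ in Proposition~\ref{prop-degWineq}, with the hypothesis $l(\nu)\geq 2$ turning the equality condition $l(\nu)\leq 2$ into $l(\nu)=2$. Your direct check via \eqref{eq-prop-degW}, which reduces the claim to $n_\nu\geq\delta_\nu$, is just the relevant half of that proposition's proof, and it is also sound.
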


Similarly,
by taking $\mu = (m-1,1)$ in \eqref{eq-prop-degWineq} one obtains:
\begin{Corollary}
Let $m,n$ be two positive integers with $m,n\geq 2$,
and let $\nu$ be a partition of $n$ with $l(\nu)\geq 2$.
Then one has:
\be
\label{eq-deggap-Wm-11}
\deg W_{(m),(n)}(q) - \deg W_{(m-1,1),\nu} \geq
\delta_\nu \cdot (m-1)+n+1
\ee
where $\delta_\nu = n - \nu_1 =\sum_{i=2}^{l(\nu)} \nu_i$,
and the equality holds if and only if $\nu = (\nu_1, n-\nu_1)$.
\end{Corollary}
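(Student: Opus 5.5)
Specialize Proposition \ref{prop-degWineq} to $\mu=(m-1,1)$, which is a partition of $m$ of length $2$ (this needs $m\geq 2$). Then $\mu_1=m-1$ and $\delta_\mu=1$. The right-hand side of \eqref{eq-prop-degWineq} becomes
\begin{equation*}
\delta_\mu(\nu_1+1)+\delta_\nu(\mu_1+1)=(\nu_1+1)+\delta_\nu\cdot m .
\end{equation*}
Substituting $\nu_1=n-\delta_\nu$ rewrites this as
\begin{equation*}
n-\delta_\nu+1+m\delta_\nu=\delta_\nu\cdot(m-1)+n+1,
\end{equation*}
which is exactly the bound \eqref{eq-deggap-Wm-11}. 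So the inequality is an immediate specialization.

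For the equality case, Proposition \ref{prop-degWineq} says equality holds if and only if $l(\mu)\leq 2$ and $l(\nu)\leq 2$. Since $l(\mu)=2$ automatically, equality reduces to $l(\nu)\leq 2$. Under the hypothesis $l(\nu)\geq 2$ this is equivalent to $l(\nu)=2$, i.e. $\nu=(\nu_1,n-\nu_1)$. The hypothesis $n\geq 2$ only ensures that such $\nu$ exist.

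To guard against an error in the quoted equality analysis, I would cross-check directly with \eqref{eq-prop-degW}. The proof of the proposition gives the difference as
\begin{equation*}
\sum_{i\neq j}\mu_i\nu_j+n_\mu+n_\nu .
\end{equation*}
For $\mu=(m-1,1)$ we have $n_\mu=1$, and
\begin{equation*}
\sum_{i\neq j}\mu_i\nu_j=(m-1)\delta_\nu+\nu_1+\sum_{j\geq 3}\nu_j .
\end{equation*}
Also $n_\nu\geq\delta_\nu$, with equality if and only if $l(\nu)\leq 2$. The sum is therefore at least $(m-1)\delta_\nu+\nu_1+1+\delta_\nu=(m-1)\delta_\nu+n+1$. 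Equality requires both $\sum_{j\geq3}\nu_j=0$ and $n_\nu=\delta_\nu$, and each of these is equivalent to $l(\nu)\leq2$.

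There is no real obstacle. The only point needing care is this bookkeeping of the equality conditions, and it is routine.
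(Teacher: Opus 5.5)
Your proposal is correct and is the paper's argument: the paper obtains this corollary by substituting $\mu=(m-1,1)$ (so $\mu_1=m-1$, $\delta_\mu=1$) into Proposition~\ref{prop-degWineq}, exactly as you do, and both your simplification $\nu_1+1+m\delta_\nu=\delta_\nu(m-1)+n+1$ and your reduction of the equality case to $l(\nu)=2$ are right. Your direct cross-check via $\sum_{i\neq j}\mu_i\nu_j+n_\mu+n_\nu$ with $n_\mu=1$ is also correct, though it only re-derives the proposition in this special case.
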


\subsection{Partition function of local $\bP^2$ and degree of $I(d)$}

In this subsection we consider the degree of each $I(d)$
in the partition function of local $\bP^2$.
We follow the notations in \cite{gz}.

Denote by $H \in H_2 (\bP^2;\bZ)$ the hyperplane class of $\bP^2$,
and then $H_2 (\bP^2 ;\bZ) = \bZ H$.
For an integer $d \in \bZ$,
denote by $K_g^d$ the $0$-pointed Gromov--Witten invariant of $K_{\bP^2}$
of genus $g$ and class $dH \in H_2 (\bP^2;\bZ)$.
Then the free energy of local $\bP^2$ is the generating series:
\begin{equation*}
F^{K_{\bP^2}} = \sum_{g\geq 0} \lambda^{2g-2} \sum_{d>0} K_d^g \cdot t^d.
\end{equation*}
This free energy is related to the partition function \eqref{eq-partitionfn-kp2} by:
\begin{equation*}
Z^{K_{\bP^2}}_{\text{closed}} = \exp ( F^{K_{\bP^2}}).
\end{equation*}
Expand $Z^{K_{\bP^2}}_{\text{closed}}$ and $F^{K_{\bP^2}}$ as formal power series in $t$,
and denote the coefficients of $t^d$ by $I(d)$ and $F(d)$ respectively:
\be
\begin{split}
Z^{K_{\bP^2}}_{\text{closed}} = 1+ \sum_{d=1}^\infty I(d)t^d,
\qquad\qquad
F^{K_{\bP^2}} =  \sum_{d=1}^\infty F(d)t^d.
\end{split}
\ee
Then $I(d)$ and $F(d)$ are related by:
\be
\label{eq-IFexpansion}
\begin{split}
& I(d) = \sum_{k\geq 1}
\sum_{d_1+\cdots +d_k = d} \frac{1}{k!}  \cdot F(d_1)F(d_2) \cdots F(d_k),\\
& F(d) = \sum_{k\geq 1}
\sum_{d_1+\cdots +d_k = d} \frac{(-1)^{k-1}}{k} \cdot I(d_1)I(d_2) \cdots I(d_k).
\end{split}
\ee

In what follows,
we represent $I(d)$ in terms of two-legged topological vertices
and estimate the degrees of terms in $I(d)$.
By \eqref{eq-partitionfn-kp2} we have:
\be
\label{eq-defId}
I(d) = \sum_{\sum_{i=1}^3 |\mu^i| =d}
(-1)^d q^{ \half \sum_{i=1}^3  \kappa_{\mu^i}}
W_{\mu^{3},\mu^1}(q) W_{\mu^{1},\mu^2}(q) W_{\mu^{2},\mu^3}(q).
\ee
As observed in Guo--Zhou \cite[\S 5.2]{gz},
$I(d)$ should be decomposed into:
\be
\label{eq-Id-twotermdecomp}
\begin{split}
I(d) = & \sum_{d_1+d_2+d_3 =d}
(-1)^d q^{ \half \sum_{i=1}^3  \kappa_{(d_i)}}
W_{(d_3),(d_1)}(q) W_{(d_1),(d_2)}(q) W_{(d_2),(d_3)}(q) \\
& + \sum_{\substack{ \sum |\mu^i| =d \\ \exists i, \text{ s.t. } l(\mu^i)>1 }}
(-1)^d q^{ \half \sum_{i=1}^3  \kappa_{\mu^i}}
W_{\mu^{3},\mu^1}(q) W_{\mu^{1},\mu^2}(q) W_{\mu^{2},\mu^3}(q),
\end{split}
\ee
where the degree of the first term is:
\begin{align*}
	\begin{split}
		&\deg \Big( q^{ \half \sum_{i=1}^3  \kappa_{(d_i)}}
		W_{(d_3),(d_1)}(q) W_{(d_1),(d_2)}(q) W_{(d_2),(d_3)}(q) \Big) \\
		=& \half \sum_{i=1}^3 \kappa_{(d_i)} + d_3d_1-\frac{d_3+d_1}{2}
		+ d_1d_2-\frac{d_1+d_2}{2} + d_2d_3-\frac{d_2+d_3}{2} \\
		=& \half (d^2-3d),
	\end{split}
\end{align*}
and the degree of the second term is $\half (d^2-3d) -d-2$,
see \cite[Lemma 5.1]{gz}.
Here one has:
\be
\label{eq-degId}
\deg I(d) = \half d(d-3).
\ee
Following the notations in \cite{gz},
we denote:
\be
\label{eq-def-Wdq}
W_d(q) = q^{-\half (d^2-3d)}\sum_{d_1+d_2+d_3 =d}
q^{ \half \sum_{i=1}^3  \kappa_{(d_i)}}
W_{(d_3),(d_1)}(q) W_{(d_1),(d_2)}(q) W_{(d_2),(d_3)}(q),
\ee
and then $W_d(q)$ is of degree $0$.

\subsection{Expicit formulas for $I(1)$, $I(2)$, $I(3)$}
\label{sec-app-I123}

In this subsection we write down the explicit formulas for $I(1)$, $I(2)$, $I(3)$.
These formulas serve as concrete examples of $I(d)$,
and will be useful in \S \ref{eq-compGVleading-expl}.

First by the definitions \eqref{eq-defId} one knows that:
\begin{equation*}
I(1) = -3 (W_{(1),(\emptyset)})^2 \cdot W_{(\emptyset),(\emptyset)},
\end{equation*}
Moreover,
by \eqref{eq-W12inSchur} one has:
\begin{equation*}
W_{(\emptyset),(\emptyset)}(q)=1,
\qquad\qquad
W_{(1),(\emptyset)}(q) = \frac{q^{-\half}}{[1]_{q^{-1}}},
\end{equation*}
and thus:
\be
I(1) = - \frac{3q^{-1}}{[1]_{q^{-1}}^2}.
\ee

As for $d=2$,
by definition one has:
\be
\label{eq-defI2sum}
I(2) = 3q\cdot (W_{(2),(\emptyset)})^2 W_{(\emptyset),(\emptyset)}
+ 3q^{-1}\cdot (W_{(1^2),(\emptyset)})^2 W_{(\emptyset),(\emptyset)}
+ 3 (W_{(1),(\emptyset)})^2 W_{(1),(1)}.
\ee
By \eqref{eq-eva-smurho}
one has:
\begin{equation*}
W_{(2),(\emptyset)} = s_{(2)}(q^\rho) = \frac{q^{-1}}{[2]_{q^{-1}}!},
\qquad\qquad
W_{(1,1),(\emptyset)} = s_{(1,1)}(q^\rho) = \frac{q^{-2}}{[2]_{q^{-1}}!}.
\end{equation*}
And moreover,
by \eqref{eq-gz-183184} one has:
\begin{equation*}
\begin{split}
W_{(1),(1)} =\frac{1}{[1]_{q^{-1}}} \big(1+\frac{q^{-2}}{[1]_{q^{-1}}}\big).
\end{split}
\end{equation*}
Plug these into \eqref{eq-defI2sum},
and we obtain:
\be
\begin{split}
I(2)
=& \frac{3q^{-1}(2+q^{-1}+q^{-3}+2q^{-4})}{([2]_{q^{-1}}!)^2}.
\end{split}
\ee

Now consider $d=3$.
By definition one has:
\be
\label{eq-defI3sum}
\begin{split}
I(3) =& -3q^3 \cdot (W_{(3),(\emptyset)})^2 W_{(\emptyset),(\emptyset)}
-3  \cdot (W_{(2,1),(\emptyset)})^2 W_{(\emptyset),(\emptyset)}
-3q^{-3} \cdot (W_{(1^3),(\emptyset)})^2 W_{(\emptyset),(\emptyset)} \\
& -6q W_{(2),(1)}W_{(2),(\emptyset)}W_{(1),(\emptyset)}
-6q^{-1} W_{(1^2),(1)}W_{(1^2),(\emptyset)}W_{(1),(\emptyset)}
- (W_{(1),(1)})^3.
\end{split}
\ee
By \eqref{eq-eva-smurho} one knows that:
\begin{equation*}
\begin{split}
W_{(3),(\emptyset)} =  \frac{q^{-\frac{3}{2}}}{[3]_{q^{-1}}!},
\qquad
W_{(1^3),(\emptyset)} =  \frac{q^{-\frac{9}{2}}}{[3]_{q^{-1}}!},
\qquad
W_{(2,1),(\emptyset)} =  \frac{q^{-\frac{5}{2}}}{[1]_{q^{-1}}^2 [3]_{q^{-1}}}.
\end{split}
\end{equation*}
Moreover,
by \eqref{eq-gz-183184} one has:
\begin{equation*}
\begin{split}
W_{(1),(1)} =\frac{1}{[1]_{q^{-1}}} \big(1+\frac{q^{-2}}{[1]_{q^{-1}}}\big),
\qquad\qquad
W_{(2),(1)} = \frac{ q^{\half}}{[2]_{q^{-1}}!} \big(1+\frac{q^{-3}}{[1]_{q^{-1}}} \big),
\end{split}
\end{equation*}
and by \eqref{eq-gzlem6.7} one has:
\begin{equation*}
\begin{split}
W_{(1^2),(1)} = q^{-\frac{3}{2}} \Big(
\frac{1}{[1]_{q^{-1}}^2} \big( 1+\frac{q^{-2}}{[1]_{q^{-1}}} \big)
- \frac{q^{-2}}{[1]_{q^{-1}}! \cdot [2]_{q^{-1}}!}
\Big) =
\frac{q^{-\frac{3}{2}} (1-q^{-2}+q^{-3})}{[1]_{q^{-1}}^2 [2]_{q^{-1}}}.
\end{split}
\end{equation*}
Plug these into \eqref{eq-defI3sum} and we obtain:
\be
\begin{split}
I(3) = -\frac{1}{([3]_{q^{-1}}!)^2} \big(&
10 + 7q^{-1} +8q^{-2} +3q^{-3} +10q^{-4} +26q^{-5} +34q^{-6} \\
& +26q^{-7}
+10q^{-8} +3q^{-9} + 8q^{-10} +7q^{-11} +10q^{-12} \big).
\end{split}
\ee

\subsection{Degree estimates of summands in $I(d)$}
\label{sec-degreeest-Id}

For our purpose we need a further detailed estimate
of the degrees of the summands in $I(d)$.
First by Proposition \ref{prop-degWineq} we easily see that:
\begin{Lemma}
\label{eq-lem-degWprod3}
Let $\lambda,\mu,\nu$ be three partitions of integers.
Denote $m=|\lambda|$, $n=|\mu|$, $k =|\nu|$,
and:
\begin{equation*}
\delta_\lambda = m-\lambda_1,
\qquad
\delta_\mu = n-\mu_1,
\qquad
\delta_\nu = k-\nu_1,
\end{equation*}
then one has:
\begin{equation*}
\begin{split}
& \deg \Big( W_{(k),(m)}(q) W_{(m),(n)}(q) W_{(n),(k)}(q) \Big) -
\deg \Big( W_{\nu,\lambda}(q) W_{\lambda,\mu}(q) W_{\mu,\nu}(q) \Big) \\
\geq & (\delta_\lambda + \delta_\mu) (\nu_1+1)
+ (\delta_\lambda + \delta_\nu) (\mu_1+1)
+ (\delta_\mu + \delta_\nu) (\lambda_1+1),
\end{split}
\end{equation*}
and the equality holds if and only if $\lambda,\mu,\nu$ are all of length $1$ or $2$.
\end{Lemma}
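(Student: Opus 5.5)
The plan is to apply Proposition \ref{prop-degWineq} to each of the three factors separately and add the three inequalities. Degrees of nonzero Laurent series are additive under products, so the left-hand side splits as
\begin{equation*}
\big(\deg W_{(k),(m)} - \deg W_{\nu,\lambda}\big)
+\big(\deg W_{(m),(n)} - \deg W_{\lambda,\mu}\big)
+\big(\deg W_{(n),(k)} - \deg W_{\mu,\nu}\big).
\end{equation*}

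For the first bracket we use Proposition \ref{prop-degWineq} with the pair $(\nu,\lambda)$ in place of $(\mu,\nu)$. This gives the lower bound $\delta_\nu(\lambda_1+1)+\delta_\lambda(\nu_1+1)$. The second bracket, for the pair $(\lambda,\mu)$, gives $\delta_\lambda(\mu_1+1)+\delta_\mu(\lambda_1+1)$. The third, for $(\mu,\nu)$, gives $\delta_\mu(\nu_1+1)+\delta_\nu(\mu_1+1)$.

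Summing these and grouping the terms by the factors $(\nu_1+1)$, $(\mu_1+1)$ and $(\lambda_1+1)$ gives
\begin{equation*}
(\delta_\lambda+\delta_\mu)(\nu_1+1)+(\delta_\lambda+\delta_\nu)(\mu_1+1)+(\delta_\mu+\delta_\nu)(\lambda_1+1),
\end{equation*}
which is exactly the claimed bound.

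For the equality statement, note that a sum of three inequalities is an equality iff each one is. By Proposition \ref{prop-degWineq}, each is an equality iff both partitions in the corresponding pair have length at most $2$. Since every one of $\lambda,\mu,\nu$ appears in some pair, this is equivalent to all three having length at most $2$.

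The main thing to check is the degenerate cases not covered by Proposition \ref{prop-degWineq}, which assumes $m,n$ positive. If some partition is empty, we set $\delta=0$ and $\lambda_1=0$. The degree formula \eqref{eq-prop-degW} still gives the identity $\deg W_{(m),(n)}-\deg W_{\mu,\nu}=\sum_{i\ne j}\mu_i\nu_j+n_\mu+n_\nu$. The same argument as in that proof then goes through, so the bound and its equality case persist; we read "length $1$ or $2$" as "length at most $2$", with the empty partition allowed.
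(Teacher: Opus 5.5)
Your proposal is correct and is exactly the paper's argument: the paper derives this lemma directly from Proposition \ref{prop-degWineq} by applying it to each of the three pairs $(\nu,\lambda)$, $(\lambda,\mu)$, $(\mu,\nu)$ and summing. Your treatment of empty partitions, including reading ``length $1$ or $2$'' as ``length at most $2$'', is a careful addition the paper leaves implicit; it matches the convention $l(\mu^i)\leq 2$ the paper uses in Proposition \ref{prop-WWWdeg-ineq}.
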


And for the factors $q^{ \sum  \kappa_{\mu^i}/2}$,
we need the following:
\begin{Lemma}
\label{lem-estkappa}
Let $\mu$ be a partition of a positive integer $N$.
Denote $\delta = N-\mu_1$,
then:
\begin{align*}
	\kappa_\mu \leq N^2 - (2\delta+1)\cdot N +2(\delta^2-\delta),
\end{align*}
and the equality holds if and only if $l(\mu)\leq 2$.
As a consequence,
one has:
\begin{align*}
	\kappa_{(N)} - \kappa_\mu \geq 2\delta\cdot N -2(\delta^2-\delta)
	= 2\delta \cdot (\mu_1 +1).
\end{align*}
\begin{proof}
In the case $l(\mu)=1$,
by the definition of $\kappa_\mu$ one directly checks that:
\begin{equation*}
\kappa_{(N)} = N^2-N.
\end{equation*}

Now assume $\mu=(m_1,\cdots,m_k|n_1,\cdots,n_k)$ in the Frobenius notation.
If $k=1$,
one has:
\begin{equation*}
\kappa_\mu = (\mu_1-\half)^2 -(N-\mu_1+\half)^2 =
(N-\delta -\half)^2 -(\delta +\half)^2,
\end{equation*}
and the conclusion clearly holds.
Next we assume $k\geq 2$, and in this case the numbers $m_2>\cdots >m_k\geq 0$ and $n_1>\cdots >n_k\geq 0$ satisfy:
\begin{equation*}
k+ \sum_{i=2}^k m_i + \sum_{i=1}^k n_i = N-m_1 = N- (N-\delta -1)= \delta+1.
\end{equation*}
From the equivalent description \eqref{eq-def-kappa2} of $\kappa_\mu$,
one easily sees that $\kappa_\mu$ attains its maximum only when $k=2$ and:
\begin{equation*}
m_2 = \delta-2,
\qquad
n_1 =1,
\qquad
n_2 = 0.
\end{equation*}
In this case one can directly check that
$\kappa_{(N-\delta-1,\delta-2|1,0)} = N^2 - (2\delta+1)\cdot N +2(\delta^2-\delta)$.
\end{proof}
\end{Lemma}

Now by this lemma and Lemma \ref{eq-lem-degWprod3},
we have the following degree estimate for the terms in the decomposition \eqref{eq-Id-twotermdecomp}:
\begin{Proposition}
\label{prop-WWWdeg-ineq}
Let $\mu^1,\mu^2,\mu^3$ be three partitions.
Let $d_i = |\mu^i|$, and $d=d_1 + d_2 +d_3$,
and we denote:
\begin{equation*}
\begin{split}
& \delta_i = d_i- \mu^i_1 = \mu^i_2+\cdots +\mu^i_{l(\mu^i)},
\qquad i=1,2,3; \\
& \Delta = \delta_1+\delta_2 +\delta_3 =d- \mu^1_1 -\mu^2_1 -\mu^3_1,
\end{split}
\end{equation*}
then one has:
\begin{equation*}
\begin{split}
& \deg \big(
q^{ \half \sum_{i=1}^3  \kappa_{(d_i)}}
\prod_{i=1}^3 W_{(d_i),(d_{i+1})}(q) \big)
- \deg \big( q^{ \half \sum_{i=1}^3  \kappa_{\mu^i}}
\prod_{i=1}^3 W_{\mu^{i},\mu^{i+1}}(q) \big) \geq \Delta (d-\Delta+3),
\end{split}
\end{equation*}
and the equality holds if and only if $l(\mu^i) \leq 2$ for every $i$.
Here we use the conventions $\mu^4=\mu^1$ and $d_4 =d_1$.
\end{Proposition}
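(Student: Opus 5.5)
The plan is to split the degree difference into a $\kappa$-part and a $W$-part, bound each with the two preceding lemmas, and then add. Write the difference as
\begin{equation*}
\half\sum_{i=1}^3 \big(\kappa_{(d_i)}-\kappa_{\mu^i}\big)
+ \Big(\deg \prod_{i=1}^3 W_{(d_i),(d_{i+1})} - \deg \prod_{i=1}^3 W_{\mu^i,\mu^{i+1}}\Big).
\end{equation*}
Lemma \ref{lem-estkappa} gives the first part directly. For each $i$ we have $\kappa_{(d_i)}-\kappa_{\mu^i}\geq 2\delta_i(\mu^i_1+1)$, so the first part is at least $\sum_i \delta_i(\mu^i_1+1)$, with equality iff $l(\mu^i)\leq 2$ for all $i$. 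If $\mu^i=\emptyset$, then $\delta_i=0$ and the bound is trivially an equality. I should check separately that the lemma, stated for positive $N$, also covers this degenerate case; it does.

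For the second part I apply Lemma \ref{eq-lem-degWprod3} with $(\nu,\lambda,\mu)=(\mu^3,\mu^1,\mu^2)$. Its product $W_{\nu,\lambda}W_{\lambda,\mu}W_{\mu,\nu}$ matches $W_{\mu^3,\mu^1}W_{\mu^1,\mu^2}W_{\mu^2,\mu^3}$. By the symmetry \eqref{eq-symm-Wmunu} this is the same as the cyclic product $\prod_i W_{\mu^i,\mu^{i+1}}$ up to the order of indices. The lemma bounds the second part below by
\begin{equation*}
\sum_{i=1}^3 (\Delta-\delta_i)(\mu^i_1+1),
\end{equation*}
since each $\mu^i_1+1$ is multiplied by the sum of the other two $\delta$'s. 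Equality holds iff all lengths are at most $2$. I need to double-check that the lemma and Proposition \ref{prop-degWineq} remain valid when some $\mu^i$ is empty, i.e. $d_i=0$. Here $\deg W_{\mu,\emptyset}=\deg s_\mu(q^\rho)$ and the same computation via \eqref{eq-prop-degW} goes through, so this is fine.

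Adding the two bounds gives
\begin{equation*}
\sum_i \big(\delta_i+\Delta-\delta_i\big)(\mu^i_1+1)=\Delta\sum_i(\mu^i_1+1)=\Delta(d-\Delta+3),
\end{equation*}
using $\sum_i \mu^i_1 = d-\Delta$. The equality condition is the conjunction of the two equality conditions, which is exactly $l(\mu^i)\leq 2$ for every $i$.

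The main point needing care is the equality characterization in the second part. I will trace the equality case of Proposition \ref{prop-degWineq} for each of the three pairs: it requires $l\leq 2$ for both partitions in each pair, which combine to the stated condition. I also need the convention for empty partitions, where $\mu^i_1=0$ and $\delta_i=0$, so that the formula $\sum_i \mu^i_1=d-\Delta$ stays correct.
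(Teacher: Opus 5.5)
Your proposal is correct and follows the paper's proof: the paper also bounds the $\kappa$-part by Lemma \ref{lem-estkappa} and the $W$-part by Lemma \ref{eq-lem-degWprod3}, then adds the two bounds to get $\Delta(\mu^1_1+\mu^2_1+\mu^3_1+3)=\Delta(d-\Delta+3)$, with equality exactly when both component bounds are equalities. Your extra checks on empty partitions and on the equality case are useful additions; the appeal to the symmetry \eqref{eq-symm-Wmunu} is unnecessary, since $W_{\mu^3,\mu^1}W_{\mu^1,\mu^2}W_{\mu^2,\mu^3}$ is already the cyclic product.
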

\begin{proof}
Denote by L.H.S. the left-hand side of the above inequality.
Then by Lemma \ref{eq-lem-degWprod3} and Lemma \ref{lem-estkappa} one has:
\begin{equation*}
\begin{split}
\text{L.H.S.} \geq &  \half\cdot \sum_{i=1}^3 2\delta_i (\mu^i_1 +1) +
\sum_{i=1}^3 (\delta_i+\delta_{i+1}) (\mu^{i+2}_1 +1) \\
=& (\delta_1+\delta_2+\delta_3) (\mu^1_1 + \mu^2_1 + \mu^3_1 +3) \\
=& \Delta\cdot (d-\Delta +3),
\end{split}
\end{equation*}
where we use the conventions $\mu^{i+3}=\mu^i$ and $\delta_{i+3} =\delta_i$ for $i=1,2$.
\end{proof}

As a corollary we have the following:
\begin{Corollary}
\label{cor-k=1234-degWWW}
Fix a number $k\in \{1,2,3,4\}$.
Let $\mu^1,\mu^2,\mu^3$ be three partitions,
and let $d_i$, $d$, $\Delta$ be as in Proposition \ref{prop-WWWdeg-ineq}.
If $\Delta \geq k$,
then one has:
\be
\label{eq-k=1234-degWWW}
\begin{split}
& \deg \big(
q^{ \half \sum_{i=1}^3  \kappa_{(d_i)}}
\prod_{i=1}^3 W_{(d_i),(d_{i+1})}(q) \big)
- \deg \big( q^{ \half \sum_{i=1}^3  \kappa_{\mu^i}}
\prod_{i=1}^3 W_{\mu^{i},\mu^{i+1}}(q) \big) \geq k(d+3-k),
\end{split}
\ee
and the equality holds if and only if $\Delta = k$
and $l(\mu^i)\leq 2$ for every $i$.
\end{Corollary}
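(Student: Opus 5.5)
The corollary follows from Proposition \ref{prop-WWWdeg-ineq} together with an elementary estimate on the quadratic function $f(x) = x(d+3-x)$ on the integers between $k$ and the largest admissible value of $\Delta$. By Proposition \ref{prop-WWWdeg-ineq}, the left-hand side of \eqref{eq-k=1234-degWWW} is at least $f(\Delta)$, with equality if and only if $l(\mu^i)\leq 2$ for every $i$. It therefore suffices to prove $f(\Delta)\geq f(k)$ whenever $\Delta\geq k$, with equality only when $\Delta=k$.

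The first step is to bound $\Delta$ from above. Write $\Delta=\sum_i\delta_i$ with $\delta_i = d_i-\mu^i_1$.
\begin{itemize}
\item If $\mu^i$ is nonempty, then $\mu^i_1\geq 1$, and moreover $\mu^i_1\geq \delta_i$ when $\delta_i>0$, since $\mu^i_1\geq \mu^i_2$ and hence $\mu^i_1 \geq \frac{d_i}{l(\mu^i)}$.
\item The crude bound $\mu^i_1\geq 1$ for each nonempty $\mu^i$ gives $\Delta\leq d-1$, because at least one $\mu^i$ is nonempty once $d\geq 1$.
\item This is enough for the strict inequality. We have $f(\Delta)-f(k)=(\Delta-k)(d+3-\Delta-k)$. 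For $\Delta>k$, the second factor satisfies $d+3-\Delta-k\geq d+3-(d-1)-k = 4-k\geq 0$.
\end{itemize}
This only yields a nonnegative second factor, not a positive one, so a sharper bound on $\Delta$ is needed.

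To get strict positivity, I would show $\Delta+k< d+3$ when $\Delta>k$ and $k\leq 4$. The critical case is $\Delta+k=d+3$ with $\Delta \geq k+1$, so $\Delta\geq (d+4)/2$. This forces $\mu^1_1+\mu^2_1+\mu^3_1 = d-\Delta \leq (d-4)/2$. However, each nonempty $\mu^i$ with $\delta_i>0$ has $\mu^i_1\geq \sqrt{d_i}$-type bounds, and more simply $\mu^i_1\geq 1$ together with $\delta_i\leq d_i-1$. So I would argue directly: $d-\Delta=\sum\mu^i_1\geq 1$, and the equality case $d+3-\Delta-k=0$ requires $\sum\mu^i_1 = k-3\leq 1$. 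Hence $k=4$, and exactly one partition, say $\mu^1=(1^{d})$, is nonempty. Then $\Delta = d-1$, and the case must be treated separately.

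For this degenerate configuration I would not use Proposition \ref{prop-WWWdeg-ineq} with equality. Instead I would note that $l(\mu^1)=d\geq 3$ for $d\geq 3$, so the inequality in Proposition \ref{prop-WWWdeg-ineq} is strict. This gives a left-hand side greater than $f(d-1)=f(4)$, which is exactly the strictness required by the equality clause. The small-$d$ cases with $d-1\geq 5$ automatically have $l\geq 3$, so they are covered.

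With this, $f(\Delta)\geq f(k)$ holds in all cases, and the full equality occurs precisely when $\Delta=k$ and all $l(\mu^i)\leq 2$. The main obstacle is the boundary situation $\Delta+k=d+3$, where the quadratic bound alone is not strict; it is handled by the strictness of Proposition \ref{prop-WWWdeg-ineq} for long partitions.
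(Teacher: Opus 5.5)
Your proof is correct and takes essentially the paper's route. You combine Proposition~\ref{prop-WWWdeg-ineq} with $\Delta\le d-1$ and the identity $f(\Delta)-f(k)=(\Delta-k)(d+3-\Delta-k)\ge 0$, where the paper instead uses the symmetry of $f$ about $\frac{d+3}{2}$. You then dispose of the only boundary case ($k=4$, $\Delta=d-1$, which forces one partition to be $(1^d)$) through the strict inequality in that proposition, exactly as the paper does. Your side remarks that $\mu^i_1\ge\delta_i$ or $\mu^i_1\ge\sqrt{d_i}$ are false (take $\mu^i=(1^{d_i})$ with $d_i\ge 3$), but your final argument never uses them, so you should simply delete them.
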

\begin{proof}
The case $k=1$ is Guo--Zhou \cite[Lemma 5.1]{gz},
and the case $k=2$ follows easily from Guo--Zhou \cite[Lemma 4.3]{gz}.
Here we prove all the cases $k=1,2,3,4$ using a unified approach.

Assume $\Delta>0$,
then by the definition of $d$ and $\Delta$ it is clear that $0<\Delta \leq d-1$.
The quadratic function $f(\Delta) = \Delta (d-\Delta +3)$
is increasing when $1 \leq \Delta \leq \half (d+3)$,
and the graph of this function is symmetric with respect to the line $\Delta = \half (d+3)$.
Therefore $\Delta\geq k$ implies $f(\Delta)\geq f(k)$ when $k=1,2,3,4$,
and then the inequality \eqref{eq-k=1234-degWWW} follows from Proposition \ref{prop-WWWdeg-ineq}.
Moreover,
when $k=1,2,3$ one easily sees that $\Delta > k$ implies $\Delta >f(k)$,
and thus the equality holds if and only if $\Delta = k$ in these cases.

As for the case $k=4$,
one has $f(\Delta) > f(4) = f(d-1)$ for $4<\Delta<d-1$.
If $\Delta = d-1$,
by the definition of $d$ and $\Delta$ one knows that
two of $\mu^1,\mu^2,\mu^2$ are the empty partition and the remaining one is $(1^d)$ (where $d\geq 5$),
and thus the inequality in Proposition \ref{prop-WWWdeg-ineq} is strict in this case.
Therefore for $k=4$ the equality in \eqref{eq-k=1234-degWWW} holds
only when $k=4$.
\end{proof}

In what follows,
for a nonnegative number $k (\leq d)$ we denote:
\be
I^{\leq k}(d) =
 \sum_{\sum_{i=1}^3 |\mu^i| =d, \text{ } \sum \delta_i \leq k}
(-1)^d q^{ \half \sum_{i=1}^3  \kappa_{\mu^i}}
W_{\mu^{3},\mu^1}(q) W_{\mu^{1},\mu^2}(q) W_{\mu^{2},\mu^3}(q),
\ee
where $\delta_i = |\mu^i| - \mu^i_1$ as above.
In particular,
one has:
\begin{equation*}
I^{\leq 0}(d) =
(-1)^d q^{\half d(d-3)} W_d(q),
\qquad\qquad
I^{\leq d}(d) = I(d).
\end{equation*}
Then by Corollary \ref{cor-k=1234-degWWW} one can see that
for a fixed $k\in \{1,2,3,4\}$,
the following relation holds:
\be
\deg I(d) -\deg \Big(I(d)-I^{\leq k-1}(d)\Big) \geq k(d+3-k).
\ee
One may rewrite these inequalities more explicitly:
\be
\label{eq-IdIleqd1234}
\begin{split}
&\deg I(d) -\deg \Big( I(d) - I^{\leq 0} (d) \Big) \geq d+2, \\
&\deg I(d) -\deg \Big(I(d)-I^{\leq 1}(d)\Big) \geq 2d+2, \\
&\deg I(d) -\deg \Big(I(d)-I^{\leq 2}(d)\Big) \geq 3d, \\
&\deg I(d) -\deg \Big(I(d)-I^{\leq 3}(d)\Big) \geq 4d-4. \\
\end{split}
\ee
And as a straightforward consequence,
one has:
\begin{Proposition}
\label{prop-Ileqkdegree}
Fix a number $k\in \{1,2,3,4\}$,
and then for $d\geq k$ one has:
\be
I(d) = I^{\leq k-1} (d) \big|_{q^{\geq \half d(d-3) -k(d+3-k) +1}}.
\ee
\end{Proposition}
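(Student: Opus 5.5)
This follows directly from the degree inequalities \eqref{eq-IdIleqd1234}, i.e.\ from Corollary \ref{cor-k=1234-degWWW}. Write
\begin{equation*}
I(d) - I^{\leq k-1}(d) = \sum_{\substack{\sum |\mu^i| = d\\ \Delta \geq k}} (-1)^d q^{\half\sum_{i=1}^3\kappa_{\mu^i}} W_{\mu^3,\mu^1}(q)W_{\mu^1,\mu^2}(q)W_{\mu^2,\mu^3}(q),
\end{equation*}
a finite sum of Laurent series in $q$. The plan is to bound the degree of each summand separately and then use that the degree of a finite sum is at most the maximum of the degrees of its summands.

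\textbf{Degree bound for each summand.} Fix a triple $(\mu^1,\mu^2,\mu^3)$ with $\Delta\geq k$. I apply Corollary \ref{cor-k=1234-degWWW} with the same $k$. This is legitimate since $k\in\{1,2,3,4\}$; the hypothesis $d\geq k$ ensures that such triples can exist, and otherwise the difference is zero and there is nothing to prove. The corollary says that the degree of the summand is at most
\begin{equation*}
\deg\Big(q^{\half\sum\kappa_{(d_i)}}\prod_{i=1}^3 W_{(d_i),(d_{i+1})}\Big) - k(d+3-k).
\end{equation*}
The computation preceding \eqref{eq-degId} shows that this first degree equals $\half(d^2-3d)$, independently of $(d_1,d_2,d_3)$. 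Hence every summand of $I(d)-I^{\leq k-1}(d)$ has degree at most $\half d(d-3)-k(d+3-k)$.

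\textbf{Conclusion.} Since no summand has a term of degree above $\half d(d-3)-k(d+3-k)$, all coefficients of $q^n$ in $I(d)-I^{\leq k-1}(d)$ vanish for $n\geq \half d(d-3)-k(d+3-k)+1$. This is exactly the claimed identity $I(d)=I^{\leq k-1}(d)\big|_{q^{\geq \half d(d-3)-k(d+3-k)+1}}$.

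The only point needing care is the application of the corollary when $k=4$ and $\Delta$ is large, for instance $\Delta=d-1$. Here the quadratic $f(\Delta)=\Delta(d-\Delta+3)$ is not monotone. However, the corollary's proof already handles this case, via $f(\Delta)\geq f(4)=f(d-1)$ for $4\leq\Delta\leq d-1$. So only the inequality from the corollary is needed, not its equality clause, and no genuine obstacle remains; the argument is purely a degree comparison.
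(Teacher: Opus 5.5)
Your proposal is correct and is the paper's argument: the paper also obtains the proposition as an immediate consequence of the degree gaps \eqref{eq-IdIleqd1234}, which come from applying Corollary \ref{cor-k=1234-degWWW} to each summand with $\Delta\geq k$, measured against the reference degree $\half d(d-3)$. You are right that only the corollary's inequality is needed, not its equality clause, including in the non-monotone case $k=4$, $\Delta=d-1$; and when no triple has $\Delta\geq k$ (which happens for $d=k$, since $\Delta\leq d-1$), the claim is trivial, as you note.
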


In particular,
for $k=1$ one has
(\cite[Lemma 5.2]{gz}):
\begin{equation*}
I(d) = I^{\leq 0}(d) = (-1)^d q^{\half d(d-3)} W_d(q)
\big|_{q^{\geq \half d(d-3) -d-1}}.
\end{equation*}
The following explicit formula for $W_d(q)$ is known in \cite[\S 5.2]{gz}:
\be
\label{eq-Wd-sumdef}
W_d(q) = \sum_{d_1 + d_2+d_3 =d}
\sum_{k_1 = 0}^{d_1} \sum_{k_2 = 0}^{d_2} \sum_{k_3 = 0}^{d_3}
\frac{q^{-k_2(d_1 +1)} q^{-k_3(d_2 +1)} q^{-k_1 (d_3+1)}}{\prod_{i=1}^3 ([d_i]_{q^{-1}}![k_i]_{q^{-1}}!) }.
\ee
Then one sees that:
\be
\begin{split}
I(d) = &(-1)^d q^{\half d(d-3)}\cdot \\
& \sum_{d_1 + d_2+d_3 =d}
\sum_{k_1 = 0}^{d_1} \sum_{k_2 = 0}^{d_2} \sum_{k_3 = 0}^{d_3}
\frac{q^{-k_2(d_1 +1)} q^{-k_3(d_2 +1)} q^{-k_1 (d_3+1)}}{\prod_{i=1}^3 ([d_i]_{q^{-1}}![k_i]_{q^{-1}}!) }
\Big|_{q^{\geq \half d(d-3) -d-1}}.
\end{split}
\ee

\subsection{Explicit formula for leading terms of $I(d)$}
\label{sec-leadingId}

In this subsection we give an explicit formula
for the leading terms
\begin{equation*}
I(d) \big|_{q^{\geq \half d(d-3) -3d+1}}
\end{equation*}
for $d\geq 4$.
This is a generalization of Guo--Zhou's \cite[Theorem 6.4]{gz}.
Our main result is:

\begin{Theorem}
\label{thm-main1-leadI}
For $d\geq 4$,
the leading terms of $I(d)$ are given by:
\be
\label{thm-mainIleading}
\begin{split}
I(d)
= \frac{ 3 (-1)^d q^{ \half d(d-3)} }{ ([\infty]_{q^{-1}}!)^3}
\Big(&  \frac{1}{3} J_0(d) + \frac{q^{-d-2}}{[1]_{q^{-1}}^2} J_1(d) \\
&+\frac{q^{-2d-2}( 2+q^{-1} +q^{-3}+2q^{-4} )}{([2]_{q^{-1}}!)^2} J_2(d)
\Big)
\Big|_{q^{\geq \half (d^2-3d) -3d+1}},
\end{split}
\ee
where:
\be
\label{eq-defa0a1a2}
\begin{split}
&J_0(d) =  \binom{d+2}{2} - 3\sum_{i\geq 1}\frac{q^{-i}}{(1-q^{-i})^2}, \\
&J_1(d) = \binom{d+1}{2}+3 - 3\sum_{i\geq 1}\frac{q^{-i}}{(1-q^{-i})^2}, \\
&J_2(d) = \binom{d}{2} +6- 3\sum_{i\geq 1}\frac{q^{-i}}{(1-q^{-i})^2},
\end{split}
\ee
and we use the notation:
\be
\label{eq-definftyfactorial}
[\infty]_{q^{-1}}! =\prod_{i=1}^\infty [i]_{q^{-1}} = \prod_{i=1}^\infty (1-q^{-i}).
\ee
\end{Theorem}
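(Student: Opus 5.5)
By Proposition \ref{prop-Ileqkdegree} with $k=3$, for $d\geq 4$ we have $I(d)=I^{\leq 2}(d)\big|_{q^{\geq \half(d^2-3d)-3d+1}}$. So the plan is to compute $I^{\leq 2}(d)$ modulo terms of degree $\leq \half(d^2-3d)-3d$. Split $I^{\leq 2}(d)=I^{\leq 0}(d)+I^{=1}(d)+I^{=2}(d)$ according to $\Delta=\sum\delta_i\in\{0,1,2\}$. We then treat each piece with the explicit vertex formulas: \eqref{eq-gz-183184} for $\Delta=0$, \eqref{eq-gzlem6.7} for $\Delta=1$, and Theorem \ref{thm-Delta2-expl} for $\Delta=2$. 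The target is to match these pieces with the three summands $\frac13 J_0$, $\frac{q^{-d-2}}{[1]^2}J_1$ and $\frac{q^{-2d-2}(\cdots)}{([2]!)^2}J_2$ respectively. Note that the prefactors $3q^{-d-2}/[1]^2=-q^{-d-1}I(1)$ and $3q^{-2d-2}(\cdots)/([2]!)^2=q^{-2d-1}I(2)$ are exactly what appears in the intro version.

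\textbf{Step 1 ($\Delta=0$).} We need $W_d(q)$ from \eqref{eq-Wd-sumdef} modulo $q^{-3d}$, not merely modulo $q^{-d-2}$ as in Guo--Zhou.
\begin{itemize}
\item The idea is to split the sum over $(d_1,d_2,d_3)$ by how many of the $d_i$ are "small" (bounded), since $1/[d_i]_{q^{-1}}!$ agrees with $1/[\infty]_{q^{-1}}!$ up to $O(q^{-d_i-1})$.
\item In the bulk region all $d_i$ are large and only $k_i$ with small $k_i(d_{i-1}+1)$ contribute. Summing these gives $\binom{d+2}{2}/([\infty]!)^3$ together with a correction.
\item The correction comes from the truncation defects $q^{-(d_i+1)}$ factors and the $k_i$ sums. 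Resumming these boundary corrections (à la Guo--Zhou's proof of Theorem 6.4) produces the Eisenstein term $-3\sum q^{-i}/(1-q^{-i})^2$.
\item Near the edges, where some $d_i$ is of size $\sim d$ and another is close to $0$, further contributions of order $q^{-d-1}$ and $q^{-2d-1}$ appear. These must be retained now rather than discarded. I expect them to reorganize precisely into pieces that combine with the $\Delta=1,2$ contributions, which explains the $+3a$ shifts in $J_1,J_2$.
\end{itemize}

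\textbf{Step 2 ($\Delta=1$ and $\Delta=2$).}
\begin{itemize}
\item For $\Delta=1$ exactly one $\mu^i=(d_i-1,1)$. By Proposition \ref{prop-WWWdeg-ineq} the relative degree drop is $d+2$, so we need this term only to relative precision $q^{-2d+2}$, i.e.\ one more order than Guo--Zhou.
\item Insert \eqref{eq-gzlem6.7}, including the $\kappa$ factor $q^{\kappa_{(d_i-1,1)}/2}$. After factoring out $q^{\half(d^2-3d)-d-2}/[1]^2$, the remaining sum over $(d_1,d_2,d_3)$ has the same shape as $W_{d-1}$-type sums. Running the Step 1 analysis at lower precision should give $J_1$-like data, i.e.\ $\binom{d+1}{2}$ plus the Eisenstein term.
\item For $\Delta=2$ there are cases $(d_i-2,2)$, $(d_i-2,1,1)$ (relative degree drop larger by one, so only its leading part matters), and two different $\mu^i,\mu^j$ each of type $(\cdot,1)$.
\item Use parts 1)--3) of Theorem \ref{thm-Delta2-expl} here. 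Only the leading order $\binom{d}{2}$-type count plus the Eisenstein tail is needed. Collecting the numerators should reproduce $3q^{-1}(2+q^{-1}+q^{-3}+2q^{-4})/([2]!)^2$, i.e.\ the same combination that appears in $I(2)$ via \eqref{eq-defI2sum}. The term-by-term analogy with \eqref{eq-defI2sum} (the cases $q\,W_{(2)}^2$, $q^{-1}W_{(1^2)}^2$, $W_{(1)}^2W_{(1),(1)}$) is the guide for matching.
\end{itemize}

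\textbf{Step 3 and main obstacle.} Finally, sum the three pieces, expand $1/[d_i]!$ as $(1/[\infty]!)\prod_{j>d_i}(1-q^{-j})$, and discard everything below the cutoff. The main obstacle is the bookkeeping of boundary corrections. These come from Step 1 at orders $q^{-d}$ and $q^{-2d}$ (and similarly from Step 2 at order $q^{-d}$), and they must be precisely absorbed into the constant shifts $+3$ and $+6$ in $J_1,J_2$. I would organize them by indexing the small variables (small $d_i$, or $d_i$ close to $d$) and writing generating functions in those, checking the result against the known $I(d)$ for small $d$ numerically.
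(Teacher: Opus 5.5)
Your skeleton matches the paper's. You reduce to $I^{\le 2}(d)$ by Proposition~\ref{prop-Ileqkdegree}, split by $\Delta\in\{0,1,2\}$, and evaluate with \eqref{eq-gz-183184}, \eqref{eq-gzlem6.7} and Theorem~\ref{thm-Delta2-expl}. Your $I(2)$-analogy is also right: up to a common factor, the $d^2/2$-coefficients of $(\text{j}_1),(\text{j}_2),(\text{j}_3)$ are $1$, $q^{-4}$ and $(1+q^{-1})^2(1-q^{-1}+q^{-2})$, matching the three summands of \eqref{eq-defI2sum}.

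\textbf{The level-by-level matching is wrong.} The $\Delta$-levels do not map onto $J_0,J_1,J_2$, and you propose to discard terms that are needed, so the bookkeeping you commit to would give a wrong formula.

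\emph{$J_1$.} Since $\mu^i=(d_i-1,1)$ forces $d_i\ge 2$, the main part of $I^{=1}(d)$ is $[1]_{q^{-1}}^{-2}$ times $W_{d-1}$ minus its $d_1=0$ slice. At its leading level Theorem~\ref{thm-I=1leading} therefore gives $\binom{d}{2}+2\sum_{i\ge2}\frac{q^{-i}}{1-q^{-i}}-3\sum_{i\ge1}\frac{q^{-i}}{(1-q^{-i})^2}$, not $\binom{d+1}{2}$. The missing $d+3$, and the cancellation of the $\sum_{i\ge2}$ term, come from the summand $\frac{q^{-d-2}}{[1]_{q^{-1}}^2}\bigl(d+3-2\sum_{i\ge2}\frac{q^{-i}}{1-q^{-i}}\bigr)$ of $W_d$ (Theorem~\ref{thm-Wdqleadingterm}).

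\emph{$J_2$.} It is false that only ``the $\binom{d}{2}$-type count plus the Eisenstein tail'' of $\Delta=2$ is needed. The coefficient $(2+q^{-1}+q^{-3}+2q^{-4})J_2$ of $q^{-2d-2}/([2]_{q^{-1}}!)^2$ in \eqref{thm-mainIleading} is assembled from three sources: the $q^{-2d-2}$ part of $W_d$, the $q^{-d}$-relative part of $I^{=1}(d)$, and the entire leading part of $I^{=2}(d)$. The non-Eisenstein terms cancel only across all three. The $[\infty]_{q^{-1}}!$ terms enter with relative weights $+1,-2,+1$. The terms $c\,\bigl(d-2\sum_{i\ge2}\frac{q^{-i}}{1-q^{-i}}\bigr)$ have $c=1+q^{-4}$, $3+2q^{-1}+2q^{-3}+2q^{-4}$ and $-(4+2q^{-1}+2q^{-3}+3q^{-4})$, which sum to $0$. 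The three rational constants add up to exactly $6(2+q^{-1}+q^{-3}+2q^{-4})$, which is the $+6$ in $J_2$.

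\emph{The $(d_1-2,1^2)$ piece.} It lies four orders below $(d_1-2,2)$, not one: two come from the vertex degrees and two from $\kappa$. It is still needed to relative precision $q^{-d+7}$ (Proposition~\ref{Prop-I=2-case2}), so ``only its leading part matters'' does not hold.

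\textbf{The evaluation of the truncated sums is missing.} This is the larger gap. ``Resumming in the style of Guo--Zhou'' plus numerical checks at small $d$ cannot establish an identity claimed for every $d\ge 4$, and the cancellations above are identities in $d$. In the paper this evaluation is essentially the whole proof:
\begin{enumerate}
\item Every $\tW_{(m),(n)}$ is written as $\frac{1}{[\infty]_{q^{-1}}!}T^x_m\frac{(q^{-n-1};q^{-1})_\infty}{(xq^{-n-1};q^{-1})_\infty}$.
\item The degree bounds \eqref{eq-gzlem-5.7} cut each sum down to a few edge slices, e.g.\ $d_2\in\{0,1,d\}$ in $W_d^3$, the rotations of $(d,0,0)$ in $W_d^4$, and $d_1=2$ in $\text{(iii)}$ and $\text{(c')}$.
\item Those slices are closed using \eqref{eq-W2infty=}, \eqref{eq-gz-prop5.8}, \eqref{eq-gzlem6.6}, \eqref{eq-twoidgen}, Lemma~\ref{lem-eqlem-3sum}, telescoping partial fractions, and, for the term $\text{(Y)}$ of $(\text{j}_3)$, evaluation of a generating function in $z$ at $z=1$ and $z=q$.
\end{enumerate}
Without explicit closed-form evaluations of this kind, your Step~3 is a plan rather than a proof.
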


This theorem is proved by explicit calculations of Schur functions and hypergeometric series.
The detailed proof is lengthy and
we put it in \S \ref{app-WdCompute} -- \S \ref{app-secC-delta12} to make the main body easier to read.
In \S \ref{app-WdCompute} we compute the leading terms of $W_d$
(which is the part $\Delta=0$ in $I(d)$, and see Proposition \ref{prop-WWWdeg-ineq} for the notation $\Delta$),
and in \S \ref{app-secC-delta12} we compute the leading terms of
the parts $\Delta=1$ and $\Delta=2$ in $I(d)$
using Guo--Zhou's formula \eqref{eq-gzlem6.7} and Theorem \ref{thm-Delta2-expl} respectively.
Here we only need to consider the three cases $\Delta = 0,1,2$ because by Proposition \ref{prop-Ileqkdegree} we have:
\begin{equation*}
I(d) = I^{\leq 2} (d) \big|_{q^{\geq \half d(d-3) -3d +1}}.
\end{equation*}

It is worthwhile mentioning that
in \S \ref{sec-app-I123} we have already written down
explicitly the formulas for $I(1)$ and $I(2)$:
\begin{equation*}
I(1) = - \frac{3q^{-1}}{[1]_{q^{-1}}^2},
\qquad\qquad
I(2) =\frac{3q^{-1}(2+q^{-1}+q^{-3}+2q^{-4})}{([2]_{q^{-1}}!)^2},
\end{equation*}
and thus one can rewrite the above theorem as follows:
\begin{Theorem}
\label{thm-main1-leadI-2}
For $d\geq 4$,
the leading terms of $I(d)$ are given by:
\be
\begin{split}
I(d)
= \frac{ (-1)^d \cdot q^{ \half d(d-3)} }{ ([\infty]_{q^{-1}}!)^3}
\Big(  J_0(d) - q^{-d-1} I(1) J_1(d)
+q^{-2d-1} I(2) J_2(d)
\Big)
\Big|_{q^{\geq \half (d^2-3d) -3d+1}},
\end{split}
\ee
where $J_0,J_1,J_2$ are given by \eqref{eq-defa0a1a2}.
\end{Theorem}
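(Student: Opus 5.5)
This statement is a direct reformulation of Theorem \ref{thm-main1-leadI}, so the plan is to substitute the explicit formulas for $I(1)$ and $I(2)$ from \S\ref{sec-app-I123} and compare the result with \eqref{thm-mainIleading} term by term. No new degree estimates are needed. Both formulas are asserted only in the range $q^{\geq \half(d^2-3d)-3d+1}$. Both right-hand sides are built from the same ingredients: the prefactor $(-1)^d q^{\half d(d-3)}/([\infty]_{q^{-1}}!)^3$ and the series $J_a(d)$ of \eqref{eq-defa0a1a2}. It therefore suffices to show that the bracketed expressions agree identically as Laurent series in $q^{-1}$. Equality before truncation implies equality after truncation.

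The comparison runs over the three terms of \eqref{thm-mainIleading}, with the overall factor $3$ distributed inside the bracket.
\begin{itemize}
\item[1)] The $J_0$ term is $3\cdot\frac{1}{3}J_0(d)=J_0(d)$, which matches directly.
\item[2)] For the $J_1$ term, $I(1)=-3q^{-1}/[1]_{q^{-1}}^2$ gives
\begin{equation*}
-q^{-d-1}I(1)J_1(d) = \frac{3q^{-d-2}}{[1]_{q^{-1}}^2}J_1(d),
\end{equation*}
which is exactly $3\cdot \frac{q^{-d-2}}{[1]_{q^{-1}}^2}J_1(d)$.
\item[3)] For the $J_2$ term, $I(2)=3q^{-1}(2+q^{-1}+q^{-3}+2q^{-4})/([2]_{q^{-1}}!)^2$ gives
\begin{equation*}
q^{-2d-1}I(2)J_2(d)=\frac{3q^{-2d-2}(2+q^{-1}+q^{-3}+2q^{-4})}{([2]_{q^{-1}}!)^2}J_2(d),
\end{equation*}
which again matches the corresponding term of \eqref{thm-mainIleading}.
\end{itemize}

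One consistency check is needed: $([2]_{q^{-1}}!)^2=(1-q^{-1})^2(1-q^{-2})^2$, so the form of $I(2)$ used here agrees with the one quoted in Theorem \ref{thm:main GW}. The $J_a$ defined by \eqref{eq-intro-Jidef}, namely $\binom{d+2-a}{2}+3a-3\sum_{i\ge1} q^{-i}/(1-q^{-i})^2$, coincide with \eqref{eq-defa0a1a2} for $a=0,1,2$. With these checks the two statements are equivalent.

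The only real content is Theorem \ref{thm-main1-leadI} itself, which is taken as given here. It rests on two ingredients. The first is the reduction $I(d)=I^{\leq 2}(d)|_{q^{\ge \half d(d-3)-3d+1}}$ from Proposition \ref{prop-Ileqkdegree}. The second is the explicit evaluation of the $\Delta=0,1,2$ contributions via \eqref{eq-Wd-sumdef}, \eqref{eq-gzlem6.7} and Theorem \ref{thm-Delta2-expl}. Relative to the present rewriting, there is no obstacle beyond the bookkeeping of the powers of $q$ and the factor $3$.
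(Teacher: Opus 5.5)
Your proposal is correct and matches the paper's argument. The paper obtains this statement from Theorem \ref{thm-main1-leadI} by substituting the explicit formulas for $I(1)$ and $I(2)$ from \S\ref{sec-app-I123}, and your identities $-q^{-d-1}I(1)=3q^{-d-2}/[1]_{q^{-1}}^2$ and $q^{-2d-1}I(2)=3q^{-2d-2}(2+q^{-1}+q^{-3}+2q^{-4})/([2]_{q^{-1}}!)^2$ are exactly that bookkeeping.
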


\subsection{Degree estimates of the free energy of local $\bP^2$}

Now we consider the degrees of the terms $F(d)$ in the free energy.

First recall that $F(d)$ and $I(d)$ are related by:
\be
\label{eq-relFI}
F(d) = I(d)
+ \sum_{k\geq 2} \sum_{d_1+\cdots +d_k = d} \frac{(-1)^{k-1}}{k} I(d_1)I(d_2) \cdots I(d_k).
\ee
and the degree of $I(d)$ is:
\begin{equation*}
\deg I(d) = \half (d^2 - 3d)
= \half (d-1)(d-2)-1.
\end{equation*}
Let $k\geq 2$,
and for fixed $d_1,\cdots,d_k$ with $d_1+\cdots +d_k =d$ one has:
\be
\deg I(d_1)\cdots I(d_k) =
\sum_{i=1}^k \half (d_i^2-3d_i)
= (\half\sum_{i=1}^k d_i^2 ) -\frac{3}{2}d.
\ee
Notice that for $k\geq 2$,
the sum of squares $\sum_{i=1}^k d_i^2$ attains the maximum only when
one of $d_1,\cdots,d_k$ is $(d-k+1)$ and all others are $1$,
thus one has:
\be
\deg I(d_1) \cdots I(d_k)
\leq \half\big(k-1+(d-k+1)^2 \big) -\frac{3}{2} d,
\ee
and thus we obtain:
\be
\label{eq-degreeIdkgap}
\deg I(d) - \deg I(d_1) \cdots I(d_k)
\geq (k-1)(d-\frac{k}{2}).
\ee
Then by applying \eqref{eq-degreeIdkgap} to \eqref{eq-relFI} we obtain:
\begin{equation*}
F(d) = I(d) +\sum_{j=2}^{k} \sum_{d_1+\cdots +d_j = d}
\frac{(-1)^{j-1}}{j}I(d_1)\cdots I(d_j)
\Big|_{q^{\geq \half (d^2-3d) -k(d-\frac{k+1}{2} ) +1 }}.
\end{equation*}
To summarize, we have:
\begin{Lemma}
\label{lem-F&Iqgeqkd}
For a fixed positive number $k$,
one has:
\be
F(d) = I(d) +\sum_{j=2}^{k} \sum_{d_1+\cdots +d_j = d}
\frac{(-1)^{j-1}}{j}I(d_1)\cdots I(d_j)
\Big|_{q^{\geq g(d) - kd + \half k(k+1)}},
\ee
where we use the notation:
\be
\label{eq-notation-gd}
g(d) = \half(d-1)(d-2) = \half d(d-3)+1.
\ee
\end{Lemma}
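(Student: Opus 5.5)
Starting from the expansion \eqref{eq-relFI}, I would split the sum over $j\geq 2$ into the terms with $2\leq j\leq k$ and those with $j\geq k+1$. The first group is kept exactly as it appears in the statement. The task is then to show that every product $I(d_1)\cdots I(d_j)$ with $j\geq k+1$ and $d_1+\cdots+d_j=d$ (all $d_i\geq 1$) has $q$-degree strictly less than $g(d)-kd+\half k(k+1)$. Such a product then contributes nothing to the coefficients of $q^n$ for $n\geq g(d)-kd+\half k(k+1)$. Since $I(d)$ and each $I(d_i)$ lie in $\bC[q,q^{-1}]]$, every product is again a Laurent series in $q^{-1}$ with finitely many positive powers. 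The truncation $|_{q^{\geq m}}$ therefore only depends on terms of degree $\geq m$, so a degree bound is all that is needed.

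The key estimate is \eqref{eq-degreeIdkgap}. For $j\geq 2$ it gives
\begin{equation*}
\deg\big(I(d_1)\cdots I(d_j)\big)\leq \deg I(d)-(j-1)\big(d-\tfrac{j}{2}\big)=g(d)-1-(j-1)\big(d-\tfrac j2\big).
\end{equation*}
Here I use \eqref{eq-degId} and \eqref{eq-notation-gd}. The estimate itself rests on the fact that $\sum d_i^2$ with $j$ parts, each $\geq 1$, is maximized at $(d-j+1,1,\dots,1)$, by convexity. Write $h(j)=(j-1)(d-j/2)$. Then
\begin{equation*}
h(j+1)-h(j)=d-j\geq 0 \qquad\text{for } j\leq d,
\end{equation*}
and products with $j>d$ do not occur. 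So for $k+1\leq j\leq d$ we get $h(j)\geq h(k+1)=k\big(d-\tfrac{k+1}{2}\big)$. Hence every discarded product has degree at most $g(d)-1-kd+\half k(k+1)$, which is strictly below the cutoff. If $k\geq d$, the discarded sum is empty and there is nothing to prove.

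The only points requiring care are bookkeeping. The degree of a product of nonzero Laurent series is the sum of the degrees, which is used in deriving \eqref{eq-degreeIdkgap}. One should also check that the monotonicity of $h$ holds over the full range $k+1\leq j\leq d$, since $h$ is a concave quadratic whose increasing range ends exactly at $j=d$. I expect no real obstacle here: the lemma is essentially a repackaging of \eqref{eq-degreeIdkgap}, which was derived just before the statement, together with the observation that the bound is monotone in the number of factors.
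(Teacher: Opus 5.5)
Your proposal is correct and follows essentially the same argument as the paper. You discard the terms with $j\geq k+1$ in the expansion of $F(d)$ in terms of the $I(d_i)$, and bound their degrees with the gap estimate $\deg I(d)-\deg I(d_1)\cdots I(d_j)\geq (j-1)(d-\frac{j}{2})$. Your explicit check that $(j-1)(d-\frac{j}{2})$ is nondecreasing for $k+1\leq j\leq d$ spells out a step the paper leaves implicit when it applies the estimate only at $j=k+1$.
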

In particular,
for $k = 1$ one has (\cite[Proposition 5.3]{gz}):
\be
\label{eq-gz-F&Igeqd}
F(d) = I(d) \big|_{q^{\geq g(d) - d+1}}.
\ee
And for $k=2,3,4$ we have:
\be
\label{eq-gz-F&Igeqkd}
\begin{split}
F(d) = \Big(& I(d) -\half \sum_{d_1+d_2 =d}I(d_1)I(d_2) \Big)
\Big|_{q^{\geq g(d) - 2d+3}}, \\
F(d) =  \Big(& I(d) -\half \sum_{d_1+d_2 =d}I(d_1)I(d_2)
+ \frac{1}{3}\sum_{d_1+d_2 +d_3=d}I(d_1)I(d_2)I(d_3) \Big)
\Big|_{q^{\geq g(d) - 3d+6}}, \\
F(d) =  \Big(& I(d) -\half \sum_{d_1+d_2 =d}I(d_1)I(d_2)
+ \frac{1}{3} \sum_{d_1+d_2 +d_3=d}I(d_1)I(d_2)I(d_3) \\
& -\frac{1}{4} \sum_{d_1+d_2 +d_3 +d_4 =d}I(d_1)I(d_2)I(d_3)I(d_4) \Big)
\Big|_{q^{\geq g(d) - 4d+10}}, \\
\end{split}
\ee
respectively.

\begin{Remark}
One can use the above relations and the leading terms of $I(d)$ to compute the
leading terms of $F(d)$.
Here we omit this discussion since the computation is exactly the same as
the computation of leading terms of the transformed Gopakumar--Vafa invariants
in next section.
In fact, by Proposition \ref{prop-GVleadingderiveI} we know that the leading terms
\begin{equation*}
F(d) \big|_{ q^{\geq  \half d(d-3) -3d+1}}
\end{equation*}
and the leading terms of the transformed Gopakumar--Vafa invariants
computed in \S \ref{eq-compGVleading-expl} differ only by an overall factor $-q (1-q^{-1})^2(1-q^{-2})$.
\end{Remark}

%\subsection{Explicit formula for leading terms of $F(d)$}
%\label{sec-leadingFd}

%Now we use the results of the above two subsections
%to derive an explicit formula for the leading terms
%of the generating series $F(d)$ of (connected) Gromov-Witten invariants of local $\bP^2$.

\section{Leading Terms of Transformed Gopakumar--Vafa Invariants of Local $\bP^2$}
\label{sec-LeadingTGV}

In this section,
we briefly review the Gopakumar--Vafa invariants and transformed Gopakumar--Vafa invariants
of local $\bP^2$,
and then compute the leading terms of the transformed Gopakumar--Vafa invariants
of local $\bP^2$ using the leading terms of $I(d)$.

\subsection{Gopakumar--Vafa invariants of local $\bP^2$}

In this subsection we recall the relation between the Gopakumar--Vafa invariants and Gromov--Witten invariants
of local $\bP^2$.

The Gopakumar--Vafa invariants $n_\beta^g(K_S)$ of a local Calabi-Yau threefold $K_S$
are given by \eqref{eq-def-GVlocal}.
Thus for local $\bP^2$ we have:
\begin{equation*}
\sum_{d>0} F(d)t^d =
\sum_{d>0}\sum_{g\geq 0} n_d^g (K_{\bP^2})
\sum_{k>0} \frac{(-1)^{g-1}}{k} \big( q^{\frac{k}{2}} - q^{-\frac{k}{2}}\big)^{2g-2}
t^{kd},
\end{equation*}
and then:
\be
\label{eq-FdinGV-1}
F(d) = \sum_{g\geq 0} \sum_{k|d}
n_{d/k}^g (K_{\bP^2})\cdot \frac{(-1)^{g-1}}{k} \big( q^{\frac{k}{2}} - q^{-\frac{k}{2}}\big)^{2g-2}.
\ee
It has been observed in \cite{gv, amv} and proved in \cite{pe} that:
\begin{align*}
	n_{d}^g(K_{\bP^2}) = 0,\qquad\qquad \forall g>g(d)
\end{align*}
where $g(d)$ is \eqref{eq-notation-gd},
and thus the right-hand side of \eqref{eq-FdinGV-1} is a finite sum:
\be
\label{eq-FdinGV-2}
F(d) = \sum_{k|d} \sum_{g= 0}^{g(d/k)}
n_{d/k}^g (K_{\bP^2})\cdot \frac{(-1)^{g-1}}{k} \big( q^{\frac{k}{2}} - q^{-\frac{k}{2}}\big)^{2g-2}.
\ee
Following \cite{gz},
denote:
\begin{equation*}
F^k(d) = \sum_{g= 0}^{g(d/k)}
n_{d/k}^g (K_{\bP^2})\cdot \frac{(-1)^{g-1}}{k} \big( q^{\frac{k}{2}} - q^{-\frac{k}{2}}\big)^{2g-2},
\end{equation*}
for $k|d$,
and then:
\begin{equation*}
F(d) = \sum_{k|d} F^k(d).
\end{equation*}
Notice that one has:
\begin{equation*}
\deg F^k(d) \leq \frac{k}{2} \cdot \big(2g(\frac{d}{k})-2\big)
= \frac{d}{2} (\frac{d}{k} - 3).
\end{equation*}
By comparing this degree at $k=2$ with $\deg F(d) = \half d(d-3)$,
one obtains (\cite[(96)]{gz}):
\be
\label{eq-fdf1dqgeqd}
F(d) = F^1(d) \big|_{q^{\geq g(d)-d+1}}.
\ee
And similarly,
one can see that:
\be
\label{eq-fdf1dqgeqkd}
\begin{split}
& F(d) = F^1(d) \big|_{q^{\geq g(d)-2d+3}},
\qquad \forall d\geq 5;\\
& F(d) = F^1(d) \big|_{q^{\geq g(d)-3d+6}},
\qquad \forall d\geq 9;\\
& F(d) = F^1(d) \big|_{q^{\geq g(d)-4d+10}},
\qquad \forall d\geq 13,
\end{split}
\ee
where by definition
\begin{equation*}
F^1(d) = \sum_{g= 0}^{g(d)}
n_{d}^g (K_{\bP^2})\cdot (-1)^{g-1} \big( q^{\frac{1}{2}} - q^{-\frac{1}{2}}\big)^{2g-2}.
\end{equation*}

By comparing \eqref{eq-fdf1dqgeqd} with \eqref{eq-gz-F&Igeqd},
one obtains \cite[(99)]{gz}:
\be
\sum_{g=g(d)-d+2}^{g(d)}
n_{d}^g (K_{\bP^2}) (-1)^{g-1} \big( q^{\frac{1}{2}} - q^{-\frac{1}{2}}\big)^{2g-2}
= I(d) \big|_{q^{\geq g(d)-d+1}}.
\ee
And by \eqref{eq-fdf1dqgeqkd} and Lemma \ref{lem-F&Iqgeqkd} one finds that
for fix $k\in \{1,2,3,4\}$,
\be
\label{eq-gknIqgeq}
\begin{split}
\sum_{g= g^k (d)+1}^{g(d)}
n_{d}^g (K_{\bP^2}) (-1)^{g-1} \big( q^{\frac{1}{2}} - q^{-\frac{1}{2}}\big)^{2g-2}
=& \sum_{j=1}^k \sum_{d_1+\cdots +d_j =d} \frac{(-1)^{j-1}}{j} I(d_1)\cdots I(d_j)
\Big|_{q^{\geq g^k (d) }}
\end{split}
\ee
holds for every $d\geq 4k-3$,
where we use the notation:
\be
g^k(d) = g(d) -kd +\half k(k+1),
\qquad\qquad 1\leq k\leq 4.
\ee
The following is \cite[Lemma 5.5]{gz}:
\begin{Lemma}
\label{lem-gz-seriesmultres}
Let $f(q)$ and $g(q)$ be two Laurent polynomials satisfying $f(q) = g(q)|_{q^{\geq m}}$,
then:
\begin{itemize}
\item[1)]
$qf(q) = qg(q)|_{q^{\geq m+1}}.$
\item[2)]
$(1-q^{-j}) f(q) = (1-q^{-j}) g(q) |_{q^{\geq m}}$ for $j>0$.
\end{itemize}
\end{Lemma}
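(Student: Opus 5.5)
The statement is Lemma \ref{lem-gz-seriesmultres}, which concerns truncations of Laurent series. I would argue directly from the definition of $f(q)=g(q)|_{q^{\geq m}}$: the coefficient of $q^n$ in $f$ equals that in $g$ for all $n\geq m$. Equivalently, $h:=f-g$ is a Laurent series (in $\bC[q,q^{-1}]]$, i.e.\ finitely many positive powers) with $\deg h\leq m-1$, or $h=0$. Both parts then reduce to bounding the degree of a multiple of $h$, since multiplication is linear: $qf-qg=qh$ and $(1-q^{-j})f-(1-q^{-j})g=(1-q^{-j})h$.

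For part 1), the coefficient of $q^n$ in $qh$ is the coefficient of $q^{n-1}$ in $h$. This vanishes whenever $n-1\geq m$, i.e.\ $n\geq m+1$. Hence $qf=qg|_{q^{\geq m+1}}$.

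For part 2), write $h=\sum_{i\leq m-1}a_iq^i$. Then
\[
(1-q^{-j})h=\sum_{i\leq m-1}a_iq^i-\sum_{i\leq m-1}a_iq^{i-j}.
\]
The second sum only involves exponents $\leq m-1-j<m$, because $j>0$. So every term of $(1-q^{-j})h$ has exponent at most $m-1$, and its coefficients of $q^n$ vanish for all $n\geq m$. The product is well defined in $\bC[q,q^{-1}]]$, since each coefficient is a finite sum. Equivalently, $\deg(1-q^{-j})=0$ and degrees add.

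There is no real obstacle. The only point needing care is that multiplication by $(1-q^{-j})$ introduces only lower powers of $q$. This is exactly why the truncation level stays at $m$ in part 2), while it shifts to $m+1$ in part 1).
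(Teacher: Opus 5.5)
Your proof is correct. Passing to $h=f-g$ with $\deg h\le m-1$ and then bounding $\deg(qh)\le m$ and $\deg\big((1-q^{-j})h\big)\le m-1$ is the same coefficient comparison the paper relies on. The paper itself only cites \cite[Lemma 5.5]{gz} for this lemma, but its Proposition \ref{prop-degLaurentprod} (multiplying by a series of degree $k$ moves the truncation to $q^{\geq m+k}$) is proved by exactly this argument, and it gives both of your parts as the cases $\varphi=q$ and $\varphi=1-q^{-j}$.
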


Moreover,
one can further prove that:
\begin{Proposition}
\label{prop-degLaurentprod}
Let $f(q)$ and $g(q)$ be two Laurent polynomials satisfying $f(q) = g(q)|_{q^{\geq m}}$,
and let $\varphi(q)$ be a Laurent polynomial of degree $k$.
Then:
\begin{equation*}
f(q)\varphi(q) = g(q)\varphi(q) \big|_{q^{\geq m+k}}.
\end{equation*}
\end{Proposition}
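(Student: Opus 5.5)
Write $\varphi(q)=\sum_{j\le k} c_j q^j$ with $c_k\neq 0$. The sum may be infinite in the negative direction, since these are elements of $\bC[q,q^{-1}]]$. The plan is to reduce to the single observation that the difference $h(q)=f(q)-g(q)$ has all its nonzero coefficients in degrees $<m$. By hypothesis $f(q)=g(q)|_{q^{\geq m}}$, which says exactly that $h(q)\in q^{m-1}\bC[[q^{-1}]]$, i.e. $\deg h\le m-1$ (or $h=0$). The claimed identity $f\varphi=g\varphi|_{q^{\geq m+k}}$ is equivalent to $h\varphi$ having no coefficients in degrees $\geq m+k$, that is, $\deg(h\varphi)\le m+k-1$.

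The key step is then the multiplicativity of degree, already noted in the paper: for nonzero $a,b\in\bC[q,q^{-1}]]$ one has $\deg(ab)=\deg a+\deg b$. Indeed, the product of two Laurent series bounded above is well defined, because each coefficient of $ab$ is a finite sum. The top coefficient of $ab$ is the product of the top coefficients, which is nonzero, and no higher-degree terms occur.

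Hence, if $h\ne 0$, we get $\deg(h\varphi)=\deg h+k\le m-1+k$. If $h=0$ the statement is trivial. It follows that the coefficients of $q^n$ in $f\varphi$ and $g\varphi$ agree for all $n\ge m+k$, and by linearity $f\varphi-g\varphi=h\varphi$.

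The only point needing care, which is hardly an obstacle, is well-definedness of products in $\bC[q,q^{-1}]]$. Concretely, the coefficient of $q^n$ in $h\varphi$ is $\sum_{i+j=n} h_i c_j$ with $i\le m-1$ and $j\le k$. This forces $i$ and $j$ into finite ranges, and the sum is empty whenever $n\ge m+k$, which directly gives the claim even without invoking the degree formula. As a consistency check, this recovers Lemma \ref{lem-gz-seriesmultres} as the special cases $\varphi=q$ (so $k=1$) and $\varphi=1-q^{-j}$ (so $k=0$).
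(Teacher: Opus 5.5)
Your proof is correct and is essentially the paper's argument. The paper computes the coefficient of $q^j$ in $f\varphi$ and $g\varphi$ for $j\ge m+k$ and notes that both equal $\sum_{i=m}^{M} a_i b_{j-i}$, so only the common coefficients of degree $\ge m$ enter. Your coefficient computation for $h\varphi=(f-g)\varphi$ makes the same observation in difference form, and the detour through multiplicativity of degree is not actually needed.
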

\begin{proof}
Denote:
\begin{equation*}
f(q)\big|_{q^{\geq m}} = g(q) \big|_{q^{\geq m}} = \sum_{i=m}^M a_i q^i,
\qquad\qquad
\varphi(q) = \sum_{i=-\infty}^k b_k q^i.
\end{equation*}
Then for every $j\geq k+m$,
it is clear that the coefficients of $q^{j}$ in $f(q)\varphi(q)$ and $g(q)\varphi(q)$ are both
$\sum_{i=m}^M a_ib_{j-i}$.
\end{proof}

Now multiply both sides of \eqref{eq-gknIqgeq} by $(q^\half - q^{-\half})^2 = q(1-q^{-1})^2$
and then use the proposition,
and in this way we may obtain the following:
\begin{Proposition}
Fix $k\in \{1,2,3,4\}$,
then for $\forall d\geq 4k-3$ one has:
\be
\label{eq-gknIqgeq-2g}
\begin{split}
& \sum_{g= g^k (d)+1}^{g(d)}
n_{d}^g (K_{\bP^2}) (-1)^{g-1} \big( q^{\frac{1}{2}} - q^{-\frac{1}{2}}\big)^{2g} \\
=& \Big( q(1-q^{-1})^2\sum_{j=1}^k \sum_{d_1+\cdots +d_j =d} \frac{(-1)^{j-1}}{j} I(d_1)\cdots I(d_j) \Big)
\Big|_{q^{\geq g^k (d) +1 }}.
\end{split}
\ee
\end{Proposition}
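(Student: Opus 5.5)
The plan is to multiply both sides of \eqref{eq-gknIqgeq} by $(q^{\half}-q^{-\half})^2 = q(1-q^{-1})^2$ and to track the truncation level with Lemma \ref{lem-gz-seriesmultres} and Proposition \ref{prop-degLaurentprod}.

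First, fix $k\in\{1,2,3,4\}$ and $d\geq 4k-3$. By \eqref{eq-gknIqgeq}, the two sides
\begin{equation*}
L(q)=\sum_{g= g^k (d)+1}^{g(d)} n_{d}^g (K_{\bP^2}) (-1)^{g-1} \big( q^{\frac{1}{2}} - q^{-\frac{1}{2}}\big)^{2g-2},
\qquad
R(q)=\sum_{j=1}^k \sum_{d_1+\cdots +d_j =d} \frac{(-1)^{j-1}}{j} I(d_1)\cdots I(d_j)
\end{equation*}
satisfy $L(q)=R(q)\big|_{q^{\geq g^k(d)}}$. Both are Laurent series in $\bC[q,q^{-1}]]$ with finite top degree. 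For $L$ this holds because the sum over $g$ is finite. For $R$ it holds because each $I(d_i)$ has degree $\half d_i(d_i-3)$. The half-integer powers cancel, since $(q^{\half}-q^{-\half})^{2g-2}=q^{g-1}(1-q^{-1})^{2g-2}$. Hence the hypotheses of Proposition \ref{prop-degLaurentprod} are met, reading "Laurent polynomial" there as a Laurent series bounded above, which is exactly the setting its proof uses.

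Next, take $\varphi(q)=q(1-q^{-1})^2$, which has degree $1$. Proposition \ref{prop-degLaurentprod} then gives $\varphi L=\varphi R\big|_{q^{\geq g^k(d)+1}}$. The same conclusion follows from Lemma \ref{lem-gz-seriesmultres}: apply part 2) twice with $j=1$, which keeps the level at $g^k(d)$, and then part 1), which raises it to $g^k(d)+1$. On the left, $\varphi(q)\,(q^{\half}-q^{-\half})^{2g-2}=(q^{\half}-q^{-\half})^{2g}$ term by term, so $\varphi L$ is exactly the left-hand side of \eqref{eq-gknIqgeq-2g}. This gives the claim.

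The only point needing care is convergence. Multiplying a series bounded above by $1-q^{-1}$ is well defined, and each coefficient of the product is a finite sum. I expect no real obstacle here; this is just a check that Proposition \ref{prop-degLaurentprod} applies to formal series rather than to polynomials only.
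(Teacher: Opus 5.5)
Your proposal is correct and follows the same route as the paper. The paper also multiplies both sides of \eqref{eq-gknIqgeq} by $(q^{\frac{1}{2}}-q^{-\frac{1}{2}})^2=q(1-q^{-1})^2$, which has degree $1$, and applies Proposition \ref{prop-degLaurentprod} to move the truncation level from $g^k(d)$ to $g^k(d)+1$. Your alternative via Lemma \ref{lem-gz-seriesmultres}, and your reading of ``Laurent polynomial'' as a series bounded above, both match the paper's conventions.
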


\subsection{Transformed Gopakumar--Vafa invariants}

Now consider the transformed Gopakumar--Vafa invariants $N_d^g (K_{\bP^2})$ of local $\bP^2$,
see \cite{gv},
and see also \cite{zhou5, gz}.
In this subsection we recall the definition of these invariants,
and use the leading terms of $I(d)$ to represent
the leading terms of generating series of $N_d^g (K_{\bP^2})$.

The transformed Gopakumar--Vafa invariants $N_d^g (K_{\bP^2})$ are related to the Gopakumar--Vafa invariants $n_d^g (K_{\bP^2})$ by:
\be
\label{eq-TGV&GV}
\sum_{g=0}^{g(d)}
N_d^g(K_{\bP^2})\cdot (q^g +q^{g-2}+\cdots +q^{-g})
= \sum_{g=0}^{g(d)} n_d^g(K_{\bP^2}) \cdot(-1)^g (q^\half -q^{-\half})^{2g}.
\ee
Here the two sequences
\begin{equation*}
\begin{split}
& S_g = (q^\half - q^{-\half})^{2g}, \qquad g\geq 0; \\
& R_g = q^g + q^{g-2}+\cdots +q^{-g+2} +q^{-g}, \qquad g\geq 0,
\end{split}
\end{equation*}
form two bases of the space of Laurent polynomials $p(q) \in \bC[p,p^{-1}]$ satisfying $p(q) = p(q^{-1})$.
By multiplying two sides of \eqref{eq-TGV&GV} by $(1-q^{-2})$,
one obtains (see \cite[\S 5.5]{gz}):
\begin{equation*}
\sum_{g=0}^{g(d)} N_d^g(K_{\bP^2})\cdot (q^g - q^{-g-2})
= (1-q^{-2}) \sum_{g=0}^{g(d)} n_d^g(K_{\bP^2}) \cdot(-1)^g (q^\half -q^{-\half})^{2g}.
\end{equation*}
Then:
\begin{equation*}
\begin{split}
\sum_{g= g^k(d) +1}^{g(d)} N_d^g(K_{\bP^2})\cdot  q^g
=& \sum_{g=0}^{g(d)} N_d^g(K_{\bP^2})\cdot (q^g - q^{-g-2}) \Big|_{q^{\geq g^k(d) +1}}\\
=& (1-q^{-2}) \sum_{g=0}^{g(d)} n_d^g(K_{\bP^2}) \cdot(-1)^g (q^\half -q^{-\half})^{2g}
\Big|_{q^{\geq g^k(d)+1 } }.
\end{split}
\end{equation*}
and then by Lemma \ref{lem-gz-seriesmultres} one obtains:
\begin{equation*}
\begin{split}
\sum_{g= g^k(d) +1}^{g(d)} N_d^g(K_{\bP^2})\cdot  q^g
=& (1-q^{-2}) \sum_{g=g^k(d) +1}^{g(d)} n_d^g(K_{\bP^2}) \cdot(-1)^g (q^\half -q^{-\half})^{2g}
\Big|_{q^{\geq g^k(d)+1 } }.
\end{split}
\end{equation*}
And then by Lemma \ref{lem-gz-seriesmultres} and \eqref{eq-gknIqgeq-2g},
one has:
\begin{Proposition}
\label{prop-GVleadingderiveI}
Fix $k\in \{1,2,3,4\}$,
then for $\forall d\geq 4k-3$ one has:
\be
\label{eq-prop-GVleadingderive}
\begin{split}
& \sum_{g= g^k(d) +1}^{g(d)} N_d^g(K_{\bP^2})q^g
= \Big( -q [1]_{q^{-1}}^2 [2]_{q^{-1}}\sum_{j=1}^k \sum_{d_1+\cdots +d_j =d} \frac{(-1)^{j-1}}{j} I(d_1)\cdots I(d_j) \Big)
\Big|_{q^{\geq g^k (d) +1 }}.
\end{split}
\ee
Here we use the notation $[j]_{q^{-1}} = (1-q^{-j})$.
\end{Proposition}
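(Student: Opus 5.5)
The argument simply chains the identities already established just before the statement. It has three steps.

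First, I start from the relation obtained by multiplying \eqref{eq-TGV&GV} by $(1-q^{-2})$:
\begin{equation*}
\sum_{g=0}^{g(d)} N_d^g(K_{\bP^2})\,(q^g - q^{-g-2}) = (1-q^{-2}) \sum_{g=0}^{g(d)} n_d^g(K_{\bP^2})\,(-1)^g (q^\half -q^{-\half})^{2g}.
\end{equation*}
On the left, the terms $-q^{-g-2}$ have degree at most $-2$. For $d\geq 4k-3$ and $k\leq 4$ one checks that $g^k(d)+1\geq -1$; this is the only place the hypothesis enters beyond \eqref{eq-gknIqgeq-2g}, and it is easy to verify, although the case $d$ small relative to $k$ should be double-checked (e.g.\ $k=4,d=13$ gives $g^4(13)=55-52+10=13$). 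Hence truncating to $q^{\geq g^k(d)+1}$ leaves exactly $\sum_{g\geq g^k(d)+1} N_d^g q^g$.

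Second, on the right-hand side I drop the summands with $g\leq g^k(d)$. Each such summand $(q^\half-q^{-\half})^{2g}$ has degree $g\leq g^k(d)$, and after multiplication by $(1-q^{-2})$ the degree is still at most $g^k(d)$, so these summands do not contribute in the range $q^{\geq g^k(d)+1}$. Part 2) of Lemma \ref{lem-gz-seriesmultres} packages exactly this. The result is the displayed identity with the sum restricted to $g\geq g^k(d)+1$.

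Third, I note that $(-1)^g S_g = -\big((-1)^{g-1}S_g\big)$. Then \eqref{eq-gknIqgeq-2g} says
\begin{equation*}
\sum_{g\geq g^k(d)+1} n_d^g (-1)^{g-1}S_g = q[1]_{q^{-1}}^2 \,\Sigma \Big|_{q^{\geq g^k(d)+1}},
\end{equation*}
where $\Sigma$ is the alternating sum of products of the $I(d_i)$. Multiplying both sides by $-(1-q^{-2})=-[2]_{q^{-1}}$, which has degree $0$, preserves the truncation by Lemma \ref{lem-gz-seriesmultres} 2) (or Proposition \ref{prop-degLaurentprod} with $k=0$). This yields \eqref{eq-prop-GVleadingderive}.

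The only mild obstacle is bookkeeping: everything must be treated as a Laurent series in $q^{-1}$, so that the truncation lemmas for multiplication by degree-$\le 0$ factors apply. The overall sign and the factor $q[1]^2[2]$ must also come out exactly as stated.
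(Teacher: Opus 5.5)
Your proof is correct and is essentially the paper's argument: you multiply the relation between $N_d^g$ and $n_d^g$ by $1-q^{-2}$, truncate at $q^{\geq g^k(d)+1}$ (which removes the $q^{-g-2}$ terms), discard the summands with $g\leq g^k(d)$ via Lemma \ref{lem-gz-seriesmultres}, and then combine with \eqref{eq-gknIqgeq-2g} and multiplication by $-[2]_{q^{-1}}$. The only slip is in your parenthetical check: $g(13)=66$, so $g^4(13)=24$ rather than $13$, and this value still satisfies $g^k(d)+1\geq -1$.
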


Here one can further reduce the number of terms in the right-hand side.
In fact,
consider $k=2$ and $d_1+d_2=d$,
and one has:
\begin{equation*}
\deg I(d) -\deg I(d_1)I(d_2)
=\half d^2 - \half d_1^2 -\half d_2^2
= d_1d_2.
\end{equation*}
If $d_1\geq 4$ and $d_2\geq 4$,
one has:
\begin{equation*}
\deg I(d) -\deg I(d_1)I(d_2)
=\half d^2 - \half d_1^2 -\half d_2^2
\geq 4d-16.
\end{equation*}
When $d\geq 16$ one always has:
\begin{equation*}
	\half d(d-3) - (4d-16) < \half d(d-3) -3d+1,
\end{equation*}
and thus for $d\geq 16$ we only need to consider the cases with $d_i < 4$ for at least one $i$
when computing the leading terms in the summation
\begin{equation*}
\sum_{d_1+d_2 =d} \frac{(-1)}{2} I(d_1) I(d_2).
\end{equation*}

And similarly for $k=3,4$,
it is not hard to see that
when restricting to the leading terms we want,
one only need to consider the cases with $d_i +d_j < 4$ for two distinct indices $i,j$ in the summation
\begin{equation*}
\sum_{d_1 +d_2 +d_3 =d} \frac{(-1)^2}{3} I(d_1) I(d_2) I(d_3)
\end{equation*}
for $d\geq 13$;
and we only need to consider the cases with $d_i +d_j +d_k< 4$ for three distinct indices $i,j,k$ in the summation
\begin{equation*}
\sum_{d_1 +d_2 +d_3 +d_4 =d} \frac{(-1)^3}{4} I(d_1) I(d_2) I(d_3) I(d_4)
\end{equation*}
for $d\geq 11$.
Therefore we have:
\begin{Corollary}
For $d\geq 16$,
the leading terms of the transformed Gopakumar--Vafa invariants of local $\bP^2$ are given by:
\be
\label{eq-corGVinI}
\begin{split}
\sum_{g= g(d)-3d+1}^{g(d)} N_d^g(K_{\bP^2})\cdot  q^g =& \bigg(
-q [1]_{q^{-1}}^2 [2]_{q^{-1}} \Big( I(d) -\sum_{i=1}^3 I(i)I(d-i)
+I(1)^2I(d-2) \\
&  + 2I(1)I(2) I(d-3) -I(1)^3 I(d-3) \Big) \bigg)
\bigg|_{q^{\geq \half d(d-3) -3d+2 }}.
\end{split}
\ee
\end{Corollary}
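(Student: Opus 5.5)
The plan is to start from Proposition \ref{prop-GVleadingderiveI} with $k=3$. It applies for $d\geq 9$ and gives $g^3(d)+1 = g(d)-3d+7$. However, the corollary asks for the range $q^{\geq g(d)-3d+1} = q^{\geq \half d(d-3)-3d+2}$, which is deeper.

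So I would rather use $k=4$. It is valid for $d\geq 13$, with $g^4(d)+1 = g(d)-4d+11$. Since $g(d)-4d+11 \leq g(d)-3d+1$ for $d\geq 10$, truncating \eqref{eq-prop-GVleadingderive} at $k=4$ to $q^{\geq g(d)-3d+1}$ gives the left side of \eqref{eq-corGVinI}. The right side becomes $-q[1]^2_{q^{-1}}[2]_{q^{-1}}$ times the alternating sum over $j=1,\dots,4$ of compositions $d_1+\cdots+d_j=d$, truncated at that order.

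By Proposition \ref{prop-degLaurentprod}, multiplying by the degree-$1$ factor $-q[1]^2[2]$ shifts the truncation by one. It therefore suffices to know the sum $\sum_j\frac{(-1)^{j-1}}{j}\sum I(d_1)\cdots I(d_j)$ at $q^{\geq \half d(d-3)-3d+1}$. This means discarding every product whose degree gap $\half d^2-\half\sum d_i^2$ is at least $3d$.

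Next comes the case analysis on compositions. For $j=2$ the gap is $d_1d_2$, and $d_1d_2<3d$ with $d\geq 16$ forces $\min(d_1,d_2)\leq 3$. The case $\min=3$ gives gap $3(d-3)=3d-9<3d$, so it is kept. The survivors are the ordered pairs with the small part in $\{1,2,3\}$, two orders each. After the factor $-\half$ this gives $-\sum_{i=1}^3 I(i)I(d-i)$; for $d\geq 16$ the two orders never coincide.

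For $j=3$ the gap is $d_1d_2+d_1d_3+d_2d_3$. If two parts are small with sum $s$ and the large part is $d-s$, the gap is about $s(d-s)+d_id_j$. This is $<3d$ only for $s\leq 3$; with $s=3$ the gap is $3(d-3)+2=3d-7$, which is kept. If the two smaller parts are both at least $2$ with sum $\geq 4$, then the gap is $\geq 4(d-4)\geq 3d$ for $d\geq 16$. The survivors are $(1,1,d-2)$ with $3$ orderings and $(1,2,d-3)$ with $6$ orderings. With the factor $\frac13$ they contribute $I(1)^2I(d-2)+2I(1)I(2)I(d-3)$.

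For $j=4$ the only survivor is $(1,1,1,d-3)$, with gap $3(d-3)+3=3d-6$. It has $4$ orderings and factor $-\frac14$, giving $-I(1)^3I(d-3)$. Other compositions have three small parts summing to $\geq 4$, so the gap is $\geq 4(d-4)\geq 3d$. The terms $j\geq 5$ are already excluded by the $k=4$ truncation of Lemma \ref{lem-F&Iqgeqkd}.

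The main obstacle is keeping the bookkeeping of thresholds consistent. Each discarded product must have degree strictly below $\half d(d-3)-3d+1$, which requires gap $\geq 3d$. The gap bounds have to be checked carefully at the boundary $d=16$, where $4(d-4)=3d$ exactly, and also for general multisets, for example via the fact that $\sum d_i^2$ is maximized by concentrating mass. I would also double-check that the $k=4$ truncation index is at least as deep as needed. The rest is the counting of orderings above.
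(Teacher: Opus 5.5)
Your proposal is correct and is essentially the paper's argument: Proposition \ref{prop-GVleadingderiveI} with $k=4$, the one-step shift of the truncation coming from the degree-one prefactor, and elimination of every composition with degree gap $\sum_{i<j}d_id_j\geq 3d$; for $d\geq 16$ this leaves exactly the listed terms, with the multiplicities you count. The only slip is in the wording of the $j=3$ case: the discarded compositions are those whose two smaller parts sum to at least $4$, which includes $(1,b,d-1-b)$ with $b\geq 3$, not only those with both small parts at least $2$. Your bound $s(d-s)\geq 4(d-4)\geq 3d$ covers these cases without change.
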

Notice that for $i= 1,2,3$ one always has $\deg I(i) =\half i(i-3) \leq 0$,
and thus in order to compute the restriction $q^{\geq g(d) -3d+1}$,
one can simply replace the terms $I(d)$ and $I(d-i)$ in the right-hand side
by $I(d)|_{q^{\geq \half d(d-3) -3d+1}}$ and $I(d-i)|_{q^{\geq \half d(d-3) -3d+1}}$
respectively by Proposition \ref{prop-degLaurentprod}.

\subsection{Explicit calculations of the leading terms}
\label{eq-compGVleading-expl}

Now we derive explicit formulas for the leading terms of the transformed Gopakumar--Vafa invariants.

First we rewrite \eqref{eq-corGVinI} as follows:
\be
\label{eq-LeaddingGVsum-1}
\begin{split}
\sum_{g= g(d)-3d+1}^{g(d)} N_d^g(K_{\bP^2}) q^g =& \bigg(
-q [1]_{q^{-1}}^2 [2]_{q^{-1}} \Big( I(d) - I(1)I(d-1)
+\big(I(1)^2 -I(2)\big)I(d-2) \\
&  - \big( I(3) - 2I(1)I(2) + I(1)^3 \big) I(d-3) \Big)
\bigg)\bigg|_{q^{\geq \half d(d-3) -3d+2 }}.
\end{split}
\ee
Then we analyze the leading terms of each summand in the right-hand side
using the results in \S \ref{sec-leadingId}.
The leading terms of $I(d)$ has been given in Theorem \ref{thm-main1-leadI-2}.

As for $I(1)I(d-1)$,
by \eqref{eq-degId} one has $\deg I(1) =-1$,
and Theorem \ref{thm-main1-leadI-2} gives:
\begin{equation*}
\begin{split}
I(d-1)
= \frac{ (-1)^{d-1} \cdot q^{ \half (d-1)(d-4)} }{ ([\infty]_{q^{-1}}!)^3}
\Big(  &J_0(d-1) - q^{-d} I(1) J_1(d-1) \\
&+q^{-2d+ 1} I(2) J_2(d-1)
\Big)
\Big|_{q^{\geq \half (d^2-3d) -4d+6}},
\end{split}
\end{equation*}
and thus for $d\geq 4$ one has:
\be
\begin{split}
I(1) I(d-1)
= \frac{ (-1)^{d-1} \cdot q^{ \half (d-1)(d-4)} }{ ([\infty]_{q^{-1}}!)^3}
\Big(  & I(1)J_0(d-1) - q^{-d} I(1)^2 J_1(d-1) \\
&+q^{-2d+ 1} I(1)I(2) J_2(d-1)
\Big)
\Big|_{q^{\geq \half (d^2-3d) -4d+6}}.
\end{split}
\ee
Here we have also used Proposition \ref{prop-degLaurentprod}.
Similarly one has:
\begin{equation*}
\deg \big( I(1)^2 -I(2) \big) =-1,
\end{equation*}
and by Theorem \ref{thm-main1-leadI-2} one has:
\begin{equation*}
\begin{split}
I(d-2)
= \frac{ (-1)^d \cdot q^{ \half (d-2)(d-5)} }{ ([\infty]_{q^{-1}}!)^3}
&\Big(  J_0(d-2) - q^{-d+1} I(1) J_1(d-2) \\
& +q^{-2d+3} I(2) J_2(d-2)
\Big)
\Big|_{q^{\geq \half (d^2-3d) -5d+12}}.
\end{split}
\end{equation*}
Notice that
\begin{equation*}
\deg \big( q^{ \half (d-2)(d-5)} \cdot q^{-2d+3} I(2) J_2(d-2) \big)
 < \half d(d-3)-3d+1,
\qquad \forall d\geq 7,
\end{equation*}
and then for $d\geq 7$ one has:
\be
\begin{split}
&\big( I(1)^2 -I(2) \big) \cdot I(d-2) \\
=& \bigg( \frac{ (-1)^{d}  q^{ \half (d-2)(d-5)} }{ ([\infty]_{q^{-1}}!)^3}
\big( I(1)^2 -I(2) \big) \Big( J_0(d-2) - q^{-d+1} I(1) J_1(d-2)
\Big) \bigg) \bigg|_{q^{\geq \half d(d-3) -3d+1}}.
\end{split}
\ee
Finally,
one has:
\begin{equation*}
\deg\big( I(3) - 2I(1)I(2) + I(1)^3 \big) =0,
\end{equation*}
and
\begin{equation*}
\deg I(d-3) = \half (d-3)(d-6) = \half d(d-3) -3d+9.
\end{equation*}
Moreover,
by Theorem \ref{thm-main1-leadI-2} we know that for $d\geq 7$ one has:
\begin{equation*}
\begin{split}
I(d-3)
= \frac{ (-1)^{d-3} \cdot q^{ \half (d-3)(d-6)} }{ ([\infty]_{q^{-1}}!)^3}
& \Big(  J_0(d-3) - q^{-d+2} I(1) J_1(d-3) \\
& +q^{-2d+5} I(2) J_2(d-3)
\Big)
\Big|_{q^{\geq \half (d^2-3d) -6d+19}}.
\end{split}
\end{equation*}
And for $d\geq 10$ one always has:
\be
\begin{split}
&\big( I(3) - 2I(1)I(2) + I(1)^3 \big) \cdot I(d-3) \\
=& \Big( \frac{ (-1)^{d-3} \cdot q^{ \half (d-3)(d-6)} }{ ([\infty]_{q^{-1}}!)^3}
\big( I(3) - 2I(1)I(2) + I(1)^3 \big) \cdot J_0(d-3)
\Big) \Big|_{q^{\geq \half d(d-3) -3d+1}}
\end{split}
\ee
since
\begin{equation*}
\deg \big( q^{ \half (d-3)(d-6)} \cdot q^{-d+2} I(1) J_1(d-3) \big)
 < \half d(d-3)-3d+1,
\qquad \forall d\geq 10.
\end{equation*}
%Here using the results of Appendix \ref{sec-app-I123} one has:
%\be
%\begin{split}
%I(3) - 2I(1)I(2) + I(1)^3= & -\frac{1}{([3]_{q^{-1}}!)^2}
%\big( 10 + 7q^{-1} -28q^{-2} -60q^{-3} -26q^{-4} +98q^{-5} \\
%& +160 q^{-6} +98q^{-7} -26q^{-8} -60q^{-9} -28q^{-10} +7q^{-11} +10q^{-12}
%\big).
%\end{split}
%\ee

Now plug these into \eqref{eq-LeaddingGVsum-1},
and we obtain:
\be
\label{eq-LeaddingGVsum-2}
\begin{split}
& \sum_{g \geq \half d(d-3)-3d+2} N_d^g(K_{\bP^2})q^g \\
=& \frac{(-1)^{d-1} q^{\half d(d-3) +1} \cdot [1]_{q^{-1}}^2 [2]_{q^{-1}}}{([\infty]_{q^{-1}}!)^3}
\bigg( J_0(d)
+ q^{-d+2} \Big( -q^{-3}I(1)J_1(d) + I(1)J_0(d-1) \Big) \\
& + q^{-2d+5} \Big( q^{-6}I(2)J_2(d) -q^{-3} I(1)^2 J_1(d-1) +\big(I(1)^2-I(2)\big) J_0(d-2)\Big) \\
& + q^{-3d+9} \Big( q^{-6}I(1)I(2)J_2(d-1) -q^{-3} I(1)\big( I(1)^2 -I(2)\big)J_1(d-2) \\
& + \big(I(3)-2I(1)I(2) +I(1)^3 \big) J_0(d-3) \Big)
\bigg)\bigg|_{q^{\geq \half d(d-3) -3d +2}}.
\end{split}
\ee
The explicit formulas for $I(1)$, $I(2)$, $I(3)$ have been computed in \S \ref{sec-app-I123}.
Now we plug these expressions together with \eqref{eq-defa0a1a2} into \eqref{eq-LeaddingGVsum-2}
(and delete some terms whose degree is obviously lower than $\half d(d-3) -3d+2$),
and in this way we finally obtain:
\begin{Theorem}
\label{thm-mainTGV-leading}
For $d\geq 16$,
the leading terms of the transformed Gopakumar--Vafa invariants are given by:
\be
\label{eq-LeaddingGVsum-thm}
\begin{split}
&\sum_{g \geq \half d(d-3)-3d+2} N_d^g(K_{\bP^2})q^g  \\
= &\frac{(-1)^{d-1} q^{\half d(d-3) +1} }{([\infty]_{q^{-1}}!)^3}
 \Big( \tJ_0  + q^{-d+1} \tJ_1  + q^{-2d+4} \tJ_2 + q^{-3d+9} \tJ_3
\Big)\Big|_{q^{\geq \half d(d-3) -3d +2}},
\end{split}
\ee
where:
\begin{equation*}
\begin{split}
\tJ_0 =& [1]_{q^{-1}}^2 [2]_{q^{-1}} \Big(
\binom{d+2}{2} - 3\sum_{i\geq 1}\frac{q^{-i}}{(1-q^{-i})^2} \Big),\\
\tJ_1 =& -3 \cdot  [2]_{q^{-1}} [3]_{q^{-1}}
\Big( \binom{d+1}{2} -3\sum_{i\geq 1} \frac{q^{-i}}{(1-q^{-i})^2} \Big)
+ 9q^{-3} \cdot [2]_{q^{-1}}, \\
\tJ_2 =& 3 \big(-2 +4q^{-2}+8q^{-3}+7q^{-4}+5q^{-5}+3q^{-6}+2q^{-7} \big) \cdot [1]_{q^{-1}}
\Big( \binom{d}{2} -3\sum_{i\geq 1} \frac{q^{-i}}{(1-q^{-i})^2} \Big)  \\
& - \frac{9q^{-4} (3+9 q^{-1}+8q^{-2}+6q^{-3}+6q^{-4}+4q^{-5} )}{1+q^{-1}},
\end{split}
\end{equation*}
and
\begin{equation*}
\begin{split}
\tJ_3 =& -\frac{ 10+7q^{-1}-28q^{-2} -60q^{-3} -26q^{-4} +116q^{-5} +178q^{-6} +62q^{-7} -152q^{-8}}
{[2]_{q^{-1}} [3]_{q^{-1}}^2} \\
& \cdot \Big( \binom{d-1}{2} -3\sum_{i\geq 1} \frac{q^{-i}}{(1-q^{-i})^2} \Big)
-\frac{54q^{-5} (1-q^{-1}-3q^{-2}+q^{-3})}{[1]_{q^{-1}}^2 [2]_{q^{-1}}}.
\end{split}
\end{equation*}
\end{Theorem}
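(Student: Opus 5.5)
The plan is to start from \eqref{eq-LeaddingGVsum-2}, which already expresses the truncated generating series of the $N_d^g$ through $J_0(d-a)$, $J_1(d-a)$, $J_2(d-a)$ and the explicit series $I(1),I(2),I(3)$ of \S\ref{sec-app-I123}. Factor out $q^{\half d(d-3)+1}/([\infty]_{q^{-1}}!)^3$ and multiply the prefactor $[1]_{q^{-1}}^2[2]_{q^{-1}}$ into each bracket. The resulting four groups, at shifts $q^{0}$, $q^{-d+2}$, $q^{-2d+5}$, $q^{-3d+9}$, are to be identified with $\tJ_0$, $q^{-d+1}\tJ_1$, $q^{-2d+4}\tJ_2$, $q^{-3d+9}\tJ_3$. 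For $\tJ_0$ this is immediate.

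The key simplification is that every $J_a(d-b)$ has the form
\[
\binom{d+2-a-b}{2}+3a-3E,\qquad E=\sum_{i\geq1}\frac{q^{-i}}{(1-q^{-i})^2}.
\]
The binomials in a group with $a+b$ fixed agree, and differ from $\binom{d+2-a-b}{2}$ by polynomials in $d$ when $a+b$ varies. For the $q^{-d+2}$ group, write $J_0(d-1)=\binom{d+1}{2}-3E$ and $J_1(d)=\binom{d+1}{2}+3-3E$. The group then equals
\[
I(1)(1-q^{-3})\Big(\binom{d+1}{2}-3E\Big)-3q^{-3}I(1).
\]
Multiplying by $[1]^2[2]$ and using $I(1)[1]^2=-3q^{-1}$ gives $-3q^{-1}[2][3](\cdots)+9q^{-4}[2]$, which is $q^{-1}\tJ_1$ and accounts for the shift $q^{-d+1}$. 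The $q^{-2d+5}$ group is handled the same way. There $J_2(d)$, $J_1(d-1)$ and $J_0(d-2)$ all contain $\binom{d}{2}-3E$ plus constants $6,3,0$. The coefficient of $\binom{d}{2}-3E$ is
\[
q^{-6}I(2)-q^{-3}I(1)^2+I(1)^2-I(2),
\]
and the constant part is $6q^{-6}I(2)-3q^{-3}I(1)^2$. Substituting the formulas for $I(1),I(2)$ and multiplying by $[1]^2[2]q^{1}$ should reproduce the two pieces of $\tJ_2$. The rational function must collapse to a polynomial times $[1]$, with the factor $[2]^{-1}$ surviving only in the constant piece, written there as $(1+q^{-1})^{-1}$ after cancelling $[1]$.

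The $q^{-3d+9}$ group is treated identically, now with the common factor $\binom{d-1}{2}-3E$. Its coefficient is
\[
q^{-6}I(1)I(2)-q^{-3}I(1)(I(1)^2-I(2))+I(3)-2I(1)I(2)+I(1)^3,
\]
and the constant term is $6q^{-6}I(1)I(2)-3q^{-3}I(1)(I(1)^2-I(2))$. Since only terms of degree $\geq -3d+2$ relative to the top are retained and this block starts at relative degree $-3d+9$, we only need these series modulo $q^{-8}$. This is where the paper's phrase ``delete terms of obviously lower degree'' enters: numerators of $\tJ_3$ such as $10+\cdots-152q^{-8}$ are legitimate truncations, not exact identities. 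I will justify each truncation with Proposition~\ref{prop-degLaurentprod}. The point is that multiplying by $1/([\infty]_{q^{-1}}!)^3$ and by $E$ only lowers degrees, so modifying a coefficient of $q^{-j}$ with $j\geq 8$ in the $\tJ_3$ block does not affect $q^{\geq \half d(d-3)-3d+2}$. A similar but weaker tolerance applies to $\tJ_2$, modulo $q^{-2d+\dots}$, which is vacuous for the exact rational expressions used there. Finally, I will restate the range $g\geq \half d(d-3)-3d+2=g(d)-3d+1$ in the normalized form of Theorem~\ref{thm:main GV} by multiplying by $q^{-g(d)}$ via Lemma~\ref{lem-gz-seriesmultres}.

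The main obstacle is the $\tJ_3$ (and to a lesser extent $\tJ_2$) bookkeeping. One has to expand products of $I(1),I(2),I(3)$ over the common denominator $[2]_{q^{-1}}[3]_{q^{-1}}^2$ times powers of $[1]$, and then truncate consistently. I would do this by computer algebra: expand each group as a power series in $q^{-1}$ to order $q^{-8}$, and match it against the claimed closed forms of $\tJ_2$ and $\tJ_3$ multiplied by their denominators. The constraint $d\geq16$ is inherited from Corollary~\eqref{eq-corGVinI} and the degree cutoffs preceding \eqref{eq-LeaddingGVsum-2}.
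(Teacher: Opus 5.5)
Your route is the paper's own: substitute the $J_a(d-b)$ and the explicit $I(1),I(2),I(3)$ of \S\ref{sec-app-I123} into \eqref{eq-LeaddingGVsum-2} and simplify block by block. Grouping by the common factor $\binom{d+2-a-b}{2}-3\sum_{i\ge1}\frac{q^{-i}}{(1-q^{-i})^2}$ is correct and makes the bookkeeping transparent. Carrying it out, $[1]_{q^{-1}}^2[2]_{q^{-1}}$ times the $q^{-d+2}$ block is exactly $q^{-1}\tJ_1$. Likewise $q[1]_{q^{-1}}^2[2]_{q^{-1}}$ times the $q^{-2d+5}$ block is exactly $\tJ_2$, both the polynomial-times-$[1]_{q^{-1}}$ piece and the piece over $1+q^{-1}$, as you predict.

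There is, however, an off-by-one in your truncation bound for the $\tJ_3$ block, and it falls on the last coefficient the theorem asserts. You compare the retained range $\ge -3d+2$, which is measured from $q^{\half d(d-3)}$, with the block offset $-3d+9$, which is measured from $q^{\half d(d-3)+1}$. Measured consistently, a term $c\,q^{-j}$ of $\tJ_3$ first appears in $q^{\half d(d-3)+1}q^{-3d+9}\tJ_3/([\infty]_{q^{-1}}!)^3$ in degree $\half d(d-3)-3d+10-j$. This survives the cut $q^{\ge \half d(d-3)-3d+2}$ if and only if $j\le 8$.

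So $\tJ_3$ must be known through $q^{-8}$ inclusive, i.e.\ modulo $q^{-9}$. Your claim that coefficients of $q^{-j}$ with $j\ge 8$ may be changed freely is false, because the $q^{-8}$ coefficients feed directly into $N_d^{g(d)-3d+1}$. Your own remark that the numerator ending in $-152q^{-8}$ is a legitimate truncation already contradicts a $j\ge 8$ tolerance. The constant piece shows the same thing. The $+q^{-3}$ in $(1-q^{-1}-3q^{-2}+q^{-3})$ is genuine data. The exact expression $[1]_{q^{-1}}^2[2]_{q^{-1}}\big(6q^{-6}I(1)I(2)-3q^{-3}I(1)(I(1)^2-I(2))\big)$ differs from the stated one by $-27(4q^{-9}+2q^{-11}+4q^{-12})/([1]_{q^{-1}}^2[2]_{q^{-1}})$, which begins exactly at $q^{-9}$. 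Similarly, $[2]_{q^{-1}}[3]_{q^{-1}}^2$ times the coefficient of $\binom{d-1}{2}-3\sum\cdots$ agrees with $-(10+7q^{-1}-\cdots-152q^{-8})$ through $q^{-8}$ and not beyond. With the tolerance corrected to $j\ge 9$, and your series check run through $q^{-8}$ inclusive, the computation goes through and reproduces $\tJ_3$.
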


\section{Leading Betti numbers of moduli space $M_d$}
\label{sec-betti}
In this section,
we prove Theorem \ref{thm-intro-GWMConj}.
We will first review several necessary backgrounds,
and then use the formula for leading Gopakumar--Vafa invariants of $K_{\mathbb{P}^2}$ derived in last section
to prove the formula for leading Betti numbers of semistable one-dimensional sheaves on $\mathbb{P}^2$.

\subsection{Relative and local Gromov--Witten invariants}
First we briefly review the concepts of the maximal contact Gromov--Witten invariants of the pair $(\mathbb{P}^2, E)$
and the local invariants of the elliptic curve $E$.
They can be used to compute the Betti numbers of the moduli space $M_d$, see \cite{bou,bfgw,gw,ms} for details.

Fix a smooth cubic curve $E\subset \mathbb{P}^2$,
and let $\overline{\mathcal{M}}_g(\mathbb{P}^2/E,d)$ be the moduli stack of connected relative stable maps of genus $g$ and degree $d$ to the pair $(\mathbb{P}^2, E)$,
with a single relative marking of contact order $3d$ along $E$.
The virtual dimension of $\overline{\mathcal{M}}_g(\mathbb{P}^2/E,d)$ is $g$.
Then the corresponding maximal contact Gromov--Witten invariants of the pair $(\mathbb{P}^2, E)$ are defined by
\begin{align*}
	N^{\mathbb{P}^2/E}_{g,d}
	=(-1)^g  \int_{[\overline{\mathcal{M}}_g(\mathbb{P}^2/E,d)]^{\text{vir}}} \lambda_g
	\in\mathbb{Q},
\end{align*}
where $\lambda_g$ is the top Chern class of the Hodge bundle.

For  the local invariants of the elliptic curve,
consider the rank two vector bundle
\begin{align*}
	N=N_{E/\mathbb{P}^2}\oplus N^{\vee}_{E/\mathbb{P}^2}
	=\mathcal{O}_{E}(3)\oplus\mathcal{O}_E(-3)
\end{align*}
over $E$.
There is an anti-diagonal $\mathbb{C}^*$-action on $N$ of weights $1$ and $-1$,
and we set $t$ to be the equivariant parameter.
For a tuple of positive integers ${\bm d}=(d_1\dots,d_n)$,
the local invariants of the elliptic curve $E$ are defined by
\begin{align*}
	N^{E,tw}_{g,d_E}({\bm d})
	=\int_{[\overline{\mathcal{M}}_{g,n}(E,d_E)]^{vir}} \bigg(\prod_{i=1}^n\frac{t\ \text{ev}_i^* \omega}{t- d_i \psi_i}\bigg)
	e_{\mathbb{C}^*}(-R^\bullet\pi_*f^* N),
\end{align*}
where $\overline{\mathcal{M}}_{g,n}(E,d_E)$ is the moduli stack of connected genus $g$, $n$-marking stable maps of degree $d_E$ to $E$,
$\psi_i$ is the first Chern class of the $i$-th cotangent line bundle,
$\omega$ is the point class of $E$,
and $\pi$ and $f$ are the universal curve and universal map
respectively.
Note that here the integral is homogeneous of degree zero in the equivariant parameter $t$,
so the local invariant $N^{E,tw}_{g,d_E}({\bm d})$ is independent of $t$.
Let ${\bm \beta}=(\beta_1,\dots,\beta_n)$ record the positive plane degrees,
then the generating function $F^{E,tw}_{d_E,{\bm \beta}}(\hbar)$ of local invariants of $E$ is defined by
\begin{align}\label{eqn:def F^E}
	F^{E,tw}_{d_E,{\bm \beta}}(\hbar)
	=\sum_{g=0}^\infty
	(-1)^n\prod_{i=1}^n (3\beta_i)^2
	\cdot N^{E,tw}_{g,d_E}(3\beta_1,\dots,3\beta_n) \cdot \hbar^{2g-2+n}.
\end{align}

On the one hand,
the Betti numbers of the moduli space $M_d$ are related to the maximal contact Gromov--Witten invariants of $(\mathbb{P}^2,E)$ by the refined sheaves/Gromov--Witten correspondence as (see \cite{bou, ms}, and see also \cite{gw})
\begin{align}\label{eqn:betti and relative}
	\frac{(-1)^{d-1}}{3d}
	\sum_{g=0}^\infty N^{\mathbb{P}^2/E}_{g,d}
	\hbar^{2g-1}
	= \sqrt{-1} \cdot \sum_{k|d, k>0}	\frac{(-1)^{d^2/k^2+1}}{k^2}
	\frac{\Omega_{d/k}(y^{k/2})}{y^{k/2}-y^{-k/2}}
\end{align}
where $y=e^{i\hbar}$.
On the other hand,
the maximal contact Gromov--Witten invariants of $(\mathbb{P}^2,E)$ can be related to the Gromov--Witten invariants of $K_{\mathbb{P}^2}$ and the local invariants of $E$ by the all genus local/relative correspondence as (see \cite{bfgw})
\begin{align}\label{eqn:relative GV E}
	K^g_d
	=\frac{(-1)^{d-1}}{3d}N^{\mathbb{P}^2/E}_{g,d}
	+\sum_{n=0}^\infty
	\sum_{g=g_0+\cdots g_n \atop  {d=3d_E+\beta_1+\dots+\beta_n \atop d_E\geq0, \beta_j>0} }
	\frac{(-1)^{|{\bm \beta}|} N^{E,\text{tw}}_{g_0,d_E}(3\beta_1,\dots,3\beta_n)}
	{|\text{Aut}((\beta_1,g_1),\dots,(\beta_n,g_n))|}
	\prod_{j=1}^n (3\beta_j)N^{\mathbb{P}^2/E}_{g_j,\beta_j},
\end{align}
where $|{\bm \beta}|=\sum_{i=1}^n \beta_i$ and
$|\text{Aut}((\beta_1,g_1),\dots,(\beta_n,g_n))|=\{\sigma\in S_n|(\beta_{\sigma(j)},g_{\sigma(j)})=(\beta_j,g_j) \text{\ for\ all\ }j\}$.
The above two relations \eqref{eqn:betti and relative} and \eqref{eqn:relative GV E} can be used to study the properties of the Betti numbers of moduli space $M_d$.

In the rest of this section,
we will use the following notation to package the Gopakumar--Vafa invariants of $K_{\mathbb{P}^2}$.
Consider the generating function
\begin{align*}
	\mathcal{F}_d(y)
	=\sum_{g\geq0} n_{g,d}(-1)^g(y^{1/2}-y^{-1/2})^{2g},
	\qquad A_d(y)
	=(-1)^{d-1} y^{g(d)} \mathcal{F}_d(y).
\end{align*}
Note that we have $\mathcal{F}_d(y)=\mathcal{F}_d(y^{-1})$ and
\begin{align*}
	\mathcal{F}_d(y)\in y^{-g(d)}  \mathbb{Z}[y] ,
\end{align*}
and then $A_d(y)\in\mathbb{Z}[y]$ is a polynomial of degree at most $2g(d)$.
The connected local Gromov--Witten series $\mathcal{W}_\beta(y)$ is defined to be
\begin{align}\label{eqn:W as F}
	\begin{split}
		\mathcal{W}_\beta(y)
		=\sum_{g=0}^\infty K^g_{\beta} \hbar^{2g-2}
		=-\sum_{\alpha|\beta, \alpha>0} \frac{\mathcal{F}_{\beta/\alpha}(y^\alpha)}{\alpha(y^{\alpha/2}-y^{-\alpha/2})^2},
	\end{split}
\end{align}
where $y=e^{i\hbar}$.
Since $\mathcal{F}_d(y)$ and $\mathcal{W}_d(y)$ are invariant under the substituting $y\leftrightarrow y^{-1}$,
the results derived in previous sections can be used here by substituting $q^{-1}=y$.

\subsection{Guo--Wu's explicit formula for the Betti numbers of $M_d$}
In \cite{gw},
Guo and Wu made a important progress on giving explicit formulas for the Betti numbers of moduli space $M_d$.
They use the local/relative correspondence to establish a formula which connects the Betti numbers, Gopakumar--Vafa invariants, relative Gromov--Witten invariants and local invariants of $E$ all together.
In the case of local $\mathbb{P}^2$,
equations (10) and (11) in \cite{gw} give
\begin{align}\label{eqn:10 in GW}
	\begin{split}
	&\sum_{k|d}
	\frac{1}{k(y^{k/2}-y^{-k/2})^2}
	\bigg(\frac{\frac{3d}{k}(-1)^{\frac{d^2}{k^2}}\Omega_{d/k}(y^{k/2})}{\frac{y^{3d/2}-y^{-3d/2}}{y^{k/2}-y^{-k/2}}} +  \mathcal{F}_{d/k}(y^k)\bigg)\\
	=&\sum_{s\geq0} \sum_{d_E, \beta_1,\dots,\beta_s>0 \atop 3d_E+|{\bm \beta}|=d}
	\frac{1}{|Aut({\bm \beta})|} G^E_{d_E,{\bm \beta}} \prod_{i=1}^s \mathcal{W}^{K_{\mathbb{P}^2}}_{\beta_i},
	\end{split}
\end{align}
where $G^E_{d_E,{\bm \beta}}$ is a Laurent polynomial given by a sum of rooted trees using the generating function $F^{E,tw}_{d_E,{\bm \beta}}(\hbar)$ of the local invariants of $E$ defined in equation \eqref{eqn:def F^E}.
In this paper,
the following two facts are essentially required to compute the Betti numbers of the moduli space $M_d$.
The first one is the following closed formula for $G^E_{1,{\bm \beta}}(y)$ (see equation (13) in \cite{gw}):
\begin{align}\label{eqn:G d=1 formula}
	G^E_{1,{\bm \beta}}(y)
	=\prod_{i=1}^s (y^{-3\beta_i/2}-y^{3\beta_i/2})^2.
\end{align}
The second one is the following degree bound for the $d_E\geq2$ case (see Lemma 4.4 in \cite{gw}):
\begin{align}
	G^E_{d_E,{\bm \beta}}(y)
	\in y^{-9g(d_E)-3(d_E-1)|{\bm \beta}|} \mathbb{Q}[y].
\end{align}

\subsection{Several degree estimates}
In this subsection,
we study the degree bounds of terms appearing in Guo--Wu's formula \eqref{eqn:10 in GW}.
For convenience,
we introduce the following two notations:
\begin{align}\label{eqn:Def BL}
	B_{L;k,d}(y) =
	y^{g(d)}\frac{(y^{1/2}-y^{-1/2})^2}{k(y^{k/2}-y^{-k/2})^2}
	\Bigg(\frac{\frac{3d}{k}(-1)^{\frac{d^2}{k^2}+1}\Omega_{d/k}(y^{k/2})}{\frac{y^{3d/2}-y^{-3d/2}}{y^{k/2}-y^{-k/2}}} -  \mathcal{F}_{d/k}(y^k)\Bigg)
\end{align}
and
\begin{align}\label{eqn:Def BR}
	B_{R;d_E,{\bm \beta},{\bm \alpha}}(y)
	=(-1)^{s+1} y^{g(d)} (y^{1/2}-y^{-1/2})^2
	\frac{G^E_{d_E,{\bm \beta}}(y)}{|Aut({\bm \beta})|} \prod_{i=1}^s \frac{\mathcal{F}_{\beta_i/\alpha_i}(y^{\alpha_i})}{\alpha_i(y^{\alpha_i/2}-y^{-\alpha_i/2})^2},
\end{align}
where ${\bm \beta}=(\beta_1,\dots,\beta_s)$ and ${\bm \alpha}=(\alpha_1,\dots,\alpha_s)$ satisfying $\alpha_i|\beta_i$ for $i=1,\dots,s$.
Here the subscripts $L$ and $R$ stand for left and right respectively.
Then by using equation \eqref{eqn:W as F},
Guo--Wu's formula \eqref{eqn:10 in GW} can be rewritten as:
\begin{align}\label{eqn:main GW}
	\sum_{k|d}
	B_{L;k,d}(y)
	=\sum_{s\geq0} \sum_{d_E, \beta_i,\alpha_i>0, \atop {\alpha_i|\beta_i, 1\leq i\leq s \atop 3d_E+|{\bm \beta}|=d}}
	B_{R;d_E,{\bm \beta},{\bm \alpha}}(y).
\end{align}
In the following,
we expand all functions at $y=0$.
We have the following degree bounds of functions appearing in the above formula.
\begin{Lemma}\label{lem:degree bound}
	For given $k,d$ such that $k>1$ and $k|d$, we have
	\begin{align}\label{eqn:LHS k>1 deg}
		B_{L;k,d}(y)
		\in y^\frac{d^2(k-1)}{2k} \mathbb{Q}[y].
	\end{align}
	For given $s$ and $\beta_1,\alpha_1,\dots,\beta_s,\alpha_s>0$ such that $\alpha_i|\beta_i$ and $3d_E+|{\bm \beta}|=d$,
	we denote
	$$\delta({\bm \beta}, {\bm \alpha}) =\frac{1}{2}\sum_{i=1}^s \frac{\beta_i^2(\alpha_i-1)}{\alpha_i}.$$
	Then for $d_E=1$, we have
	\begin{align}\label{eqn:RHS dE=1 deg}
		B_{R;1,{\bm \beta},{\bm \alpha}}(y)
		\in y^{\sum_{i<j}\beta_i\beta_j+\delta({\bm \beta}, {\bm \alpha})}
		\mathbb{Q}[y],
	\end{align}
	whereas for $d_E\geq2$,
	\begin{align}\label{eqn:RHS dE>1 deg}
		B_{R;d_E,{\bm \beta},{\bm \alpha}}(y)
		\in y^{3d-9+\sum_{i<j}\beta_i\beta_j+\delta({\bm \beta}, {\bm \alpha})}	\mathbb{Q}[y],
	\end{align}
	where $d=3d_E+\sum_{i=1}^s\beta_i$.
\end{Lemma}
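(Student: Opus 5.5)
The plan is to compute the order of vanishing at $y=0$ of each factor separately and add them up. Two elementary facts will be used throughout. First, $(y^{a/2}-y^{-a/2})^2 = y^{-a}(1-y^a)^2$, and $(1-y^a)^{\pm 2}$ is a unit in $\mathbb{Q}[[y]]$. Second, since $A_d(y)=(-1)^{d-1}y^{g(d)}\mathcal{F}_d(y)\in\mathbb{Z}[y]$, we have $\mathcal{F}_{e}(y^{\alpha})\in y^{-\alpha g(e)}\mathbb{Z}[y]$. Similarly, $\Omega_e(y^{1/2}) = y^{-(e^2+1)/2}P_e(y)$ with $P_e$ a polynomial, so $\Omega_e(y^{k/2})\in y^{-k(e^2+1)/2}\mathbb{Q}[y]$. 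I will read all memberships as statements about $\mathbb{Q}[[y]]$, i.e. lower bounds on the order at $y=0$; this is what is needed later.

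\emph{Left-hand side, $k>1$.} Put $e=d/k$. For the $\mathcal{F}$ term, the prefactor $y^{g(d)}(y^{1/2}-y^{-1/2})^2/(k(y^{k/2}-y^{-k/2})^2)$ has order $g(d)-1+k$, and $\mathcal{F}_e(y^k)$ has order at least $-kg(e)$. The total is $g(d)-1+k-kg(e)$. Using $g(m)=\tfrac12 m(m-3)+1$, this equals $\tfrac12 d^2-\tfrac32 d +k -\tfrac{k}{2}e^2+\tfrac32 d - k = \tfrac{d^2}{2}(1-\tfrac1k)$, as claimed.

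For the $\Omega$ term, the ratio $(y^{k/2}-y^{-k/2})/(y^{3d/2}-y^{-3d/2})$ has order $3d/2-k/2$. Combining this with the order $\ge -k(e^2+1)/2$ of $\Omega_e(y^{k/2})$, the total order is at least
\[
g(d)-1+k+\tfrac{3d}{2}-\tfrac{k}{2}-\tfrac{k(e^2+1)}{2} = \tfrac{d^2}{2}-\tfrac{d^2}{2k},
\]
which is the same bound. (I should double-check that the sign convention in $\Omega$ introduces no half-integer powers. Since $P_e$ is palindromic of degree $e^2+1$, $\Omega_e$ is a Laurent polynomial in $y^{1/2}$, and integrality of the final exponent is automatic once the sum of exponents is an integer.)

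\emph{Right-hand side.} The prefactor $y^{g(d)}(y^{1/2}-y^{-1/2})^2$ has order $g(d)-1=\tfrac12 d(d-3)$. Each factor $\mathcal{F}_{\beta_i/\alpha_i}(y^{\alpha_i})/(\alpha_i(y^{\alpha_i/2}-y^{-\alpha_i/2})^2)$ has order at least $\alpha_i-\alpha_i g(\beta_i/\alpha_i)$. This simplifies to $\tfrac32\beta_i-\tfrac{\beta_i^2}{2\alpha_i} = \tfrac32\beta_i-\tfrac12\beta_i^2+\tfrac12\beta_i^2(\alpha_i-1)/\alpha_i$. Summing over $i$ gives the order contribution $\tfrac32|\bm\beta|-\tfrac12\sum\beta_i^2+\delta(\bm\beta,\bm\alpha)$.

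For $d_E=1$, formula \eqref{eqn:G d=1 formula} gives $G^E_{1,\bm\beta}$ of order exactly $-3|\bm\beta|$. With $d=3+|\bm\beta|$, the total order is $\tfrac12 d(d-3)-\tfrac32|\bm\beta|-\tfrac12\sum\beta_i^2+\delta$. Now $\tfrac12 d(d-3)=\tfrac12(|\bm\beta|+3)|\bm\beta| = \tfrac12\sum\beta_i^2+\sum_{i<j}\beta_i\beta_j+\tfrac32|\bm\beta|$, so the total collapses to $\sum_{i<j}\beta_i\beta_j+\delta$.

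For $d_E\ge2$, Lemma 4.4 of \cite{gw} gives order at least $-9g(d_E)-3(d_E-1)|\bm\beta|$. Writing $b=|\bm\beta|$ and $d=3d_E+b$, the prefactor gives
\[
\tfrac12 d(d-3)=\tfrac92 d_E^2+3d_Eb+\tfrac12 b^2-\tfrac92 d_E-\tfrac32 b .
\]
The total order, minus $\sum_{i<j}\beta_i\beta_j+\delta$, is then
\[
\tfrac92 d_E^2-\tfrac92 d_E-9g(d_E)+3b = \tfrac92 d_E^2-\tfrac92 d_E-\tfrac92 d_E^2+\tfrac{27}{2}d_E-9+3b = 9d_E-9+3b = 3d-9,
\]
which is exactly \eqref{eqn:RHS dE>1 deg}.

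The computations are routine. The only delicate point, and the one I would check most carefully, is the $\Omega$ term on the left: its bound relies on $P_e$ being a genuine polynomial with nonnegative exponents, together with the order of $(y^{3d/2}-y^{-3d/2})^{-1}$. Here the two terms must be checked to give the same bound rather than one dominating.
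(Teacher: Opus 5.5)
Your proposal is correct, and for the two right-hand-side bounds it is the paper's own argument: you count the orders of $y^{g(d)}(y^{1/2}-y^{-1/2})^2$, of each factor $\mathcal{F}_{\beta_i/\alpha_i}(y^{\alpha_i})/(\alpha_i(y^{\alpha_i/2}-y^{-\alpha_i/2})^2)$, and of $G^E_{d_E,\bm\beta}$, using the explicit formula when $d_E=1$ and Guo--Wu's Lemma 4.4 when $d_E\ge 2$. The one difference is the $k>1$ bound, which the paper imports from the proof of Guo--Wu's Proposition 5.1; you instead check directly that the $\Omega$-term and the $\mathcal{F}$-term each have order at least $\frac{d^2(k-1)}{2k}$, so no cancellation is needed and your argument is self-contained.
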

\begin{proof}
	Equation \eqref{eqn:LHS k>1 deg} has essentially been proved by Guo--Wu \cite{gw}.
	In their proof of Proposition 5.1,
	they proved that when $k>1$,
	\begin{align}
		\frac{(y^{1/2}-y^{-1/2})^2}{(y^{k/2}-y^{-k/2})^2}
		\Bigg(\frac{\frac{3d}{k}(-1)^{\frac{d^2}{k^2}+1}\Omega_{d/k}(y^{k/2})}{\frac{y^{3d/2}-y^{-3d/2}}{y^{k/2}-y^{-k/2}}} -  \mathcal{F}_{d/k}(y^k)\Bigg)
		\in y^{-\frac{d^2}{2k}+\frac{3d}{2}-1}\mathbb{Q}[y].
	\end{align}
	Notice that $-\frac{d^2}{2k}+\frac{3d}{2}-1+g(d)=\frac{d^2(k-1)}{2k}$.
	
	Next,
	we prove equation \eqref{eqn:RHS dE=1 deg}.
	We use the explicit formula \eqref{eqn:G d=1 formula} for $G^E_{1,{\bm \beta}}(y)$ to obtain
	\begin{align}\label{eqn:BRd=1 formula}
		\begin{split}
		B_{R;1,{\bm \beta},{\bm \alpha}}(y)
		=&\frac{(-1)^{1+\sum_i(\beta_i/\alpha_i)} y^{g(d)-1-3(d-3)+|{\bm \alpha}|-\sum_{i=1}^s\alpha_i g(\beta_i/\alpha_i)} (1-y)^2}{|\text{Aut}({\bm \beta})|}\\
		&\cdot\prod_{i=1}^s \frac{(1-y^{3\beta_i})^2}{\alpha_i(1-y^{\alpha_i})^2} A_{\beta_i/\alpha_i}(y^{\alpha_i})
		\end{split}
	\end{align}
	Consequently,
	the lowest degree term in $B_{R;1,(\beta_1,\dots,\beta_s),(\alpha_1,\dots,\alpha_s)}(y)$ is exactly
	\begin{align*}
		\frac{(-1)^{1+\sum_i(\beta_i/\alpha_i)} y^{g(d)-1-3(d-3)+|{\bm \alpha}|-\sum_{i=1}^s\alpha_i g(\beta_i/\alpha_i)}}{|\text{Aut}({\bm \beta})|} \prod_{i=1}^s \frac{A_{\beta_i/\alpha_i}(0)}{\alpha_i},
	\end{align*}
	which is of degree
	\begin{align*}
		g(d)-1-3(d-3)+|{\bm \alpha}|-\sum_{i=1}^s\alpha_i g(\beta_i/\alpha_i)
		=&\frac{1}{2}(d-3)^2-\frac{1}{2}\sum_{i=1}^s\frac{\beta_i^2}{\alpha_i}
		=\sum_{i<j}\beta_i\beta_j
		+\delta({\bm \beta}, {\bm \alpha}).
	\end{align*}
	
	To prove the third equation \eqref{eqn:RHS dE>1 deg},
	we first recall that,
	in Lemma 4.4 of \cite{gw},
	Guo and Wu have proved the following:
	\begin{align}
		G^E_{d_E,{\bm \beta}}
		\in y^{-9g(d_E)-3(d_E-1)\sum_{i=1}^s\beta_i} \mathbb{Q}[y]
	\end{align}
	for $d_E>1$.
	Thus from the definition \eqref{eqn:Def BR} of $B_{R;d_E,{\bm \beta},{\bm \alpha}}(y)$,
	its lowest $y$-degree is at least
	\begin{align*}
		g(d)-1-9g(d_E)-3(d_E-1)\sum_{i=1}^s\beta_i
		+\sum_{i=1}^s\big(\alpha_i-\alpha_i g(\beta_i/\alpha_i)\big)
		=3d-9+\sum_{i<j}\beta_i\beta_j
		+\delta({\bm \beta}, {\bm \alpha}).
	\end{align*}
	This completes the proof of equation \eqref{eqn:RHS dE>1 deg}.
\end{proof}

\subsection{The formula for $\widehat \Omega_d(y)$ up to degree $3d-10$}
In this subsection,
we finish the proof of Conjecture \ref{thm:main leading betti 3d-10},
which provides a closed formula for $\widehat{\Omega}_d(y)$ up to degree $3d-10$.

Based on the degree bounds in Lemma \ref{lem:degree bound},
we first show that only one term in the left-hand side and eleven terms in the right-hand side of equation \eqref{eqn:main GW}  make non-trivial contributes when restricting to $y^{\leq3d-10}$.
More precisely,
we have:
\begin{Lemma}\label{lem:up to 3d-10}
	For $d\geq16$,
	we have
	\begin{align}\label{eqn:up to 3d-10}
		B_{L;1,d}(y)
		=\sum_{\big((\beta_i), (\alpha_i)\big)}
		B_{R;1,{\bm \beta},{\bm \alpha}}(y)
		|_{y^{\leq 3d-10}},
	\end{align}
	where the sum runs over the following eleven cases:
	\begin{align*}
		\big((\beta_i), (\alpha_i)\big)=&\big((d-3), (1)\big);\  \big((1,d-4), (1,1)\big); \ \big((2,d-5), (1,1)\big); \ \big((2,d-5), (2,1)\big); \\
		&\big((3,d-6), (1,1)\big);\ \big((3,d-6), (3,1)\big);\  \big((1,1,d-5), (1,1,1)\big); \ \big((1,2,d-6), (1,1,1)\big);\\ &\big((1,2,d-6), (1,2,1)\big); \ \big((1,1,1,d-6), (1,1,1,1)\big);\ \big((4,d-7),(1,1)\big).
	\end{align*}
	Moreover,
	the last case $\big((4,d-7),(1,1)\big)$ vanishes after truncation $y^{\leq 3d-10}$
	when $d\geq19$.
\end{Lemma}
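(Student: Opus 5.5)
The plan is to start from the rewritten Guo--Wu identity \eqref{eqn:main GW} and discard, term by term, everything whose lowest $y$-degree exceeds $3d-10$, using the bounds of Lemma \ref{lem:degree bound}. Three families of terms have to be handled: the left-hand terms with $k>1$, the right-hand terms with $d_E\geq 2$, and the right-hand terms with $d_E=1$. The case $d_E=0$ needs separate treatment. There $G^E_{0,{\bm \beta}}$ is the degree-zero contribution, which should reduce to the single summand with $s=1$, $\beta_1=d$. I expect it to cancel against the $\mathcal F_d$ part of $B_{L;1,d}$; this is precisely why $B_{L}$ was defined with $-\mathcal F_{d/k}(y^k)$. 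I would check that bookkeeping first against the definitions \eqref{eqn:Def BL}, \eqref{eqn:Def BR}.

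\textbf{Left side, $k>1$.} By \eqref{eqn:LHS k>1 deg}, $B_{L;k,d}$ starts in degree $\geq d^2(k-1)/(2k)\geq d^2/4$. For $d\geq 16$ we have $d^2/4 > 3d-10$, so all of these terms vanish under the truncation.

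\textbf{Right side, $d_E\geq2$.} By \eqref{eqn:RHS dE>1 deg}, the degree is at least $3d-9+\sum_{i<j}\beta_i\beta_j+\delta \geq 3d-9$. This already exceeds $3d-10$, so these terms drop out as well.

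\textbf{Right side, $d_E=1$.} Here $|{\bm \beta}|=d-3$ and the lower bound is $D=\sum_{i<j}\beta_i\beta_j+\delta({\bm \beta},{\bm \alpha})$. I need to enumerate all pairs $({\bm \beta},{\bm \alpha})$ with $D\leq 3d-10$.

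First suppose $s\geq 2$ and order $\beta_1\leq\cdots\leq\beta_s$. Write $m=\beta_1+\cdots+\beta_{s-1}$, so $\beta_s=d-3-m$. Then $\sum_{i<j}\beta_i\beta_j\geq m(d-3-m)$. This quadratic in $m$ exceeds $3d-10$ once $5\leq m\leq d-8$. The endpoint cases where $m$ is near $d-3$ would force several large parts. In that regime $\sum_{i<j}\beta_i\beta_j$ is roughly $d^2/4$ or more, which is again too large for $d\geq16$. So we may take $m\leq 4$.

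For $m=4$ we get $4(d-7)=4d-28$, which is $\leq 3d-10$ exactly when $d\leq18$. This gives the extra case $((4,d-7),(1,1))$. The partitions $(1,3)$, $(2,2)$, $(1,1,2)$ and $(1^4)$ of $m=4$ add further cross terms, pushing them above $3d-10$. Any $\alpha>1$ adds $\delta>0$: for $\beta=4$ it is $\geq 4$, and for the big part it is $\geq (d-7)^2/4$. Hence only $((4,d-7),(1,1))$ survives, and only for $d\leq 18$, which is the final claim of the lemma.

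For $m\leq 3$ I list the partitions of $m$ (with $\emptyset$ corresponding to $s=1$) together with their cross terms. This yields $(d-3)$, $(1,d-4)$, $(2,d-5)$, $(1,1,d-5)$, $(3,d-6)$, $(1,2,d-6)$ and $(1,1,1,d-6)$. I then test which choices of $\alpha_i>1$ keep $D\leq 3d-10$. The big part must have $\alpha=1$, since otherwise $\delta\geq (d-6)^2/4$, far too large. On the small parts, $\alpha_i=\beta_i$ costs $\delta=\beta_i(\beta_i-1)/2$, and the cases with $\alpha_i=\beta_i=2$ or $3$ stay within the budget. Checking each candidate explicitly should produce exactly the eleven listed cases.

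The main obstacle I anticipate is making this enumeration airtight: the boundary values $m\approx d-3$ need care, as do the precise constant $3d-10$ when $\delta$ and cross terms combine. A second delicate point is the $d_E=0$ cancellation, which I would verify directly from \eqref{eqn:10 in GW} before doing anything else.
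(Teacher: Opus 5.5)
Your argument for the three families is the paper's. You discard $k>1$ via \eqref{eqn:LHS k>1 deg} and $d_E\ge 2$ via \eqref{eqn:RHS dE>1 deg}. For $d_E=1$ you bound $\sum_{i<j}\beta_i\beta_j\ge m(d-3-m)$, where your $m=d-3-\max_i\beta_i$ is the paper's $a$. You then force $m\le 4$ and $\alpha=1$ on the large part, and enumerate. Your eleven cases and the cutoff $4d-28\le 3d-10\iff d\le 18$ for $((4,d-7),(1,1))$ are right. Two corrections are needed.

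\textbf{No $d_E=0$ term.} Drop the $d_E=0$ paragraph. The sum in \eqref{eqn:main GW}, like the one in Guo--Wu's \eqref{eqn:10 in GW}, already runs only over $d_E>0$, so there is no degree-zero term on the right to treat. Your expectation that the $-\mathcal F_d$ part of $B_{L;1,d}$ cancels against something is wrong, and acting on it would be harmful. The minus sign in \eqref{eqn:Def BL} comes only from the overall normalization: $B_{L;k,d}$ is $-y^{g(d)}(y^{1/2}-y^{-1/2})^2$ times the corresponding term of \eqref{eqn:10 in GW}, and this factor also flips the sign of the $\Omega$-term. The lemma deliberately keeps all of $B_{L;1,d}=(-1)^{d^2+1}3d\,\widehat\Omega_d(y)-(-1)^{d-1}A_d(y)$ on the left. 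That $A_d$ is exactly what drives \eqref{eqn:omega as A}. Cancelling it would give a different, incorrect identity.

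\textbf{A numerical slip.} In the $m\le 3$ step you say that $\alpha>1$ on the large part gives $\delta\ge (d-6)^2/4$, which is ``far too large''. On its own it is not: $(d-6)^2/4>3d-10$ only for $d\ge 21$, and at $d=16$ it is $25<38$. You must add the cross term, as the paper does. The inequality
\[
m(d-3-m)+\tfrac{1}{4}(d-3-m)^2>3d-10
\]
holds for all $0\le m\le 4$ and $d\ge 16$, which closes the step.

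Your endpoint regime $m\ge d-7$ is best handled as in the paper. There $\max_i\beta_i\le 4<(d-3)/2$, so
\[
\sum_{i<j}\beta_i\beta_j\ge \tfrac12 (d-3)(d-3-\max_i\beta_i)>\tfrac14 (d-3)^2>3d-10 .
\]
This replaces the ``roughly $d^2/4$'' heuristic with an actual bound.
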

\begin{proof}
	We start from equation \eqref{eqn:main GW} and analysis the degree of each term.
	We first focus on the terms in the left-hand side of equation \eqref{eqn:main GW}.
	For $k>1$,
	from equation \eqref{eqn:LHS k>1 deg} and
	\begin{align*}
		\frac{d^2(k-1)}{2k}\geq\frac{d^2}{4}>3d-10,
	\end{align*}
	we have
	\begin{align}
		B_{L;k,d}(y)
		\in y^{3d-9} \mathbb{Q}[y]
	\end{align}
	for $k>1$.
	Thus,
	only the $k=1$ case can contribute.
	
	Next, we focus on the terms in the right-hand side of equation \eqref{eqn:main GW}.
	For $d_E\geq2$,
	from equation \eqref{eqn:RHS dE>1 deg} and $3d-9>3d-10$,
	we have
	\begin{align*}
		B_{R;d_E,{\bm \beta},{\bm \alpha}}(y)
		\in y^{3d-9}\mathbb{Q}[y]
	\end{align*}
	when $d_E\geq2$.
	For $d_E=1$,
	from equation \eqref{eqn:RHS dE=1 deg},
	the lowest degree of $B_{R;1,{\bm \beta},{\bm \alpha}}(y)$  is $\sum_{i<j}\beta_i\beta_j+\delta({\bm \beta}, {\bm \alpha})$.
	We have
	\begin{align*}
		\sum_{i<j}\beta_i\beta_j
		=\frac{(d-3)^2-\sum_i \beta_i^2}{2}
		\geq\frac{(d-3)^2-(d-3)\max_i\{\beta_i\}}{2}.
	\end{align*}
	If $\max_i\{\beta_i\}<(d-3)/2$,
	then $\sum_{i<j}\beta_i\beta_j>(d-3)^2/4>3d-10$ since $d\geq16$.
	This kind of term cannot contribute.
	If there is one $\beta_{i_0}\geq (d-3)/2$,
	then we set $\beta_{i_0}=d-3-a$ and
	\begin{align*}
		\sum_{i<j}\beta_i\beta_j
		\geq a\big(d-3-a\big)
		=ad-3a-a^2,
	\end{align*}
	which is greater than $3d-10$ unless $a=0,1,2,3,4$.
	Moreover,
	in these cases if $\alpha_{i_0}\neq1$,
	then
	\begin{align*}
		\delta({\bm \beta}, {\bm \alpha})=\frac{1}{2}\sum_{i=1}^s \frac{\beta_i^2(\alpha_i-1)}{\alpha_i}
		\geq \frac{\beta_{i_0}^2(\alpha_{i_0}-1)}{2\alpha_{i_0}}
		\geq \frac{(d-3-a)^2}{4}
	\end{align*}
	and the lowest degree is $\sum_{i<j}\beta_i\beta_j+\delta({\bm \beta}, {\bm \alpha})\geq a(d-3-a)+\frac{(d-3-a)^2}{4}> 3d-10$.
	This means only the term satisfying $\alpha_{i_0}=1$ can contribute.
	In particular,
	if $a=4$,
	then the inequality $ad-3a-a^2\leq 3d-10$ holds only when $d=16, 17, 18$.
	One can directly verify that only the term satisfying $\big(({\bm \beta},{\bm \alpha})\big)=\big((4,d-7),(1,1)\big)$ can contribute.
	
	As a consequence,
	after restricting to the $y^{\leq 3d-10}$ part in both sides of equation \eqref{eqn:main GW},
	the $({\bm \beta},{\bm \alpha})$ of non-trivial terms are precisely the cases listed in the statement of this lemma.
\end{proof}

\begin{Theorem}
	\label{eq-Confirmgwconj}
	For $d\geq4$, we have
	\begin{align}\label{eqn:omega  up to 3d-10}
		\widehat{\Omega}_d(y)
		=\frac{[1]_y^2}{[\infty]_y!^3}\bigg(1-3y^{d-1}\frac{[3]_y}{[1]_y^2}+
		3y^{2d-4}\frac{[3]_y(-2+2y+4y^2+2y^3+y^4+2y^5)}{[1]_y^2 [2]_y}\bigg)
		\Big|_{y^{\leq 3d-10}}.
	\end{align}
\end{Theorem}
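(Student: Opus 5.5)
The plan is to start from Lemma \ref{lem:up to 3d-10}, valid for $d\geq 16$, which reduces the identity \eqref{eqn:main GW}, truncated to $y^{\leq 3d-10}$, to
\[
B_{L;1,d}(y)=\sum_{((\beta_i),(\alpha_i))} B_{R;1,{\bm\beta},{\bm\alpha}}(y)\Big|_{y^{\leq 3d-10}},
\]
with eleven explicit terms on the right. By \eqref{eqn:Def BL} with $k=1$,
\[
B_{L;1,d}=y^{g(d)}\Big(\frac{3d(-1)^{d^2+1}\Omega_d(y^{1/2})(y^{1/2}-y^{-1/2})}{y^{3d/2}-y^{-3d/2}}-\mathcal F_d(y)\Big).
\]
Since $\Omega_d(y^{1/2})=y^{-(d^2+1)/2}P_d(y)$, the first term equals, up to a sign and the factor $3d$, $y^{(d^2-3d+2)/2-(d^2+1)/2+3d/2}\,\widehat\Omega_d(y)\cdot(\ldots)$. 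A short computation should make the power of $y$ vanish, so this first term is $\pm 3d\,\widehat\Omega_d(y)$ exactly. The second term is $(-1)^{d}A_d(y)$.

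So I will solve for $\widehat\Omega_d$:
\[
3d\,\widehat\Omega_d(y)=\pm\Big(A_d(y)\pm\sum B_{R;1,\ldots}\Big)\Big|_{y^{\leq 3d-10}}.
\]
For $A_d$ I use Theorem \ref{thm-mainTGV-leading}. Since $\mathcal F_d(y)=\sum_g N^g_d(y^g+\cdots+y^{-g})$, $A_d$ is obtained from $\sum_{g} N^{g}_d q^{g}$ with $q^{-1}=y$, after multiplying by $y^{g(d)}$ and dividing by $(1-y^2)$ to recover the symmetric sum. The truncation range is fine, because the terms $y^{-g}$ with large $g$ have positive degree beyond $3d$. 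This gives $A_d|_{y^{\leq 3d-1}}$ as $(-1)^{d-1}\cdot(\tJ_0+y^{d-1}\tJ_1+y^{2d-4}\tJ_2+y^{3d-9}\tJ_3)/([\infty]_y!^3(1-y^2))$, with $q\to y^{-1}$.

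For the eleven $B_R$ terms I use the closed form \eqref{eqn:BRd=1 formula}. Each one involves $A_{\beta}(y^{\alpha})$ for small $\beta\in\{1,2,3,4\}$, which are explicit polynomials ($A_1=3$, and so on, via $\mathcal F$ from $I(1),I(2),I(3)$). It also involves $A_{d-3-a}(y)$ for $a\leq 4$, which I again take from Theorem \ref{thm-mainTGV-leading} applied to $d-3-a$, truncated to the needed range. The prefactor degrees $a(d-3-a)$ ensure only the first one or two $\tJ$'s of each $A_{d-3-a}$ matter. The term $((4,d-7),(1,1))$ only matters for $d=16,17,18$, and I will check it explicitly.

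Summing, the $d$-linear parts should combine to exactly cancel the binomial coefficients and Eisenstein series in the $\tJ_i$. The $3d$ coefficient of $\widehat\Omega_d$ must match, since $\widehat\Omega_d$ is independent of the $J$-type $d$-dependence. What remains is then compared with the conjectured rational function, truncated to degree $3d-10$. The cases $4\leq d\leq 15$ are handled separately. For these, $3d-10\leq 35$, and they can be verified by direct computation of Betti numbers from known tables, or Guo--Wu's formula; in small cases the truncation is below $2d-11$-type ranges already covered by \cite{gw}.

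The main obstacle is the bookkeeping of the cancellation. One has to track signs $(-1)^{\beta_i/\alpha_i}$, automorphism factors (for example $1/2$ for $(1,1,d-5)$ and $1/6$ for $(1,1,1,d-6)$), and the shifts of $\binom{\cdot}{2}$ in $d$ across $A_{d-3-a}$, and show that all terms proportional to $d$ or to $\sum q^{-i}/(1-q^{-i})^2$ cancel. I would first verify the cancellation of the $E_2$-type series, which forces the structure, and then do the remaining finite rational-function identity by computer algebra coefficientwise.
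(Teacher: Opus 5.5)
Your route is the paper's: reduce via Lemma \ref{lem:up to 3d-10}, identify $B_{L;1,d}=(-1)^{d^2+1}3d\,\widehat\Omega_d-(-1)^{d-1}A_d$, evaluate the eleven $B_R$ terms from \eqref{eqn:BRd=1 formula}, substitute the leading Gopakumar--Vafa formula, and check that everything collapses to $3d$ times the target. The gap is in the range where that substitution is legitimate.

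The term $A_{d-3}$ appears with no power of $y$ in front. So it is needed through degree $3d-10=3(d-3)-1$, which is the full depth of Theorem \ref{thm-mainTGV-leading}, and that theorem is only proved in degree $\geq 16$. Your generic argument therefore covers $d\geq 19$, not $d\geq 16$.

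This is not a formality. For $e=d-3\in\{13,14,15\}$ the $3e$-level formula for $A_e$ is actually wrong from degree $4e-16=4(d-7)$ on. The reason is that the product $I(4)I(e-4)$, discarded when the formula was derived, contributes there. And $4(d-7)$ is exactly the lowest degree of the $((4,d-7),(1,1))$ term. So if you plug the formula into $A_{13},A_{14},A_{15}$ and then add the explicitly computed $(4,d-7)$ term, as you propose, an uncancelled remainder is left. For instance, you get $-990y^{40}-3717y^{41}$ in $3d\,\widehat\Omega_d$ at $d=17$ and $-1170y^{44}$ at $d=18$, and the identity appears to fail.

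The missing observation is that the two defects cancel: the paper checks for $d=17,18$ that the error in $-A_{d-3}$ is exactly minus the $((4,d-7),(1,1))$ contribution. Accordingly, the paper does three things:
\begin{itemize}
\item it runs the generic computation only for $d\geq 19$, where the $(4,d-7)$ term vanishes after truncation;
\item it treats $d=17,18$ by computing this correction and exhibiting the cancellation;
\item it uses Guo--Wu's tabulated $\widehat\Omega_d$ for $4\leq d\leq 16$.
\end{itemize}

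Your fallback for small $d$ via the Guo--Wu $2d-11$ range cannot work, since $3d-10>2d-11$ for every $d$; only the direct data check does.

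Two smaller slips:
\begin{itemize}
\item The $(-1)^{d-1}$ in the definition of $A_d$ cancels the one in Theorem \ref{thm-mainTGV-leading}. So $A_d=(\tJ_0+y^{d-1}\tJ_1+y^{2d-4}\tJ_2+y^{3d-9}\tJ_3)/([2]_y[\infty]_y!^3)$ through $y^{3d-1}$ carries no sign, as in \eqref{eqn:A_d formula}.
\item All four $\tJ_i$ of $A_{d-3}$ contribute, and three of those of $A_{d-4}$ do, not ``one or two''.
\end{itemize}
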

\begin{proof}
	We start from the equation \eqref{eqn:up to 3d-10}.
	When $d\geq16$,
	from the definition equation \eqref{eqn:Def BL} of $B_{L;k,d}(y)$,
	the left-hand side of equation \eqref{eqn:up to 3d-10} is given by
	\begin{align*}
		B_{L;1,d}(y)
		=(-1)^{d^2+1}3d \hat{\Omega}_d(y)
		-(-1)^{d-1}A_d(y).
	\end{align*}
	To compute the right-hand side of equation \eqref{eqn:up to 3d-10},
	we need to compute all those eleven terms,
	which will be accomplished in Lemma \ref{lem:first one} to Lemma \ref{lem:last one a=4}.
	As a consequence,
	we obtain
	\begin{align}\label{eqn:omega as A}
		\begin{split}
		3d\widehat{\Omega}_d(y)
		=&A_d(y)-A_{d-3}(y)
		-3y^{d-4}(1+y+y^2)^2A_{d-4}(y)
		-y^{2d-10}P(y)A_{d-5}(y)\\
		&-y^{3d-18}Q(y)A_{d-6}(y)
		-y^{4d-28}\frac{(15+12y+48y^2)A_{d-7}(y)}{(1-y)^2}
		\ \Big|_{y^{\leq 3d-10}},
		\end{split}
	\end{align}
	where the functions $P(y)$ and $Q(y)$ are given by
	\begin{align*}
		P(y)=&3(1+y+y^2)^2 (2+y+4y^2+7y^3+4y^4+y^5+2y^6),\\
		Q(y)=&10+27y+72y^2+154y^3+306y^4+531y^5+848y^6+1197y^7+1512y^8,
	\end{align*}
	and in Lemma \ref{lem:last one a=4},
	only the $y^{\leq2}$ part of $A_4(y)$ can contribute since we assume $d\geq16$ here.
	Now,
	we can use the leading Gopakumar--Vafa formula in Theorem \ref{thm:main GV} to compute the $\widehat{\Omega}_d(y)|_{y^{\leq 3d-10}}$.
	Recall that from equation \eqref{eq-LeaddingGVsum-thm main},
	after setting $q^{-1}=y$,
	the $A_d(y)$ is given by
	\begin{align}\label{eqn:A_d formula}
		A_d(y)=&\frac{1}{[2]_y ([\infty]_{y}!)^3}
		\Big( \tilde{J}_0(y)  + y^{d-1} \tilde{J}_1(y)   + y^{2d-4} \tilde{J}_2(y)  + y^{3d-9} \tilde{J}_3(y)
		\Big)\Big|_{y^{\leq 3d-1}}.
	\end{align}
	Note that the above formula holds only for $d\geq16$.
	So in order to substitute it to compute the $\widehat{\Omega}_d(y)$ using equation \eqref{eqn:omega as A},
	we first need to restrict us to the region $d\geq19$ because of the appearance of the term $A_{d-3}(y)$.
	
	Now in the case $d\geq19$,
	we use equations \eqref{eqn:omega as A} and \eqref{eqn:A_d formula} to compute $\widehat{\Omega}_d(y)$.
	After factoring out the common denominator $\frac{1}{[\infty]_y!^3}$,
	we collect the terms according to their initial powers as $y^0, y^{d-4}, y^{2d-10}, y^{3d-18}$.
	We use the notation:
	\begin{align}
		L_n(y)=\binom{n+2}{2} -3\sum_{i=1}^\infty \frac{y^i}{(1-y^i)^2}.
	\end{align}
	As a consequence,
	the coefficient of $y^0$ comes from $A_d$ and $A_{d-3}$,
	which is equal to
	\begin{align*}
		(1-y)^2\big(L_d(y)-L_{d-3}(y)\big)
		=3d[1]_y^2.
	\end{align*}
	The coefficient of $y^{d-4}$ comes from $A_d, A_{d-3}$ and $A_{d-4}$,
	which is equal to
	\begin{align*}
		3(1-y^3)\big(-3y^3+L_{d-4}(y)-y^3L_{d-1}(y)\big)-3(1+y+y^2)^2(1-y)^2L_{d-4}(y)
		=-9dy^3[3]_y |_{y^{\leq 2d-6}}.
	\end{align*}
	The coefficient of $y^{2d-10}$ comes from $A_d, A_{d-3}, A_{d-4}$ and $A_{d-5}$,
	which is equal to
	\begin{align*}
		&\frac{3[3]_y(-2+2y+4y^2+2y^3+y^4+2y^5)}{[2]_y}
		\big(y^6L_{d-2}(y)-L_{d-5}(y)\big)\\
		&+9y^4\frac{[1]_y[6]_y(3+9y+8y^2+6y^3+6y^4+4y^5)}
		{[2]_y^2}
		+9y\frac{[3]_y^2}{[1]_y^2}\big([3]_yL_{d-5}(y)-3y^3\big)
		-[1]_y^2P(y)L_{d-5}(y)\\
		=&9dy^6\frac{[3]_y(-2+2y+4y^2+2y^3+y^4+2y^5)}{[2]_y} |_{y^{\leq d}}.
	\end{align*}
	The coefficient of $y^{3d-18}$ comes from $A_{d-3}, A_{d-4}, A_{d-5}$ and $A_{d-6}$,
	which is equal to
	\begin{align*}
		&\frac{10+7y-28y^2-60y^3-26y^4+116y^5+178y^6+62y^7-152y^8}
		{[2]_y^2[3]_y^2} L_{d-6}\\
		&+54y^5\frac{(1-y-3y^2+y^3)}{[1]_y^2[2]_y^2}
		-9y^2\frac{[3]_y^2(-2+4y^2+8y^3+7y^4+5y^5+3y^6+2y^7)}
		{[1]_y[2]_y} L_{d-6}\\
		&+27y^6\frac{[3]_y^2(3+9y+8y^2+6y^3+6y^4+4y^5)}
		{[1]_y[2]_y^2}
		+3y^2P(y)\big([3]_yL_{d-6}-3y^3\big)
		-[1]_y^2Q(y)L_{d-6}
		=0 |_{y^{\leq 8}}.
	\end{align*}
	The coefficient of $y^{4d-28}$ cannot contribute since $d\geq19$ as we remark in Lemma \ref{lem:up to 3d-10}.
	
	Next,
	we deal with the finitely many degrees $4\leq d\leq 18.$
	For the case of $d=17,18$,
	only the substitution for the term $A_{d-3}(y)$ and the coefficient of $y^{4d-28}$ require corrections.
	Actually,
	on the one hand,
	by direct computations	the corrections from $-A_{d-3}(y)$ are
	\begin{align}\label{eqn:correction A_{d-3}}
		990y^{40}+3717y^{41} |_{y^{\leq 3\times 17-10}},
		\quad \text{and}\quad 1170y^{44} |_{y^{\leq 3\times 18-10}}
	\end{align}
	for $d=17$ and $18$, respectively.
	On the other hand,
	the corrections from the coefficient of $y^{4d-28}$ can be directly computed by Lemma \ref{lem:last one a=4},
	which are
	\begin{align*}
		-y^{4d-28}\frac{(15+12y) A_{d-7}(y)}{(1-y)^2}|_{y^{\leq 3d-10}}
		=-990y^{40}-3717y^{41}|_{y^{\leq 3\times 17-10}},
		\quad \text{and}\quad -1170y^{44} |_{y^{\leq 3\times 18-10}}
	\end{align*}
	for $d=17$ and $18$, respectively.
	The above corrections cancel with the corrections \eqref{eqn:correction A_{d-3}} from $-A_{d-3}(y)$.
	Thus equation \eqref{eqn:omega  up to 3d-10} holds for $d=17$ and $18$.
	For the case of $4\leq d\leq16$,
	Guo and Wu provide a complete list of $\widehat{\Omega}_d(y)$ in their website, see \S 6 in \cite{gw}.
	Then the equation \eqref{eqn:omega  up to 3d-10} for these cases can be directly checked using their data.
\end{proof}

Below,
we compute the contributions from right-hand side of equation \eqref{eqn:up to 3d-10} explicitly,
which have been used in the computations of the previous theorem.
\begin{Lemma}\label{lem:first one}
	For $d\geq7$,
	we have
	\begin{align}\label{eqn:first one}
		\begin{split}
			&B_{R;1,(d-3),(1)}(y)
			+B_{R;1,(1,d-4),(1,1)}(y)\\
			=&(-1)^d\Big(A_{d-3}(y)
			+3y^{d-4}(1+y+y^2)^2A_{d-4}(y)\Big) \Big|_{y^{\leq 3d-10}}.
		\end{split}
	\end{align}
\end{Lemma}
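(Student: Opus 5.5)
The plan is to evaluate both terms directly from the closed formula \eqref{eqn:BRd=1 formula} for $B_{R;1,{\bm \beta},{\bm \alpha}}(y)$, since all $\alpha_i=1$ here. For ${\bm \beta}=(d-3)$, ${\bm \alpha}=(1)$ we have $s=1$ and $|\text{Aut}|=1$. The exponent is
\[
g(d)-1-3(d-3)+1-g(d-3).
\]
Using $g(d)-g(d-3)=3d-9$, this exponent vanishes. The sign is $(-1)^{1+(d-3)}=(-1)^d$. The remaining factor is
\[
\frac{(1-y)^2(1-y^{3(d-3)})^2}{(1-y)^2}A_{d-3}(y)=(1-y^{3d-9})^2A_{d-3}(y).
\]
Since $A_{d-3}(y)$ is a polynomial and $(1-y^{3d-9})^2=1-2y^{3d-9}+y^{6d-18}$, truncating to $y^{\leq 3d-10}$ kills every correction. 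So this term equals $(-1)^dA_{d-3}(y)$ on $y^{\leq 3d-10}$.

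For ${\bm \beta}=(1,d-4)$, ${\bm \alpha}=(1,1)$, the two entries are distinct because $d\geq 7$ gives $d-4\geq 3>1$, so $|\text{Aut}|=1$. The sign is $(-1)^{1+1+(d-4)}=(-1)^d$. The exponent is
\[
g(d)-1-3(d-3)+2-g(1)-g(d-4).
\]
With $g(1)=0$ and $g(d)-g(d-4)=4d-10$, this exponent equals $d-4$, consistent with $\sum_{i<j}\beta_i\beta_j=d-4$ from Lemma \ref{lem:degree bound}. The product factor is
\[
(1-y)^2\cdot\frac{(1-y^3)^2}{(1-y)^2}A_1(y)\cdot\frac{(1-y^{3(d-4)})^2}{(1-y)^2}A_{d-4}(y).
\]
We need $A_1(y)$. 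By definition $A_1=y^{g(1)}\mathcal{F}_1=n_{0,1}$, and $n^0_1(K_{\bP^2})=3$ is the classical genus-zero degree-one invariant. This can be checked from Theorem \ref{thm:main GW}'s $I(1)$ via \eqref{eq-fdf1dqgeqd}: $F(1)=I(1)=-3q^{-1}/(1-q^{-1})^2=-3/(q^{1/2}-q^{-1/2})^2$, giving $n^0_1=3$. Using $(1-y^3)^2/(1-y)^2=(1+y+y^2)^2$, the term becomes
\[
3(-1)^dy^{d-4}(1+y+y^2)^2(1-y^{3d-12})^2A_{d-4}(y).
\]

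The only point needing care is this last truncation. The factor $(1-y^{3d-12})^2$ contributes corrections starting at degree $(d-4)+(3d-12)=4d-16$. This exceeds $3d-10$ precisely when $d>6$, so for $d\geq7$ these corrections drop out, and one must also confirm that $A_{d-4}$ is a genuine polynomial with no negative powers. The hypothesis $d\geq 7$ is also what makes $|\text{Aut}({\bm \beta})|=1$. Summing the two terms gives \eqref{eqn:first one}. I expect no real obstacle beyond double-checking the sign conventions in \eqref{eqn:BRd=1 formula} and the value $A_1=3$.
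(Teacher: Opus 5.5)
Your proposal is correct and is essentially the paper's proof. Like the paper, you evaluate both terms with \eqref{eqn:BRd=1 formula} and $A_1(y)=3$, obtaining $(-1)^d(1-y^{3d-9})^2A_{d-3}(y)$ and $3(-1)^dy^{d-4}(1+y+y^2)^2(1-y^{3d-12})^2A_{d-4}(y)$, and then replace the factors $(1-y^{3d-9})^2$ and $(1-y^{3d-12})^2$ by $1$ for $d\geq 7$. There is one arithmetic slip: $g(d)-g(d-4)=4d-14$, not $4d-10$ (with $4d-10$ the exponent would come out as $d$), but the exponent $d-4$ you actually state is the correct one.
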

\begin{proof}
	From the definition \eqref{eqn:Def BR} of $B_{R;d_E,(\beta_1,\dots,\beta_s),(\alpha_1,\dots,\alpha_s)}(y)$
	and the explicit formula \eqref{eqn:G d=1 formula} for $G^E_{1,{\bm \beta}}(y)$,
	we have
	\begin{align*}
		B_{R;1,(d-3),(1)}(y)
		=&(-1)^{d}(1-y^{3d-9})^2A_{d-3}(y),\\
		B_{R;1,(1,d-4),(1,1)}(y)=&
		(-1)^d 3y^{d-4} \frac{[3]_y^2}{[1]_y^2}(1-y^{3d-12})^2A_{d-4}(y),
	\end{align*}
	where in the second equality,
	we have used $A_1(y)=3$.
	Since we are only interested in the $y^{\leq 3d-10}$ part,
	both of the factors
	\begin{align*}
		(1-y^{3d-9})^2=1-2y^{3d-9}+y^{6d-18}
		\text{\ \ and\ \ } (1-y^{3d-12})^2=1-2y^{3d-12}+y^{6d-24}
	\end{align*}
	could be replaced by $1$ when $d\geq7$.
	Thus we obtain equation \eqref{eqn:first one}.
\end{proof}

\begin{Lemma}
	For $d\geq8$,
	we have
	\begin{align}\label{eqn:second one}
		\begin{split}
			&B_{R;1,(2,d-5),(1,1)}(y)
			+B_{R;1,(2,d-5),(2,1)}(y)
			+B_{R;1,(1,1,d-5),(1,1,1)}(y)\\
			=&(-1)^d y^{2d-10} P(y) A_{d-5}(y) |_{y^{\leq 3d-10}},
		\end{split}
	\end{align}
	where we recall that $P(y)=3(1+y+y^2)^2 (2+y+4y^2+7y^3+4y^4+y^5+2y^6)$.
\end{Lemma}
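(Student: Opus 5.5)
The plan mirrors the proof of Lemma \ref{lem:first one}. I will write out each of the three summands with the closed formula \eqref{eqn:BRd=1 formula}, which comes from \eqref{eqn:G d=1 formula}. Then I will drop the factors $(1-y^{3\beta_i})^2$ coming from $\beta_i = d-5$, and finally add the three results.

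Throughout, take $d=3+\sum\beta_i$ and use $g(1)=g(2)=0$, $A_1(y)=3$. From \eqref{eq-LeaddingGVsum-thm main} one has $A_2(y)=-(1-y)^{-2}\cdot(\text{degree-}0\text{ part})$; more directly, $A_2$ is obtained from the genus $0$ degree $2$ Gopakumar--Vafa invariant $n^0_2=-6$, so $A_2(y)=6$ up to the sign convention $(-1)^{d-1}$.

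The exponent in \eqref{eqn:BRd=1 formula} is $\sum_{i<j}\beta_i\beta_j+\delta(\bm\beta,\bm\alpha)$. I expect the three cases to give the following.
\begin{itemize}
\item[1)] For $((2,d-5),(1,1))$ the exponent is $2(d-5)$, and $|\mathrm{Aut}|=1$ since $d\geq 8$ gives $d-5\neq 2$. The summand is
\[
(-1)^{d}y^{2d-10}\frac{[6]_y^2}{[1]_y^2}(1-y^{3d-15})^2A_2(y)A_{d-5}(y),
\]
with the overall sign read off from $(-1)^{1+\sum\beta_i/\alpha_i}$ together with the sign of $A_2$.
\item[2)] For $((2,d-5),(2,1))$ the exponent is $2(d-5)+1$, because $\delta=1$. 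The summand is
\[
y^{2d-9}\frac{[6]_y^2}{2[2]_y^2}A_1(y^2)A_{d-5}(y)\cdot(1-y^{3d-15})^2
\]
with the sign $(-1)^{1+1+(d-5)}$.
\item[3)] For $((1,1,d-5),(1,1,1))$ the exponent is $1+2(d-5)$ and $|\mathrm{Aut}|=2$. The summand is
\[
\tfrac12\, y^{2d-9}\frac{[3]_y^4}{[1]_y^4}\cdot 9\, A_{d-5}(y)(1-y^{3d-15})^2,
\]
again with the sign determined by \eqref{eqn:BRd=1 formula}.
\end{itemize}

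For $d\geq 8$, the correction $2y^{3d-15}$ multiplied by $y^{2d-10}$ has degree $5d-25>3d-10$. So after the truncation $|_{y^{\leq 3d-10}}$ every factor $(1-y^{3d-15})^2$ can be replaced by $1$, exactly as in Lemma \ref{lem:first one}.

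It then remains to check the polynomial identity
\[
\pm 6\frac{[6]_y^2}{[1]_y^2}\pm 3y\frac{[6]_y^2}{[2]_y^2}\cdot\tfrac{1}{2}\cdot 2\pm\tfrac{9}{2}y\frac{[3]_y^4}{[1]_y^4}=P(y).
\]
To do this, I will factor out $3[3]_y^2/[1]_y^2=3(1+y+y^2)^2$ and verify that the remaining combination equals $2+y+4y^2+7y^3+4y^4+y^5+2y^6$. This uses $[6]_y/[2]_y=1+y^2+y^4$ and $[6]_y/[3]_y=1+y^3$.

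The main obstacle is bookkeeping: getting the signs and the automorphism factors right. The factor $1/\alpha_i$, the substitution $A_{\beta_i/\alpha_i}(y^{\alpha_i})$, and the precise value and sign of $A_2$ are where errors are likely. I will fix these by checking the lowest coefficient against the known value $2\cdot 3=6$ at the constant term of $P$ divided by 3. The remaining divisibility, namely that the rational combination is actually a polynomial, should follow once $[1]_y^4$ is cleared and terms are matched degree by degree up to $y^{10}$.
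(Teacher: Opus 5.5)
Your argument is the paper's proof. Both evaluate the three terms with \eqref{eqn:BRd=1 formula} using $A_1=3$ and $A_2=6$, replace $(1-y^{3d-15})^2$ by $1$ after truncation, and sum. Your three case formulas agree with the paper's: exponents $2d-10,\,2d-9,\,2d-9$, signs $(-1)^d,(-1)^{d+1},(-1)^d$, and prefactors $6,\tfrac32,\tfrac92$. So does your truncation estimate $5d-25>3d-10$.

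There is one slip to fix. In your final identity the middle coefficient must be $\tfrac{1}{\alpha_1}A_1(y^2)=\tfrac32$, as in your own second bullet, not $3\cdot\tfrac12\cdot 2=3$. With $3$ the identity already fails at $y^1$ for every choice of signs. Your planned constant-term check cannot catch this, because the last two terms both carry a factor $y$.

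With signs $+,-,+$, use $[6]_y/[1]_y=(1+y+y^2)(1+y^3)$ and $[6]_y/[2]_y=(1+y+y^2)(1-y+y^2)$, then factor out $3(1+y+y^2)^2$. This leaves
\[
2(1+y^3)^2-\tfrac12 y(1-y+y^2)^2+\tfrac32 y(1+y+y^2)^2=2+y+4y^2+7y^3+4y^4+y^5+2y^6,
\]
as required. There is also no divisibility issue: each quotient $[6]_y/[1]_y$, $[6]_y/[2]_y$, $[3]_y/[1]_y$ is already a polynomial.

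Finally, your aside obtaining $A_2$ from \eqref{eq-LeaddingGVsum-thm main} is not valid, since that formula requires $d\geq 16$. The direct value $A_2=(-1)^{1}n^0_2=6$ is the one you need.
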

\begin{proof}
	The functions $B_{R;1,(2,d-5),(1,1)}(y), B_{R;1,(2,d-5),(2,1)}(y)$ and $B_{R;1,(1,1,d-5),(1,1,1)}(y)$ can be explicitly computed by
	\begin{align*}
		B_{R;1,(2,d-5),(1,1)}(y)
		=&(-1)^d 6y^{2d-10} \frac{[6]_y^2}{[1]_y^2}(1-y^{3d-15})^2A_{d-5}(y),\\
		B_{R;1,(2,d-5),(2,1)}(y)
		=&(-1)^{d+1} \frac{3}{2}y^{2d-9} \frac{[6]_y^2}{[2]_y^2}(1-y^{3d-15})^2A_{d-5}(y),\\
		B_{R;1,(1,1,d-5),(1,1,1)}(y)
		=&(-1)^d \frac{9}{2}y^{2d-9} \frac{[3]_y^4}{[1]_y^4}(1-y^{3d-15})^2A_{d-5}(y),
	\end{align*}
	where we have used $A_1(y)=3$ and $A_2(y)=6$.
	When restricting to the $y^{\leq 3d-10}$ part and assuming $d\geq8$,
	we have
	\begin{align*}
		B_{R;1,(2,d-5),(1,1)}(y)
		=&(-1)^d 6y^{2d-10} (1+y+y^2+y^3+y^4+y^5)^2 A_{d-5}(y) |_{y^{\leq 3d-10}},\\
		B_{R;1,(2,d-5),(2,1)}(y)
		=&(-1)^{d+1} \frac{3}{2}y^{2d-9} (1+y^2+y^4)^2 A_{d-5}(y) |_{y^{\leq 3d-10}},\\
		B_{R;1,(1,1,d-5),(1,1,1)}(y)
		=&(-1)^d \frac{9}{2}y^{2d-9} (1+y+y^2)^4 A_{d-5}(y) |_{y^{\leq 3d-10}},
	\end{align*}
	which give the equation \eqref{eqn:second one}.
\end{proof}

\begin{Lemma}\label{lem:last one}
	For $d\geq10$,
	we have
	\begin{align}\label{eqn:last one}
		\begin{split}
			&B_{R;1,(3,d-6),(1,1)}(y)
			+B_{R;1,(3,d-6),(3,1)}(y)
			+B_{R;1,(1,2,d-6),(1,1,1)}(y)\\
			&\ \ +B_{R;1,(1,2,d-6),(1,2,1)}(y)
			+B_{R;1,(1,1,1,d-6),(1,1,1,1)}(y)
			=(-1)^d y^{3d-18}Q(y)  A_{d-6}(y) |_{y^{\leq 3d-10}},
		\end{split}
	\end{align}
	where $Q(y)=10+27y+72y^2+154y^3+306y^4+531y^5
	+848y^6+1197y^7+1512y^8$.
\end{Lemma}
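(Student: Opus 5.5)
The plan follows the two previous lemmas. Start from the closed form \eqref{eqn:BRd=1 formula} of $B_{R;1,{\bm \beta},{\bm \alpha}}(y)$, which rests on \eqref{eqn:G d=1 formula}. Write each of the five terms explicitly, using the small-degree values $A_1(y)=3$ and $A_2(y)=6$, together with $A_3(y)$. One gets $A_3(y)=(-1)^2y\,\mathcal{F}_3(y)=y(n_{1,3}\cdot(\cdots)+n_{0,3})$. Since $n_{0,3}=27$ and $n_{1,3}=-10$, this gives $A_3(y)=-10+27y-10y^2$ after the sign conventions are sorted out. I will fix the signs by checking $A_d(0)=(-1)^{d-1}\cdot(-1)^{g(d)}n_{g(d),d}$ against the known value $(-1)^{d+1}\binom{d+2}{2}$ from \eqref{eqn:A_d formula}. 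In particular this gives $A_3(0)=10$, consistent with the constant $10$ in $Q$. For ${\bm \alpha}=(3,1)$ one needs $A_1(y^3)=3$, and for $(1,2,1)$ one needs $A_1(y^2)=3$.

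Next compute, for each of the five cases, the exponent $g(d)-1-3(d-3)+|{\bm \alpha}|-\sum\alpha_i g(\beta_i/\alpha_i)$, which the proof of Lemma \ref{lem:degree bound} identified as $\sum_{i<j}\beta_i\beta_j+\delta({\bm \beta},{\bm \alpha})$. The values are:
\begin{itemize}
\item $3d-18$ for $(3,d-6),(1,1)$;
\item $3d-18+3$ for $(3,d-6),(3,1)$;
\item $3d-18+2$ for $(1,2,d-6),(1,1,1)$;
\item $3d-18+3$ for $(1,2,d-6),(1,2,1)$;
\item $3d-18+3$ for $(1,1,1,d-6),(1,1,1,1)$.
\end{itemize}
Then record the automorphism factors $1,1,1,1,1/6$, the sign $(-1)^{1+\sum\beta_i/\alpha_i}$, and the prefactors $\prod (1-y^{3\beta_i})^2/(\alpha_i(1-y^{\alpha_i})^2)$. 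The factor $(1-y)^2$ in \eqref{eqn:BRd=1 formula} cancels one $(1-y)^{-2}$ coming from the $\beta_i=d-6$ entry with $\alpha_i=1$. Concretely, the five terms become $(-1)^d y^{3d-18}$ times the following:
\begin{itemize}
\item $A_3(y)(1+\cdots+y^8)^2$;
\item $-\tfrac13 y^3 A_1(y^3)(1+y^3+y^6)^2$;
\item $-3\cdot 6\, y^2(1+y+y^2)^2(1+\cdots+y^5)^2$;
\item $+\tfrac{3\cdot 3}{2} y^3(1+y+y^2)^2(1+y^2+y^4)^2$;
\item $\tfrac{27}{6}y^3(1+y+y^2)^6$.
\end{itemize}
Each term also carries the factor $(1-y^{3d-18})^2A_{d-6}(y)$. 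I will recheck the signs carefully from $(-1)^{1+\sum\beta_i/\alpha_i}$ and $d-6\equiv d$.

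Truncate to $y^{\leq 3d-10}$. Only the coefficients of $y^0,\dots,y^8$ of the bracketed polynomial survive, since everything multiplies $y^{3d-18}A_{d-6}(y)$ with $A_{d-6}\in\mathbb{Z}[y]$. The factor $(1-y^{3d-18})^2$ can be replaced by $1$ as soon as $3d-18+3d-18>3d-10$, i.e.\ $d\geq 9$. With $d\geq10$ this holds with margin, and the condition also guarantees that each $\beta_{i_0}=d-6$ is the unique large part, so the listed automorphism groups are correct. The remaining task is to sum the five polynomials modulo $y^9$ and verify that the result equals $Q(y)$ coefficientwise. For example, the constant term is $A_3(0)=10$ and the $y^1$ coefficient is $2\cdot10+A_3'(0)=20+(-?)$. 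These two checks calibrate the sign of the $A_3$ linear term against $27$ and the $y^2$ contributions.

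The main obstacle is bookkeeping rather than any conceptual step: getting $A_3(y)$ and all signs and automorphism factors right so that the truncated sum reproduces exactly $Q(y)$. I will first fix the conventions on the constant and linear coefficients ($10$ and $27$), and then expand the rest, which is a finite polynomial computation.
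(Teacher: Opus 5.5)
Your strategy is the paper's: write each of the five terms via \eqref{eqn:BRd=1 formula}, replace $(1-y^{3d-18})^2$ by $1$, and keep only the $y^{\le 8}$ part of the polynomial prefactor. But this lemma \emph{is} that finite computation, and several of the inputs you record are wrong, so the check as you set it up fails.

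First, $A_3$. From the definition, $\mathcal F_3(y)=n_{0,3}-n_{1,3}(y^{1/2}-y^{-1/2})^2=27+10(y-2+y^{-1})$, so $A_3(y)=y\,\mathcal F_3(y)=10+7y+10y^2$. Your $-10+27y-10y^2$ drops both the factor $(-1)^g$ and the middle term of $(y^{1/2}-y^{-1/2})^2=y-2+y^{-1}$. It also contradicts your own claim $A_3(0)=10$.

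Second, the signs. The sign $(-1)^{1+\sum_i\beta_i/\alpha_i}$ is $(-1)^d$ in every case except $\big((1,2,d-6),(1,2,1)\big)$, where it is $(-1)^{d+1}$. So the brackets multiplying $(-1)^d y^{3d-18}(1-y^{3d-18})^2A_{d-6}(y)$ are
\begin{gather*}
(10+7y+10y^2)(1+y+\cdots+y^8)^2,\qquad +\,y^3(1+y^3+y^6)^2,\qquad +\,18y^2(1+y+y^2)^2(1+y+\cdots+y^5)^2,\\
-\tfrac{9}{2}\,y^3(1+y+y^2)^2(1+y^2+y^4)^2,\qquad +\,\tfrac{9}{2}\,y^3(1+y+y^2)^6 .
\end{gather*}
You have flipped the second, third and fourth. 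With your data the coefficients of $y^0,y^1,y^2$ come out as $-10,7,-4$ instead of $10,27,72$.

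Your proposed repair, fixing the conventions by matching the coefficients $10$ and $27$ of $Q$, is circular: the inputs must be determined independently of the identity you are proving. The auxiliary ``known value'' $A_d(0)=(-1)^{d+1}\binom{d+2}{2}$ is also wrong for even $d$: it would give $A_2(0)=-6$, whereas $A_2=6$; in fact $A_d(0)=\binom{d+2}{2}$. Moreover, \eqref{eqn:A_d formula} is only established for $d\ge 16$, so it cannot be invoked at $d=3$. Simply use $n_{0,3}=27$ and $n_{1,3}=-10$.

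With the correct inputs the verification goes through. Modulo $y^9$:
\begin{itemize}
\item the first bracket contributes $10$ at $y^0$ and $27k$ at $y^k$ for $1\le k\le 8$;
\item the second contributes $y^3+2y^6$;
\item the third contributes $18,72,180,324,486,648,774$ at $y^2,\dots,y^8$;
\item the last two together contribute $0,18,72,198,360,522$ at $y^3,\dots,y^8$.
\end{itemize}
Summing these gives exactly $Q(y)$.
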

\begin{proof}
	From the equations \eqref{eqn:G d=1 formula} and \eqref{eqn:Def BR},
	the first function $B_{R;1,(3,d-6),(1,1)}(y)$ is given by
	\begin{align}\label{eqn:B_{R;1,(3,d-6),(1,1)}(y)}
		B_{R;1,(3,d-6),(1,1)}(y)
		=&(-1)^d y^{3d-18} \frac{[9]_y^2}{[1]_y^2}(10+7y+10y^2)(1-y^{3d-18})^2A_{d-6}(y),
	\end{align}
	where we have used $A_3(y)=10+7y+10y^2$.
	The next four terms can be computed similarly as
	\begin{align}\label{eqn:4B}
		\begin{split}
			B_{R;1,(3,d-6),(3,1)}(y)
			=&(-1)^d y^{3d-15} \frac{[9]_y^2}{[3]_y^2}(1-y^{3d-18})^2A_{d-6}(y),\\
			B_{R;1,(1,2,d-6),(1,1,1)}(y)
			=&(-1)^d 18y^{3d-16} \frac{[3]_y^2[6]_y^2}{[1]_y^4}(1-y^{3d-18})^2A_{d-6}(y),\\
			B_{R;1,(1,2,d-6),(1,2,1)}(y)
			=&(-1)^{d+1} \frac{9}{2}y^{3d-15} \frac{[3]_y^2[6]_y^2}{[1]_y^2[2]_y^2}(1-y^{3d-18})^2A_{d-6}(y),\\
			B_{R;1,(1,1,1,d-6),(1,1,1,1)}(y)
			=&(-1)^d \frac{9}{2}y^{3d-15} \frac{[3]_y^6}{[1]_y^6}(1-y^{3d-18})^2A_{d-6}(y).
		\end{split}
	\end{align}
	As a consequence,
	the left-hand side of equation \eqref{eqn:last one} has the following form
	\begin{align}\label{eqn:lhs use Q}
		(-1)^d y^{3d-18} \tilde{Q}(y) (1-y^{3d-18})^2 A_{d-6}(y),
	\end{align}
	where $\tilde{Q}(y)$ is a polynomial in $y$ obtained from taking the sum of the above equations \eqref{eqn:B_{R;1,(3,d-6),(1,1)}(y)}, \eqref{eqn:4B},
	and more precisely,
	\begin{align*}
		\tilde{Q}(y)
		=&10+27y+72y^2+154y^3+306y^4+531y^5
		+848y^6+1197y^7+1512y^8+1630y^9\\
		&+1512y^{10}+1197y^{11}+848y^{12}+531y^{13}
		+306y^{14}+154y^{15}+72y^{16}+27y^{17}+10y^{18}.
	\end{align*}
	Note that we are only interested in the $y^{\leq 3d-10}$ part.
	On the one hand,
	the factor $(1-y^{3d-18})^2$ in equation \eqref{eqn:lhs use Q} could be replaced by $1$ when $d\geq10$.
	On the other hand,
	since the appearance of the factor $y^{3d-18}$,
	we could only keep the terms in the expansion of $\tilde{Q}(y)$ up to $y^8$.
	Thus,
	we obtain the equation \eqref{eqn:last one}.
\end{proof}

\begin{Lemma}\label{lem:last one a=4}
	For $d\geq12$,
	we have
	\begin{align}\label{eqn:last one a=4}
		\begin{split}
			&B_{R;1,(4,d-7),(1,1)}(y)
			=(-1)^d y^{4d-28}\frac{A_4(y)A_{d-7}(y)}{(1-y)^2}
			|_{y^{\leq 3d-10}},
		\end{split}
	\end{align}
	where $A_4(y)=15+12y+48y^2+42y^3+48y^4+12y^5+15y^6$.
\end{Lemma}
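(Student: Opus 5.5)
We compute $B_{R;1,(4,d-7),(1,1)}(y)$ directly from the definition \eqref{eqn:Def BR}, in the same way as Lemmas \ref{lem:first one}--\ref{lem:last one}. Take $s=2$, $\bm\beta=(4,d-7)$ and $\bm\alpha=(1,1)$. For $d\geq 12$ we have $d-7\neq 4$ (except $d=11$, which is excluded), so $|\mathrm{Aut}(\bm\beta)|=1$.

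The closed formula \eqref{eqn:BRd=1 formula} then gives the exponent
\begin{equation*}
g(d)-1-3(d-3)+2-g(4)-g(d-7) = 4(d-7),
\end{equation*}
which agrees with the value $\sum_{i<j}\beta_i\beta_j+\delta(\bm\beta,\bm\alpha)=4(d-7)$ from Lemma \ref{lem:degree bound}. The sign is $(-1)^{1+4+(d-7)}=(-1)^d$. The product over $i$ contributes
\begin{equation*}
\frac{(1-y^{12})^2}{(1-y)^2}A_4(y)\cdot\frac{(1-y^{3d-21})^2}{(1-y)^2}A_{d-7}(y),
\end{equation*}
and the overall prefactor $(1-y)^2$ cancels one of the two factors $(1-y)^{-2}$. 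We therefore obtain the exact identity
\begin{equation*}
B_{R;1,(4,d-7),(1,1)}(y)=(-1)^d y^{4d-28}\frac{(1-y^{12})^2(1-y^{3d-21})^2}{(1-y)^2}A_4(y)A_{d-7}(y).
\end{equation*}

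Next we truncate to $y^{\leq 3d-10}$. Because of the prefactor $y^{4d-28}$, only coefficients of the remaining factor up to $y^{18-d}$ matter. For $d\geq 12$ this bound is at most $6<12$, so $(1-y^{12})^2$ can be replaced by $1$. Since $3d-21\geq 15>6$, the factor $(1-y^{3d-21})^2$ can likewise be replaced by $1$. This gives \eqref{eqn:last one a=4}. The same bound shows that only the $y^{\leq 2}$ part of $A_4$ contributes when $d\geq 16$, as used in the theorem.

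It remains to justify the polynomial $A_4(y)=15+12y+48y^2+42y^3+48y^4+12y^5+15y^6$. By definition $A_4(y)=-y^{3}\mathcal{F}_4(y)$, so it is determined by the genus $0$–$3$ Gopakumar–Vafa invariants of degree $4$ of local $\mathbb P^2$. These are known: $n^0_4=-192$, $n^1_4=231$, $n^2_4=-102$, $n^3_4=15$. One can also recover them from $I(1),\dots,I(4)$ via \eqref{eq-IFexpansion} and \eqref{eq-FdinGV-2}, taking into account the $k=2$ multiple-cover term.

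The key step is expanding $\sum_g n^g_4(-1)^g(y^{1/2}-y^{-1/2})^{2g}$ and multiplying by $-y^3$. The main obstacle, though a mild one, is this bookkeeping: checking the resulting palindromic coefficients and the sign convention $(-1)^{d-1}$ in $A_d$. The remaining steps are routine degree counting of the kind already carried out in the preceding lemmas.
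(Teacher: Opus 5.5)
Your proposal is correct and is essentially the paper's own proof. You use \eqref{eqn:G d=1 formula} and \eqref{eqn:Def BR}, with $|\mathrm{Aut}(\bm\beta)|=1$, to get the exact identity $B_{R;1,(4,d-7),(1,1)}(y)=(-1)^d y^{4d-28}(1-y^{12})^2(1-y^{3d-21})^2A_4(y)A_{d-7}(y)/(1-y)^2$, and then replace $(1-y^{12})^2(1-y^{3d-21})^2$ by $1$ under the truncation $y^{\leq 3d-10}$. Your derivation of $A_4(y)$ from the degree-$4$ Gopakumar--Vafa invariants $n^0_4,\dots,n^3_4=-192,231,-102,15$ is also correct; the paper simply states this value.
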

\begin{proof}
	By equations \eqref{eqn:G d=1 formula} and \eqref{eqn:Def BR},
	and $d-7>4$ when $d\geq12$,
	we obtain
	\begin{align*}
		B_{R;1,(4,d-7),(1,1)}(y)
		=(-1)^d y^{4d-28}
		\frac{(1-y^{12})^2(1-y^{3d-21})^2}{(1-y)^2} A_4(y)A_{d-7}(y).
	\end{align*}
	Then when restricting to the $y^{\leq 3d-10}$ part,
	the factor $(1-y^{12})^2(1-y^{3d-21})^2$ can be replaced by $1$ when $d\geq10$.
	This proves the lemma.
\end{proof}

\section{Proof of Theorem \ref{thm-Delta2-expl}}
\label{app-eval-schur}

In this section we prove Theorem \ref{thm-Delta2-expl},
i.e.,
we calculate the explicit formulas for the two-legged topological vertex
$W_{\mu,\nu}(q)$ with $\Delta =2$.

\subsection{Computation of $W_{(m-2,2),(n)}(q)$}

First we compute $W_{(m-2,2),(n)}(q)$ for  $m\geq 4$ and $n\geq 0$.
By the definition \eqref{eq-W12inSchur} we have:
\be
\label{eq-Wm-22-decomp}
W_{(m-2,2),(n)}(q) = s_{(m-2,2)}(q^\rho) s_{(n)}(q^{(m-2,2)+\rho}),
\ee
and by the definition of Schur functions and the evaluation formula \eqref{eq-hneneva} we have:
\be
\label{eq-Wm-22-sm-22}
\begin{split}
s_{(m-2,2)}(q^\rho) =& (h_{m-2}h_2 - h_{m-1}h_1) (q^\rho)
= q^{-\half m-2} \cdot \frac{[m-3]_{q^{-1}}} {[m-1]_{q^{-1}}!\cdot [2]_{q^{-1}}!},
\end{split}
\ee
and
\begin{equation*}
\begin{split}
s_{(n)}(q^{(m-2,2)+\rho}) = h_n (q^{(m-2,2)+\rho})
= h_n (q^{m-\frac{5}{2}}, q^{\half}, q^{-\frac{5}{2}}, q^{-\frac{7}{2}}, q^{-\frac{9}{2}},\cdots).
\end{split}
\end{equation*}
Let $t$ be a formal variable and consider the generating series of $h_n(q^{(m-2,2)+\rho})$:
\begin{equation*}
\begin{split}
\sum_{n\geq 0} h_n(q^{(m-2,2)+\rho}) t^n
=& \frac{1}{1-q^{m-\frac{5}{2}}t} \cdot \frac{1}{1-q^{\frac{1}{2}}t} \cdot
\prod_{j\geq 2} \frac{1}{1-q^{-j-\half}t} \\
=& \frac{1}{1-q^{m-\frac{5}{2}}t} \cdot \frac{1}{1-q^{\frac{1}{2}}t} \cdot
\sum_{k\geq 0} h_k (q^\rho) (q^{-2}t)^k \\
=& \Big(\sum_{i\geq 0} q^{(m-\frac{5}{2})i} t^i\Big)
\Big(\sum_{j\geq 0} q^{\half j} t^j\Big)
\Big(\sum_{k\geq 0} \frac{q^{-\half k}}{[k]_{q^{-1}}!}\cdot q^{-2k} t^k\Big).
\end{split}
\end{equation*}
By taking the coefficient of $t^n$,
we obtain:
\begin{equation*}
\begin{split}
h_n(q^{(m-2,2)+\rho})
=& \sum_{k=0}^n
\sum_{j=0}^{n-k} \frac{1}{[k]_{q^{-1}}!} \cdot
q^{(m-\frac{5}{2})(n-k-j) + \half j -\frac{5}{2}k} \\
=& q^{(m-\frac{5}{2})n} \sum_{k=0}^n \frac{q^{-mk}}{[k]_{q^{-1}}!} \cdot
\frac{1-q^{-(m-3) (n-k+1)}}{1-q^{-(m-3)}},
\end{split}
\end{equation*}
and thus:
\begin{equation*}
\begin{split}
h_n(q^{(m-2,2)+\rho})
=& \frac{q^{mn-\frac{5}{2}n}}{[m-3]_{q^{-1}}}\sum_{k=0}^n \frac{q^{-mk}}{[k]_{q^{-1}}!}
- \frac{q^{-m+\half n +3}}{[m-3]_{q^{-1}}} \sum_{k=0}^n \frac{q^{-3k}}{[k]_{q^{-1}}!}.
\end{split}
\end{equation*}
Using the notation \eqref{eq-notationtW},
we conclude that:
\be
\label{eq-Wm-22-hnm-22}
\begin{split}
h_n(q^{(m-2,2)+\rho})
=&  \frac{q^{mn-\frac{5}{2}n} \cdot [m-1]_{q^{-1}}!}{[m-3]_{q^{-1}}} \cdot \tW_{(m-1),(n)}(q)
-\frac{q^{-m+\half n +3} \cdot [2]_{q^{-1}}!}{[m-3]_{q^{-1}}} \cdot \tW_{(2),(n)}(q).
\end{split}
\ee
Now plug \eqref{eq-Wm-22-sm-22} and \eqref{eq-Wm-22-hnm-22} into \eqref{eq-Wm-22-decomp},
and we obtain Case 1) of Theorem \ref{thm-Delta2-expl}.

\subsection{Computation of $W_{(m-2,1,1),(n)}(q)$}

Now we compute the case $W_{(m-2,1,1),(n)}(q)$ where $m\geq 3$ and $n\geq 0$.
By \eqref{eq-W12inSchur} we have:
\be
\label{eq-Wm-211-decomp}
W_{(m-2,1,1),(n)}(q) = s_{(m-2,1,1)}(q^\rho) s_{(n)}(q^{(m-2,1,1)+\rho}),
\ee
where
\begin{equation*}
\begin{split}
s_{(m-2,1,1)}(q^\rho) =& (h_{m-2}e_2 -h_{m-1}e_1 + h_m ) (q^\rho).
\end{split}
\end{equation*}
Then by \eqref{eq-hneneva} one has:
\be
\label{eq-Wm-211-sm-211}
\begin{split}
s_{(m-2,1,1)}(q^\rho) =&
\frac{q^{-\half (m-2)}}{[m-2]_{q^{-1}}!}\cdot \frac{q^{-2}}{[2]_{q^{-1}}!}
- \frac{q^{-\half (m-1)}}{[m-1]_{q^{-1}}!}\cdot \frac{q^{-\half}}{[1]_{q^{-1}}!}
+ \frac{q^{-\half m}}{[m]_{q^{-1}}!} \\
=& \frac{q^{-\half m -3}}{[m]_{q^{-1}}! \cdot [2]_{q^{-1}}!} \cdot [m-1]_{q^{-1}}\cdot [m-2]_{q^{-1}}.
\end{split}
\ee

Then we compute the second factor $s_{(n)}(q^{(m-2,1,1)+\rho})$
in \eqref{eq-Wm-211-decomp}.
By definition one has:
\begin{equation*}
s_{(n)}(q^{(m-2,1,1)+\rho})
= h_n (q^{m-\frac{5}{2}}, q^{-\half}, q^{-\frac{3}{2}},
q^{-\frac{7}{2}},q^{-\frac{9}{2}}, q^{-\frac{11}{2}}, \cdots).
\end{equation*}
Consider the generating series:
\begin{equation*}
\begin{split}
\sum_{n\geq 0} h_n(q^{(m-2,1,1)+\rho}) t^n
=& \frac{1}{1-q^{m-\frac{5}{2}}t} \cdot (1-q^{-\frac{5}{2}} t) \cdot
\prod_{j\geq 0} \frac{1}{1-q^{-j-\half}t} \\
=& \frac{1}{1-q^{m-\frac{5}{2}}t} \cdot (1-q^{-\frac{5}{2}} t) \cdot
\sum_{k\geq 0} h_k(q^\rho) t^k \\
=& (1-q^{-\frac{5}{2}} t)
\Big(\sum_{i\geq 0} q^{(m-\frac{5}{2})i} t^i \Big)
\Big( \sum_{k\geq 0} \frac{q^{-\half k}}{[k]_{q^{-1}}!} \cdot t^k \Big).
\end{split}
\end{equation*}
By comparing the coefficients of $t^n$ one obtains:
\begin{equation*}
\begin{split}
h_n(q^{(m-2,1,1)+\rho}) =&
\sum_{k=0}^n q^{(m-\frac{5}{2})(n-k)} \frac{q^{-\half k}}{[k]_{q^{-1}}!}
-\sum_{k=0}^{n-1} q^{-\frac{5}{2}} q^{(m-\frac{5}{2})(n-k-1)} \frac{q^{-\half k}}{[k]_{q^{-1}}!} \\
=& \frac{q^{-\half n}}{[n]_{q^{-1}}!}
+ q^{(m-\frac{5}{2})n} \cdot [m]_{q^{-1}} \cdot
\sum_{k=0}^{n-1} \frac{q^{-(m-2) k}}{[k]_{q^{-1}}!}.
\end{split}
\end{equation*}
And using the notation \eqref{eq-notationtW} one has:
\be
\label{eq-Wm-211-hn211}
\begin{split}
h_n(q^{(m-2,1,1)+\rho}) = \frac{q^{-\half n}}{[n]_{q^{-1}}!}
+ q^{(m-\frac{5}{2})n} \cdot [m]_{q^{-1}} \cdot [m-3]_{q^{-1}}!
\cdot \tW_{(m-3),(n-1)}(q).
\end{split}
\ee
Now plug \eqref{eq-Wm-211-sm-211} and \eqref{eq-Wm-211-hn211} into \eqref{eq-Wm-211-decomp},
and we obtain Case 2) of Theorem \ref{thm-Delta2-expl}.

\subsection{Computation of $W_{(m-1,1),(n-1,1)}(q)$}

Finally we compute the $W_{(m-1,1),(n-1,1)}(q)$ where $m,n\geq 2$.
By \eqref{eq-W12inSchur} we have:
\be
\label{eq-Wm-1n-1-decomp}
W_{(m-1,1),(n-1,1)}(q) = s_{(m-1,1)}(q^\rho) s_{(n-1,1)}(q^{(m-1,1)+\rho}),
\ee
where
\begin{equation*}
\begin{split}
s_{(m-1,1)}(q^\rho) =& (h_{m-1}h_1 -h_m) (q^\rho)
= q^{-\half m}\Big( \frac{1}{[m-1]_{q^{-1}} !} \cdot \frac{1}{[1]_{q^{-1}} !}
-\frac{1}{[m]_{q^{-1}} !}\Big).
\end{split}
\end{equation*}
It is clear that:
\begin{equation*}
\begin{split}
\frac{1}{[m-1]_{q^{-1}} !} \cdot \frac{1}{[1]_{q^{-1}} !}
-\frac{1}{[m]_{q^{-1}} !}
=\frac{1}{[m]_{q^{-1}} !} \cdot \sum_{j=1}^{m-1}q^{-j}
= \frac{q^{-1}}{[m]_{q^{-1}}!} \cdot \frac{[m-1]_{q^{-1}}}{[1]_{q^{-1}}},
\end{split}
\end{equation*}
and thus:
\be
\label{sm-11qrho}
s_{(m-1,1)}(q^\rho) =
\frac{q^{-\half m -1}}{[m]_{q^{-1}}!} \cdot \frac{[m-1]_{q^{-1}}}{[1]_{q^{-1}}}.
\ee

Now we compute the second factor in \eqref{eq-Wm-1n-1-decomp}.
One has:
\be
\label{eq-sm-11qrho-1}
\begin{split}
s_{(n-1,1)}(q^{(m-1,1)+\rho}) =&
h_{n-1}(q^{(m-1,1)+\rho}) h_1(q^{(m-1,1)+\rho}) -h_n( q^{(m-1,1)+\rho} ).
\end{split}
\ee
Consider the generating series of $h_n( q^{(m-1,1)+\rho} )$:
\begin{equation*}
\begin{split}
\sum_{k\geq 0} h_k( q^{(m-1,1)+\rho} ) t^k
=& \frac{1}{1- q^{m-\frac{3}{2}} t} \cdot \frac{1}{1-q^{-\half} t}\cdot
\prod_{j\geq 2} \frac{1}{1-q^{-j-\half} t} \\
=& \frac{1- q^{-\frac{3}{2}} t}{1- q^{m-\frac{3}{2}} t} \cdot
\sum_{k\geq 0} h_k(q^\rho) t^k \\
=& (1- q^{-\frac{3}{2}} t)
\Big( \sum_{i\geq 0}q^{(m-\frac{3}{2})i} t^i \Big)
\Big( \sum_{k\geq 0} h_k(q^\rho) t^k \Big).
\end{split}
\end{equation*}
By comparing the coefficients of $t^n$ and using \eqref{eq-hneneva} one obtains:
\begin{equation*}
\begin{split}
h_n(q^{(m-1,1)+\rho}) =&
\sum_{k=0}^n q^{(m-\frac{3}{2})(n-k)} \cdot \frac{q^{-\half k}}{[k]_{q^{-1}}!}
-\sum_{k=0}^{n-1} q^{-\frac{3}{2}} q^{(m-\frac{3}{2})(n-k-1)}
\cdot \frac{q^{-\half k}}{[k]_{q^{-1}}!} \\
=& \frac{q^{-\half n}}{[n]_{q^{-1}}!}
+q^{mn-\frac{3}{2}n} \cdot [m]_{q^{-1}}\cdot
\sum_{k=0}^{n-1} \frac{q^{- k (m-1)}}{[k]_{q^{-1}}!}.
\end{split}
\end{equation*}
Using the notation \eqref{eq-notationtW},
one has:
\be
\begin{split}
h_n(q^{(m-1,1)+\rho}) = \frac{q^{-\half n}}{[n]_{q^{-1}}!}
+q^{mn-\frac{3}{2}n} \cdot \frac{[m]_{q^{-1}}!}{[m-1]_{q^{-1}}}\cdot
\tW_{(m-2),(n-1)}(q).
\end{split}
\ee
Then by \eqref{eq-sm-11qrho-1} one has:
\be
\label{eq-sm-11qrho-2}
\begin{split}
& s_{(n-1,1)}(q^{(m-1,1)+\rho}) \\
=&
\Big( \frac{q^{-\half (n-1)}}{[n-1]_{q^{-1}}!}
+ \frac{q^{(m-\frac{3}{2})(n-1)} [m]_{q^{-1}}!}{[m-1]_{q^{-1}}}
\tW_{(m-2),(n-2)}(q) \Big)
\Big( \frac{q^{-\half}}{[1]_{q^{-1}}} +q^{m-\frac{3}{2}} [m]_{q^{-1}} \Big) \\
& -\Big( \frac{q^{-\half n}}{[n]_{q^{-1}}!}
+q^{mn-\frac{3}{2}n} \cdot \frac{[m]_{q^{-1}}!}{[m-1]_{q^{-1}}}
\tW_{(m-2),(n-1)}(q) \Big).
\end{split}
\ee
From the definition \eqref{eq-notationtW} of $\tW_{(m),(n)}$ it is clear that:
\begin{equation*}
\tW_{(m-2),(n-1)}(q) =\tW_{(m-2),(n-2)}(q)
+ \frac{q^{-(n-1)(m-1)}}{[m-2]_{q^{-1}}!\cdot [n-1]_{q^{-1}}!}.
\end{equation*}
Now we plug this into \eqref{eq-sm-11qrho-2},
and then after some simplification we obtain:
\begin{equation*}
\begin{split}
s_{(n-1,1)}(q^{(m-1,1)+\rho})
=&
q^{-\half n} \Big( \frac{1}{[1]_{q^{-1}} \cdot [n-1]_{q^{-1}}!} - \frac{1}{[n]_{q^{-1}}!}\Big) \\
& + q^{mn-\frac{3}{2}n} \cdot \frac{[m]_{q^{-1}}!}{[m-1]_{q^{-1}}}
\Big( \frac{q^{-m+1}}{[1]_{q^{-1}}} + [m]_{q^{-1}} -1\Big) \tW_{(m-2),(n-2)}(q),
\end{split}
\end{equation*}
or equivalently,
\be
\label{eq-sm-11qrho-3}
\begin{split}
& s_{(n-1,1)}(q^{(m-1,1)+\rho}) \\
=& \frac{q^{-\half n-1}}{[n]_{q^{-1}}!} \cdot \frac{[n-1]_{q^{-1}}}{[1]_{q^{-1}}}
+ q^{mn - m -\frac{3}{2}n +1} \cdot \frac{(1-q^{-1}+q^{-2})\cdot [m]_{q^{-1}}!}{[m-1]_{q^{-1}}\cdot [1]_{q^{-1}}}
\cdot \tW_{(m-2),(n-2)}(q).
\end{split}
\ee
Now plug \eqref{sm-11qrho} and \eqref{eq-sm-11qrho-3} into \eqref{eq-Wm-1n-1-decomp},
and we obtain Case 3) of Theorem \ref{thm-Delta2-expl}.

\section{Proof of Theorem \ref{thm-main1-leadI} --- Part I: Leading Terms of $W_d(q)$}
\label{app-WdCompute}

In this section we compute the leading terms of $W_d(q)$,
where $W_d(q)$ is defined by \eqref{eq-def-Wdq} or equivalently by \eqref{eq-Wd-sumdef}.
See Theorem \ref{thm-Wdqleadingterm} for the final result.

\subsection{Pochhammer symbols and hypergeometric series}
\label{sec-appB1-gz}

First we recall the relation between Pochhammer symbols
\begin{equation*}
\begin{split}
& (a;q^{-1})_{\infty}= \prod_{i=0}^\infty (1-aq^{-i}), \qquad\qquad
 (a;q^{-1})_{n}= \prod_{i=0}^{n-1} (1-aq^{-i})
= \frac{(a;q^{-1})_\infty}{(aq^{-n};q^{-1})_\infty},
\end{split}
\end{equation*}
and hypergeometric series,
and Guo--Zhou's decomposition of $W_d(q)$.

It is well-known that
(see e.g. \cite{gr}):
\be
\label{eq-pochhammerseries}
\sum_{n\geq 0} \frac{z^n}{[n]_{q^{-1}}!} = \frac{1}{(z;q^{-1})_\infty}.
\ee
For a formal power series $f(x) = a_0 + a_1 x +a_2 x^2+ \cdots  \in \bC[[x]]$,
denote:
\begin{equation*}
T_m^x f(x) = a_0 +a_1 + \cdots +a_m \in \bC.
\end{equation*}
The following two identities follow from \eqref{eq-pochhammerseries}
(see Guo--Zhou \cite[Lemma 5.6]{gz}):
\be
\label{eq-gzlem-5.6}
\begin{split}
&\sum_{k=1}^m \frac{q^{-k(d+1)}}{[k]_{q^{-1}}!} =
T_m^x \Big( \frac{1}{(xq^{-d-1} ;q^{-1})_\infty} -1 \Big),\\
&\frac{1}{[d_2]_{q^{-1}}!}
\sum_{k=0}^{d_3} \frac{q^{-k(d_2 +1)}}{[k]_{q^{-1}}!} =
\frac{1}{[\infty]_{q^{-1}}!}\cdot
T_{d_3}^x \frac{(q^{-d_2 -1} ;q^{-1})_\infty}{(xq^{-d_2 -1} ;q^{-1})_\infty},
\end{split}
\ee
where $[\infty]_{q^{-1}}!$ is defined by \eqref{eq-definftyfactorial}.
By \eqref{eq-notationtW} one easily sees the symmetry:
\be
\label{eq-symmTmx}
T_{d_3}^x \frac{(q^{-d_2 -1} ;q^{-1})_\infty}{(xq^{-d_2 -1} ;q^{-1})_\infty}
= T_{d_2}^x \frac{(q^{-d_3 -1} ;q^{-1})_\infty}{(xq^{-d_3 -1} ;q^{-1})_\infty},
\qquad \forall d_2,d_3\geq 0.
\ee
Moreover,
one has (see \cite[Lemma 5.7]{gz}):
\be
\label{eq-gzlem-5.7}
\begin{split}
& \deg \Big( \frac{1}{[\infty]_{q^{-1}}!} - \frac{1}{[d]_{q^{-1}}!} \Big) = -(d+1),\\
& \deg  T_{m}^x \Big( \frac{(q^{-d -1} ;q^{-1})_\infty}{(xq^{-d -1} ;q^{-1})_\infty} -1\Big)
= -(m+1)(d+1).
\end{split}
\ee

Now we recall Guo--Zhou's decomposition of $W_d(q)$.
By \eqref{eq-gzlem-5.6} and \eqref{eq-Wd-sumdef} one can rewrite $W_d(q)$ as:
\be
\label{eq-Wddefexpand}
\begin{split}
W_d(q) = \sum_{d_1+d_2+d_3 =d} \prod_{i=1}^3
\Big( \frac{1}{[\infty]_{q^{-1}}!}\cdot
 T_{d_{i+1}}^x\Big( \frac{(q^{-d_i -1} ;q^{-1})_\infty}{(xq^{-d_i -1} ;q^{-1})_\infty} -1 \Big)
+ \frac{1}{[\infty]_{q^{-1}}!} \Big),
\end{split}
\ee
where we use the convention $d_4 =d_1$.
Following \cite{gz} we denote:
\be
\label{eq-defWd1Wd2Wd3}
\begin{split}
& W_d^1(q) = \sum_{d_1+d_2+d_3 =d} \frac{1}{([\infty]_{q^{-1}}!)^3}
= \frac{(d+1)(d+2)}{2} \cdot \frac{1}{([\infty]_{q^{-1}}!)^3}, \\
& W_d^2(q) = \sum_{d_1+d_2+d_3 =d} \frac{3}{([\infty]_{q^{-1}}!)^3}
 T_{d_2}^x\Big( \frac{(q^{-d_1 -1} ;q^{-1})_\infty}{(xq^{-d_1 -1} ;q^{-1})_\infty} -1 \Big),\\
& W_d^3(q) = \sum_{d_1+d_2+d_3 =d} \frac{3}{([\infty]_{q^{-1}}!)^3}
 T_{d_2}^x\Big( \frac{(q^{-d_1 -1} ;q^{-1})_\infty}{(xq^{-d_1 -1} ;q^{-1})_\infty} -1 \Big)
 T_{d_3}^x\Big( \frac{(q^{-d_2 -1} ;q^{-1})_\infty}{(xq^{-d_2 -1} ;q^{-1})_\infty} -1 \Big),
\end{split}
\ee
and
\be
\label{eq-defWd4}
\begin{split}
W_d^4(q) = \sum_{d_1+d_2+d_3 =d} &\frac{1}{([\infty]_{q^{-1}}!)^3}
\cdot
T_{d_1}^x\Big( \frac{(q^{-d_3 -1} ;q^{-1})_\infty}{(xq^{-d_3 -1} ;q^{-1})_\infty} -1 \Big) \\
& \cdot T_{d_2}^x\Big( \frac{(q^{-d_1 -1} ;q^{-1})_\infty}{(xq^{-d_1 -1} ;q^{-1})_\infty} -1 \Big)
T_{d_3}^x\Big( \frac{(q^{-d_2 -1} ;q^{-1})_\infty}{(xq^{-d_2 -1} ;q^{-1})_\infty} -1 \Big),
\end{split}
\ee
and then by expanding \eqref{eq-Wddefexpand} directly one can see that:
\begin{equation*}
W_d(q) = W_d^1(q) + W_d^2(q) + W_d^3(q) + W_d^4(q).
\end{equation*}
By \eqref{eq-gzlem-5.7} one may derive that
(see \cite[\S 5.6]{gz}):
\begin{equation*}
\deg W_d^3(q) = -(d+2),
\qquad\qquad
\deg W_d^4(q) = - (2d+3),
\end{equation*}
and thus:
\begin{equation*}
W_d(q) = W_d^1(q) + W_d^2 (q) \big|_{q^{\geq -d-1}}.
\end{equation*}
In the next subsections,
we will compute the leading terms $W_d^i(q)|_{q^{\geq -3d+1}}$ for every $i$.

\subsection{Leading terms of $W_d^2(q)$}

In this subsection we compute the leading terms of $W_d^2(q)$.
Denote:
\be
\label{eq-W2infty-def}
W_\infty^2(q) = \frac{3}{([\infty]_{q^{-1}}!)^3} \cdot
\sum_{d_1,d_2 \geq 0}
T_{d_2}^x\Big( \frac{(q^{-d_1 -1} ;q^{-1})_\infty}{(xq^{-d_1 -1} ;q^{-1})_\infty} -1 \Big),
\ee
and Guo--Zhou proved that (see \cite[Prop. 5.9]{gz}):
\be
\label{eq-W2infty=}
W_\infty^2 (q) =
-\frac{3}{([\infty]_{q^{-1}}!)^3} \cdot
\sum_{i\geq 1} \frac{q^{-i}}{(1-q^{-i})^2}.
\ee
Now denote:
\be
\label{eq-defVd2-Wd2}
V_d^2 (q) = W_\infty^2 (q) - W_d^2(q),
\ee
then by the definitions of $W_\infty^2 (q)$ and $W_d^2(q)$ it is clear that:
\be
V_d^2(q) = \frac{3}{([\infty]_{q^{-1}}!)^3} \cdot
\sum_{d_1 + d_2 \geq d+1}
T_{d_2}^x\Big( \frac{(q^{-d_1 -1} ;q^{-1})_\infty}{(xq^{-d_1 -1} ;q^{-1})_\infty} -1 \Big),
\ee
and thus by \eqref{eq-gzlem-5.7} one has:
\begin{equation*}
\deg T_{d_2}^x\Big( \frac{(q^{-d_1 -1} ;q^{-1})_\infty}{(xq^{-d_1 -1} ;q^{-1})_\infty} -1 \Big)
= -(d_1+1)(d_2+1),
\end{equation*}
and thus (see \cite[Prop 5.10]{gz}):
\begin{equation*}
\deg V_d^2(q) = \max_{d_1 + d_2 \geq d+1} \{-(d_1+1)(d_2+1)\} = -(d+2).
\end{equation*}
Moreover,
it is easy to see that:
\be
\label{eq-degreeest-V2}
\begin{split}
& -(d_1+1)(d_2+1)
\leq -2 (d+1),
\qquad \text{if $d_1,d_2\geq 1$}; \\
& -(d_1+1)(d_2+1)
\leq -3 d,
\qquad \text{if $d_1,d_2\geq 2$}; \\
& -(d_1+1)(d_2+1)
\leq -4 (d-1),
\qquad \text{if $d_1,d_2\geq 3$},
\end{split}
\ee
provided $d_1+d_2 \geq d+1$,
and thus:
\begin{Lemma}
Fix $k\in \{1,2,3,4\}$,
and then:
\begin{equation*}
\begin{split}
W_d^2 (q) =
\Big( W_\infty^2 (q) - 
\frac{3}{([\infty]_{q^{-1}}!)^3}
\sum_{\substack{d_1 + d_2 \geq d+1\\ d_1< k-1 \text{ or } d_2< k-1 }}
T_{d_2}^x\Big( \frac{(q^{-d_1 -1} ;q^{-1})_\infty}{(xq^{-d_1 -1} ;q^{-1})_\infty} -1 \Big) \Big)
\Big|_{q^{\geq -k(d+3-k) +1}}.
\end{split}
\end{equation*}
\end{Lemma}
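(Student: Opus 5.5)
The statement follows from the identity $W_d^2 = W_\infty^2 - V_d^2$ in \eqref{eq-defVd2-Wd2} together with a degree estimate for the tail of $V_d^2$. Write
\begin{equation*}
V_d^2(q) = \frac{3}{([\infty]_{q^{-1}}!)^3}\Big(\sum_{\substack{d_1+d_2\geq d+1\\ d_1<k-1 \text{ or } d_2<k-1}} + \sum_{\substack{d_1+d_2\geq d+1\\ d_1,d_2\geq k-1}}\Big) T_{d_2}^x\Big( \frac{(q^{-d_1 -1} ;q^{-1})_\infty}{(xq^{-d_1 -1} ;q^{-1})_\infty} -1 \Big).
\end{equation*}
Substituting this into $W_d^2 = W_\infty^2 - V_d^2$ gives exactly the claimed expression, minus the second sum. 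So it suffices to show that the second sum has degree at most $-k(d+3-k)$.

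For this we use \eqref{eq-gzlem-5.7}, which says each summand has degree $-(d_1+1)(d_2+1)$. The factor $1/([\infty]_{q^{-1}}!)^3$ has degree $0$, so multiplying by it does not change degrees. We then need the elementary bound: if $d_1,d_2\geq k-1$ and $d_1+d_2\geq d+1$, then $(d_1+1)(d_2+1)\geq k(d+3-k)$.

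To prove the bound, set $a=d_1+1\geq k$ and $b=d_2+1\geq k$, so that $a+b\geq d+3$. For a fixed sum $s=a+b$, the product $ab$ subject to $a,b\geq k$ is minimized at the extreme point $a=k$, $b=s-k$. This gives $ab\geq k(s-k)\geq k(d+3-k)$. For $k=2,3,4$ this recovers the three estimates listed in \eqref{eq-degreeest-V2}. For $k=1$ the condition $d_i\geq 0$ is vacuous, so the second sum is all of $V_d^2$, and the bound is $\deg V_d^2 = -(d+2)$, already recorded in the text. We are implicitly assuming $d+3-k\geq k$, i.e.\ the range is nonempty; this holds in the regime $d\geq 4$ relevant here.

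One point needs care: the second sum is infinite. Since each summand has degree $\leq -k(d+3-k)$ and there are only finitely many pairs $(d_1,d_2)$ with $(d_1+1)(d_2+1)$ below any given bound, the sum converges in $\bC[q,q^{-1}]]$ and its degree is at most $-k(d+3-k)$. Hence it contributes nothing in the region $q^{\geq -k(d+3-k)+1}$, which proves the lemma.

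There is no real obstacle. The only points needing attention are this convergence/degree argument for the infinite sum, and the fact that the boundary degree $-k(d+3-k)$ itself is excluded by the "$+1$" in the truncation.
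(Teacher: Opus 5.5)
Your argument is correct and is the same as the paper's. You write $V_d^2=W_\infty^2-W_d^2$ as the sum over $d_1+d_2\geq d+1$, and the part with $d_1,d_2\geq k-1$ has degree at most $-k(d+3-k)$ by \eqref{eq-gzlem-5.7} and the bound $(d_1+1)(d_2+1)\geq k(d+3-k)$; this is exactly what the estimates \eqref{eq-degreeest-V2}, together with $\deg V_d^2=-(d+2)$ for $k=1$, record. Your caveat that $d+3-k\geq k$ is not actually needed: $a,b\geq k$ already forces $s=a+b\geq 2k$, and $k(s-k)\geq k(d+3-k)$ holds for every $s\geq d+3$.
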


In particular,
by taking $k=1$ one obtains:
\begin{equation*}
W_d^2(q) = W_\infty^2 (q) \big|_{q^{\geq -d-1}},
\end{equation*}
and then by \eqref{eq-W2infty=} one concludes that (see \cite[Prop. 5.10]{gz}):
\be
W_d^2 (q) =
-\frac{3}{([\infty]_{q^{-1}}!)^3} \cdot
\sum_{i\geq 1} \frac{q^{-i}}{(1-q^{-i})^2}
\Big|_{q^{\geq -d-1}}.
\ee
Here we want to compute the coefficients of $q^{-D}$ in $V_d^2(q)$ for $0\leq D\leq 3d-1$,
and for our purpose we need to consider the case $k=3$ since:
\begin{equation*}
\begin{split}
W_d^2 (q) =
\bigg( W_\infty^2 (q) -
\frac{3}{([\infty]_{q^{-1}}!)^3}
 \sum_{\substack{d_1 + d_2 \geq d+1\\ d_1=0,1 \text{ or } d_2=0,1 }}
T_{d_2}^x\Big( \frac{(q^{-d_1 -1} ;q^{-1})_\infty}{(xq^{-d_1 -1} ;q^{-1})_\infty} -1 \Big) \bigg)
\bigg|_{q^{\geq -k(d+3-k) +1}}.
\end{split}
\end{equation*}
First by induction one can easily obtain the following:
\begin{Lemma}
\label{lem-sumqoverdfactorial}
For every positive integer $n$ one has:
\begin{equation*}
\begin{split}
\sum_{n=0}^m \frac{q^{-n}}{[n]_{q^{-1}}!} = \frac{1}{[m]_{q^{-1}}!},\qquad\qquad
\sum_{n=0}^m \frac{q^{-2n}}{[n]_{q^{-1}}!} = \frac{1-q^{-1}+q^{-m-1}}{[m]_{q^{-1}}!},
\end{split}
\end{equation*}
\end{Lemma}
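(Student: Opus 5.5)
Both identities will be proved by induction on $m$, the base case $m=0$ being trivial: each side equals $1$ (note $1-q^{-1}+q^{-1}=1$). Here the summation index is $n$ and the upper limit is $m$; the ``positive integer $n$'' in the statement should read $m$. Throughout we use $[m+1]_{q^{-1}}! = [m]_{q^{-1}}!\,(1-q^{-m-1})$.

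\emph{First identity.} Assume $\sum_{n=0}^m q^{-n}/[n]_{q^{-1}}! = 1/[m]_{q^{-1}}!$. Adding the next term gives
\begin{equation*}
\frac{1}{[m]_{q^{-1}}!}+\frac{q^{-m-1}}{[m+1]_{q^{-1}}!}
=\frac{(1-q^{-m-1})+q^{-m-1}}{[m+1]_{q^{-1}}!}
=\frac{1}{[m+1]_{q^{-1}}!},
\end{equation*}
which completes the induction. As a consistency check, letting $m\to\infty$ recovers \eqref{eq-pochhammerseries} at $z=q^{-1}$, since $(q^{-1};q^{-1})_\infty=[\infty]_{q^{-1}}!$.

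\emph{Second identity.} Assume $\sum_{n=0}^m q^{-2n}/[n]_{q^{-1}}! = (1-q^{-1}+q^{-m-1})/[m]_{q^{-1}}!$. Adding the term $q^{-2m-2}/[m+1]_{q^{-1}}!$ puts the sum over the common denominator $[m+1]_{q^{-1}}!$, with numerator
\begin{equation*}
(1-q^{-1}+q^{-m-1})(1-q^{-m-1})+q^{-2m-2}
=1-q^{-1}+q^{-m-2}.
\end{equation*}
The expansion is immediate: the $\pm q^{-m-1}$ terms cancel, $-q^{-1}\cdot(-q^{-m-1})$ contributes $q^{-m-2}$, and the $-q^{-2m-2}$ from the product cancels the added $q^{-2m-2}$. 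This is exactly the claimed formula at $m+1$.

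No step is a genuine obstacle; the only point requiring care is this cancellation of the $q^{-2m-2}$ terms in the second induction step.
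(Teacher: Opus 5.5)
Your proposal is correct and takes the same approach as the paper, which simply states that the lemma follows ``by induction''. Your base cases, both inductive steps (including the cancellation of the $q^{-2m-2}$ terms), and your remark that the index in the hypothesis should read $m$ rather than $n$ are all accurate.
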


Then by \eqref{eq-pochhammerseries} one has:
\begin{equation*}
\begin{split}
T_{d_2}^x\Big( \frac{(q^{-d_1 -1} ;q^{-1})_\infty}{(xq^{-d_1 -1} ;q^{-1})_\infty} -1 \Big)
=& T_{d_2}^x\Big( \frac{[\infty]_{q^{-1}}!}{[d_1]_{q^{-1}}!} \cdot
\sum_{n\geq 0} \frac{x^n q^{-n(d_1+1)}}{[n]_{q^{-1}}!} -1 \Big) \\
=& \frac{[\infty]_{q^{-1}}!}{[d_1]_{q^{-1}}!} \cdot
\sum_{n= 0}^{d_2} \frac{q^{-n(d_1+1)}}{[n]_{q^{-1}}!} -1.
\end{split}
\end{equation*}
In particular,
for $d_1 = 0,1$ one has:
\be
\label{eq-computed1=01}
\begin{split}
&T_{d_2}^x \Big( \frac{(q^{-1} ;q^{-1})_\infty}{(xq^{-1} ;q^{-1})_\infty} -1 \Big)
= \frac{[\infty]_{q^{-1}}!}{[d_2]_{q^{-1}}!} -1; \\
&T_{d_2}^x \Big( \frac{(q^{-2} ;q^{-1})_\infty}{(xq^{-2} ;q^{-1})_\infty} -1 \Big)
= \frac{1-q^{-1}+q^{-d_2-1}}{[1]_{q^{-1}}} \cdot \frac{[\infty]_{q^{-1}}!}{[d_2]_{q^{-1}}!} -1,
\end{split}
\ee
respectively by the above lemma;
and for $d_2 = 0,1$ one has:
\be
\label{eq-computed2=01}
\begin{split}
& T_{0}^x\Big( \frac{(q^{-d_1 -1} ;q^{-1})_\infty}{(xq^{-d_1 -1} ;q^{-1})_\infty} -1 \Big)
= \frac{[\infty]_{q^{-1}}!}{[d_1]_{q^{-1}}!} -1; \\
& T_{1}^x\Big( \frac{(q^{-d_1 -1} ;q^{-1})_\infty}{(xq^{-d_1 -1} ;q^{-1})_\infty} -1 \Big)
= (1+ \frac{q^{-(d_1+1)}}{[1]_{q^{-1}}!})\cdot \frac{[\infty]_{q^{-1}}!}{[d_1]_{q^{-1}}!} -1,
\end{split}
\ee
respectively.
Here \eqref{eq-computed2=01} can also be obtained by applying the symmetry \eqref{eq-symmTmx}
to \eqref{eq-computed1=01}.

For $d\geq 2$,
the four cases $(d_1,d_2) = (0,d_2),(1,d_2),(d_1,0),(d_1,1)$ do not overlap when $d_1+d_2 \geq d+1$,
and thus by \eqref{eq-degreeest-V2} one has:
\be
\label{eq-Vd2compute}
\begin{split}
V_d^2 (q) = & \frac{3}{[\infty]_{q^{-1}}!^3} \Big(
\sum_{d_1 = d+1}^{\infty} \big( T_{0}^x \frac{(q^{-d_1 -1} ;q^{-1})_\infty}{(xq^{-d_1 -1} ;q^{-1})_\infty} -1\big)
+ \sum_{d_1 = d}^{\infty} \big(  T_{1}^x \frac{(q^{-d_1 -1} ;q^{-1})_\infty}{(xq^{-d_1 -1} ;q^{-1})_\infty} -1\big)\\
& + \sum_{d_2 = d+1}^{\infty} \big( T_{d_2}^x \frac{(q^{ -1} ;q^{-1})_\infty}{(xq^{ -1} ;q^{-1})_\infty} -1\big)
+ \sum_{d_2 = d}^{\infty} \big(T_{d_2}^x \frac{(q^{ -2} ;q^{-1})_\infty}{(xq^{-2} ;q^{-1})_\infty}-1 \big)
\Big) \Big|_{q^{\geq -3d+1}} \\
=& \frac{6}{[\infty]_{q^{-1}}!^3} \Big(
\sum_{d_1 = d+1}^\infty \big(\frac{[\infty]_{q^{-1}}!}{[d_1]_{q^{-1}}!} -1\big)
+ \sum_{d_1 =d}^\infty \big(
(1+ \frac{q^{-(d_1+1)}}{[1]_{q^{-1}}!}) \frac{[\infty]_{q^{-1}}!}{[d_1]_{q^{-1}}!} -1 \big)
\Big) \Big|_{q^{\geq -3d+1}}.
\end{split}
\ee
Here we have also used Proposition \ref{prop-degLaurentprod}
and the fact $\deg ([\infty]_{q^{-1}}!) =0$.
Moreover,
\begin{equation*}
\begin{split}
\sum_{d_1 = d+1}^{\infty} \Big( T_{0}^x \frac{(q^{-d_1 -1} ;q^{-1})_\infty}{(xq^{-d_1 -1} ;q^{-1})_\infty} -1\Big)
\Big|_{q^{\geq -3d+1}}
=& \sum_{d_1 = d+1}^{\infty} \Big( \frac{[\infty]_{q^{-1}}!}{[d_1]_{q^{-1}}!} -1 \Big)
\Big|_{q^{\geq -3d+1}} \\
=& \sum_{d_1 = d+1}^{\infty} \Big( \prod_{ i= 1}^\infty (1-q^{-d_1-i}) -1 \Big)
\Big|_{q^{\geq -3d+1}}.
\end{split}
\end{equation*}
By expanding $\frac{[\infty]_{q^{-1}}!}{[d_1]_{q^{-1}}!} = \prod_{ i= 1}^\infty (1-q^{-d_1-i})$ explicitly
it is easy to see that:
\be
\label{eq-comutedisumWd2}
\begin{split}
\Big(\frac{[\infty]_{q^{-1}}!}{[d_1]_{q^{-1}}!}-1\Big)
=& \Big(  - \sum_{i=d_1+1}^\infty q^{-i}
+\sum_{i=2d_1+3}^\infty \lfloor \frac{i-2d_1-1}{2} \rfloor \cdot q^{-i} \Big)
\Big|_{q^{\geq -3d+1}} \\
=&\Big( - \frac{q^{-d_1-1}}{1-q^{-1}} + \frac{q^{-2d_1-3}}{(1-q^{-1})(1-q^{-2})} \Big)
\Big|_{q^{\geq -3d+1}}
\end{split}
\ee
for $d_1\geq d+1$,
where $\lfloor \cdot \rfloor$ is the floor function.
And similarly,
for $d_1\geq d$ one has:
\begin{equation*}
\begin{split}
& \Big((1+ \frac{q^{-(d_1+1)}}{[1]_{q^{-1}}!}) \frac{[\infty]_{q^{-1}}!}{[d_1]_{q^{-1}}!} -1\Big) \\
=& \Big(  - \sum_{i=d_1+1}^\infty q^{-i}
+\sum_{i=2d_1+3}^\infty \lfloor \frac{i-2d_1-1}{2} \rfloor  q^{-i}
+\frac{1}{1-q^{-1}} \big( q^{-d_1-1} -\sum_{i=2d_1+2}^\infty q^{-i} \big)\Big)
\Big|_{q^{\geq -3d+1}} \\
=& \Big(  - \frac{q^{-2d_1-2}}{(1-q^{-1})^2} + \frac{q^{-2d_1-3}}{(1-q^{-1})(1-q^{-2})} \Big)
\Big|_{q^{\geq -3d+1}}.
\end{split}
\end{equation*}
Plug these into \eqref{eq-Vd2compute} and use Proposition \ref{prop-degLaurentprod},
and we finally obtain:
\begin{equation*}
\begin{split}
V_d^2 (q)
= &\frac{6}{([\infty]_{q^{-1}}!)^3} \Big(
-\frac{q^{-d-2}}{(1-q^{-1})^2} + \frac{q^{-2d-5}}{(1-q^{-1})(1-q^{-2})^2} \\
& \qquad\qquad
-\frac{q^{-2d-2}}{(1-q^{-1})^2 (1-q^{-2})} + \frac{q^{-2d-3}}{(1-q^{-1})(1-q^{-2})^2}
\Big) \Big|_{q^{\geq -3d+1}} \\
=&\frac{6}{([\infty]_{q^{-1}}!)^3} \Big(
\frac{q^{-2d-3}\cdot [4]_{q^{-1}}}{[1]_{q^{-1}} \cdot [2]_{q^{-1}}^3}
-\frac{q^{-d-2}}{[1]_{q^{-1}}^2} - \frac{q^{-2d-2}}{[1]_{q^{-1}}^2 \cdot [2]_{q^{-1}}}
\Big) \Big|_{q^{\geq -3d+1}}.
\end{split}
\end{equation*}
Now combine this with \eqref{eq-W2infty=} and \eqref{eq-defVd2-Wd2},
and we conclude that:

\begin{Proposition}
\label{prop-w2dleading}
For $d\geq 2$ one has:
\begin{equation*}
W_d^2(q) = \frac{3}{([\infty]_{q^{-1}}!)^3}\bigg(
- \sum_{i\geq 1} \frac{q^{-i}}{(1-q^{-i})^2} -2\Big(
\frac{q^{-2d-3} [4]_{q^{-1}}}{[1]_{q^{-1}}  [2]_{q^{-1}}^3}
-\frac{q^{-d-2}}{[1]_{q^{-1}}^2} - \frac{q^{-2d-2}}{[1]_{q^{-1}}^2  [2]_{q^{-1}}}
\Big)
\bigg)\Big|_{q^{\geq -3d+1}}.
\end{equation*}
\end{Proposition}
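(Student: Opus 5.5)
The proof assembles the pieces computed just above. We write $W_d^2(q)=W_\infty^2(q)-V_d^2(q)$ using the definition \eqref{eq-defVd2-Wd2}, and substitute Guo--Zhou's closed form \eqref{eq-W2infty=} for $W_\infty^2(q)$. The statement then follows once we show that
\begin{equation*}
V_d^2(q)=\frac{6}{([\infty]_{q^{-1}}!)^3}\Big(\frac{q^{-2d-3}[4]_{q^{-1}}}{[1]_{q^{-1}}[2]_{q^{-1}}^3}-\frac{q^{-d-2}}{[1]_{q^{-1}}^2}-\frac{q^{-2d-2}}{[1]_{q^{-1}}^2[2]_{q^{-1}}}\Big)\Big|_{q^{\geq -3d+1}}.
\end{equation*}
This is because $W_\infty^2-V_d^2$ then matches the displayed bracket: the factor $3$ outside absorbs the $6$ as $-2\cdot 3$.

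For $V_d^2$, the first step is the truncation lemma with $k=3$. By \eqref{eq-degreeest-V2}, every pair $(d_1,d_2)$ with $d_1+d_2\ge d+1$ and $d_1,d_2\ge 2$ contributes only to degrees $\le -3d$. Multiplying by $1/([\infty]_{q^{-1}}!)^3$, which has degree $0$, preserves this by Proposition \ref{prop-degLaurentprod}. So only the boundary families $d_1\in\{0,1\}$ or $d_2\in\{0,1\}$ survive. For $d\ge2$ these four families are disjoint under $d_1+d_2\ge d+1$.

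Next we evaluate each family in closed form. We use \eqref{eq-computed1=01}, \eqref{eq-computed2=01} and Lemma \ref{lem-sumqoverdfactorial}. The symmetry \eqref{eq-symmTmx} pairs the $d_1=0$ family with the $d_2=0$ family, and the $d_1=1$ family with the $d_2=1$ family. This yields the factor $6$ and the two sums in \eqref{eq-Vd2compute}.

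The main work is expanding $\prod_{i\ge1}(1-q^{-d_1-i})-1$ modulo $q^{<-3d+1}$. For $d_1\ge d$, any term using three or more factors has degree at most $-3d_1-6<-3d+1$. So only the linear and quadratic parts matter:
\begin{itemize}
\item the linear part is $-q^{-d_1-1}/[1]_{q^{-1}}$;
\item the quadratic part is $\sum_{i<j}q^{-2d_1-i-j}=q^{-2d_1-3}/([1]_{q^{-1}}[2]_{q^{-1}})$.
\end{itemize}
This gives \eqref{eq-comutedisumWd2}. For the $T_1$ family, we multiply by $1+q^{-d_1-1}/[1]_{q^{-1}}$ and again keep only terms of degree $\ge -3d+1$. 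The cross term $q^{-d_1-1}/[1]_{q^{-1}}\cdot(-q^{-d_1-1}/[1]_{q^{-1}})$ survives, while the product with the quadratic part is negligible. The linear pieces then cancel, leaving $-q^{-2d_1-2}/[1]^2+q^{-2d_1-3}/([1][2])$.

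Finally we sum the geometric series: $d_1\ge d+1$ in the first family, $d_1\ge d$ in the second. We use $\sum_{d_1\ge a}q^{-d_1}=q^{-a}/[1]_{q^{-1}}$ and $\sum_{d_1\ge a}q^{-2d_1}=q^{-2a}/[2]_{q^{-1}}$. This produces the four terms
\begin{equation*}
-\frac{q^{-d-2}}{[1]^2},\qquad \frac{q^{-2d-5}}{[1][2]^2},\qquad -\frac{q^{-2d-2}}{[1]^2[2]},\qquad \frac{q^{-2d-3}}{[1][2]^2}.
\end{equation*}
Combining the two middle-order terms gives $q^{-2d-3}(1+q^{-2})/([1][2]^2)=q^{-2d-3}[4]/([1][2]^3)$. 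Throughout, every truncation is justified by Proposition \ref{prop-degLaurentprod} together with Lemma \ref{lem-gz-seriesmultres}.

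The step most likely to cause trouble is the bookkeeping of which products stay above degree $-3d+1$. In particular, summing over $d_1$ after truncation must be legitimate: each summand's tail lies below the cutoff uniformly in $d_1$.
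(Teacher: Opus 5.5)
Your proposal is correct and follows the paper's proof essentially step for step: the decomposition $W_d^2=W_\infty^2-V_d^2$, the $k=3$ truncation to the four boundary families paired by the symmetry \eqref{eq-symmTmx}, the expansion \eqref{eq-comutedisumWd2} and its $T_1$ analogue, and the final geometric sums in $d_1$. You justify the expansion by counting factors of $\prod_{i\ge1}(1-q^{-d_1-i})$ rather than via the paper's floor-function coefficients, and you address uniformity of the truncation when summing over $d_1$; both are just more explicit versions of the same argument.
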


\subsection{Leading terms of $W_d^3(q)$ -- Case 1)}

Now we compute the coefficients of $q^{-D}$ with $0\leq D\leq 3d-1$ in $W_d^3(q)$.
First consider the degree of $W_d^3(q)$.
Assume $d_1+d_2+d_3 =d$,
and then by \eqref{eq-gzlem-5.7} one has:
\begin{equation*}
\begin{split}
\deg\Big(
T_{d_2}^x\big( \frac{(q^{-d_1 -1} ;q^{-1})_\infty}{(xq^{-d_1 -1} ;q^{-1})_\infty} -1 \big)
 T_{d_3}^x\big( \frac{(q^{-d_2 -1} ;q^{-1})_\infty}{(xq^{-d_2 -1} ;q^{-1})_\infty} -1 \big)
\Big)
= -(d_2+1)(d+2-d_2).
\end{split}
\end{equation*}
When $2\leq d_2 <d$ one has
$-(d_2+1)(d+2-d_2) \leq -3d$.
Thus to compute the coefficients of $q^{-D}$ with $0\leq D\leq 3d-1$,
we only need to consider three cases $d_2 =0,1$ and $d_2 = d$
in the definition \eqref{eq-defWd1Wd2Wd3} of $W_d^3(q)$.
We compute the first case $d_2 = 0$ in the rest subsection,
and compute the other two cases in the next two subsections.

For $d_2 = 0$,
by \eqref{eq-computed1=01} and \eqref{eq-computed2=01} one has:
\begin{equation*}
\begin{split}
&\sum_{d_1+d_3=d}T_{0}^x\Big( \frac{(q^{-d_1 -1} ;q^{-1})_\infty}{(xq^{-d_1 -1} ;q^{-1})_\infty} -1 \Big)
 T_{d_3}^x\Big( \frac{(q^{ -1} ;q^{-1})_\infty}{(xq^{ -1} ;q^{-1})_\infty} -1 \Big) \\
=& ([\infty]_{q^{-1}}!)^2 \cdot\sum_{d_1+d_3=d}
\Big( \frac{1}{[d_1]_{q^{-1}}!}-\frac{1}{[\infty]_{q^{-1}}!} \Big)
\Big( \frac{1}{[d_3]_{q^{-1}}!}-\frac{1}{[\infty]_{q^{-1}}!} \Big).
\end{split}
\end{equation*}
In the proof of \cite[Prop. 6.5]{gz},
Guo--Zhou have derived the following identity:
\be
\label{eq-W3dcase1-gzeq}
\begin{split}
&\sum_{d_1+d_3=d}
\Big( \frac{1}{[d_1]_{q^{-1}}!}-\frac{1}{[\infty]_{q^{-1}}!} \Big)
\Big( \frac{1}{[d_3]_{q^{-1}}!}-\frac{1}{[\infty]_{q^{-1}}!} \Big) \\
=& \sum_{d_1+d_3 =d}
\sum_{k=d_1+1}^\infty q^{-k}\big( \frac{1}{[k]_{q^{-1}}!} - \frac{1}{[\infty]_{q^{-1}}!} \big)
\sum_{l=d_3+1}^\infty q^{-l}\big( \frac{1}{[l]_{q^{-1}}!} - \frac{1}{[\infty]_{q^{-1}}!} \big) \\
& + \frac{2q^{-d-1}}{(1-q^{-1}) [\infty]_{q^{-1}}!} \sum_{d_1=0}^d \sum_{k=d_1+1}^\infty q^{d_1-k}\big( \frac{1}{[k]_{q^{-1}}!} - \frac{1}{[\infty]_{q^{-1}}!} \big)
+\frac{(d+1)q^{-d-2}}{(1-q^{-1})^2 ([\infty]_{q^{-1}}!)^2}.
\end{split}
\ee
In the following lemmas,
we compute the leading terms of the summations in the right-hand side.

\begin{Lemma}
\label{lem-W3dleading1}
The following identity holds:
\begin{equation*}
\begin{split}
& \frac{2q^{-d-1}}{(1-q^{-1}) [\infty]_{q^{-1}}!} \sum_{d_1=0}^d \sum_{k=d_1+1}^\infty q^{d_1-k}\big( \frac{1}{[k]_{q^{-1}}!} - \frac{1}{[\infty]_{q^{-1}}!} \big) \\
=&\Big(
-\frac{2q^{-d-2}}{[1]_{q^{-1}}^2 ([\infty]_{q^{-1}}!)^2} \sum_{i\geq 2} \frac{q^{-i}}{1-q^{-i}}
+ \frac{2q^{-2d-5}}{[1]_{q^{-1}}^3 [2]_{q^{-1}} ([\infty]_{q^{-1}}!)^2}
\Big) \Big|_{q^{\geq -3d-8}}.
\end{split}
\end{equation*}
(Here we use the truncation $q^{\geq -3d+8}$ instead of $q^{\geq -3d+1}$ for the convenience of the computations
in next sections.)
\end{Lemma}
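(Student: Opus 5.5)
The plan is to compute the double sum on the left directly, organizing it by the inner variable $k-d_1$. Write $k = d_1 + j$ with $j\geq 1$. The inner sum is then
\begin{equation*}
\sum_{j\geq 1} q^{-j}\Big(\frac{1}{[d_1+j]_{q^{-1}}!}-\frac{1}{[\infty]_{q^{-1}}!}\Big).
\end{equation*}
Factor out $1/[\infty]_{q^{-1}}!$:
\begin{equation*}
\frac{1}{[k]_{q^{-1}}!}-\frac{1}{[\infty]_{q^{-1}}!}
=\frac{1}{[\infty]_{q^{-1}}!}\Big(\frac{[\infty]_{q^{-1}}!}{[k]_{q^{-1}}!}-1\Big).
\end{equation*}
The bracket is exactly the quantity expanded in \eqref{eq-comutedisumWd2}. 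Up to the needed truncation it equals
\begin{equation*}
-\frac{q^{-k-1}}{1-q^{-1}}+\frac{q^{-2k-3}}{(1-q^{-1})(1-q^{-2})}+O(q^{-3k-c}).
\end{equation*}
This is just the expansion $\prod_{i\geq1}(1-q^{-k-i})-1 = -\sum_i q^{-k-i} + \sum_{i<i'} q^{-2k-i-i'} - \cdots$. Its third-order part has degree $\le -3k-6$, and the overall prefactor $q^{-d-1}$ together with $k\geq 1$ shows that part is negligible.

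Next, substitute and sum the two geometric pieces. The prefactor $2q^{-d-1}/([1]_{q^{-1}}[\infty]_{q^{-1}}!)$ already carries degree $-d-1$. The linear piece gives
\begin{equation*}
-\frac{1}{[1]_{q^{-1}}}\sum_{d_1=0}^d\sum_{j\ge1} q^{-j}q^{-d_1-j-1}.
\end{equation*}
This is not yet of the stated shape, so I would instead keep the exact linear part: $\sum_{i\geq1}q^{-k-i}$ summed appropriately. Summing over $d_1$ from $0$ to $d$ gives a finite geometric series $\sum_{d_1=0}^d q^{-d_1}=(1-q^{-d-1})/[1]_{q^{-1}}$. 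The tail $q^{-d-1}$ multiplied by the prefactor drops below $q^{-3d-8}$ only if the remaining factor has degree $\le -d-6$. So I must check this, or more precisely reorganize.

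The cleaner route is to exchange the order of summation: put $d_1$ outermost as $\sum_{d_1=0}^{d} q^{d_1}$ and $k$ innermost over $k>d_1$. The truncation $q^{\geq -3d-8}$ (note $-3d-8$, generous) means only small $d_1$ matter after combining with the prefactor. Terms with large $d_1$ contribute degree about $-d-1-2-2d_1\cdot(\ldots)$. I would therefore treat the sum as $\sum_{d_1=0}^\infty$ minus a tail and check that the tail is of degree $<-3d-8$.

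For the $d_1=\infty$ completed sum, the linear part produces $\sum_{k\geq1}\sum_{d_1<k}q^{d_1-k}\cdot(-q^{-k-1}/(1-q^{-1}))$. Resumming via $\sum_{d_1=0}^{k-1}q^{d_1-k}=q^{-1}(1-q^{-k})/(1-q^{-1})$ yields an expression. I expect it to be rewritable as $-\frac{q^{-1}}{[1]_{q^{-1}}}\sum_{i\ge2}\frac{q^{-i}}{1-q^{-i}}$; identifying this Lambert-type series is a step to verify carefully, possibly by expanding $\prod(1-q^{-k-i})$ to all orders rather than truncating. Indeed, the exact identity should come from $\frac{[\infty]_{q^{-1}}!}{[k]_{q^{-1}}!}$ summed with weight $q^{-k}$, via \eqref{eq-pochhammerseries}. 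The quadratic piece gives, at leading order, the $d_1=d$ boundary contribution $\frac{2q^{-2d-5}}{[1]^3[2]([\infty]!)^2}$.

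The main obstacle is the bookkeeping of which part becomes the full Lambert series, which is independent of $d$, and which is the $d$-dependent boundary term. The sum over $d_1$ produces a factor $q^{d_1}$, so the upper limit $d_1=d$ rather than small $d_1$ governs the $d$-dependent terms. I would handle this by writing $d_1=d-r$ and summing over $r\ge0$, then matching coefficients with the claimed right-hand side. Finally I would multiply through using Proposition \ref{prop-degLaurentprod}, which preserves the truncation.
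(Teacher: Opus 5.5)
Completing the sum to $d_1=\infty$ and subtracting a tail is the right structure, and it is the paper's. But your proposal has a genuine gap: both terms on the right-hand side are misattributed, and the steps you call negligible are not. Since the prefactor has degree $-d-1$, the inner double sum must be known up to terms of degree $<-2d-7$.

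First, the $d$-independent Lambert-series term requires the completed sum $\sum_{d_1\geq 0}\sum_{k>d_1}$ \emph{exactly}, and no finite truncation of $\prod_{i\geq 1}(1-q^{-k-i})-1$ gives it. At $d_1=0$, $k=1$ the $m$-th order part contributes at degree $-1-m(m+3)/2$. For the cubic part this is $-10$, which lies inside the window $q^{\geq -2d-7}$ for every $d\geq 2$. So your claim that the third-order part is negligible is false. For the same reason the linear part does not produce the Lambert series. It gives $-\frac{q^{-3}}{[1]_{q^{-1}}^2[2]_{q^{-1}}[\infty]_{q^{-1}}!}$, which already differs from $-\frac{q^{-1}}{[1]_{q^{-1}}[\infty]_{q^{-1}}!}\sum_{i\geq 2}\frac{q^{-i}}{1-q^{-i}}$ at $q^{-6}$. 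The missing ingredient is the exact identity \eqref{eq-gzlem6.6} (Guo--Zhou's Lemma 6.6, quoted in the paper). You can prove it as follows:
\begin{itemize}
\item Sum over $d_1$ first, using $\sum_{d_1<k}q^{d_1-k}=\frac{q^{-1}(1-q^{-k})}{1-q^{-1}}$.
\item Use $\sum_{k\geq 0}\big(\frac{1}{[k]_{q^{-1}}!}-\frac{1}{[\infty]_{q^{-1}}!}\big)=-\frac{1}{[\infty]_{q^{-1}}!}\sum_{i\geq 1}\frac{q^{-i}}{1-q^{-i}}$, which is \eqref{eq-gz-prop5.8}.
\item Use $\sum_{k\geq 0}q^{-k}\big(\frac{1}{[k]_{q^{-1}}!}-\frac{1}{[\infty]_{q^{-1}}!}\big)=-\frac{q^{-1}}{[1]_{q^{-1}}[\infty]_{q^{-1}}!}$, which is \eqref{eq-pochhammerseries} at $z=q^{-1}$.
\end{itemize}

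Second, the tail $d_1\geq d+1$ is not below $q^{-3d-8}$. Its terms have degree $d_1-2k-1\leq -d-4$, i.e.\ $-2d-5$ after the prefactor. Up to the truncation, the prefactor times minus this tail is exactly the term $\frac{2q^{-2d-5}}{[1]_{q^{-1}}^3[2]_{q^{-1}}([\infty]_{q^{-1}}!)^2}$. It comes from the \emph{linear} piece $-\frac{q^{-k-1}}{[1]_{q^{-1}}[\infty]_{q^{-1}}!}$ summed over $d_1\geq d+1$, $k>d_1$, which gives $-\frac{q^{-d-4}}{[1]_{q^{-1}}^2[2]_{q^{-1}}[\infty]_{q^{-1}}!}$. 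The remainder in the tail has degree $\leq d_1-3k-3\leq -2d-8$. The term does not come from the quadratic piece near $d_1=d$. That piece has degree at most $-2d-6$ before the prefactor, and it is in any case absorbed in the exact completed sum. The reorganization $d_1=d-r$ is unnecessary. With these two corrections you recover the paper's argument: take the exact completed sum, subtract the tail evaluated to leading order (replace $1/[n]_{q^{-1}}!$ by $1/[\infty]_{q^{-1}}!$ in $\frac{1}{[k]_{q^{-1}}!}-\frac{1}{[\infty]_{q^{-1}}!}=-\sum_{n>k}\frac{q^{-n}}{[n]_{q^{-1}}!}$), then apply Proposition~\ref{prop-degLaurentprod} for the prefactor.
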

\begin{proof}
First by \eqref{eq-pochhammerseries} and Lemma \ref{lem-sumqoverdfactorial} one easily sees that:
\be
\label{eq-sum1overkfac-infty}
\Big( \frac{1}{[k]_{q^{-1}}!}-\frac{1}{[\infty]_{q^{-1}}!} \Big)
=\sum_{n=0}^{k} \frac{q^{-n}}{[n]_{q^{-1}}!} -\sum_{n\geq 0} \frac{q^{-n}}{[n]_{q^{-1}}!}
=-\sum_{n=k+1}^\infty \frac{q^{-n}}{[n]_{q^{-1}}!},
\ee
and then one has:
\begin{equation*}
\begin{split}
& \sum_{d_1=0}^d \sum_{k=d_1+1}^\infty q^{d_1-k}\big( \frac{1}{[k]_{q^{-1}}!} - \frac{1}{[\infty]_{q^{-1}}!} \big) \\
=& \sum_{d_1=0}^\infty \sum_{k=d_1+1}^\infty q^{d_1-k}\big( \frac{1}{[k]_{q^{-1}}!} - \frac{1}{[\infty]_{q^{-1}}!} \big)
-\sum_{d_1 = d+1}^\infty \sum_{k=d_1+1}^\infty q^{d_1-k}\big( \frac{1}{[k]_{q^{-1}}!} - \frac{1}{[\infty]_{q^{-1}}!} \big)\\
=& \sum_{d_1=0}^\infty \sum_{k=d_1+1}^\infty q^{d_1-k}\big( \frac{1}{[k]_{q^{-1}}!} - \frac{1}{[\infty]_{q^{-1}}!} \big)
+ \sum_{d_1 = d+1}^\infty \sum_{k=d_1+1}^\infty \sum_{n=k+1}^\infty
\frac{q^{d_1-k-n}}{[n]_{q^{-1}}!}.
\end{split}
\end{equation*}
The first term of this expression has been computed by Guo--Zhou (see \cite[Lemma 6.6]{gz}):
\be
\label{eq-gzlem6.6}
\sum_{d_1=0}^\infty \sum_{k=d_1+1}^\infty q^{d_1-k}\big( \frac{1}{[k]_{q^{-1}}!} - \frac{1}{[\infty]_{q^{-1}}!} \big)
= -\frac{q^{-1}}{(1-q^{-1})[\infty]_{q^{-1}}!}\sum_{i\geq 2} \frac{q^{-i}}{1-q^{-i}},
\ee
and the second term can be rewritten as:
\begin{equation*}
\begin{split}
& \sum_{d_1 = d+1}^\infty \sum_{k=d_1+1}^\infty \sum_{n=k+1}^\infty
\frac{q^{d_1-k-n}}{[n]_{q^{-1}}!}
= \sum_{d_1 = d+1}^\infty \sum_{k=d_1+1}^\infty \sum_{n=k+1}^\infty \Big(
q^{d_1-k-n}\big( \frac{1}{[n]_{q^{-1}}!} - \frac{1}{[\infty]_{q^{-1}}!} \big)
+ \frac{q^{d_1-k-n}}{[\infty]_{q^{-1}}!}\Big).
\end{split}
\end{equation*}
Here by \eqref{eq-gzlem-5.7} one knows that
\begin{equation*}
\deg
q^{d_1-k-n}\big( \frac{1}{[n]_{q^{-1}}!} - \frac{1}{[\infty]_{q^{-1}}!} \big)
=d_1-k-2n-1 <-2d-7,
\end{equation*}
and thus when restricting to $q^{\geq -2d-7}$ one has:
\begin{equation*}
\begin{split}
\sum_{d_1 = d+1}^\infty \sum_{k=d_1+1}^\infty \sum_{n=k+1}^\infty
\frac{q^{d_1-k-n}}{[n]_{q^{-1}}!} =&
 \sum_{d_1 = d+1}^\infty \sum_{k=d_1+1}^\infty \sum_{n=k+1}^\infty
\frac{q^{d_1-k-n}}{[\infty]_{q^{-1}}!} \\
=& \frac{q^{-d-4}}{(1-q^{-1})^2(1-q^{-2})[\infty]_{q^{-1}}!}
\Big|_{q^{\geq -2d-7}},
\end{split}
\end{equation*}
and thus by Proposition \ref{prop-degLaurentprod} we conclude that:
\begin{equation*}
\begin{split}
& \frac{2q^{-d-1}}{(1-q^{-1}) [\infty]_{q^{-1}}!} \sum_{d_1=0}^d \sum_{k=d_1+1}^\infty q^{d_1-k}\big( \frac{1}{[k]_{q^{-1}}!} - \frac{1}{[\infty]_{q^{-1}}!} \big) \\
=& \frac{2q^{-d-1}}{(1-q^{-1}) [\infty]_{q^{-1}}!}\Big(
-\frac{q^{-1}}{(1-q^{-1})[\infty]_{q^{-1}}!}\sum_{i\geq 2} \frac{q^{-i}}{1-q^{-i}}
+ \frac{q^{-d-4}}{(1-q^{-1})^2(1-q^{-2})[\infty]_{q^{-1}}!} \Big) \\
=&\Big(
-\frac{2q^{-d-2}}{(1-q^{-1})^2 ([\infty]_{q^{-1}}!)^2} \sum_{i\geq 2} \frac{q^{-i}}{1-q^{-i}}
+ \frac{2q^{-2d-5}}{(1-q^{-1})^3(1-q^{-2})([\infty]_{q^{-1}}!)^2}
\Big) \Big|_{q^{\geq -3d-8}}.
\end{split}
\end{equation*}
Thus the conclusion holds.
\end{proof}

\begin{Lemma}
\label{lem-W3dleading2}
The following identity holds:
\begin{equation*}
\begin{split}
&\sum_{d_1+d_3 =d}
\sum_{k=d_1+1}^\infty q^{-k}\big( \frac{1}{[k]_{q^{-1}}!} - \frac{1}{[\infty]_{q^{-1}}!} \big)
\sum_{l=d_3+1}^\infty q^{-l}\big( \frac{1}{[l]_{q^{-1}}!} - \frac{1}{[\infty]_{q^{-1}}!} \big) \\
=& \frac{1}{([\infty]_{q^{-1}}!)^2} \Big(
\frac{2q^{-2d-8}}{[1]_{q^{-1}}^2 [2]_{q^{-1}}^3}
+ \frac{(d+1)q^{-2d-6}}{[1]_{q^{-1}}^2 [2]_{q^{-1}}^2}
-\frac{2 q^{-2d-6}}{[1]_{q^{-1}}^2 [2]_{q^{-1}}^2} \sum_{i\geq 2}\frac{q^{-i}}{1-q^{-i}}
\Big)|_{q^{\geq -3d-9}}.
\end{split}
\end{equation*}
\end{Lemma}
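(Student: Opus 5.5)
The plan is to write each inner tail as an explicit series and keep only the few pieces of high enough degree. Put
\begin{equation*}
S(a) := \sum_{k=a+1}^\infty q^{-k}\Big( \frac{1}{[k]_{q^{-1}}!} - \frac{1}{[\infty]_{q^{-1}}!} \Big).
\end{equation*}
By \eqref{eq-sum1overkfac-infty} and \eqref{eq-gzlem-5.7}, the $k$-th summand has degree $-2k-1$. Hence $\deg S(a) = -2a-3$, and the summand $S(d_1)S(d_3)$ has degree $-2d-6$. Among the pairs $(d_1,d_3)$, the sum $S(d_1)S(d_3)$ is truncated at $q^{\geq -3d-9}$.

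The first step is to decompose $S(a)$ as
\begin{equation*}
S(a) = S(0) - \sum_{k=1}^{a} q^{-k}\Big(\frac{1}{[k]_{q^{-1}}!}-\frac{1}{[\infty]_{q^{-1}}!}\Big).
\end{equation*}
This is the same trick as in the proof of Lemma \ref{lem-W3dleading1}, but it does not keep degrees low. A better route is to expand $S(a)$ directly using \eqref{eq-sum1overkfac-infty}:
\begin{equation*}
S(a) = -\sum_{k\geq a+1}\sum_{n\geq k+1}\frac{q^{-k-n}}{[n]_{q^{-1}}!}.
\end{equation*}
In this double sum, replace $1/[n]_{q^{-1}}!$ by $1/[\infty]_{q^{-1}}!$ plus the correction $\big(1/[n]_{q^{-1}}! - 1/[\infty]_{q^{-1}}!\big)$, whose degree is $-n-1$. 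The $1/[\infty]_{q^{-1}}!$ part sums to
\begin{equation*}
-\frac{q^{-2a-3}}{[1]_{q^{-1}}[2]_{q^{-1}}[\infty]_{q^{-1}}!}.
\end{equation*}
The correction part is $O(q^{-3a-6})$ at leading order. Keeping its top piece (from $k=a+1$, $n=a+2$) and iterating \eqref{eq-sum1overkfac-infty} once more gives a term of the form $q^{-3a-6}(\cdots)/[\infty]_{q^{-1}}!$.

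The second step is to multiply and sum over $d_1+d_3=d$. The product of the two leading pieces contributes
\begin{equation*}
(d+1)\frac{q^{-2d-6}}{[1]^2[2]^2([\infty]!)^2}
\end{equation*}
for every pair, which is the middle term of the claim. A cross term between a leading piece and a correction has degree $-2d_3-3-3d_1-6 = -2d-9-d_1$. It survives the truncation only when $d_1$ is small, that is, near the ends of the range (symmetrically, $d_3$ small). Since $S(0)$ and $S(1)$ can be computed exactly, the cleanest approach is to treat the boundary pairs separately. For $d_1=0$, one can evaluate $S(0)$ exactly via \eqref{eq-gzlem6.6}-type identities, which produce $\sum_{i\geq 2} q^{-i}/(1-q^{-i})$. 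These boundary contributions should account for both the $-2q^{-2d-6}\sum_{i\geq2}(\cdots)$ term and the $2q^{-2d-8}/([1]^2[2]^3)$ term, after subtracting the approximation already counted in the $(d+1)$-term. The interior pairs only need their leading parts, justified by Proposition \ref{prop-degLaurentprod}.

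The main obstacle is the exact boundary bookkeeping, in particular confirming that only $d_1\in\{0,1,\dots\}$ up to a fixed bound contributes within $q^{\geq-3d-9}$. Concretely, this requires a clean closed form for $S(0)$, namely
\begin{equation*}
S(0) = -\frac{q^{-1}}{[1]\,[\infty]!}\sum_{i\geq 2}\frac{q^{-i}}{1-q^{-i}} + (\text{explicit}),
\end{equation*}
obtained by specializing Guo--Zhou's computation \cite[Lemma 6.6]{gz}, together with the analogous formula for $S(1)$. I would first derive these exactly. I would then verify the final identity by comparing coefficients for small $d$ as a consistency check.
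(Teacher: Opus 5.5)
Your decomposition is the paper's own. You expand each tail with \eqref{eq-sum1overkfac-infty} and split $1/[n]_{q^{-1}}!$ into $1/[\infty]_{q^{-1}}!$ plus a correction. The product of the two $1/[\infty]_{q^{-1}}!$ parts gives the $(d+1)$-term, and the product of two corrections has degree at most $-3d-12$, so it drops out. The gap is in the cross terms.

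You correctly compute that the cross term attached to $d_1$ has degree $-2d-9-d_1$. But you then conclude it survives only for small $d_1$. In fact $-2d-9-d_1\geq -3d-9$ for \emph{every} $0\leq d_1\leq d$, so all pairs contribute inside the window. Already at leading order, the cross term attached to $d_1$ (counted together with its mirror image) starts with $-2q^{-2d-9-d_1}$. For small $d_1$ you also need about $d-d_1$ further coefficients of the correction series, not only its top piece from $k=a+1$, $n=a+2$.

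So computing a bounded set of boundary pairs exactly, and using only $-q^{-2a-3}/([1]_{q^{-1}}[2]_{q^{-1}}[\infty]_{q^{-1}}!)$ for the interior pairs, gives the wrong answer. Relatedly, the Lambert series cannot come from an exact evaluation of $S(0)$. By \eqref{eq-pochhammerseries} and Lemma \ref{lem-sumqoverdfactorial}, for every $a$ one has
\[
S(a)=\frac{1}{[\infty]_{q^{-1}}!}-\frac{1}{[a]_{q^{-1}}!}-\frac{q^{-a-1}}{[1]_{q^{-1}}\,[\infty]_{q^{-1}}!}.
\]
For example, $S(0)=\frac{1-2q^{-1}}{[1]_{q^{-1}}[\infty]_{q^{-1}}!}-1$, which contains no $\sum_{i\geq 2}q^{-i}/(1-q^{-i})$.

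The missing step is to sum the cross terms over all $d_1$ at once. By symmetry they equal
\[
\frac{2q^{-2d-3}}{[2]_{q^{-1}}!\,[\infty]_{q^{-1}}!}\sum_{d_1=0}^{d}\sum_{k>d_1}\sum_{n>k}q^{2d_1-k-n}\Big(\frac{1}{[n]_{q^{-1}}!}-\frac{1}{[\infty]_{q^{-1}}!}\Big).
\]
You may extend the range to all $d_1\geq 0$, because the terms with $d_1>d$ have degree at most $-3d-10$. The resulting infinite triple sum has a closed form, which is the paper's Lemma \ref{lem-eqlem-3sum}:
\begin{itemize}
\item put $n$ outermost;
\item the finite geometric sums over $d_1$ and $k$ then give $\frac{q^{-3}}{[1]_{q^{-1}}[2]_{q^{-1}}}(1-q^{-n}-q^{-n+1}+q^{-2n+1})$;
\item the remaining sum over $n$ is evaluated by \eqref{eq-pochhammerseries} and Guo--Zhou's identity \eqref{eq-gz-prop5.8}.
\end{itemize}
This global sum over $d_1$ is what produces both the $2q^{-2d-8}/([1]_{q^{-1}}^2[2]_{q^{-1}}^3)$ term and the Lambert-series term in the lemma.
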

\begin{proof}
By \eqref{eq-sum1overkfac-infty} one can rewrite the left-hand side (L.H.S.) of the above identity as:
\begin{equation*}
\begin{split}
\text{L.H.S.}
=& \sum_{d_1+d_3 =d}
\sum_{k=d_1+1}^\infty \sum_{n>k} q^{-k-n} \cdot \frac{1}{[n]_{q^{-1}}!}
\sum_{l=d_3+1}^\infty \sum_{m>l} q^{-l-m} \cdot \frac{1}{[m]_{q^{-1}}!}.
\end{split}
\end{equation*}
Replacing $\frac{1}{[n]_{q^{-1}}!}$ and $\frac{1}{[m]_{q^{-1}}!}$ by
\begin{equation*}
(\frac{1}{[n]_{q^{-1}}!} - \frac{1}{[\infty]_{q^{-1}}!})+\frac{1}{[\infty]_{q^{-1}}!},
\qquad\qquad
(\frac{1}{[m]_{q^{-1}}!} - \frac{1}{[\infty]_{q^{-1}}!})+\frac{1}{[\infty]_{q^{-1}}!}
\end{equation*}
respectively, we obtain:
\begin{equation*}
\begin{split}
\text{L.H.S.}
= &  \sum_{d_1+d_3 = d}
\sum_{k=d_1+1}^\infty \sum_{n>k} q^{-k-n} (\frac{1}{[n]_{q^{-1}}!}- \frac{1}{[\infty]_{q^{-1}}!})
\sum_{l=d_3+1}^\infty \sum_{m>l} q^{-l-m} (\frac{1}{[m]_{q^{-1}}!}- \frac{1}{[\infty]_{q^{-1}}!}) \\
&+2 \sum_{d_1=0}^d \sum_{k=d_1+1}^\infty \sum_{n>k} q^{-k-n} (\frac{1}{[n]_{q^{-1}}!}- \frac{1}{[\infty]_{q^{-1}}!})
\cdot \sum_{l=d-d_1+1}^\infty\sum_{m>l} \frac{q^{-m-l}}{[\infty]_{q^{-1}}!}
\\
&+ \sum_{d_1+d_3 = d}\sum_{k=d_1+1}^\infty \sum_{l=d_3+1}^\infty \frac{q^{-2k-2l-2}}{(1-q^{-1})^2 ([\infty]_{q^{-1}}!)^2},
\end{split}
\end{equation*}
or more explicitly,
\begin{equation*}
\begin{split}
\text{L.H.S.}
= &  \sum_{d_1+d_3 = d}
\sum_{k=d_1+1}^\infty \sum_{n>k} q^{-k-n} (\frac{1}{[n]_{q^{-1}}!}- \frac{1}{[\infty]_{q^{-1}}!})
\sum_{l=d_3+1}^\infty \sum_{m>l} q^{-l-m} (\frac{1}{[m]_{q^{-1}}!}- \frac{1}{[\infty]_{q^{-1}}!}) \\
&+\frac{2 q^{-2d-3}}{[2]_{q^{-1}}! [\infty]_{q^{-1}}!} \sum_{d_1=0}^d \sum_{k=d_1+1}^\infty \sum_{n>k} q^{2d_1-k-n}
(\frac{1}{[n]_{q^{-1}}!}- \frac{1}{[\infty]_{q^{-1}}!})
+ \frac{(d+1)q^{-2d-6}}{([2]_{q^{-1}}!)^2  ([\infty]_{q^{-1}}!)^2}.
\end{split}
\end{equation*}
The first summand is of degree:
\begin{equation*}
\begin{split}
\deg\Big( q^{-k-n} (\frac{1}{[n]_{q^{-1}}!}- \frac{1}{[\infty]_{q^{-1}}!})
 q^{-l-m} (\frac{1}{[m]_{q^{-1}}!}- \frac{1}{[\infty]_{q^{-1}}!})\Big)
 =&-k-2n-l-2m-2.
\end{split}
\end{equation*}
Here $-k-2n-l-2m-2 < -3d-11$, and thus:
\begin{equation*}
\begin{split}
\text{L.H.S.}
=
&\Big(
\frac{2 q^{-2d-3}}{[2]_{q^{-1}}! [\infty]_{q^{-1}}!} \sum_{d_1=0}^d \sum_{k>d_1} \sum_{n>k} q^{2d_1-k-n}
(\frac{1}{[n]_{q^{-1}}!}- \frac{1}{[\infty]_{q^{-1}}!}) \\
& + \frac{(d+1)q^{-2d-6}}{ ([2]_{q^{-1}}![\infty]_{q^{-1}}!)^2}
\Big)\Big|_{q^{\geq -3d-11}}.
\end{split}
\end{equation*}
Moreover,
notice that
\begin{equation*}
\begin{split}
& \sum_{d_1=0}^d \sum_{k>d_1} \sum_{n>k} q^{2d_1-k-n}
(\frac{1}{[n]_{q^{-1}}!}- \frac{1}{[\infty]_{q^{-1}}!}) \\
=& \sum_{d_1=0}^\infty \sum_{k>d_1} \sum_{n>k} q^{2d_1-k-n}
(\frac{1}{[n]_{q^{-1}}!}- \frac{1}{[\infty]_{q^{-1}}!})
- \sum_{d_1>d} \sum_{k>d_1} \sum_{n>k} q^{2d_1-k-n}
(\frac{1}{[n]_{q^{-1}}!}- \frac{1}{[\infty]_{q^{-1}}!}),
\end{split}
\end{equation*}
and for $d_1>d$ one has:
\begin{equation*}
\deg q^{2d_1-k-n}
(\frac{1}{[n]_{q^{-1}}!}- \frac{1}{[\infty]_{q^{-1}}!}) =2d_1-k-2n -1 < -d-6,
\end{equation*}
and so:
\be
\deg \Big(
\frac{2 q^{-2d-3}}{[2]_{q^{-1}}! [\infty]_{q^{-1}}!} \cdot
\sum_{d_1>d} \sum_{k>d_1} \sum_{n>k} q^{2d_1-k-n}
(\frac{1}{[n]_{q^{-1}}!}- \frac{1}{[\infty]_{q^{-1}}!}) \Big)
< -3d-9.
\ee
Then we may conclude that:
\begin{equation*}
\begin{split}
\text{L.H.S.}
=
&\Big(
\frac{2 q^{-2d-3}}{[2]_{q^{-1}}! [\infty]_{q^{-1}}!}
\sum_{d_1=0}^\infty \sum_{k>d_1} \sum_{n>k} q^{2d_1-k-n}
(\frac{1}{[n]_{q^{-1}}!}- \frac{1}{[\infty]_{q^{-1}}!}) \\
& + \frac{(d+1)q^{-2d-6}}{ ([2]_{q^{-1}}! [\infty]_{q^{-1}}!)^2}
\Big)\Big|_{q^{\geq -3d-9}}.
\end{split}
\end{equation*}
Then the conclusion follows from the next lemma.
\end{proof}

\begin{Lemma}
\label{lem-eqlem-3sum}
The following identity holds:
\be
\label{eq-lem-3sum}
\begin{split}
& \sum_{d_1=0}^\infty \sum_{k>d_1} \sum_{n>k} q^{2d_1-k-n}
(\frac{1}{[n]_{q^{-1}}!}- \frac{1}{[\infty]_{q^{-1}}!}) \\
=& \frac{1}{[\infty]_{q^{-1}}!}\Big(
\frac{q^{-5}}{(1-q^{-1})(1-q^{-2})^2}
-\frac{q^{-3}}{(1-q^{-1})(1-q^{-2})} \sum_{i=2}^\infty \frac{q^{-i}}{1-q^{-i}}
\Big).
\end{split}
\ee
\end{Lemma}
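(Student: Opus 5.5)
The plan is to remove the sums over $d_1$ and $k$ first, since they are finite geometric sums once $n$ is fixed. The constraints are $0\le d_1<k<n$, so $n\ge 2$. For fixed $n$ define
\begin{equation*}
c_n(q)=\sum_{d_1=0}^{n-2}\sum_{k=d_1+1}^{n-1}q^{2d_1-k-n}.
\end{equation*}
Summing first over $k$ gives $q^{d_1-1-n}\,(1-q^{-(n-1-d_1)})/(1-q^{-1})$. Summing this over $d_1$ gives a closed form for $c_n(q)$ as a combination of $q^{-n}$, $q^{-2n}$ (and possibly $q^{-3n}$-type terms with constant shifts), with coefficients rational in $q^{-1}$ and denominators $[1]_{q^{-1}}$, $[2]_{q^{-1}}$. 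Every exponent is $\le -3$, so there are no convergence issues. The left-hand side of \eqref{eq-lem-3sum} is then $\sum_{n\ge 2}c_n(q)\big(\frac{1}{[n]_{q^{-1}}!}-\frac{1}{[\infty]_{q^{-1}}!}\big)$.

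Next I will insert \eqref{eq-sum1overkfac-infty}, namely $\frac{1}{[n]_{q^{-1}}!}-\frac{1}{[\infty]_{q^{-1}}!}=-\sum_{j>n}\frac{q^{-j}}{[j]_{q^{-1}}!}$, and swap the order of summation. For fixed $j$, the inner sum $\sum_{2\le n<j}c_n(q)$ is again a finite geometric sum. This reduces the left-hand side to a finite linear combination of the series
\begin{equation*}
\sum_{j\ge0}\frac{z^j}{[j]_{q^{-1}}!},\qquad \sum_{j\ge0}\frac{j\,z^j}{[j]_{q^{-1}}!},
\end{equation*}
evaluated at $z=q^{-1}$ and $z=q^{-2}$ (and possibly $z=q^{-3}$), with coefficients rational in $q^{-1}$.

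These series are evaluated by \eqref{eq-pochhammerseries}, which gives $\sum_j z^j/[j]_{q^{-1}}!=1/(z;q^{-1})_\infty$. Applying $z\frac{d}{dz}$ yields
\begin{equation*}
\sum_{j\ge0}\frac{j\,z^j}{[j]_{q^{-1}}!}=\frac{1}{(z;q^{-1})_\infty}\sum_{i\ge0}\frac{zq^{-i}}{1-zq^{-i}}.
\end{equation*}
At $z=q^{-1}$ this gives $\frac{1}{[\infty]_{q^{-1}}!}\sum_{i\ge1}\frac{q^{-i}}{1-q^{-i}}$. At $z=q^{-2}$ it gives $\frac{[1]_{q^{-1}}}{[\infty]_{q^{-1}}!}\sum_{i\ge2}\frac{q^{-i}}{1-q^{-i}}$. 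The two Lambert series differ only by the single term $\frac{q^{-1}}{1-q^{-1}}$, so everything can be written in terms of $\sum_{i\ge2}\frac{q^{-i}}{1-q^{-i}}$, matching the shape of the right-hand side. Finite boundary terms, such as the missing low-$j$ terms and the $j=0,1$ corrections, are handled by Lemma \ref{lem-sumqoverdfactorial}. As a cross-check I can compare with Guo--Zhou's identity \eqref{eq-gzlem6.6}, which is the analogous computation with weight $q^{d_1-k}$.

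The main obstacle is the bookkeeping of the rational prefactors. Several terms proportional to $1/[\infty]_{q^{-1}}!$ with no Lambert series must cancel down to the single term $\frac{q^{-5}}{[1]_{q^{-1}}[2]_{q^{-1}}^2}$. The coefficient of $\sum_{i\ge2}\frac{q^{-i}}{1-q^{-i}}$ must collapse to $-\frac{q^{-3}}{[1]_{q^{-1}}[2]_{q^{-1}}}$. I would organize this by tracking the $z=q^{-1}$ and $z=q^{-2}$ contributions separately. As a sanity check I would expand both sides of \eqref{eq-lem-3sum} to the first few orders in $q^{-1}$ (the lowest nonzero term should be at $q^{-5}$) before completing the simplification.
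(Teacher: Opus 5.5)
Your proposal is correct. Its first step, the closed form $c_n=\frac{q^{-3}}{[1]_{q^{-1}}[2]_{q^{-1}}}(1-q^{-n}-q^{-n+1}+q^{-2n+1})$, is exactly the paper's; the two arguments diverge after that.

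The paper keeps the difference $\frac{1}{[n]_{q^{-1}}!}-\frac{1}{[\infty]_{q^{-1}}!}$ intact only against the constant term $1$ of $c_n$, and evaluates that piece with Guo--Zhou's identity \eqref{eq-gz-prop5.8}. Against the decaying terms $q^{-n},q^{-n+1},q^{-2n+1}$ it splits the difference. It then sums $\frac{1}{[n]_{q^{-1}}!}$ via \eqref{eq-pochhammerseries} at $z=q^{-1},q^{-2}$, and sums $\frac{1}{[\infty]_{q^{-1}}!}$ by geometric series.

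You instead expand the difference as the tail \eqref{eq-sum1overkfac-infty} and swap the order of summation. The swap is legitimate, since each term has $q^{-1}$-order at least $j+3$. This gives
\begin{equation*}
\sum_{n=2}^{j-1}c_n=\frac{q^{-3}}{[1]_{q^{-1}}[2]_{q^{-1}}}\Big(j-\frac{2+q^{-1}}{[2]_{q^{-1}}}+\frac{(1+q^{-1})q^{-j+1}}{[1]_{q^{-1}}}-\frac{q^{-2j+1}}{[2]_{q^{-1}}}\Big).
\end{equation*}
So you actually need the $j$-weighted series only at $z=q^{-1}$. The plain series is needed at $z=q^{-1}$ and $q^{-2}$, and also at $z=q^{-3}$ because of the $q^{-2j+1}$ term; there $1/(q^{-3};q^{-1})_\infty=[1]_{q^{-1}}[2]_{q^{-1}}/[\infty]_{q^{-1}}!$.

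Carrying this through, all terms not proportional to $1/[\infty]_{q^{-1}}!$ cancel exactly. The left-hand side becomes $-\frac{q^{-3}}{[1]_{q^{-1}}[2]_{q^{-1}}[\infty]_{q^{-1}}!}\big(\sum_{i\ge1}\frac{q^{-i}}{1-q^{-i}}-\frac{q^{-1}+2q^{-2}}{[2]_{q^{-1}}}\big)$. Passing to $\sum_{i\ge2}$ gives the stated right-hand side.

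The trade-off is as follows. Your route is self-contained: the weighted identity at $z=q^{-1}$ (the paper's \eqref{eq-twoidgen}) is precisely what \eqref{eq-gz-prop5.8} reduces to after the same tail-swap, so you reprove Guo--Zhou's identity in passing. The paper's route is shorter because it imports that identity. It also avoids the $z=q^{-3}$ evaluation and the larger cancellation of non-$1/[\infty]_{q^{-1}}!$ terms.
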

\begin{proof}
The left-hand side (L.H.S.) of \eqref{eq-lem-3sum} can be rewritten as:
\begin{equation*}
\begin{split}
\text{L.H.S.}
=& \sum_{n=2}^\infty (\frac{1}{[n]_{q^{-1}}!}- \frac{1}{[\infty]_{q^{-1}}!})
\sum_{d_1 = 0}^{n-2} \sum_{k=d_1+1}^{n-1} q^{2d_1-k-n} \\
= & \frac{q^{-3}}{(1-q^{-1})(1-q^{-2})}
\sum_{n=2}^\infty (1-q^{-n} -q^{-n+1} +q^{-2n+1}) (\frac{1}{[n]_{q^{-1}}!}- \frac{1}{[\infty]_{q^{-1}}!})\\
=& \frac{q^{-3}}{(1-q^{-1})(1-q^{-2})}
\Big( \sum_{n=2}^\infty \frac{q^{-2n+1} -q^{-n+1}(1+q^{-1})}{[n]_{q^{-1}}!} \\
&\qquad\qquad\qquad
+\sum_{n=2}^\infty (\frac{1}{[n]_{q^{-1}}!}- \frac{1}{[\infty]_{q^{-1}}!})
+\frac{q^{-1} +2q^{-2}}{1-q^{-2}}\cdot\frac{1}{[\infty]_{q^{-1}}!} \Big).
\end{split}
\end{equation*}
By \eqref{eq-pochhammerseries} one has:
\begin{equation*}
\begin{split}
\sum_{n=2}^\infty \frac{q^{-2n+1}}{[n]_{q^{-1}}!} =
q \cdot \frac{1}{(q^{-2} ; q^{-1})_\infty} -q-\frac{q^{-1}}{[1]_{q^{-1}}!}
= \frac{q(1-q^{-1})}{[\infty]_{q^{-1}}!} -q -\frac{q^{-1}}{1-q^{-1}},
\end{split}
\end{equation*}
and similarly,
\begin{equation*}
\begin{split}
\sum_{n=2}^\infty \frac{q^{-n+1}}{[n]_{q^{-1}}!} =
q\cdot \frac{1}{(q^{-1};q^{-1})_\infty} -q -\frac{1}{[1]_{q^{-1}}!}
= \frac{q}{[\infty]_{q^{-1}}!} -q -\frac{1}{1-q^{-1}}.
\end{split}
\end{equation*}
Moreover,
Guo--Zhou have proved the following identity
(see \cite[Prop. 5.8]{gz}):
\be
\label{eq-gz-prop5.8}
\sum_{n=0}^\infty (\frac{1}{[n]_{q^{-1}}!}- \frac{1}{[\infty]_{q^{-1}}!})
= -\frac{1}{[\infty]_{q^{-1}}!} \cdot \sum_{i\geq 1} \frac{q^{-i}}{1-q^{-i}},
\ee
which implies
\begin{equation*}
\sum_{n=2}^\infty (\frac{1}{[n]_{q^{-1}}!}- \frac{1}{[\infty]_{q^{-1}}!})
= \frac{1}{[\infty]_{q^{-1}}!}
\Big( 2-  \sum_{i\geq 1} \frac{q^{-i}}{1-q^{-i}} \Big)
-1-\frac{1}{1-q^{-1}}.
\end{equation*}
Now we can conclude that:
\begin{equation*}
\begin{split}
& \sum_{d_1=0}^\infty \sum_{k>d_1} \sum_{n>k} q^{2d_1-k-n}
(\frac{1}{[n]_{q^{-1}}!}- \frac{1}{[\infty]_{q^{-1}}!}) \\
=& \frac{q^{-3}}{(1-q^{-1})(1-q^{-2})}
\Big( \frac{q(1-q^{-1})}{[\infty]_{q^{-1}}!} -q -\frac{q^{-1}}{1-q^{-1}}
-(1+q^{-1})( \frac{q}{[\infty]_{q^{-1}}!} -q -\frac{1}{1-q^{-1}}) \\
&\qquad\qquad\qquad +\frac{1}{[\infty]_{q^{-1}}!}
\Big( 2 -  \sum_{i\geq 1} \frac{q^{-i}}{1-q^{-i}} \Big)
- 1-\frac{1}{1-q^{-1}}
+\frac{q^{-1} +2q^{-2}}{1-q^{-2}}\cdot\frac{1}{[\infty]_{q^{-1}}!} \Big) \\
=& \frac{q^{-4} (2q^{-1} +1)}{(1-q^{-1})(1-q^{-2})^2 \cdot [\infty]_{q^{-1}}!}
-  \frac{q^{-3}}{(1-q^{-1})(1-q^{-2})} \cdot
\frac{1}{[\infty]_{q^{-1}}!} \cdot \sum_{i\geq 1} \frac{q^{-i}}{1-q^{-i}},
\end{split}
\end{equation*}
and this is equivalent to \eqref{eq-lem-3sum}.
\end{proof}

Now we plug Lemma \ref{lem-W3dleading1} and Lemma \ref{lem-W3dleading2} into
the identity \eqref{eq-W3dcase1-gzeq},
and in this way we obtain the conclusion in the case $d_2 = 0$ in $W_d^3(q)$:
\begin{Proposition}
\label{prop-w3dcase1}
The following identity holds:
\begin{equation*}
\begin{split}
&\sum_{d_1+d_3=d}T_{0}^x\Big( \frac{(q^{-d_1 -1} ;q^{-1})_\infty}{(xq^{-d_1 -1} ;q^{-1})_\infty} -1 \Big)
 T_{d_3}^x\Big( \frac{(q^{ -1} ;q^{-1})_\infty}{(xq^{ -1} ;q^{-1})_\infty} -1 \Big) \\
=& \bigg(
\frac{2q^{-2d-5}}{[1]_{q^{-1}}^3 [2]_{q^{-1}}} + \frac{2q^{-2d-8}}{[1]_{q^{-1}}^2 [2]_{q^{-1}}^3}
+ \frac{(d+1)q^{-d-2}}{[1]_{q^{-1}}^2} + \frac{(d+1)q^{-2d-6}}{([2]_{q^{-1}}!)^2} \\
& - \Big(\frac{2q^{-d-2}}{[1]_{q^{-1}}^2} + \frac{2q^{-2d-6}}{([2]_{q^{-1}}!)^2}\Big)
\sum_{i\geq 2}\frac{q^{-i}}{1-q^{-i}}
\bigg) \bigg|_{q^{\geq -3d-8}}.
\end{split}
\end{equation*}
\end{Proposition}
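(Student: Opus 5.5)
The plan is to start from the rewriting already carried out just before Guo--Zhou's identity \eqref{eq-W3dcase1-gzeq}. By \eqref{eq-computed1=01} and \eqref{eq-computed2=01}, the left-hand side of the proposition equals $([\infty]_{q^{-1}}!)^2$ times the left-hand side of \eqref{eq-W3dcase1-gzeq}. Then \eqref{eq-W3dcase1-gzeq} splits that sum into three pieces:
\begin{itemize}
\item the double-tail product term;
\item the cross term involving $\frac{2q^{-d-1}}{(1-q^{-1})[\infty]_{q^{-1}}!}$;
\item the explicit term $\frac{(d+1)q^{-d-2}}{(1-q^{-1})^2([\infty]_{q^{-1}}!)^2}$.
\end{itemize}
So the whole proof reduces to substituting the leading-term evaluations of the first two pieces, multiplying through by $([\infty]_{q^{-1}}!)^2$, and collecting terms.

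Concretely, I will proceed in the following order.
\begin{itemize}
\item Apply Lemma \ref{lem-W3dleading1} to the cross term. This yields $-\frac{2q^{-d-2}}{[1]^2_{q^{-1}}([\infty]_{q^{-1}}!)^2}\sum_{i\geq2}\frac{q^{-i}}{1-q^{-i}}+\frac{2q^{-2d-5}}{[1]^3_{q^{-1}}[2]_{q^{-1}}([\infty]_{q^{-1}}!)^2}$, valid modulo lower-order terms.
\item Apply Lemma \ref{lem-W3dleading2} to the double-tail term. This yields $\frac{1}{([\infty]_{q^{-1}}!)^2}\big(\frac{2q^{-2d-8}}{[1]^2_{q^{-1}}[2]^3_{q^{-1}}}+\frac{(d+1)q^{-2d-6}}{[1]^2_{q^{-1}}[2]^2_{q^{-1}}}-\frac{2q^{-2d-6}}{[1]^2_{q^{-1}}[2]^2_{q^{-1}}}\sum_{i\geq2}\frac{q^{-i}}{1-q^{-i}}\big)$, where $[1]^2_{q^{-1}}[2]^2_{q^{-1}}=([2]_{q^{-1}}!)^2$.
\item Keep the third piece of \eqref{eq-W3dcase1-gzeq} exactly as it is.
\item Multiply the sum by $([\infty]_{q^{-1}}!)^2$. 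This clears every $([\infty]_{q^{-1}}!)^{-2}$ factor, and the six displayed terms of the proposition appear one by one.
\end{itemize}

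The only point needing care is the truncation bookkeeping. The two lemmas are stated with truncations $q^{\geq -3d-8}$ and $q^{\geq -3d-9}$, so the sum agrees with the claimed expression at least on $q^{\geq -3d-8}$. Multiplying by $([\infty]_{q^{-1}}!)^2$, a series of degree $0$, preserves this agreement by Proposition \ref{prop-degLaurentprod}. One caveat: the proof of Lemma \ref{lem-W3dleading1} internally truncates at $q^{\geq -2d-7}$ before multiplying by a factor of degree $-d-2$. I would double-check that the resulting range is the one actually used, namely $-3d+8$ as the parenthetical remark in that lemma suggests. If the true agreement range is only $q^{\geq -3d+8}$, that still suffices for the later application to $q^{\geq -3d+1}$ in $W_d^3$, since the prefactor in $W^3_d$ has degree $0$.

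So the main obstacle is not algebraic but the consistency of these truncation exponents. I would first verify them by recomputing the degree of each discarded remainder via \eqref{eq-gzlem-5.7}, and then state the final result with the weakest range that all the inputs guarantee.
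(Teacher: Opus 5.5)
Your proposal is correct and is the paper's proof. You rewrite the left-hand side as $([\infty]_{q^{-1}}!)^2$ times the left-hand side of \eqref{eq-W3dcase1-gzeq}, substitute Lemmas \ref{lem-W3dleading1} and \ref{lem-W3dleading2}, keep the $(d+1)q^{-d-2}$ term exactly, and multiply through by a degree-$0$ factor; the stated ranges $q^{\geq -3d-8}$ and $q^{\geq -3d-9}$ then give $q^{\geq -3d-8}$.

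Two corrections to your truncation caveat, though neither affects the proof:

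\begin{itemize}
\item The range in Lemma \ref{lem-W3dleading1} really is $q^{\geq -3d-8}$, so the ``$-3d+8$'' in its parenthetical is a typo. The discarded remainder has degree at most $-2d-8$, and the prefactor $\frac{2q^{-d-1}}{(1-q^{-1})[\infty]_{q^{-1}}!}$ has degree $-d-1$, not $-d-2$.
\item Your fallback argument runs the wrong way. Agreement on $q^{\geq -3d+8}$ is weaker than agreement on $q^{\geq -3d+1}$, so it would not suffice. The stronger range is in fact needed: the Case 1 result is reused with $d$ replaced by $d-1$ as the term (I) in Case 2. There it must hold on $q^{\geq -3d+1}=q^{\geq -3(d-1)-2}$.
\end{itemize}
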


\subsection{Leading terms of $W_d^3(q)$ -- Case 2)}
\label{sec-app-w3dcase2}

Now consider the second case $d_2 = 1$ in $W_d^3(q)$.
By \eqref{eq-computed1=01} and \eqref{eq-computed2=01} one has:
\begin{equation*}
\begin{split}
&\sum_{d_1+d_3=d-1}T_{1}^x\Big( \frac{(q^{-d_1 -1} ;q^{-1})_\infty}{(xq^{-d_1 -1} ;q^{-1})_\infty} -1 \Big)
 T_{d_3}^x\Big( \frac{(q^{ -2} ;q^{-1})_\infty}{(xq^{ -2} ;q^{-1})_\infty} -1 \Big) \\
=& \sum_{d_1+d_3=d-1} \Big( (1+ \frac{q^{-(d_1+1)}}{[1]_{q^{-1}}!}) \frac{[\infty]_{q^{-1}}!}{[d_1]_{q^{-1}}!} -1 \Big)
\Big( (1+ \frac{q^{-(d_3+1)}}{[1]_{q^{-1}}!}) \frac{[\infty]_{q^{-1}}!}{[d_3]_{q^{-1}}!} -1 \Big).
\end{split}
\end{equation*}
Then by expanding the right-hand side one obtains:
\begin{equation*}
\begin{split}
\sum_{d_1+d_3=d-1}T_{1}^x\Big( \frac{(q^{-d_1 -1} ;q^{-1})_\infty}{(xq^{-d_1 -1} ;q^{-1})_\infty} -1 \Big)
 T_{d_3}^x\Big( \frac{(q^{ -2} ;q^{-1})_\infty}{(xq^{ -2} ;q^{-1})_\infty} -1 \Big)
= \text{(I)} + \text{(II)} + \text{(III)},
\end{split}
\end{equation*}
where we denote:
\be
\label{eq-W3d-IIIIII}
\begin{split}
& \text{(I)} = \sum_{d_1+d_3=d-1}  \Big( \frac{[\infty]_{q^{-1}}!}{[d_1]_{q^{-1}}!} -1 \Big)
\Big( \frac{[\infty]_{q^{-1}}!}{[d_3]_{q^{-1}}!} -1 \Big);\\
& \text{(II)} = 2\cdot \sum_{d_1 +d_3 = d-1} \frac{q^{-d_1-1}}{[1]_{q^{-1}}} \cdot
\frac{[\infty]_{q^{-1}}!}{[d_1]_{q^{-1}}!}
\Big( \frac{[\infty]_{q^{-1}}!}{[d_3]_{q^{-1}}!} -1 \Big); \\
& \text{(III)} = \sum_{d_1+d_3=d-1} \frac{q^{-d-1}}{[1]_{q^{-1}}^2} \cdot
\frac{[\infty]_{q^{-1}}!}{[d_1]_{q^{-1}}!}\cdot \frac{[\infty]_{q^{-1}}!}{[d_3]_{q^{-1}}!}.
\end{split}
\ee

First by the conclusion of Case 1) we already know that:
\be
\label{eq-W3d-Iresult}
\begin{split}
\text{(I)} = & \sum_{d_1+d_3=d-1}
T_{0}^x\Big( \frac{(q^{-d_1 -1} ;q^{-1})_\infty}{(xq^{-d_1 -1} ;q^{-1})_\infty} -1 \Big)
 T_{d_3}^x\Big( \frac{(q^{ -1} ;q^{-1})_\infty}{(xq^{ -1} ;q^{-1})_\infty} -1 \Big) \\
=& \bigg(
\frac{2q^{-2d-3}}{[1]_{q^{-1}}^3 [2]_{q^{-1}}} + \frac{2q^{-2d-6}}{[1]_{q^{-1}}^2 [2]_{q^{-1}}^3}
+ \frac{d\cdot q^{-d-1}}{[1]_{q^{-1}}^2} + \frac{d\cdot q^{-2d-4}}{([2]_{q^{-1}}!)^2} \\
& - \Big(\frac{2q^{-d-1}}{[1]_{q^{-1}}^2} + \frac{2q^{-2d-4}}{([2]_{q^{-1}}!)^2}\Big)
\sum_{i\geq 2}\frac{q^{-i}}{1-q^{-i}}
\bigg) \bigg|_{q^{\geq -3d-5}}.
\end{split}
\ee

Now consider $\text{(II)}$.
By \eqref{eq-sum1overkfac-infty} one has:
\begin{equation*}
\begin{split}
\text{(II)} = & \frac{2 ([\infty]_{q^{-1}}!)^2}{[1]_{q^{-1}}}
\sum_{d_1+d_3 = d-1}  \frac{q^{-d_1-1}}{[d_1]_{q^{-1}}!} \cdot
(\frac{1}{[d_3]_{q^{-1}}!} - \frac{1}{[\infty]_{q^{-1}}!}) \\
=& -\frac{2 ([\infty]_{q^{-1}}!)^2}{[1]_{q^{-1}}}
\sum_{d_1+d_3 = d-1}  \frac{q^{-d_1-1}}{[d_1]_{q^{-1}}!} \cdot
\sum_{n>d_3}
\frac{q^{-n}}{[n]_{q^{-1}}!}.
\end{split}
\end{equation*}
Similar to the computations in Case 1),
we decompose $\frac{1}{[d_1]_{q^{-1}}!}$ and $\frac{1}{[n]_{q^{-1}}!}$ as follows:
\begin{equation*}
\begin{split}
&\frac{1}{[d_1]_{q^{-1}}!} = (\frac{1}{[d_1]_{q^{-1}}!} - \frac{1}{[\infty]_{q^{-1}}!})
+ \frac{1}{[\infty]_{q^{-1}}!}, \qquad
\frac{1}{[n]_{q^{-1}}!} = (\frac{1}{[n
]_{q^{-1}}!} - \frac{1}{[\infty]_{q^{-1}}!})
+ \frac{1}{[\infty]_{q^{-1}}!},
\end{split}
\end{equation*}
and in this way we obtain:
\be
\label{eq-W3d-IIsum}
\begin{split}
\text{(II)}
=& -\frac{2 ([\infty]_{q^{-1}}!)^2}{[1]_{q^{-1}}}
\sum_{d_1+d_3 = d-1} \sum_{n>d_3} q^{-d_1-n-1}
\big(\frac{1}{[d_1]_{q^{-1}}!} - \frac{1}{[\infty]_{q^{-1}}!}\big)
\big(\frac{1}{[n]_{q^{-1}}!} - \frac{1}{[\infty]_{q^{-1}}!}\big) \\
&- \frac{2 \cdot [\infty]_{q^{-1}}! }{[1]_{q^{-1}}}
\sum_{d_1+d_3 = d-1} \sum_{n>d_3} q^{-d_1-n-1} \big(\frac{1}{[n]_{q^{-1}}!} - \frac{1}{[\infty]_{q^{-1}}!}\big)\\
& - \frac{2 \cdot [\infty]_{q^{-1}}! }{[1]_{q^{-1}}}
\sum_{d_1+d_3 = d-1} \sum_{n>d_3} q^{-d_1-n-1} \big(\frac{1}{[d_1]_{q^{-1}}!} - \frac{1}{[\infty]_{q^{-1}}!}\big)
-\frac{2}{[1]_{q^{-1}}}\cdot\frac{d\cdot q^{-d-1}}{1-q^{-1}}.
\end{split}
\ee
We compute the three summations in the right-hand side in the following three lemmas.

\begin{Lemma}
The following identity holds:
\begin{equation*}
\begin{split}
&\sum_{d_1+d_3 = d-1} \sum_{n>d_3} q^{-d_1-n-1}
\big(\frac{1}{[d_1]_{q^{-1}}!} - \frac{1}{[\infty]_{q^{-1}}!}\big)
\big(\frac{1}{[n]_{q^{-1}}!} - \frac{1}{[\infty]_{q^{-1}}!}\big) \\
=& \frac{1}{([\infty]_{q^{-1}}!)^2}\Big(
\frac{q^{-2d-5}}{([2]_{q^{-1}}!)^2}
+\frac{d\cdot q^{-2d-3}}{[1]_{q^{-1}}^2  [2]_{q^{-1}}}
- \frac{2 q^{-2d-3}}{[1]_{q^{-1}}^2  [2]_{q^{-1}}}\sum_{i= 2}^\infty \frac{q^{-i}}{1-q^{-i}}
\Big) \Big|_{q^{\geq -3d+1}}.
\end{split}
\end{equation*}
\end{Lemma}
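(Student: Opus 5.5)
The plan is to mimic the proofs of Lemma \ref{lem-W3dleading2} and Lemma \ref{lem-eqlem-3sum}. First I will discard the part of the double sum that cannot contribute in the range $q^{\geq -3d+1}$. By \eqref{eq-gzlem-5.7}, each summand has degree
\begin{equation*}
-d_1-n-1-(d_1+1)-(n+1) = -2d_1-2n-3 .
\end{equation*}
With $n\geq d_3+1 = d-d_1$, this is at most $-2d-3$, so the whole sum is of degree $-2d-3$. A summand survives the truncation only if $2d_1+2n+3\leq 3d-1$.

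Next I rewrite the factor in $n$. By \eqref{eq-sum1overkfac-infty},
\begin{equation*}
\frac{1}{[n]_{q^{-1}}!}-\frac{1}{[\infty]_{q^{-1}}!} = -\sum_{m>n}\frac{q^{-m}}{[m]_{q^{-1}}!},
\end{equation*}
and I split $1/[m]_{q^{-1}}!$ once more as $(1/[m]_{q^{-1}}!-1/[\infty]_{q^{-1}}!)+1/[\infty]_{q^{-1}}!$. The piece carrying the second difference has degree $\leq -2d_1-n-2m-4 < -3d+1$, so it is negligible. That leaves
\begin{equation*}
-\frac{1}{[\infty]_{q^{-1}}!}\sum_{d_1+d_3=d-1}\sum_{n>d_3}\frac{q^{-d_1-2n-2}}{[1]_{q^{-1}}}\Big(\frac{1}{[d_1]_{q^{-1}}!}-\frac{1}{[\infty]_{q^{-1}}!}\Big).
\end{equation*}

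The geometric sum over $n\geq d-d_1$ gives $q^{-2d+2d_1}/[2]_{q^{-1}}$. The expression therefore becomes
\begin{equation*}
-\frac{q^{-2d-2}}{[1]_{q^{-1}}[2]_{q^{-1}}[\infty]_{q^{-1}}!}\sum_{d_1=0}^{d-1}q^{d_1}\Big(\frac{1}{[d_1]_{q^{-1}}!}-\frac{1}{[\infty]_{q^{-1}}!}\Big).
\end{equation*}
The terms with $d_1\geq d$ have degree $-d_1-1$ inside, hence total degree $\leq -3d-3$. So I may extend the sum to all $d_1\geq 0$, and Proposition \ref{prop-degLaurentprod} keeps the truncation valid.

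It remains to evaluate the infinite sum $\sum_{d_1\geq 0}q^{d_1}\big(1/[d_1]_{q^{-1}}!-1/[\infty]_{q^{-1}}!\big)$ in closed form; I expect this to be the main obstacle. I will again insert \eqref{eq-sum1overkfac-infty} and exchange the order of summation, exactly as in the proof of Lemma \ref{lem-eqlem-3sum}. This gives
\begin{equation*}
-\sum_{m\geq 1}\frac{q^{-m}}{[m]_{q^{-1}}!}\sum_{d_1=0}^{m-1}q^{d_1} = -\frac{q}{q-1}\sum_{m\geq 1}\frac{1-q^{-m}}{[m]_{q^{-1}}!}.
\end{equation*}
This is a formal expression, since $\sum_{d_1}q^{d_1}$ alone diverges. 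To make it rigorous, I will write it as $\sum_{m\geq1}\big(1/[m-1]_{q^{-1}}!-1/[\infty]_{q^{-1}}!\big)$ plus the compensating term $-\sum_{m\geq 1}\big(q^{-m}/[m]_{q^{-1}}!\big)\cdot(\dots)$ that comes from splitting $1/[m]_{q^{-1}}!$ once more. I then apply \eqref{eq-gz-prop5.8} and \eqref{eq-pochhammerseries}. The result should be
\begin{equation*}
\frac{1}{[\infty]_{q^{-1}}!}\Big(2\sum_{i\geq2}\frac{q^{-i}}{1-q^{-i}}-d-\frac{q^{-3}}{[2]_{q^{-1}}}\Big)\frac{1}{[1]_{q^{-1}}}.
\end{equation*}
The $-d$ here is recovered from the finite range $d_1\leq d-1$, via the $1/[\infty]_{q^{-1}}!$ part contributing $d$ terms. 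So I will actually keep $\sum_{d_1=0}^{d-1}q^{d_1}/[\infty]_{q^{-1}}!$ separately and only extend the convergent difference part. Collecting the powers $q^{-2d-3}$ and $q^{-2d-5}$ should then match the three terms in the statement. I will check the constants against $I(2)$-type low-degree expansions.
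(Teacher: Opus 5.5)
\textbf{There is a genuine gap at your first truncation step.}

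Write $A_k=\frac{1}{[k]_{q^{-1}}!}-\frac{1}{[\infty]_{q^{-1}}!}$, so $\deg A_k=-(k+1)$ by \eqref{eq-gzlem-5.7}. You expand only the $n$-factor and leave $A_{d_1}$ untouched. The piece you then discard is
\[
D=-\sum_{d_1+d_3=d-1}\ \sum_{n>d_3}\ \sum_{m>n}q^{-d_1-n-m-1}A_{d_1}A_m .
\]
Its summands have degree $-2d_1-n-2m-3$. Under $n\geq d-d_1$ and $m\geq n+1$, this is only bounded by $d_1-3d-5$, which is nowhere near $-3d$ when $d_1$ is close to $d-1$. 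The triple $(d_1,n,m)=(d-1,1,2)$ is the unique one of degree $-2d-6$, and it contributes $-q^{-2d-6}$. So $D\neq 0$ in the window $q^{\geq -3d+1}$ as soon as $d\geq 7$.

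In fact $D$ coincides, in the window, with the cross term that the paper evaluates with Lemma~\ref{lem-eqlem-3sum}:
\[
D=\frac{1}{([\infty]_{q^{-1}}!)^2}\Big(\frac{q^{-2d-5}}{([2]_{q^{-1}}!)^2}-\frac{q^{-2d-3}}{[1]_{q^{-1}}^2[2]_{q^{-1}}}\sum_{i\geq 2}\frac{q^{-i}}{1-q^{-i}}\Big).
\]
This is the whole $q^{-2d-5}$ term of the statement and half of the $\sum_{i\geq 2}$ term. So what you keep cannot produce the right-hand side, however the remaining sum is evaluated.

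\textbf{The extension step also fails as written.} The summand $q^{d_1}A_{d_1}$ has degree exactly $-1$ for every $d_1$ (leading term $-q^{-1}$), not $-d_1-1$. So the sum over $d_1$ cannot be extended to infinity. The closed form you ``expect'' is fitted rather than derived: it even has a nonzero degree-$0$ term, which is impossible.

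Done correctly, write $q^{d_1}A_{d_1}=-\sum_{k>d_1}q^{d_1-k}\big(A_k+\frac{1}{[\infty]_{q^{-1}}!}\big)$. Keep the $\frac{1}{[\infty]_{q^{-1}}!}$ part over $0\le d_1\le d-1$; this produces the $-d$. Extend only the $A_k$ part, which is \eqref{eq-gzlem6.6}. Up to terms of degree $\leq -d-3$ this gives
\[
\sum_{d_1=0}^{d-1}q^{d_1}A_{d_1}=\frac{q^{-1}}{[1]_{q^{-1}}[\infty]_{q^{-1}}!}\big(\sum_{i\geq2}\frac{q^{-i}}{1-q^{-i}}-d\big).
\]
So your retained part equals $\frac{q^{-2d-3}}{[1]_{q^{-1}}^2[2]_{q^{-1}}([\infty]_{q^{-1}}!)^2}\big(d-\sum_{i\geq2}\frac{q^{-i}}{1-q^{-i}}\big)$, and only after adding $D$ do you get the statement.

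\textbf{The fix} is what the proof of Lemma~\ref{lem-W3dleading2}, which you set out to mimic, actually does. Expand both factors:
\[
A_{d_1}=-\sum_{k>d_1}\frac{q^{-k}}{[k]_{q^{-1}}!},\qquad A_n=-\sum_{l>n}\frac{q^{-l}}{[l]_{q^{-1}}!}.
\]
Then split each of $\frac{1}{[k]_{q^{-1}}!}$ and $\frac{1}{[l]_{q^{-1}}!}$ into $A+\frac{1}{[\infty]_{q^{-1}}!}$. The four resulting pieces behave as follows:
\begin{itemize}
\item The doubly differenced term has degree $\leq -3d-7$, so it really is negligible.
\item The pure $\frac{1}{([\infty]_{q^{-1}}!)^2}$ term gives the $d$-term.
\item The $A_k$ cross term gives $-\frac{q^{-2d-3}}{[1]_{q^{-1}}^2[2]_{q^{-1}}([\infty]_{q^{-1}}!)^2}\sum_{i\geq2}\frac{q^{-i}}{1-q^{-i}}$ via \eqref{eq-gzlem6.6}.
\item The $A_l$ cross term, which is your $D$, is handled by Lemma~\ref{lem-eqlem-3sum}.
\end{itemize}
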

\begin{proof}
By \eqref{eq-sum1overkfac-infty} we can rewrite the left-hand side (L.H.S.) as:
\begin{equation*}
\begin{split}
\text{L.H.S.} =&
\sum_{d_1+d_3 = d-1} \sum_{n>d_3} q^{-d_1-n-1} \sum_{k>d_1}\sum_{l>n}
\frac{q^{-k}}{[k]_{q^{-1}}!} \cdot \frac{q^{-l}}{[l]_{q^{-1}}!} \\
=& \sum_{d_1+d_3 = d-1} \sum_{n>d_3} \sum_{k>d_1}\sum_{l>n} q^{-d_1-n-k-l-1}\Big(
\big( \frac{1}{[k]_{q^{-1}}!} - \frac{1}{[\infty]_{q^{-1}}!} \big)
\big( \frac{1}{[l]_{q^{-1}}!} - \frac{1}{[\infty]_{q^{-1}}!} \big) \\
&+ \frac{1}{[\infty]_{q^{-1}}!} \big( \frac{1}{[k]_{q^{-1}}!} - \frac{1}{[\infty]_{q^{-1}}!} \big)
+ \frac{1}{[\infty]_{q^{-1}}!} \big( \frac{1}{[l]_{q^{-1}}!} - \frac{1}{[\infty]_{q^{-1}}!} \big)
+ \frac{1}{([\infty]_{q^{-1}}!)^2} \Big).
\end{split}
\end{equation*}
Here by \eqref{eq-gzlem-5.7} one has
\begin{equation*}
\deg q^{-d_1-n-k-l-1} \big( \frac{1}{[k]_{q^{-1}}!} - \frac{1}{[\infty]_{q^{-1}}!} \big)
\big( \frac{1}{[l]_{q^{-1}}!} - \frac{1}{[\infty]_{q^{-1}}!} \big)
<-3d,
\end{equation*}
and thus:
\be
\label{eq-pfW3dcase2-1}
\begin{split}
\text{L.H.S.}
=& \bigg( \sum_{d_1+d_3 = d-1} \sum_{n>d_3} \sum_{k>d_1}\sum_{l>n} \frac{q^{-d_1-n-k-l-1}}{[\infty]_{q^{-1}}!}\Big(
\big( \frac{1}{[k]_{q^{-1}}!} - \frac{1}{[\infty]_{q^{-1}}!} \big) \\
&+ \big( \frac{1}{[l]_{q^{-1}}!} - \frac{1}{[\infty]_{q^{-1}}!} \big) \Big)
+ \frac{d\cdot q^{-2d-3}}{[1]_{q^{-1}}^2 \cdot[2]_{q^{-1}} \cdot([\infty]_{q^{-1}}!)^2}
\bigg)\bigg|_{q^{\geq -3d+1}}.
\end{split}
\ee
Moreover,
one has:
\begin{equation*}
\begin{split}
& \sum_{d_1+d_3 = d-1} \sum_{n>d_3} \sum_{k>d_1}\sum_{l>n}
\frac{q^{-d_1-n-k-l-1}}{[\infty]_{q^{-1}}!}
\big( \frac{1}{[k]_{q^{-1}}!} - \frac{1}{[\infty]_{q^{-1}}!} \big) \\
=& \Big(\sum_{d_1=0}^\infty -\sum_{d_1=d}^\infty \Big)
\sum_{n>d-1-d_1} \sum_{k>d_1}\sum_{l>n}
\frac{q^{-d_1-n-k-l-1}}{[\infty]_{q^{-1}}!}
\big( \frac{1}{[k]_{q^{-1}}!} - \frac{1}{[\infty]_{q^{-1}}!} \big). \\
\end{split}
\end{equation*}
and when $d_1 \geq d$ one has:
\begin{equation*}
\deg \frac{q^{-d_1-n-k-l-1}}{[\infty]_{q^{-1}}!}
\big( \frac{1}{[k]_{q^{-1}}!} - \frac{1}{[\infty]_{q^{-1}}!} \big) <-3d,
\end{equation*}
therefore:
\begin{equation*}
\begin{split}
& \sum_{d_1+d_3 = d-1} \sum_{n>d_3} \sum_{k>d_1}\sum_{l>n}
\frac{q^{-d_1-n-k-l-1}}{[\infty]_{q^{-1}}!}
\big( \frac{1}{[k]_{q^{-1}}!} - \frac{1}{[\infty]_{q^{-1}}!} \big) \\
=& \sum_{d_1 = 0}^\infty
\sum_{n > d-1-d_1} \sum_{k>d_1}\sum_{l>n}
\frac{q^{-d_1-n-k-l-1}}{[\infty]_{q^{-1}}!}
\big( \frac{1}{[k]_{q^{-1}}!} - \frac{1}{[\infty]_{q^{-1}}!} \big) \\
= & \frac{q^{-2d-2}}{[2]_{q^{-1}}! \cdot [\infty]_{q^{-1}}!}\cdot
\sum_{d_1=0}^\infty \sum_{k>d_1} q^{d_1-k}
\big( \frac{1}{[k]_{q^{-1}}!} - \frac{1}{[\infty]_{q^{-1}}!} \big)
\Big|_{q^{\geq -3d+1}},
\end{split}
\end{equation*}
and by \eqref{eq-gzlem6.6} one obtains:
\be
\label{eq-pfW3dcase2-2}
\begin{split}
& \sum_{d_1+d_3 = d-1} \sum_{n>d_3} \sum_{k>d_1}\sum_{l>n}
\frac{q^{-d_1-n-k-l-1}}{[\infty]_{q^{-1}}!}
\big( \frac{1}{[k]_{q^{-1}}!} - \frac{1}{[\infty]_{q^{-1}}!} \big) \\
= & \Big( -\frac{q^{-2d-3}}{[1]_{q^{-1}}^2 \cdot [2]_{q^{-1}} \cdot ([\infty]_{q^{-1}}!)^2}\cdot
\sum_{i\geq 2} \frac{q^{-i}}{1-q^{-i}} \Big)
\Big|_{q^{\geq -3d+1}}.
\end{split}
\ee

Next we compute:
\begin{equation*}
\begin{split}
& \sum_{d_1+d_3 = d-1} \sum_{n>d_3} \sum_{k>d_1}\sum_{l>n}
\frac{q^{-d_1-n-k-l-1}}{[\infty]_{q^{-1}}!}
\big( \frac{1}{[l]_{q^{-1}}!} - \frac{1}{[\infty]_{q^{-1}}!} \big) \\
=& \sum_{d_3 = 0}^{d-1} \sum_{n>d_3} \sum_{k>d_1}\sum_{l>n}
\frac{q^{-d +d_3 -n-k-l}}{[\infty]_{q^{-1}}!}
\big( \frac{1}{[l]_{q^{-1}}!} - \frac{1}{[\infty]_{q^{-1}}!} \big) \\
=& \Big( \sum_{d_3=0}^\infty -\sum_{d_3 = d}^\infty \Big)
\sum_{n>d_3} \sum_{k>d-1-d_3}\sum_{l>n}
\frac{q^{-d +d_3 -n-k-l}}{[\infty]_{q^{-1}}!}
\big( \frac{1}{[l]_{q^{-1}}!} - \frac{1}{[\infty]_{q^{-1}}!} \big).
\end{split}
\end{equation*}
When $d_3 \geq d$ one has:
\begin{equation*}
\deg \frac{q^{-d +d_3 -n-k-l}}{[\infty]_{q^{-1}}!}
\big( \frac{1}{[l]_{q^{-1}}!} - \frac{1}{[\infty]_{q^{-1}}!} \big) <-3d,
\end{equation*}
and thus:
\begin{equation*}
\begin{split}
& \sum_{d_1+d_3 = d-1} \sum_{n>d_3} \sum_{k>d_1}\sum_{l>n}
\frac{q^{-d_1-n-k-l-1}}{[\infty]_{q^{-1}}!}
\big( \frac{1}{[l]_{q^{-1}}!} - \frac{1}{[\infty]_{q^{-1}}!} \big) \\
=& \sum_{d_3=0}^\infty
\sum_{n>d_3} \sum_{k>d-1-d_3} \sum_{l>n}
\frac{q^{-d +d_3 -n-k-l}}{[\infty]_{q^{-1}}!}
\big( \frac{1}{[l]_{q^{-1}}!} - \frac{1}{[\infty]_{q^{-1}}!} \big) \\
= &\frac{q^{-2d}}{[1]_{q^{-1}}\cdot [\infty]_{q^{-1}}!}
\sum_{d_3=0}^\infty  \sum_{n>d_3} \sum_{l>n}
q^{2d_3 - l -n} \big( \frac{1}{[l]_{q^{-1}}!} - \frac{1}{[\infty]_{q^{-1}}!} \big)
\Big|_{q^{\geq -3d+1}}.
\end{split}
\end{equation*}
And by Lemma \ref{lem-eqlem-3sum} and Proposition \ref{prop-degLaurentprod} one obtains:
\begin{equation*}
\begin{split}
& \sum_{d_1+d_3 = d-1} \sum_{n>d_3} \sum_{k>d_1}\sum_{l>n}
\frac{q^{-d_1-n-k-l-1}}{[\infty]_{q^{-1}}!}
\big( \frac{1}{[l]_{q^{-1}}!} - \frac{1}{[\infty]_{q^{-1}}!} \big) \\
= &\frac{1}{[1]_{q^{-1}} \cdot ([\infty]_{q^{-1}}!)^2}
\Big(
\frac{q^{-2d -5}}{[1]_{q^{-1}} \cdot [2]_{q^{-1}}^2}
-\frac{q^{-2d -3}}{[2]_{q^{-1}}!} \sum_{i=2}^\infty \frac{q^{-i}}{1-q^{-i}}
\Big)
\Big|_{q^{\geq -3d+1}}.
\end{split}
\end{equation*}
Now plug this identity and \eqref{eq-pfW3dcase2-2} into \eqref{eq-pfW3dcase2-1},
and we have proved the lemma.
\end{proof}

\begin{Lemma}
The following identity holds:
\begin{equation*}
\begin{split}
&\sum_{d_1+d_3 = d-1} \sum_{n>d_3} q^{-d_1-n-1} \big(\frac{1}{[n]_{q^{-1}}!} - \frac{1}{[\infty]_{q^{-1}}!}\big) \\
=& \frac{1}{[\infty]_{q^{-1}}!} \Big(
\frac{q^{-2d-3}}{[1]_{q^{-1}}^2 [2]_{q^{-1}} }
- \frac{q^{-d-1}}{[1]_{q^{-1}}} \sum_{i\geq 2}\frac{q^{-i}}{1-q^{-i}}
\Big)\Big|_{q^{\geq -3d+1}}.
\end{split}
\end{equation*}
\end{Lemma}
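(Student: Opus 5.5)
The plan is to follow the pattern of the proof of Lemma \ref{lem-W3dleading1}: rewrite the sum so that the inner factor is a tail sum, split it into an infinite sum minus a remainder, and use \eqref{eq-gzlem6.6} for the infinite sum. Put $d_1 = d-1-d_3$, so that $q^{-d_1-n-1} = q^{-d+d_3-n}$. The left-hand side becomes
\begin{equation*}
q^{-d}\sum_{d_3=0}^{d-1}\sum_{n>d_3} q^{d_3-n}\Big(\frac{1}{[n]_{q^{-1}}!}-\frac{1}{[\infty]_{q^{-1}}!}\Big).
\end{equation*}
This is exactly $q^{-d}$ times the double sum appearing in Lemma \ref{lem-W3dleading1}, with the upper limit $d$ replaced by $d-1$. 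We then write $\sum_{d_3=0}^{d-1} = \sum_{d_3=0}^{\infty} - \sum_{d_3\geq d}$.

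\textbf{Infinite part.} By \eqref{eq-gzlem6.6} it equals
\begin{equation*}
-\frac{q^{-1}}{[1]_{q^{-1}}[\infty]_{q^{-1}}!}\sum_{i\geq2}\frac{q^{-i}}{1-q^{-i}}.
\end{equation*}
Multiplying by $q^{-d}$ gives the second term of the claimed right-hand side, namely $-\frac{q^{-d-1}}{[1]_{q^{-1}}[\infty]_{q^{-1}}!}\sum_{i\geq 2}\frac{q^{-i}}{1-q^{-i}}$.

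\textbf{Tail part.} Use \eqref{eq-sum1overkfac-infty} to write $\frac{1}{[n]_{q^{-1}}!}-\frac{1}{[\infty]_{q^{-1}}!} = -\sum_{m>n}\frac{q^{-m}}{[m]_{q^{-1}}!}$. Then
\begin{equation*}
-\sum_{d_3\geq d}\sum_{n>d_3} = \sum_{d_3\geq d}\sum_{n>d_3}\sum_{m>n} q^{d_3-n-m}\frac{1}{[m]_{q^{-1}}!}.
\end{equation*}
Next split $\frac{1}{[m]_{q^{-1}}!} = \big(\frac{1}{[m]_{q^{-1}}!}-\frac{1}{[\infty]_{q^{-1}}!}\big)+\frac{1}{[\infty]_{q^{-1}}!}$.

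By \eqref{eq-gzlem-5.7}, each term coming from the difference has degree $d_3-n-2m-1$. Since $n\geq d_3+1$ and $m\geq d_3+2$, this is at most $-2d_3-6\leq -2d-6$. After multiplying by $q^{-d}$ the degree is at most $-3d-6$, so these terms do not contribute in the range $q^{\geq -3d+1}$.

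The remaining geometric triple sum evaluates in closed form. Setting $n=d_3+a$ and $m=n+b$ with $a,b\geq1$ gives $q^{d_3-n-m} = q^{-d_3-2a-b}$. Summing over $d_3\geq d$, $a\geq1$, $b\geq1$ yields
\begin{equation*}
\frac{q^{-d-3}}{[1]_{q^{-1}}^2[2]_{q^{-1}}}.
\end{equation*}
Multiplying by $\frac{q^{-d}}{[\infty]_{q^{-1}}!}$ gives exactly the first term of the claimed right-hand side, $\frac{q^{-2d-3}}{[1]_{q^{-1}}^2[2]_{q^{-1}}[\infty]_{q^{-1}}!}$.

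\textbf{Assembly.} Combine the two parts, using Proposition \ref{prop-degLaurentprod} and $\deg [\infty]_{q^{-1}}! = 0$ to justify the truncations. The only delicate point is the degree bookkeeping that discards the difference terms in the tail part. That margin ($-3d-6<-3d+1$) is comfortable, so no real obstacle is expected.
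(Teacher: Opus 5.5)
Your proof is correct. It uses the same ingredients as the paper: the tail formula \eqref{eq-sum1overkfac-infty}, the splitting of $\frac{1}{[m]_{q^{-1}}!}$ as $\big(\frac{1}{[m]_{q^{-1}}!}-\frac{1}{[\infty]_{q^{-1}}!}\big)+\frac{1}{[\infty]_{q^{-1}}!}$, the degree estimate \eqref{eq-gzlem-5.7}, and a geometric triple sum. The two arguments differ in how they are organized.

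You split off the range $d_3\geq d$ at the level of the double sum and quote \eqref{eq-gzlem6.6} for the full infinite double sum. This is exactly the pattern of the proof of Lemma \ref{lem-W3dleading1}. The geometric series then comes from the tail $d_3\geq d$ and directly gives $\frac{q^{-2d-3}}{[1]_{q^{-1}}^2[2]_{q^{-1}}[\infty]_{q^{-1}}!}$.

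The paper instead proceeds as follows:
\begin{itemize}
\item it first expands the whole finite sum into a triple sum;
\item it extracts the explicit piece $-\frac{q^{-d-3}-q^{-2d-3}}{[1]_{q^{-1}}^2[2]_{q^{-1}}[\infty]_{q^{-1}}!}$ from the finite range $d_3\leq d-1$;
\item it discards the $d_3\geq d$ difference terms by degree, and folds the $-q^{-d-3}$ piece back into the infinite sum;
\item it evaluates $\sum_{d_3\geq 0}\sum_{n>d_3}\sum_{k>n}q^{d_3-n-k}/[k]_{q^{-1}}!$ from scratch, by reversing the order of summation and using \eqref{eq-derivsumpoch} and \eqref{eq-twoidgen}.
\end{itemize}
In effect the paper re-derives \eqref{eq-gzlem6.6} inside the proof.

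Your route is shorter because it reuses an identity the paper already has. The paper's route is self-contained with respect to Guo--Zhou's Lemma 6.6. Your error bound (degree at most $-3d-6$) is also sharper than the lemma requires.
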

\begin{proof}
By \eqref{eq-sum1overkfac-infty}  the left-hand side (L.H.S.) of the above identity equals to:
\begin{equation*}
\begin{split}
\text{L.H.S.} =&
- \sum_{d_1+d_3 = d-1} \sum_{n>d_3} \sum_{k>n}
q^{-d_1-n-1} \cdot \frac{q^{-k}}{[k]_{q^{-1}}!} \\
=& - \sum_{d_1+d_3 = d-1} \sum_{n>d_3} \sum_{k>n}
q^{-d_1-n-k- 1} \big( \frac{1}{[k]_{q^{-1}}!} - \frac{1}{[\infty]_{q^{-1}}!} \big)
- \frac{q^{-d-3}-q^{-2d-3}}{[1]_{q^{-1}}^2 [2]_{q^{-1}}\cdot [\infty]_{q^{-1}}!} \\
=& \Big( \sum_{d_3=d}^\infty -\sum_{d_3=0}^\infty \Big)
\sum_{n>d_3} \sum_{k>n}
q^{-d + d_3 -n-k } \big( \frac{1}{[k]_{q^{-1}}!} - \frac{1}{[\infty]_{q^{-1}}!} \big)
-\frac{q^{-d-3}-q^{-2d-3}}{[1]_{q^{-1}}^2 [2]_{q^{-1}}\cdot [\infty]_{q^{-1}}!}.
\end{split}
\end{equation*}
When $d_3\geq d$ one has:
\begin{equation*}
\begin{split}
\deg q^{-d + d_3 -n-k } \big( \frac{1}{[k]_{q^{-1}}!} - \frac{1}{[\infty]_{q^{-1}}!} \big)
<-3d,
\end{split}
\end{equation*}
and thus:
\be
\label{eq-lempfw3xyz}
\begin{split}
\text{L.H.S.} =&
- \sum_{d_3=0}^\infty
\sum_{n>d_3} \sum_{k>n}
q^{-d + d_3 -n-k } \big( \frac{1}{[k]_{q^{-1}}!} - \frac{1}{[\infty]_{q^{-1}}!} \big)
 -\frac{q^{-d-3}-q^{-2d-3}}{[1]_{q^{-1}}^2 [2]_{q^{-1}} [\infty]_{q^{-1}}!} \\
=& \Big( - \sum_{d_3=0}^\infty
\sum_{n>d_3} \sum_{k>n}
 \frac{q^{-d + d_3 -n-k }}{[k]_{q^{-1}}!}
+\frac{q^{-2d-3}}{[1]_{q^{-1}}^2 [2]_{q^{-1}} [\infty]_{q^{-1}}!}
\Big) \Big|_{q^{\geq -3d+1}}
\end{split}
\ee
Moreover,
one has:
\be
\label{eq-lempfw3ooo}
\begin{split}
\sum_{d_3=0}^\infty
\sum_{n>d_3} \sum_{k>n}
\frac{q^{-d + d_3 -n-k }}{[k]_{q^{-1}}!}
=& \sum_{k=2}^\infty \frac{1}{[k]_{q^{-1}}!}
\sum_{n=1}^{k-1} \sum_{d_3 = 0}^{n-1} q^{-d + d_3 -n-k } \\
=& \frac{q^{-d}}{[1]_{q^{-1}}^2} \sum_{k=2}^\infty
\big(q^{-2k-1} -kq^{-k-2} +(k-1)q^{-k-1}\big) \frac{1}{[k]_{q^{-1}}!}.
\end{split}
\ee
Apply $\frac{d}{dz}$ to the identity \eqref{eq-pochhammerseries},
and we obtain:
\be
\label{eq-derivsumpoch}
\sum_{k\geq 1}\frac{kz^{k-1}}{[k]_{q^{-1}}!} =
\frac{1}{(z;q^{-1})_\infty} \cdot
\sum_{k=0}^\infty \frac{q^{-k}}{1-zq^{-k}}.
\ee
Then the following two identities hold:
\be
\label{eq-twoidgen}
\begin{split}
\sum_{k\geq 0}\frac{q^{-2k}}{[k]_{q^{-1}}!}
=\frac{[1]_{q^{-1}}}{[\infty]_{q^{-1}}!}; \qquad\qquad
\sum_{k\geq 1}\frac{kq^{-k}}{[k]_{q^{-1}}!} =
\frac{1}{[\infty]_{q^{-1}}!} \cdot
\sum_{k=1}^\infty \frac{q^{-k}}{1-q^{-k}}.
\end{split}
\ee
Plug these into \eqref{eq-lempfw3ooo} and we obtain:
\begin{equation*}
\begin{split}
\sum_{d_3=0}^\infty
\sum_{n>d_3} \sum_{k>n}
\frac{q^{-d + d_3 -n-k }}{[k]_{q^{-1}}!}
=& \frac{q^{-d}}{[1]_{q^{-1}}^2}
\Big(
\big(\frac{q^{-1}[1]_{q^{-1}}}{[\infty]_{q^{-1}}!}-q^{-1}-\frac{q^{-3}}{[1]_{q^{-1}}} \big)
- \big(\frac{q^{-2}}{[\infty]_{q^{-1}}!} \sum_{k=1}^\infty \frac{q^{-k}}{1-q^{-k}} -\frac{q^{-3}}{[1]_{q^{-1}}} \big) \\
& + \big(\frac{q^{-1}}{[\infty]_{q^{-1}}!} \sum_{k=1}^\infty \frac{q^{-k}}{1-q^{-k}} -\frac{q^{-2}}{[1]_{q^{-1}}} \big)
- \big( \frac{q^{-1}}{[\infty]_{q^{-1}}!} -q^{-1} -\frac{q^{-2}}{[1]_{q^{-1}}} \big)
\Big) \\
=& \frac{q^{-d-1}}{ [1]_{q^{-1}} \cdot [\infty]_{q^{-1}}!}\cdot
\sum_{k=2}^\infty \frac{q^{-k}}{1-q^{-k}}.
\end{split}
\end{equation*}
Plug this into \eqref{eq-lempfw3xyz} and we finish the proof.
\end{proof}

\begin{Lemma}
The following identity holds:
\begin{equation*}
\begin{split}
&\sum_{d_1+d_3 = d-1} \sum_{n>d_3} q^{-d_1-n-1} \big(\frac{1}{[d_1]_{q^{-1}}!} - \frac{1}{[\infty]_{q^{-1}}!}\big)\\
=& \frac{1}{[\infty]_{q^{-1}}!} \Big(
\frac{q^{-2d-2}}{[1]_{q^{-1}}^3 } - \frac{q^{-d-1}}{[1]_{q^{-1}} }\cdot\sum_{i\geq 1}\frac{q^{-i}}{1-q^{-i}}
\Big) \Big|_{q^{\geq -3d+1}}.
\end{split}
\end{equation*}
\end{Lemma}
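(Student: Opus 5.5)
The plan is to collapse the inner sum over $n$ first, reduce the whole expression to a truncated version of Guo--Zhou's identity \eqref{eq-gz-prop5.8}, and then handle the tail by a degree estimate.

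\emph{Step 1: the sum over $n$.} The factor $\big(\frac{1}{[d_1]_{q^{-1}}!}-\frac{1}{[\infty]_{q^{-1}}!}\big)$ does not depend on $n$, so the inner sum is the geometric series
\begin{equation*}
\sum_{n>d_3} q^{-n} = \frac{q^{-d_3-1}}{[1]_{q^{-1}}}.
\end{equation*}
Since $d_1+d_3=d-1$, each summand then carries the same power $q^{-d_1-d_3-2}=q^{-d-1}$. Hence the left-hand side equals exactly
\begin{equation*}
\frac{q^{-d-1}}{[1]_{q^{-1}}}\sum_{d_1=0}^{d-1}\Big(\frac{1}{[d_1]_{q^{-1}}!}-\frac{1}{[\infty]_{q^{-1}}!}\Big).
\end{equation*}

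\emph{Step 2: split into the full sum minus a tail.} I write $\sum_{d_1=0}^{d-1}=\sum_{d_1=0}^{\infty}-\sum_{d_1\geq d}$. By \eqref{eq-gz-prop5.8}, the full sum is $-\frac{1}{[\infty]_{q^{-1}}!}\sum_{i\geq1}\frac{q^{-i}}{1-q^{-i}}$. After multiplication by $\frac{q^{-d-1}}{[1]_{q^{-1}}}$, this gives the second term of the claimed formula exactly.

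\emph{Step 3: the tail, which is the only step needing a degree estimate.} By \eqref{eq-sum1overkfac-infty},
\begin{equation*}
-\sum_{d_1\geq d}\Big(\frac{1}{[d_1]_{q^{-1}}!}-\frac{1}{[\infty]_{q^{-1}}!}\Big)=\sum_{d_1\geq d}\sum_{n>d_1}\frac{q^{-n}}{[n]_{q^{-1}}!}.
\end{equation*}
I would replace $\frac{1}{[n]_{q^{-1}}!}$ by $\frac{1}{[\infty]_{q^{-1}}!}$. By \eqref{eq-gzlem-5.7} the error term $q^{-n}\big(\frac{1}{[n]_{q^{-1}}!}-\frac{1}{[\infty]_{q^{-1}}!}\big)$ has degree $-2n-1\leq -2d-3$, because $n\geq d+1$. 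After multiplying by $\frac{q^{-d-1}}{[1]_{q^{-1}}}$, which has degree $-d-1$, Proposition \ref{prop-degLaurentprod} shows the error contributes only in degrees $\leq -3d-4<-3d+1$, so it vanishes under the truncation.

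The remaining main part is a double geometric series:
\begin{equation*}
\frac{1}{[\infty]_{q^{-1}}!}\sum_{d_1\geq d}\frac{q^{-d_1-1}}{[1]_{q^{-1}}}=\frac{q^{-d-1}}{[1]_{q^{-1}}^2\,[\infty]_{q^{-1}}!}.
\end{equation*}
Multiplying by the prefactor $\frac{q^{-d-1}}{[1]_{q^{-1}}}$ gives $\frac{q^{-2d-2}}{[1]_{q^{-1}}^3[\infty]_{q^{-1}}!}$, which is the first term of the claimed formula.

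The only real care needed is in Step 3: the degree bookkeeping that justifies discarding the error terms, which relies on $[\infty]_{q^{-1}}!$ having degree $0$. Everything else is exact.
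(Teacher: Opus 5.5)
Your proof is correct, and it is organized differently from the paper's.

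The paper does not collapse the $n$-sum at the outset. Its steps are:
\begin{itemize}
\item It rewrites $\frac{1}{[d_1]_{q^{-1}}!}-\frac{1}{[\infty]_{q^{-1}}!}=-\sum_{k>d_1}\frac{q^{-k}}{[k]_{q^{-1}}!}$ via \eqref{eq-sum1overkfac-infty} and works with the resulting triple sum over $(d_1,n,k)$.
\item It splits off the $\frac{1}{[\infty]_{q^{-1}}!}$ part exactly, and extends the $d_1$-range to infinity by a degree estimate.
\item Only then does it reorganize by $k$, obtaining $\frac{q^{-d-1}}{[1]_{q^{-1}}}\sum_{k\geq 1}\frac{kq^{-k}}{[k]_{q^{-1}}!}$, which it evaluates with the derivative identity \eqref{eq-twoidgen}.
\end{itemize}

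Your route starts from the observation that the summand depends on $n$ only through $q^{-n}$. Together with $d_1+d_3=d-1$, this reduces the whole expression immediately and exactly to $\frac{q^{-d-1}}{[1]_{q^{-1}}}\sum_{d_1=0}^{d-1}\big(\frac{1}{[d_1]_{q^{-1}}!}-\frac{1}{[\infty]_{q^{-1}}!}\big)$. After that you need \eqref{eq-sum1overkfac-infty} and a degree estimate only for the tail $d_1\geq d$.

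The analytic input is the same in both routes. Interchanging summation gives $\sum_{d_1\geq 0}\big(\frac{1}{[d_1]_{q^{-1}}!}-\frac{1}{[\infty]_{q^{-1}}!}\big)=-\sum_{k\geq 1}\frac{kq^{-k}}{[k]_{q^{-1}}!}$. Hence \eqref{eq-gz-prop5.8} and the second identity in \eqref{eq-twoidgen} are equivalent.

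What your route buys is brevity and transparency. It shows this lemma is just $\frac{q^{-d-1}}{[1]_{q^{-1}}}$ times the truncated sum that the paper treats separately in Lemma \ref{lem-sum1overd-inf}. Your error bound (degree $\leq -3d-4$) also shows the identity in fact holds through $q^{\geq -3d-3}$.
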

\begin{proof}
The left-hand side (L.H.S.) equals to:
\begin{equation*}
\begin{split}
\text{L.H.S.} =& -\sum_{d_1+d_3 = d-1} \sum_{n>d_3} q^{-d_1-n-1}
\sum_{k>d_1} \frac{q^{-k}}{[k]_{q^{-1}}!} \\
=& -\sum_{d_1 = 0}^{d-1} \sum_{n>d-1-d_1} \sum_{k>d_1}
q^{-d_1-n-k-1} \big(  \frac{1}{[k]_{q^{-1}}!} - \frac{1}{[\infty]_{q^{-1}}!} \big)
- \frac{q^{-d-2}-q^{-2d-2}}{[1]_{q^{-1}}^3  [\infty]_{q^{-1}}!} \\
=& \Big( \sum_{d_1 = d}^{\infty} - \sum_{d_1=0}^\infty \Big)
\sum_{n>d-1-d_1} \sum_{k>d_1}
q^{-d_1-n-k-1} \big(  \frac{1}{[k]_{q^{-1}}!} - \frac{1}{[\infty]_{q^{-1}}!} \big)
- \frac{q^{-d-2}-q^{-2d-2}}{[1]_{q^{-1}}^3  [\infty]_{q^{-1}}!}.
\end{split}
\end{equation*}
When $d_1\geq d$ one has:
\begin{equation*}
\deg q^{-d_1-n-k-1} \big(  \frac{1}{[k]_{q^{-1}}!} - \frac{1}{[\infty]_{q^{-1}}!} \big)
<-3d,
\end{equation*}
and thus:
\begin{equation*}
\begin{split}
\text{L.H.S.}
=& -\sum_{d_1 = 0}^{\infty}
\sum_{n>d-1-d_1} \sum_{k>d_1}
q^{-d_1-n-k-1} \big(  \frac{1}{[k]_{q^{-1}}!} - \frac{1}{[\infty]_{q^{-1}}!} \big)
- \frac{q^{-d-2}-q^{-2d-2}}{[1]_{q^{-1}}^3  [\infty]_{q^{-1}}!} \\
=& \Big( -\sum_{d_1 = 0}^{\infty}
\sum_{n>d-1-d_1} \sum_{k>d_1}
\frac{q^{-d_1-n-k-1}}{[k]_{q^{-1}}!}
+ \frac{q^{-2d-2}}{[1]_{q^{-1}}^3  [\infty]_{q^{-1}}!}
\Big)\Big|_{q^{\geq -3d+1}}.
\end{split}
\end{equation*}
Here
\begin{equation*}
\begin{split}
\sum_{d_1 =0}^\infty\sum_{n>d-1-d_1} \sum_{k>d_1}
\frac{q^{-d_1-n-k-1}}{[k]_{q^{-1}}!}
= \sum_{k=1}^\infty \frac{1}{[k]_{q^{-1}}!}
\sum_{d_1=0}^{k-1}\sum_{n>d-1-d_1} q^{-d_1-n-k-1}
= \frac{q^{-d-1}}{[1]_{q^{-1}}}
\sum_{k=1}^\infty \frac{kq^{-k}}{[k]_{q^{-1}}!},
\end{split}
\end{equation*}
and then by \eqref{eq-twoidgen} one obtains:
\begin{equation*}
\begin{split}
\sum_{d_1 =0}^\infty\sum_{n>d-1-d_1} \sum_{k>d_1}
\frac{q^{-d_1-n-k-1}}{[k]_{q^{-1}}!}
= \frac{q^{-d-1}}{[1]_{q^{-1}} \cdot [\infty]_{q^{-1}}!} \cdot
\sum_{i\geq 1}\frac{q^{-i}}{1-q^{-i}}.
\end{split}
\end{equation*}
This completes the proof.
\end{proof}

Plug the above three lemmas into \eqref{eq-W3d-IIsum},
and we obtain the following expression for $\text{(II)}$:
\be
\label{eq-W3d-IIsum-2}
\begin{split}
\text{(II)}
=&\Big( -2\big(
\frac{q^{-2d-5}}{[1]_{q^{-1}}^3  [2]_{q^{-1}}^2}
+ \frac{q^{-2d-2}}{[1]_{q^{-1}}^4}
+ \frac{(d+1)q^{-2d-3}}{[1]_{q^{-1}}^3  [2]_{q^{-1}}}
+\frac{dq^{-d-1}}{[1]_{q^{-1}}^2 } -\frac{q^{-d-2}}{[1]_{q^{-1}}^3} \big) \\
& +4 \big( \frac{q^{-2d-3}}{[1]_{q^{-1}}^3  [2]_{q^{-1}}} + \frac{q^{-d-1}}{[1]_{q^{-1}}^2}
\big) \sum_{i\geq 2}\frac{q^{-i}}{1-q^{-i}}
\Big) \Big|_{q^{\geq -3d+1}}.
\end{split}
\ee

Finally we compute the third summation $\text{(III)}$.
From the definition \eqref{eq-W3d-IIIIII} we already see that:
\begin{equation*}
\begin{split}
\text{(III)} =&  \frac{q^{-d-1}\cdot ([\infty]_{q^{-1}}!)^2}{[1]_{q^{-1}}^2}
\sum_{d_1+d_3=d-1}
\frac{1}{[d_1]_{q^{-1}}!}\cdot \frac{1}{[d_3]_{q^{-1}}!} \\
=&  \frac{q^{-d-1}\cdot ([\infty]_{q^{-1}}!)^2}{[1]_{q^{-1}}^2}
\Big( \sum_{d_1+d_3=d-1}
\big( \frac{1}{[d_1]_{q^{-1}}!} - \frac{1}{[\infty]_{q^{-1}}!} \big)
\big( \frac{1}{[d_3]_{q^{-1}}!} - \frac{1}{[\infty]_{q^{-1}}!} \big) \\
&\qquad\qquad\qquad\qquad
+ \frac{2}{[\infty]_{q^{-1}}!}\sum_{d_1=0}^{d-1} \big( \frac{1}{[d_1]_{q^{-1}}!} - \frac{1}{[\infty]_{q^{-1}}!} \big)
+ \frac{d}{([\infty]_{q^{-1}}!)^2} \Big)
\end{split}
\end{equation*}
The first summation in the right-hand side is exactly $\text{(I)}/([\infty]_{q^{-1}})^2$
and the leading terms of $\text{(I)}$ has been given in \eqref{eq-W3d-Iresult}.
Here we only need the coefficients of $q^{\geq -2d+2}$
since there is a factor $q^{-d-1}$ in $\text{(III)}$.
The result is:
\be
\label{eq-W3d-Iresult2d}
\begin{split}
&\sum_{d_1+d_3=d-1}
\big( \frac{1}{[d_1]_{q^{-1}}!} - \frac{1}{[\infty]_{q^{-1}}!} \big)
\big( \frac{1}{[d_3]_{q^{-1}}!} - \frac{1}{[\infty]_{q^{-1}}!} \big) \\
=& \frac{1}{([\infty]_{q^{-1}})^2}
\Big(
\frac{d\cdot q^{-d-1}}{[1]_{q^{-1}}^2}
 - \frac{2q^{-d-1}}{[1]_{q^{-1}}^2}
\sum_{i\geq 2}\frac{q^{-i}}{1-q^{-i}}
\Big) \Big|_{q^{\geq -2d+2}}.
\end{split}
\ee

And the leading terms of the second summation in the right-hand side is given by:

\begin{Lemma}
\label{lem-sum1overd-inf}
The following identity holds:
\begin{equation*}
\begin{split}
\sum_{d_1=0}^{d-1} \big( \frac{1}{[d_1]_{q^{-1}}!} - \frac{1}{[\infty]_{q^{-1}}!} \big)
= \frac{1}{[\infty]_{q^{-1}}!} \Big( \frac{q^{-d-1}}{[1]_{q^{-1}}^2}
-\sum_{i\geq 1} \frac{q^{-i}}{1-q^{-i}}
\Big)\Big|_{q^{\geq -2d+2}}.
\end{split}
\end{equation*}
\end{Lemma}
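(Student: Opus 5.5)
The plan is to split the finite sum at $d$ into the infinite sum minus its tail, exactly as in the proofs of Lemma \ref{lem-W3dleading1} and the lemmas of \S\ref{sec-app-w3dcase2}:
\begin{equation*}
\sum_{d_1=0}^{d-1} \Big( \frac{1}{[d_1]_{q^{-1}}!} - \frac{1}{[\infty]_{q^{-1}}!} \Big)
= \sum_{d_1=0}^{\infty} \Big( \frac{1}{[d_1]_{q^{-1}}!} - \frac{1}{[\infty]_{q^{-1}}!} \Big)
- \sum_{d_1=d}^{\infty} \Big( \frac{1}{[d_1]_{q^{-1}}!} - \frac{1}{[\infty]_{q^{-1}}!} \Big).
\end{equation*}
The first sum on the right is given exactly by Guo--Zhou's identity \eqref{eq-gz-prop5.8}. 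It equals $-\frac{1}{[\infty]_{q^{-1}}!}\sum_{i\geq 1}\frac{q^{-i}}{1-q^{-i}}$, which already produces the second term of the claimed formula.

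For the tail I would use \eqref{eq-sum1overkfac-infty}, which gives
$\frac{1}{[d_1]_{q^{-1}}!}-\frac{1}{[\infty]_{q^{-1}}!} = -\sum_{n>d_1}\frac{q^{-n}}{[n]_{q^{-1}}!}$. Then I split $\frac{1}{[n]_{q^{-1}}!} = \big(\frac{1}{[n]_{q^{-1}}!}-\frac{1}{[\infty]_{q^{-1}}!}\big)+\frac{1}{[\infty]_{q^{-1}}!}$.

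\begin{itemize}
\item The first piece has degree $-2n-1 \leq -2d-3 < -2d+2$ by \eqref{eq-gzlem-5.7}, since $n \geq d+1$. Summing over $d_1 \geq d$ and $n > d_1$ keeps all terms below the truncation, so this piece can be discarded when restricting to $q^{\geq -2d+2}$.
\item The second piece contributes
\begin{equation*}
-\frac{1}{[\infty]_{q^{-1}}!}\sum_{d_1\geq d}\sum_{n>d_1} q^{-n}
= -\frac{1}{[\infty]_{q^{-1}}!}\sum_{d_1\geq d}\frac{q^{-d_1-1}}{1-q^{-1}}
= -\frac{q^{-d-1}}{[1]_{q^{-1}}^2\,[\infty]_{q^{-1}}!}.
\end{equation*}
\end{itemize}

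Subtracting the tail therefore gives $+\frac{q^{-d-1}}{[1]_{q^{-1}}^2[\infty]_{q^{-1}}!}$. Combined with the first sum, this is exactly the asserted expression modulo $q^{<-2d+2}$.

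The only point needing care is that the discarded remainder is an infinite double sum whose degrees must decrease, so that the truncated identity holds coefficientwise. Since each summand has degree at most $-2n-1$, for each $D$ only finitely many $(d_1,n)$ contribute to $q^{-D}$, so the sum is well defined and lies entirely in $q^{\leq -2d-3}$. The bound $-2d-3<-2d+2$ holds with room to spare, so no obstacle is expected beyond checking these degree estimates.
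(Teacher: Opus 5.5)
Your proof is correct and is essentially the paper's argument: both extend the $d_1$-sum to infinity, discard the pieces $q^{-n}\big(\frac{1}{[n]_{q^{-1}}!}-\frac{1}{[\infty]_{q^{-1}}!}\big)$ with $n\geq d+1$ using the degree bound \eqref{eq-gzlem-5.7}, and obtain $\frac{q^{-d-1}}{[1]_{q^{-1}}^2[\infty]_{q^{-1}}!}$ from the geometric part. The only difference is in evaluating the full infinite sum: you cite \eqref{eq-gz-prop5.8} directly, whereas the paper first rewrites everything via \eqref{eq-sum1overkfac-infty} and then evaluates $\sum_k kq^{-k}/[k]_{q^{-1}}!$ with \eqref{eq-twoidgen}, which amounts to rederiving \eqref{eq-gz-prop5.8}; both arguments in fact give the identity on $q^{\geq -2d-2}$.
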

\begin{proof}
By \eqref{eq-sum1overkfac-infty} one has:
\begin{equation*}
\begin{split}
\sum_{d_1=0}^{d-1} \big( \frac{1}{[d_1]_{q^{-1}}!} - \frac{1}{[\infty]_{q^{-1}}!} \big)
=& - \sum_{d_1=0}^{d-1} \sum_{k>d_1} \frac{q^{-k}}{[k]_{q^{-1}}!} \\
=& - \sum_{d_1=0}^{d-1} \sum_{k>d_1} q^{-k} \big(\frac{1}{[k]_{q^{-1}}!} - \frac{1}{[\infty]_{q^{-1}}!}\big)
- \frac{q^{-1} - q^{-d-1}}{[1]_{q^{-1}}^2  [\infty]_{q^{-1}}!}.
\end{split}
\end{equation*}
Here
\begin{equation*}
\begin{split}
& \sum_{d_1=0}^{d-1} \sum_{k>d_1} q^{-k} \big(\frac{1}{[k]_{q^{-1}}!} - \frac{1}{[\infty]_{q^{-1}}!}\big) \\
=& \sum_{d_1 =0}^\infty \sum_{k>d_1} q^{-k} \big(\frac{1}{[k]_{q^{-1}}!} - \frac{1}{[\infty]_{q^{-1}}!}\big)
-\sum_{d_1=d}^\infty \sum_{k>d_1} q^{-k} \big(\frac{1}{[k]_{q^{-1}}!} - \frac{1}{[\infty]_{q^{-1}}!}\big),
\end{split}
\end{equation*}
and for $d_1\geq d$ one always has:
\begin{equation*}
\deg q^{-k} \big(\frac{1}{[k]_{q^{-1}}!} - \frac{1}{[\infty]_{q^{-1}}!}\big) <-2d.
\end{equation*}
Therefore:
\begin{equation*}
\begin{split}
\sum_{d_1=0}^{d-1} \big( \frac{1}{[d_1]_{q^{-1}}!} - \frac{1}{[\infty]_{q^{-1}}!} \big)
=& - \sum_{d_1=0}^\infty \sum_{k>d_1} q^{-k} \big(\frac{1}{[k]_{q^{-1}}!} - \frac{1}{[\infty]_{q^{-1}}!}\big)
- \frac{q^{-1} - q^{-d-1}}{[1]_{q^{-1}}^2  [\infty]_{q^{-1}}!} \\
=& \Big( - \sum_{d_1=0}^\infty \sum_{k>d_1} \frac{q^{-k}}{[k]_{q^{-1}}!}
+ \frac{ q^{-d-1}}{[1]_{q^{-1}}^2  [\infty]_{q^{-1}}!}
\Big)\Big|_{q^{\geq -3d+1}}.
\end{split}
\end{equation*}
Moreover,
\begin{equation*}
\begin{split}
\sum_{d_1=0}^\infty \sum_{k>d_1} \frac{q^{-k}}{[k]_{q^{-1}}!}
= \sum_{k=1}^\infty  \frac{q^{-k}}{[k]_{q^{-1}}!} \sum_{d_1 = 0}^{k-1} 1
= \sum_{k=1}^\infty  \frac{kq^{-k}}{[k]_{q^{-1}}!},
\end{split}
\end{equation*}
and then the conclusion follows from \eqref{eq-twoidgen}.
\end{proof}

Now by this lemma and \eqref{eq-W3d-Iresult2d} we conclude that:
\be
\label{eq-W3d-IIsum-3}
\begin{split}
\text{(III)}
= \Big(
\frac{(d+2)q^{-2d-2}}{[1]_{q^{-1}}^4} + \frac{d  q^{-d-1}}{[1]_{q^{-1}}^2} -\frac{2q^{-d-2}}{[1]_{q^{-1}}^3}
-2\big(\frac{q^{-2d-2}}{[1]_{q^{-1}}^4}+ \frac{q^{-d-1}}{[1]_{q^{-1}}^2} \big) \sum_{i\geq 2} \frac{q^{-i}}{1-q^{-i}}
\Big)\Big|_{q^{\geq -3d+1}}.
\end{split}
\ee
And by combining \eqref{eq-W3d-Iresult},
\eqref{eq-W3d-IIsum-2},
\eqref{eq-W3d-IIsum-3},
we see that:
\begin{equation*}
\begin{split}
\text{(I)} + \text{(II)} + \text{(III)}
=& \Big( \frac{dq^{-2d-2}}{[1]_{q^{-1}}^4} -\frac{2d q^{-2d-3}}{[1]_{q^{-1}}^3 [2]_{q^{-1}}}
+ \frac{dq^{-2d-4}}{[1]_{q^{-1}}^2 [2]_{q^{-1}}^2}
-\frac{2q^{-2d-5}}{[1]_{q^{-1}}^3 [2]_{q^{-1}}^2} + \frac{2q^{-2d-6}}{[1]_{q^{-1}}^2 [2]_{q^{-1}}^3} \\
& +\big( -\frac{2q^{-2d-2}}{[1]_{q^{-1}}^4} +\frac{4q^{-2d-3}}{[1]_{q^{-1}}^3 [2]_{q^{-1}}}
 -\frac{2q^{-2d-4}}{[1]_{q^{-1}}^2 [2]_{q^{-1}}^2}\big) \sum_{i\geq 2}\frac{q^{-i}}{1-q^{-i}}
\Big) \Big|_{q^{\geq -3d+1}},
\end{split}
\end{equation*}
and after some simplification
we finally obtain the conclusion in the case $d_2 = 1$ in $W_d^3(q)$:

\begin{Proposition}
\label{prop-w3dcase2}
The following identity holds:
\begin{equation*}
\begin{split}
&\sum_{d_1+d_3=d-1}T_{1}^x\Big( \frac{(q^{-d_1 -1} ;q^{-1})_\infty}{(xq^{-d_1 -1} ;q^{-1})_\infty} -1 \Big)
 T_{d_3}^x\Big( \frac{(q^{ -2} ;q^{-1})_\infty}{(xq^{ -2} ;q^{-1})_\infty} -1 \Big) \\
=& \Big( \frac{dq^{-2d-2}}{[1]_{q^{-1}}^2 [2]_{q^{-1}}^2} -\frac{2q^{-2d-5}}{[1]_{q^{-1}}^2 [2]_{q^{-1}}^3}
-\frac{2q^{-2d-2}}{[1]_{q^{-1}}^2 [2]_{q^{-1}}^2 }\sum_{i\geq 2}\frac{q^{-i}}{1-q^{-i}}
\Big) \Big|_{q^{\geq -3d+1}}.
\end{split}
\end{equation*}
\end{Proposition}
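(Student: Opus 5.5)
The paper has already reduced the left-hand side to $\text{(I)}+\text{(II)}+\text{(III)}$ and has written each piece, truncated at $q^{\geq -3d+1}$, as an explicit combination of terms in \eqref{eq-W3d-Iresult}, \eqref{eq-W3d-IIsum-2} and \eqref{eq-W3d-IIsum-3}. What remains is bookkeeping: add the three pieces, then simplify the rational functions of $q^{-1}$ to reach the compact form in the statement. Throughout I would write every denominator over $[1]_{q^{-1}}^4[2]_{q^{-1}}^3$ and use $[2]_{q^{-1}}=[1]_{q^{-1}}(1+q^{-1})$. Since each identity holds only up to $q^{\geq -3d+1}$, I would invoke Proposition \ref{prop-degLaurentprod} whenever I multiply by a Laurent series of degree $\le 0$. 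This keeps the truncation valid.

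\emph{Step 1: summing.} I would collect the terms into three groups: the $d$-dependent ones, the $d$-independent ones, and those multiplying $\sum_{i\geq2}q^{-i}/(1-q^{-i})$. The pieces $\frac{d q^{-d-1}}{[1]^2}$ and $\frac{q^{-d-2}}{[1]^3}$ appear in (I), (II) and (III), and I expect them to cancel exactly. A quick check supports this. From (I), (II) and (III) the coefficients of $dq^{-d-1}/[1]^2$ are $1$, $-2$ and $1$. The $q^{-d-2}$ terms are $+2$ from (II) and $-2$ from (III). For the series coefficient at $q^{-d-1}$ the contributions are $-2$, $+4$ and $-2$. So all terms of order $q^{-d-\cdot}$ drop out and only $q^{-2d-\cdot}$ terms survive. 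This yields the intermediate expression displayed just before the proposition.

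\emph{Step 2: simplification.} The $d$-coefficient is
\[
\frac{q^{-2d-2}}{[1]^4}\Big(1-\frac{2q^{-1}[1]}{[2]}+\frac{q^{-2}[1]^2}{[2]^2}\Big)
=\frac{q^{-2d-2}}{[1]^4}\Big(1-\frac{q^{-1}[1]}{[2]}\Big)^2 .
\]
Here $1-q^{-1}[1]/[2]=1-q^{-1}/(1+q^{-1})=1/(1+q^{-1})=[1]/[2]$. The square is therefore $[1]^2/[2]^2$, which gives $dq^{-2d-2}/([1]^2[2]^2)$. The series coefficient is exactly $-2$ times the same perfect square, so it gives $-2q^{-2d-2}/([1]^2[2]^2)$.

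\emph{Step 3: the constant term.} The remaining constant term is
\[
-\frac{2q^{-2d-5}}{[1]^3[2]^2}+\frac{2q^{-2d-6}}{[1]^2[2]^3}
=\frac{2q^{-2d-5}}{[1]^2[2]^3}\Big(-\frac{[2]}{[1]}+q^{-1}\Big)
=\frac{2q^{-2d-5}}{[1]^2[2]^3}\big(-1-q^{-1}+q^{-1}\big),
\]
which equals $-2q^{-2d-5}/([1]^2[2]^3)$, as claimed.

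The only real obstacle is the correctness of the input identities (I)–(III). Those were established in the preceding lemmas, so I take them as given. The risk in this step is confined to sign and exponent errors while collecting terms. I would guard against this by checking the cancellation in Step 1 term by term before simplifying.
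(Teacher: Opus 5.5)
Your proposal is correct and is essentially the paper's own argument: the paper also adds the truncated expressions for (I), (II), (III) to get the displayed intermediate sum, and then only says ``after some simplification.'' Your explicit simplification checks out and fills in exactly that step. It uses $1-q^{-1}[1]_{q^{-1}}/[2]_{q^{-1}}=[1]_{q^{-1}}/[2]_{q^{-1}}$ for both the $d$-coefficient and the coefficient of $\sum_{i\geq 2}q^{-i}/(1-q^{-i})$, and reduces the leftover constant to $-2q^{-2d-5}/([1]_{q^{-1}}^2[2]_{q^{-1}}^3)$; this last step also relies on the cancellation of the $d$-independent pieces $\pm 2q^{-2d-3}/([1]_{q^{-1}}^3[2]_{q^{-1}})$ and $\pm 2q^{-2d-2}/[1]_{q^{-1}}^4$, which you use without stating.
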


\subsection{Leading terms of $W_d^3(q)$ -- Case 3) and summary}

Now we consider the case $d_2 = d$ in $W_d^3(q)$.
In this case one must have $d_1 = d_3 =0$,
and similar to \eqref{eq-comutedisumWd2} one has:
\begin{equation*}
\begin{split}
T_{d}^x\Big( \frac{(q^{ -1} ;q^{-1})_\infty}{(xq^{ -1} ;q^{-1})_\infty} -1 \Big)
 T_{0}^x\Big( \frac{(q^{-d -1} ;q^{-1})_\infty}{(xq^{-d -1} ;q^{-1})_\infty} -1 \Big)
= \Big( \frac{[\infty]_{q^{-1}}!}{[d]_{q^{-1}}!}-1 \Big)^2
=  \frac{q^{-2d-2}}{[1]_{q^{-1}}^2}
\Big|_{q^{\geq -3d+1}}.
\end{split}
\end{equation*}

Now we are able to consider the summation of the contributions of all the three cases
(Proposition \ref{prop-w3dcase1},
Proposition \ref{prop-w3dcase2},
and the above identity),
and in this way we obtain the following result
of leading terms of $W_d^3(q)$:
\begin{Proposition}
\label{prop-w3dleading}
One has:
\begin{equation*}
\begin{split}
W_d^3(q) = &
\frac{3}{[1]_{q^{-1}}^2 \cdot ([\infty]_{q^{-1}}!)^3} \bigg(
(d+1)q^{-d-2} +  q^{-2d-2}
+ \frac{d q^{-2d-2}}{ [2]_{q^{-1}}^2}
+ \frac{2q^{-2d-5}}{ [2]_{q^{-1}}!}  -\frac{2q^{-2d-5}}{ [2]_{q^{-1}}^3} \\
& + \frac{(d+1)q^{-2d-6}}{ [2]_{q^{-1}}^2}
+ \frac{2q^{-2d-8}}{ [2]_{q^{-1}}^3}
-2\Big( q^{-d-2} +\frac{q^{-2d-2}}{[2]_{q^{-1}}^2} + \frac{q^{-2d-6}}{[2]_{q^{-1}}^2} \Big)
\sum_{i\geq 2}\frac{q^{-i}}{1-q^{-i}}
\bigg) \bigg|_{q^{\geq -3d+1}}.
\end{split}
\end{equation*}
\end{Proposition}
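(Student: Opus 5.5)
The plan is to assemble the three contributions to $W_d^3(q)$ identified at the start of the Case 1) subsection. From the definition \eqref{eq-defWd1Wd2Wd3}, $W_d^3(q)$ equals $\frac{3}{([\infty]_{q^{-1}}!)^3}$ times a sum over $d_1+d_2+d_3=d$ of products of two $T^x$-factors. The degree computation given there shows that each summand with $2\leq d_2<d$ has degree $-(d_2+1)(d+2-d_2)\leq -3d$, so it does not affect the coefficients of $q^{\geq -3d+1}$. Only $d_2=0$, $d_2=1$ and $d_2=d$ remain. For $d\geq 2$ these three index sets are disjoint, so the truncated $W_d^3(q)$ is $\frac{3}{([\infty]_{q^{-1}}!)^3}$ times the sum of Proposition \ref{prop-w3dcase1}, Proposition \ref{prop-w3dcase2}, and the identity for $d_2=d$, which gives $\frac{q^{-2d-2}}{[1]_{q^{-1}}^2}$. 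The truncation of Proposition \ref{prop-w3dcase1} at $q^{\geq -3d-8}$ is finer than needed, so I restrict it to $q^{\geq -3d+1}$. Multiplying by $\frac{3}{([\infty]_{q^{-1}}!)^3}$, which has degree $0$, preserves the truncation by Proposition \ref{prop-degLaurentprod}.

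Next I add the three expressions term by term, after factoring out $\frac{1}{[1]_{q^{-1}}^2}$. The pieces match the target as follows.
\begin{itemize}
\item The $d_2=d$ case gives $q^{-2d-2}$.
\item Case 1) gives $(d+1)q^{-d-2}$, $\frac{(d+1)q^{-2d-6}}{[2]_{q^{-1}}^2}$ and $\frac{2q^{-2d-8}}{[2]_{q^{-1}}^3}$, using $([2]_{q^{-1}}!)^2=[1]_{q^{-1}}^2[2]_{q^{-1}}^2$.
\item Case 1) also gives $\frac{2q^{-2d-5}}{[1]_{q^{-1}}[2]_{q^{-1}}}=\frac{2q^{-2d-5}}{[2]_{q^{-1}}!}$.
\item Case 2) gives $\frac{dq^{-2d-2}}{[2]_{q^{-1}}^2}$ and $-\frac{2q^{-2d-5}}{[2]_{q^{-1}}^3}$.
\item The Eisenstein-type parts combine: $-2q^{-d-2}$ and $-\frac{2q^{-2d-6}}{[2]_{q^{-1}}^2}$ from Case 1), and $-\frac{2q^{-2d-2}}{[2]_{q^{-1}}^2}$ from Case 2), all multiplying $\sum_{i\geq 2}\frac{q^{-i}}{1-q^{-i}}$.
\end{itemize}

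Together these reproduce the stated formula exactly. The only real obstacle is bookkeeping: confirming the symmetric factor $3$ in front of $W_d^3$ (by the cyclic symmetry in \eqref{eq-defWd1Wd2Wd3}) and that the three cases do not overlap. No further estimates are needed beyond those already established.
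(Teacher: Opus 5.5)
Your proposal is correct and follows the paper's own argument. The paper likewise discards the range $2\le d_2<d$ by the degree bound $-(d_2+1)(d+2-d_2)\le -3d$, then sums Proposition~\ref{prop-w3dcase1}, Proposition~\ref{prop-w3dcase2} and the $d_2=d$ identity, and your term-by-term matching (using $([2]_{q^{-1}}!)^2=[1]_{q^{-1}}^2[2]_{q^{-1}}^2$ and $[2]_{q^{-1}}!=[1]_{q^{-1}}[2]_{q^{-1}}$) checks out. The only remark is that the factor $3$ needs no separate confirmation, since it is already built into the definition of $W_d^3(q)$.
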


\subsection{Leading terms of $W_d^4(q)$}

Now we compute the coefficients of $q^{-D}$ with $0\leq D\leq 3d-1$ in $W_d^4(q)$.
First we consider the degree of $W_d^4(q)$.
Assume $d_1+d_2+d_3 =d$,
and then by \eqref{eq-gzlem-5.7} one has:
\begin{equation*}
\begin{split}
&\deg\Big(
T_{d_1}^x\big( \frac{(q^{-d_3 -1} ;q^{-1})_\infty}{(xq^{-d_3 -1} ;q^{-1})_\infty} -1 \big)
T_{d_2}^x\big( \frac{(q^{-d_1 -1} ;q^{-1})_\infty}{(xq^{-d_1 -1} ;q^{-1})_\infty} -1 \big)
T_{d_3}^x\big( \frac{(q^{-d_2 -1} ;q^{-1})_\infty}{(xq^{-d_2 -1} ;q^{-1})_\infty} -1 \big)
\Big) \\
=& -(d_1+1)(d_3+1) -(d_2+1)(d_1+1) -(d_3+1)(d_2+1).
\end{split}
\end{equation*}
It is clear that if two of $d_1,d_2,d_3$ are positive,
one has:
\begin{equation*}
-(d_1+1)(d_3+1) -(d_2+1)(d_1+1) -(d_3+1)(d_2+1) \leq -(3d+2),
\end{equation*}
and thus we only need to consider the cases $(d_1,d_2,d_3) = (d,0,0),(0,d,0),(0,0,d)$.
Then by \eqref{eq-computed1=01} and \eqref{eq-computed2=01} one has:
\begin{equation*}
\begin{split}
W_d^4(q) =  &\frac{3}{([\infty]_{q^{-1}}!)^3}
\cdot
\Big( \frac{[\infty]_{q^{-1}}!}{[d]_{q^{-1}}!} -1 \Big)^2
\cdot \Big( [\infty]_{q^{-1}}! -1 \Big)
\Big|_{q^{\geq -3d+1}}.
\end{split}
\end{equation*}
And then similar to \eqref{eq-comutedisumWd2} one obtains:
\begin{equation*}
W_d^4(q) = \frac{3([\infty]_{q^{-1}}! -1)}{([\infty]_{q^{-1}}!)^3} \cdot
\Big( - \frac{q^{-d-1}}{1-q^{-1}} + \frac{q^{-2d-3}}{(1-q^{-1})(1-q^{-2})} \Big)^2
\bigg|_{q^{\geq -3d+1}}.
\end{equation*}
And by expanding the square and then restricting to $q^{\geq -3d+1}$,
one concludes that:

\begin{Proposition}
\label{prop-w4dleading}
We have:
\begin{equation*}
W_d^4(q) = \frac{3([\infty]_{q^{-1}}! -1)}{([\infty]_{q^{-1}}!)^3} \cdot
\frac{q^{-2d-2}}{[1]_{q^{-1}}^2}
\bigg|_{q^{\geq -3d+1}}.
\end{equation*}
\end{Proposition}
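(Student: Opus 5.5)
Following the text just before the statement, I would first use the degree bound for $W_d^4(q)$. Every summand in \eqref{eq-defWd4} with at least two of $d_1,d_2,d_3$ positive has degree at most $-(3d+2)$, so it does not contribute in the region $q^{\geq -3d+1}$. Only the three cyclic configurations $(d,0,0),(0,d,0),(0,0,d)$ remain, and they give equal contributions. By \eqref{eq-computed1=01} and \eqref{eq-computed2=01}, each contribution is a product of three factors. Two of them equal $\frac{[\infty]_{q^{-1}}!}{[d]_{q^{-1}}!}-1$: one is $T^x_{d}$ applied with parameter $q^{-1}$, the other is $T^x_0$ applied with parameter $q^{-d-1}$. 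The third factor is $T^x_0(\cdots)$ with parameter $q^{-1}$, which equals $[\infty]_{q^{-1}}!-1$. This yields the displayed expression $\frac{3}{([\infty]_{q^{-1}}!)^3}\big(\frac{[\infty]_{q^{-1}}!}{[d]_{q^{-1}}!}-1\big)^2([\infty]_{q^{-1}}!-1)$ restricted to $q^{\geq -3d+1}$.

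Next I would substitute the truncated expansion \eqref{eq-comutedisumWd2} with $d_1=d$. Its statement there is for $d_1\ge d+1$, but the same product expansion of $\prod_{i\ge1}(1-q^{-d-i})$ works: its terms of degree $\ge -3d+1$ come only from the single-factor and two-factor parts. So
\[
\frac{[\infty]_{q^{-1}}!}{[d]_{q^{-1}}!}-1=-\frac{q^{-d-1}}{[1]_{q^{-1}}}+R,
\]
where $\deg R\le -2d-3$. Squaring, I would use $\deg([\infty]_{q^{-1}}!-1)=-1$ together with Proposition \ref{prop-degLaurentprod} to justify replacing the factors inside the truncation. The cross term $2\frac{q^{-d-1}}{[1]_{q^{-1}}}R$ has degree at most $-3d-4$. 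After multiplying by $([\infty]_{q^{-1}}!-1)$, whose degree is $-1$, and by $([\infty]_{q^{-1}}!)^{-3}$, whose degree is $0$, it lies below $-3d+1$, and so does $R^2$. Only $\frac{q^{-2d-2}}{[1]_{q^{-1}}^2}$ survives, which gives the stated formula.

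The only real care point is the truncation bookkeeping. One must check that the degree loss from multiplying by $([\infty]_{q^{-1}}!-1)$ and by $([\infty]_{q^{-1}}!)^{-3}$ does not bring discarded terms back into range. Since these two factors have degrees $-1$ and $0$, nothing is brought back, so I do not expect a genuine obstacle.
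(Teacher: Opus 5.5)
Your proposal is correct and follows the paper's proof. You discard every configuration with two positive $d_i$ via the degree bound $-(3d+2)$, reduce to the three cyclic configurations using \eqref{eq-computed1=01} and \eqref{eq-computed2=01}, and expand $\frac{[\infty]_{q^{-1}}!}{[d]_{q^{-1}}!}-1$ as in \eqref{eq-comutedisumWd2}. Your exact remainder bound $\deg R\le -2d-3$, which you rightly note is valid at $d_1=d$, discards the cross terms slightly more cleanly than the paper's squaring of the explicit two-term expansion, but the argument is the same.
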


Now we have accomplished the computations leading terms of
$W_d^i(q)$ for $i=1,2,3,4$,
see equality \eqref{eq-defWd1Wd2Wd3},
Propositions
\ref{prop-w2dleading},
\ref{prop-w3dleading},
\ref{prop-w4dleading}.
In summary,
for $d\geq 2$ one has:
\begin{equation*}
\begin{split}
W_d(q) =& \frac{1}{([\infty]_{q^{-1}}!)^3} \bigg( \frac{(d+1)(d+2)}{2}
+  \frac{3(d+3)q^{-d-2}}{[1]_{q^{-1}}^2}
+ \frac{6q^{-2d-2}}{[1]_{q^{-1}}^2 [2]_{q^{-1}}}
+\frac{3d \cdot q^{-2d-2}}{[1]_{q^{-1}}^2 [2]_{q^{-1}}^2} \\
& -\frac{6q^{-2d-3}}{[1]_{q^{-1}} [2]_{q^{-1}}^3}
+\frac{6q^{-2d-5}}{[1]_{q^{-1}}^3 [2]_{q^{-1}}}
-\frac{6q^{-2d-5}}{[1]_{q^{-1}}^2 [2]_{q^{-1}}^3}
+\frac{3(d+1) q^{-2d-6}}{[1]_{q^{-1}}^2 [2]_{q^{-1}}^2}
+ \frac{6q^{-2d-7}}{[1]_{q^{-1}} [2]_{q^{-1}}^3} \\
& + \frac{6q^{-2d-8}}{[1]_{q^{-1}}^2 [2]_{q^{-1}}^3}
-6\Big( \frac{q^{-d-2}}{[1]_{q^{-1}}^2} +\frac{q^{-2d-2}}{[1]_{q^{-1}}^2[2]_{q^{-1}}^2}
+ \frac{q^{-2d-6}}{[1]_{q^{-1}}^2[2]_{q^{-1}}^2} \Big)
\Big( \sum_{i\geq 2}\frac{q^{-i}}{1-q^{-i}} \Big) \\
& -3 \Big(\sum_{i\geq 1} \frac{q^{-i}}{(1-q^{-i})^2} \Big)
+ \frac{3q^{-2d-2} [\infty]_{q^{-1}}!}{[1]_{q^{-1}}^2 }
\bigg)\bigg|_{q^{\geq -3d+1}}.
\end{split}
\end{equation*}
After some simplifications,
one finally obtain:
\begin{Theorem}
\label{thm-Wdqleadingterm}
For every $d\geq 2$,
the following identity holds:
\be
\begin{split}
W_d(q) =& \frac{3}{([\infty]_{q^{-1}}!)^3} \bigg( \frac{1}{3}\binom{d+2}{2}
+  \frac{q^{-d-2}}{[1]_{q^{-1}}^2} \Big( d+3-2\sum_{i\geq 2}\frac{q^{-i}}{1-q^{-i}} \Big)
 + \frac{q^{-2d-2}  [\infty]_{q^{-1}}!}{[1]_{q^{-1}}^2 } \\
&+ \frac{(2-2q^{-2}-2q^{-3}+3q^{-4}+3q^{-5}) \cdot q^{-2d-2}}{[1]_{q^{-1}}\cdot [2]_{q^{-1}}^3}
- \sum_{i\geq 1} \frac{q^{-i}}{(1-q^{-i})^2} \\
& +\frac{ (1+ q^{-4})\cdot  q^{-2d-2}}{[1]_{q^{-1}}^2 [2]_{q^{-1}}^2}
\Big( d- 2\sum_{i\geq 2}\frac{q^{-i}}{1-q^{-i}} \Big)
\bigg)\bigg|_{q^{\geq -3d+1}}.
\end{split}
\ee
\end{Theorem}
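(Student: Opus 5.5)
The identity follows by summing the four pieces $W_d(q)=W_d^1+W_d^2+W_d^3+W_d^4$ and simplifying. The input is the expression displayed just before the theorem, which combines equality \eqref{eq-defWd1Wd2Wd3} with Propositions \ref{prop-w2dleading}, \ref{prop-w3dleading} and \ref{prop-w4dleading}. Every coefficient there is a rational function in $q^{-1}$ with denominator dividing $[1]_{q^{-1}}^3[2]_{q^{-1}}^3$, apart from the quasi-modular pieces $\sum_{i\geq2}\frac{q^{-i}}{1-q^{-i}}$ and $\sum_{i\ge1}\frac{q^{-i}}{(1-q^{-i})^2}$ and the lone term carrying $[\infty]_{q^{-1}}!$. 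So the proof is bookkeeping: factor out $3/([\infty]_{q^{-1}}!)^3$ and regroup the terms by their power of $q^{-d}$ and by the type of factor they carry.

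I would organize the regrouping in four steps.

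\emph{Step 1.} The $d$-independent constant is $\frac{(d+1)(d+2)}{6}=\frac13\binom{d+2}{2}$, and $-\sum_{i\ge1} q^{-i}/(1-q^{-i})^2$ passes through unchanged.

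\emph{Step 2.} The $q^{-d-2}$ terms are $\frac{(d+3)q^{-d-2}}{[1]^2}$ and $-\frac{2q^{-d-2}}{[1]^2}\sum_{i\ge2}$. These combine directly into the second summand of the theorem.

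\emph{Step 3.} In the $q^{-2d-\ast}$ sector I separate the parts proportional to $d$ and to $\sum_{i\ge2}$. The $d$-linear terms are $\frac{d q^{-2d-2}}{[1]^2[2]^2}$ and $\frac{(d+1)q^{-2d-6}}{[1]^2[2]^2}$. I would split the latter as $d+1$, so that $d(1+q^{-4})q^{-2d-2}/([1]^2[2]^2)$ appears together with the matching sum terms $-2(1+q^{-4})q^{-2d-2}\sum_{i\ge2}/([1]^2[2]^2)$. This produces the last bracket of the theorem. The leftover piece $q^{-2d-6}/([1]^2[2]^2)$ moves into the rational remainder.

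\emph{Step 4.} All remaining $q^{-2d-\ast}$ rational terms are collected over the common denominator $[1]^3[2]^3$, and I would check that the numerator equals $[1]^2(2-2q^{-2}-2q^{-3}+3q^{-4}+3q^{-5})\,q^{-2d-2}$. The terms in question are
\begin{itemize}
\item[] $6q^{-2d-2}/([1]^2[2])$, $-6q^{-2d-3}/([1][2]^3)$, $6q^{-2d-5}/([1]^3[2])$, $-6q^{-2d-5}/([1]^2[2]^3)$,
\item[] $6q^{-2d-7}/([1][2]^3)$, $6q^{-2d-8}/([1]^2[2]^3)$, and the leftover $3q^{-2d-6}/([1]^2[2]^2)$, all divided by $3$.
\end{itemize}
The $[\infty]_{q^{-1}}!$ term already has the required form $q^{-2d-2}[\infty]_{q^{-1}}!/[1]^2$.

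The main obstacle, and the only real risk, is Step 4. A numerator of degree about $11$ in $q^{-1}$ must collapse to a polynomial divisible by $[1]^2$, and this divisibility is what allows the denominator $[1][2]^3$ stated in the theorem. I would first multiply out with all powers shifted by $q^{2d+2}$. Then I would test the claim at $q^{-1}=1$, together with its first derivative there, to confirm the factor $[1]^2$ before finishing the polynomial division.

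A final caution concerns the truncation. Each summand is only valid in the range $q^{\geq -3d+1}$, and the regrouping is a literal identity of Laurent series, so no further truncation argument is needed. Proposition \ref{prop-degLaurentprod} justifies multiplying the truncated identities by the degree-$0$ prefactors. The hypothesis $d\ge2$ is inherited from Proposition \ref{prop-w2dleading}.
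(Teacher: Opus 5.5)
Your proposal is correct and follows the paper's own route: the paper also adds $W_d^1,\dots,W_d^4$ from \eqref{eq-defWd1Wd2Wd3} and Propositions \ref{prop-w2dleading}, \ref{prop-w3dleading}, \ref{prop-w4dleading}, and reaches the theorem ``after some simplifications'', which is the regrouping you describe. Your Step 4 identity also checks out: with $x=q^{-1}$ and the factor $q^{-2d-2}$ removed, the numerator over $(1-x)^3(1-x^2)^3$ is $2-4x+2x^3+5x^4-5x^5-3x^6+3x^7=(1-x)^2(2-2x^2-2x^3+3x^4+3x^5)$, of degree $7$ rather than about $11$, and this gives exactly the stated term.
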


\section{Proof of Theorem \ref{thm-main1-leadI} --- Part II: Leading Terms of $I_d(q)$}
\label{app-secC-delta12}

In this section we finish the computation of the coefficients of $q^{\half d(d-3)-D}$ in $I_d(q)$
for $0\leq D\leq 3d-1$.
See \S \ref{sec-degreeest-Id} for the notations.
By Proposition \ref{prop-Ileqkdegree} we already know that
$\deg I(d) = \half d(d-3)$, and
\begin{equation*}
I(d) = I^{\leq 2} (d)\big|_{q^{\geq \half d(d-3) -3d+1}}.
\end{equation*}
In what follows,
we will denote:
\be
\label{eq-def-I=k}
I^{ = k}(d) =
\sum_{\sum_{i=1}^3 |\mu^i| =d, \text{ } \sum \delta_i = k}
(-1)^d q^{ \half \sum_{i=1}^3  \kappa_{\mu^i}}
W_{\mu^{3},\mu^1}(q) W_{\mu^{1},\mu^2}(q) W_{\mu^{2},\mu^3}(q)
\ee
for a nonnegative integer $k$,
where $\delta_i = |\mu^i| - \mu^i_1$ as before.
Then:
\begin{equation*}
I^{\leq 2}(d) = I^{=0}(d) +  I^{=1}(d) + I^{=2}(d).
\end{equation*}
The leading terms of
\be
\label{eq-relnI=0Wd}
I^{= 0}(d) = (-1)^d q^{\half d(d-3)} W_d(q)
\ee
have been computed in the previous section,
and in what follows we compute the coefficients of $I^{=1}(d)$ and $I^{=2}(d)$.
The main results of this section are
Theorem \ref{thm-I=1leading} and Theorem \ref{thm-I=2leading},
and as a corollary we prove Theorem \ref{thm-main1-leadI}.

\subsection{Leading terms of $I^{=1}(d)$ -- Case 1)}

First we compute the leading terms of $I^{= 1} (d)$.
By definition one has:
\begin{equation*}
\begin{split}
I^{=1}(d) = 3
\sum_{\substack{d_1 + d_2 +d_3 =d \\ d_2,d_3\geq 0; \text{ } d_1\geq 2}}
(-1)^d q^{ \half (d_1^2 +d_2^2 +d_3^2 -3d_1 -d_2-d_3)}
 W_{(d_3),(d_1-1,1)}(q) W_{(d_1-1,1),(d_2)}(q) W_{(d_2),(d_3)}(q),
\end{split}
\end{equation*}
and by plugging \eqref{eq-gz-183184} and \eqref{eq-gzlem6.7} into this equality one obtains:
\be
\begin{split}
I^{=1}(d) = & 3
\sum_{\substack{d_1 + d_2 +d_3 =d \\ d_2,d_3\geq 0; \text{ } d_1\geq 2}}
(-1)^d q^{ \half (d_1+d_2+d_3)^2 - \frac{5}{2} (d_1+d_2+d_3) -2} \cdot \tW_{(d_2),(d_3)}(q) \\
& \cdot
\Big( \frac{\tW_{(d_2),(d_1-1)}}{[1]_{q^{-1}}} -
\frac{q^{-(d_2+1)(d_1-1)}}{[d_2]_{q^{-1}}! [d_1]_{q^{-1}}!}
\Big)
\Big( \frac{\tW_{(d_3),(d_1-1)}}{[1]_{q^{-1}}} -
\frac{q^{-(d_3+1)(d_1-1)}}{[d_3]_{q^{-1}}! [d_1]_{q^{-1}}!}
\Big),
\end{split}
\ee
and then:
\be
\label{eq-appDelta=1-decomp}
\begin{split}
I^{=1}(d)
=& 3 \cdot (-1)^d q^{ \half d^2 - \frac{5}{2} d -2} \cdot
\Big(
\text{(i)} - 2\cdot \text{(ii)} + \text{(iii)}
\Big),
\end{split}
\ee
where:
\begin{equation*}
\begin{split}
& \text{(i)} =  \sum_{\substack{d_1 + d_2 +d_3 =d-1 \\ d_2,d_3\geq 0; \text{ } d_1\geq 1}}
\frac{1}{[1]_{q^{-1}}^2}\cdot
\tW_{(d_2),(d_3)}(q) \tW_{(d_1),(d_2)}(q) \tW_{(d_1),(d_3)}(q), \\
& \text{(ii)} = \sum_{\substack{d_1 + d_2 +d_3 =d \\ d_2,d_3\geq 0; \text{ } d_1\geq 2}}
\frac{q^{-(d_2+1)(d_1-1)}}{[1]_{q^{-1}}\cdot[d_2]_{q^{-1}}! [d_1]_{q^{-1}}!}  \cdot
\tW_{(d_2),(d_3)}(q) \tW_{(d_3),(d_1-1)}(q),  \\
& \text{(iii)} = \sum_{\substack{d_1 + d_2 +d_3 =d \\ d_2,d_3\geq 0; \text{ } d_1\geq 2}}
\frac{q^{-(d_2+d_3 +2)(d_1-1)}}{[d_2]_{q^{-1}}! [d_3]_{q^{-1}}! \cdot ([d_1]_{q^{-1}}!)^2} \cdot
\tW_{(d_2),(d_3)}(q).
\end{split}
\end{equation*}
Recall that our goal is to compute $I(d)|_{q^{\geq \half d(d-3) -3d+1}}$,
and thus we need to compute
the coefficients of $q^{-D}$ in $\text{(i)},\text{(ii)},\text{(iii)}$ for $0\leq D\leq 2d-3$.
We compute the leading terms of $\text{(i)}$ in the rest of this subsection,
and compute $\text{(ii)},\text{(iii)}$ in next two subsections.

By the definition of $W_d(q)$ one has:
\begin{equation*}
\begin{split}
\text{(i)} =& \frac{1}{[1]_{q^{-1}}^2} \Big(
 W_{d-1}(q) -\sum_{d_2+d_3 =d-1} \tW_{(d_2),(d_3)}(q) \tW_{(0),(d_2)}(q) \tW_{(0),(d_3)}(q) \Big).
\end{split}
\end{equation*}
By Theorem \ref{thm-Wdqleadingterm} we already know that:
\be
\label{eq-Wd-1-computei}
\begin{split}
W_{d-1}(q) = \frac{1}{([\infty]_{q^{-1}}!)^3} &\Big( \binom{d+1}{2}
+  \frac{3(d+2)q^{-d-1}}{[1]_{q^{-1}}^2} \\
& -3\sum_{i\geq 1} \frac{q^{-i}}{(1-q^{-i})^2}
 - \frac{6 q^{-d-1}}{[1]_{q^{-1}}^2}
\sum_{i\geq 2}\frac{q^{-i}}{1-q^{-i}}
\Big)\Big|_{q^{\geq -2d+3}}.
\end{split}
\ee
\begin{Lemma}
\label{lem-AppC-1}
For every $d\geq 2$ one has:
\begin{equation*}
\begin{split}
& \sum_{d_2+d_3 =d-1} \tW_{(d_2),(d_3)}(q) \tW_{(0),(d_2)}(q) \tW_{(0),(d_3)}(q) \\
=& \frac{1}{([\infty]_{q^{-1}})^3}
\Big( d-\frac{2q^{-1}}{[1]_{q^{-1}}}
+\frac{(d+2)q^{-d-1}}{[1]_{q^{-1}}^2}
-\frac{2q^{-d} [\infty]_{q^{-1}}!}{[1]_{q^{-1}}}
-2\big( 1+\frac{q^{-d-1}}{[1]_{q^{-1}}^2}\big) \sum_{i\geq 2}\frac{q^{-i}}{1-q^{-i}}
\Big)\Big|_{q^{\geq -2d+3}}.
\end{split}
\end{equation*}
\end{Lemma}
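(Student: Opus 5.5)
The plan is to reduce everything to the quantities already controlled in \S\ref{app-WdCompute}. First, by \eqref{eq-notationtW} one has $\tW_{(0),(n)}=1/[n]_{q^{-1}}!$, and by \eqref{eq-gzlem-5.6} one has $\tW_{(d_2),(d_3)}=\frac{1}{[\infty]_{q^{-1}}!}T^x_{d_3}\frac{(q^{-d_2-1};q^{-1})_\infty}{(xq^{-d_2-1};q^{-1})_\infty}$. Write
\[
A(d_2,d_3)=T^x_{d_3}\Big(\frac{(q^{-d_2-1};q^{-1})_\infty}{(xq^{-d_2-1};q^{-1})_\infty}-1\Big),\qquad B_n=\frac{[\infty]_{q^{-1}}!}{[n]_{q^{-1}}!}-1 .
\]
Then each summand equals $\frac{1}{([\infty]_{q^{-1}}!)^3}(1+A(d_2,d_3))(1+B_{d_2})(1+B_{d_3})$. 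I will expand this product into eight pieces and estimate each piece separately, keeping only what survives in $q^{\geq -2d+3}$. This is exactly the strategy used for $W_d=W^1_d+\cdots+W^4_d$.

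The pieces are treated in turn.
\begin{itemize}
\item The constant term $1$ contributes $d$.
\item For $\sum A(d_2,d_3)$, \eqref{eq-gzlem-5.7} gives degree $-(d_2+1)(d_3+1)$. This is at most $-2d$ unless $d_2=0$ or $d_3=0$. Both endpoint terms equal $B_{d-1}$, by \eqref{eq-computed1=01} and \eqref{eq-computed2=01}.
\item The linear $B$-terms give $2\sum_{n=0}^{d-1}B_n=2[\infty]_{q^{-1}}!\sum_{n=0}^{d-1}\big(\frac1{[n]_{q^{-1}}!}-\frac1{[\infty]_{q^{-1}}!}\big)$. This is evaluated by Lemma \ref{lem-sum1overd-inf}, which is valid precisely on $q^{\geq-2d+2}$.
\item The term $\sum B_{d_2}B_{d_3}$ equals $([\infty]_{q^{-1}}!)^2$ times the left side of \eqref{eq-W3d-Iresult2d}. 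It is therefore given there, again on $q^{\geq -2d+2}$.
\item For the mixed terms $A(d_2,d_3)(B_{d_2}+B_{d_3})$, only $d_2=0$ or $d_3=0$ survive. These give $2B_{d-1}(B_0+B_{d-1})$. Since $\deg B_{d-1}^2=-2d$, only $2B_{d-1}B_0=2B_{d-1}([\infty]_{q^{-1}}!-1)$ remains.
\item The triple term $AB_{d_2}B_{d_3}$ has degree at most $-2d$ in all cases and is discarded.
\end{itemize}

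Next, I will use the expansion \eqref{eq-comutedisumWd2}, namely $B_{d-1}=-\frac{q^{-d}}{1-q^{-1}}+O(q^{-2d-1})$, so that $B_{d-1}=-q^{-d}/[1]_{q^{-1}}$ modulo $q^{\geq -2d+3}$. Combining the endpoint contribution $2B_{d-1}$ with the mixed contribution $2B_{d-1}([\infty]_{q^{-1}}!-1)$ then produces exactly $-2q^{-d}[\infty]_{q^{-1}}!/[1]_{q^{-1}}$. Inserting Lemma \ref{lem-sum1overd-inf} gives
\[
2\Big(\frac{q^{-d-1}}{[1]_{q^{-1}}^2}-\sum_{i\geq1}\frac{q^{-i}}{1-q^{-i}}\Big).
\]
Splitting off the $i=1$ term as $-\frac{2q^{-1}}{[1]_{q^{-1}}}$ leaves $-2\sum_{i\geq2}$. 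Adding \eqref{eq-W3d-Iresult2d} contributes $\frac{dq^{-d-1}}{[1]_{q^{-1}}^2}-\frac{2q^{-d-1}}{[1]_{q^{-1}}^2}\sum_{i\geq2}$. Collecting the $q^{-d-1}/[1]_{q^{-1}}^2$ coefficients gives $d+2$, and the total matches the claimed formula. Throughout, Proposition \ref{prop-degLaurentprod} together with $\deg[\infty]_{q^{-1}}!=0$ lets me multiply truncated identities by $([\infty]_{q^{-1}}!)^{-3}$ and $[\infty]_{q^{-1}}!$ without losing range.

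The main obstacle is purely bookkeeping on degrees. One must check that the discarded pieces all lie strictly below $-2d+3$, including the $d_2=1$ or $d_3=1$ contributions to $\sum A$, $B_{d-1}^2$, and the triple products. One must also check that the cited truncated identities hold on a range at least as large as $q^{\geq -2d+3}$; Lemma \ref{lem-sum1overd-inf} and \eqref{eq-W3d-Iresult2d} hold on $q^{\geq -2d+2}$, so this is fine. I expect the case $d=2$ to need a quick direct check, since the separation between endpoint cases degenerates there.
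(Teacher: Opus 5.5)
Your proposal is correct and is essentially the paper's own proof: you use the same splitting of each of the three factors into $\frac{1}{[\infty]_{q^{-1}}!}$ plus a remainder, your $\sum A$, mixed, linear-$B$ and $BB$ pieces are exactly the paper's (b), (a), (c), (d), the triple product is discarded by degree, and the inputs \eqref{eq-computed1=01}, \eqref{eq-comutedisumWd2} and \eqref{eq-W3d-Iresult2d} are the same (you quote Lemma \ref{lem-sum1overd-inf} for the linear piece, where the paper re-derives it from \eqref{eq-twoidgen}). One small correction: for $d_2=1$ or $d_3=1$ the degree of $A(d_2,d_3)$ is $-2(d-1)=-2d+2$, not at most $-2d$; this is still below the cutoff $-2d+3$, so the conclusion stands, and no separate check is needed for $d=2$.
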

\begin{proof}
By \eqref{eq-gzlem-5.6} one has:
\begin{equation*}
\begin{split}
& \sum_{d_2+d_3 =d-1} \tW_{(d_2),(d_3)}(q) \tW_{(0),(d_2)}(q) \tW_{(0),(d_3)}(q) \\
=& \sum_{d_2+d_3 =d-1}
\Big( \frac{1}{[\infty]_{q^{-1}}!} \big( T_{d_2}^x
\frac{(q^{-d_3-1};q^{-1})_\infty}{(xq^{-d_3-1};q^{-1})_\infty} -1 \big)
+ \frac{1}{[\infty]_{q^{-1}}!}
\Big) \\
& \cdot \Big(
\big(\frac{1}{[d_2]_{q^{-1}}!} - \frac{1}{[\infty]_{q^{-1}}!}\big) + \frac{1}{[\infty]_{q^{-1}}!} \Big)
\Big(
\big(\frac{1}{[d_3]_{q^{-1}}!} -\frac{1}{[\infty]_{q^{-1}}!}\big) +\frac{1}{[\infty]_{q^{-1}}!}\Big).
\end{split}
\end{equation*}
Here
\begin{equation*}
\begin{split}
\deg
\big( T_{d_2}^x
\frac{(q^{-d_3-1};q^{-1})_\infty}{(xq^{-d_3-1};q^{-1})_\infty} -1 \big)
=-(d_2+1)(d_3+1) = -d_2d_3-d,
\end{split}
\end{equation*}
and
\begin{equation*}
\begin{split}
\deg
\big(\frac{1}{[d_2]_{q^{-1}}!} - \frac{1}{[\infty]_{q^{-1}}!}\big)
= -d_2-1,
\qquad
\deg
\big(\frac{1}{[d_3]_{q^{-1}}!} - \frac{1}{[\infty]_{q^{-1}}!}\big)
= -d_3-1.
\end{split}
\end{equation*}
Then by a simple degree-estimate and the symmetry \eqref{eq-symmTmx} one obtains:
\begin{equation*}
\begin{split}
& \sum_{d_2+d_3 =d-1} \tW_{(d_2),(d_3)}(q) \tW_{(0),(d_2)}(q) \tW_{(0),(d_3)}(q)
=\Big(\frac{d}{([\infty]_{q^{-1}}!)^3} +
\text{(a)}+\text{(b)}+\text{(c)}+\text{(d)}
 \Big)\Big|_{q^{\geq -2d+3}},
\end{split}
\end{equation*}
where:
\begin{equation*}
\begin{split}
&\text{(a)} =
\sum_{d_2+d_3 =d-1}
\frac{2}{([\infty]_{q^{-1}}!)^2} \big( T_{d_2}^x
\frac{(q^{-d_3-1};q^{-1})_\infty}{(xq^{-d_3-1};q^{-1})_\infty} -1 \big)
\big(\frac{1}{[d_2]_{q^{-1}}!} - \frac{1}{[\infty]_{q^{-1}}!}\big), \\
&\text{(b)} =\sum_{d_2+d_3 =d-1}
\frac{1}{([\infty]_{q^{-1}}!)^3} \big( T_{d_2}^x
\frac{(q^{-d_3-1};q^{-1})_\infty}{(xq^{-d_3-1};q^{-1})_\infty} -1 \big),\\
&\text{(c)} =\sum_{d_2+d_3 =d-1}
\frac{2}{([\infty]_{q^{-1}}!)^2} \big(\frac{1}{[d_2]_{q^{-1}}!} - \frac{1}{[\infty]_{q^{-1}}!}\big), \\
&\text{(d)} =\sum_{d_2+d_3 =d-1}
 \frac{1}{[\infty]_{q^{-1}}!}
\big(\frac{1}{[d_2]_{q^{-1}}!} - \frac{1}{[\infty]_{q^{-1}}!}\big)
\big(\frac{1}{[d_3]_{q^{-1}}!} - \frac{1}{[\infty]_{q^{-1}}!}\big).
\end{split}
\end{equation*}
By \eqref{eq-W3d-Iresult2d} we already know that:
\begin{equation*}
\begin{split}
\text{(d)}
=& \frac{1}{([\infty]_{q^{-1}}!)^3}
\Big(
\frac{d\cdot q^{-d-1}}{[1]_{q^{-1}}^2}
 - \frac{2q^{-d-1}}{[1]_{q^{-1}}^2}
\sum_{i\geq 2}\frac{q^{-i}}{1-q^{-i}}
\Big) \Big|_{q^{\geq -2d+3}}.
\end{split}
\end{equation*}
As for $\text{(c)}$,
by the method used in \S \ref{app-WdCompute} one has:
\begin{equation*}
\begin{split}
\text{(c)}
=& -\frac{2}{([\infty]_{q^{-1}}!)^2} \sum_{d_2 = 0}^{d-1}
\sum_{n>d_2} \frac{q^{-n}}{[n]_{q^{-1}}!}  \\
=& -\frac{2}{([\infty]_{q^{-1}}!)^2} \Big(
\frac{q^{-1} - q^{-d-1}}{[1]_{q^{-1}}^2 \cdot [\infty]_{q^{-1}}! } +
\sum_{d_2 = 0}^{d-1} \sum_{n>d_2}q^{-n}\big(\frac{1}{[n]_{q^{-1}}!} -\frac{1}{[\infty]_{q^{-1}}!} \big)
 \Big) \\
=& -\frac{2}{([\infty]_{q^{-1}}!)^2} \Big(
\frac{q^{-1} - q^{-d-1}}{[1]_{q^{-1}}^2 \cdot [\infty]_{q^{-1}}! } +
\sum_{d_2 = 0}^{\infty} \sum_{n>d_2}q^{-n}\big(\frac{1}{[n]_{q^{-1}}!} -\frac{1}{[\infty]_{q^{-1}}!} \big)
 \Big) \Big|_{q^{\geq -2d+3}},
\end{split}
\end{equation*}
and
\begin{equation*}
\begin{split}
\sum_{d_2 = 0}^{\infty} \sum_{n>d_2}q^{-n}\big(\frac{1}{[n]_{q^{-1}}!} -\frac{1}{[\infty]_{q^{-1}}! } \big)
= \Big(\sum_{n=1}^\infty \frac{n\cdot q^{-n}}{[n]_{q^{-1}}!} \Big)
- \frac{q^{-1}}{[1]_{q^{-1}}^2 \cdot [\infty]_{q^{-1}}!}.
\end{split}
\end{equation*}
Then by \eqref{eq-twoidgen} one concludes that:
\begin{equation*}
\begin{split}
\text{(c)}
=& -\frac{2}{([\infty]_{q^{-1}}!)^3} \Big(
-\frac{ q^{-d-1}}{[1]_{q^{-1}}^2 }  + \sum_{i\geq 1} \frac{q^{-i}}{1-q^{-i}}
 \Big) \Big|_{q^{\geq -2d+3}}.
\end{split}
\end{equation*}

Now consider $\text{(a)}$ and $\text{(b)}$.
One has:
\begin{equation*}
\begin{split}
\deg
\big( T_{d_2}^x
\frac{(q^{-d_3-1};q^{-1})_\infty}{(xq^{-d_3-1};q^{-1})_\infty} -1 \big)
 = -d_2d_3-d <-2d+3
\end{split}
\end{equation*}
provided $d_2\geq 1$ and $d_3\geq 1$,
and thus one only needs to consider the special cases $d_2 = 0$ or $d_3 =0$
in the computations of $\text{(a)}$ and $\text{(b)}$.
Therefore for $d\geq 2$ one has:
\begin{equation*}
\begin{split}
\text{(a)} =&
\frac{2}{([\infty]_{q^{-1}}!)^2} \Big(
\big( T_{0}^x
\frac{(q^{-(d-1)-1};q^{-1})_\infty}{(xq^{-(d-1)-1};q^{-1})_\infty} -1 \big)
\big(1  - \frac{1}{[\infty]_{q^{-1}}!}\big) \\
& +\big( T_{d-1}^x
\frac{(q^{-1};q^{-1})_\infty}{(xq^{-1};q^{-1})_\infty} -1 \big)
\big(\frac{1}{[d-1]_{q^{-1}}!} - \frac{1}{[\infty]_{q^{-1}}!}\big) \Big)
\Big|_{q^{\geq -2d+3}}, \\
\text{(b)} =&
\frac{2}{([\infty]_{q^{-1}}!)^3} \big( T_{0}^x
\frac{(q^{-(d-1)-1};q^{-1})_\infty}{(xq^{-(d-1)-1};q^{-1})_\infty} -1 \big)
\Big|_{q^{\geq -2d+3}},
\end{split}
\end{equation*}
and by \eqref{eq-computed1=01} one obtains:
\begin{equation*}
\begin{split}
\text{(a)} =&
\frac{2}{([\infty]_{q^{-1}}!)^3} \Big(
\big( [\infty]_{q^{-1}}! -1\big)
\big( \frac{[\infty]_{q^{-1}}!}{[d-1]_{q^{-1}}!} -1 \big)
+ \big( \frac{[\infty]_{q^{-1}}!}{[d-1]_{q^{-1}}!} -1 \big)^2
\Big) \Big|_{q^{\geq -2d+3}}, \\
\text{(b)} =&
\frac{2}{([\infty]_{q^{-1}}!)^3} \big( \frac{[\infty]_{q^{-1}}!}{[d-1]_{q^{-1}}!} -1 \big)
\Big|_{q^{\geq -2d+3}}.
\end{split}
\end{equation*}
Then by \eqref{eq-comutedisumWd2} one concludes that:
\begin{equation*}
\begin{split}
\text{(a)} =
-\frac{2([\infty]_{q^{-1}}! -1)}{([\infty]_{q^{-1}}!)^3} \cdot
\frac{q^{-d}}{[1]_{q^{-1}}}
\Big|_{q^{\geq -2d+3}},
\qquad
\text{(b)} =
-\frac{2}{([\infty]_{q^{-1}}!)^3} \cdot \frac{q^{-d}}{[1]_{q^{-1}}}
\Big|_{q^{\geq -2d+3}}.
\end{split}
\end{equation*}
This completes the proof.
\end{proof}

Now by this lemma and \eqref{eq-Wd-1-computei} we conclude that:
\begin{Proposition}
\label{prop-app-delta1i}
The leading terms of $\text{(i)}$ are given by:
\begin{equation*}
\begin{split}
\text{(i)} =& \frac{1}{[1]_{q^{-1}}^2 \cdot ([\infty]_{q^{-1}}!)^3} \Big(
\binom{d}{2} + \frac{2q^{-1}}{[1]_{q^{-1}}} + \frac{2(d+2)q^{-d-1}}{[1]_{q^{-1}}^2}
+\frac{2q^{-d} [\infty]_{q^{-1}}!}{[1]_{q^{-1}}} \\
&\qquad\qquad
 +(2- \frac{4 q^{-d-1}}{[1]_{q^{-1}}^2} )\sum_{i\geq 2}\frac{q^{-i}}{1-q^{-i}}
 -3\sum_{i\geq 1} \frac{q^{-i}}{(1-q^{-i})^2}
\Big) \Big|_{q^{\geq -2d+3}}.
\end{split}
\end{equation*}
\end{Proposition}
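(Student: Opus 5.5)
The statement should follow directly from the decomposition of (i) stated just before it:
\[
\text{(i)} = \frac{1}{[1]_{q^{-1}}^2}\Big( W_{d-1}(q) - \sum_{d_2+d_3=d-1} \tW_{(d_2),(d_3)}(q)\tW_{(0),(d_2)}(q)\tW_{(0),(d_3)}(q)\Big).
\]
The plan is to substitute the two available truncated expansions and subtract. For $W_{d-1}(q)$ I use \eqref{eq-Wd-1-computei}, which is Theorem \ref{thm-Wdqleadingterm} with $d$ replaced by $d-1$ and truncated at $q^{\geq -2d+3}$. For the second sum I use Lemma \ref{lem-AppC-1}, valid at the same truncation for $d\geq 2$. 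Both are equalities modulo $q^{<-2d+3}$. By Lemma \ref{lem-gz-seriesmultres}(2), applied twice, multiplying by $[1]_{q^{-1}}^{-2}$ keeps the truncation level: the degree is $0$, so Proposition \ref{prop-degLaurentprod} also applies after expanding $1/(1-q^{-1})^2$ as a series with nonpositive powers. Hence the truncation $q^{\geq -2d+3}$ survives the prefactor.

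Before subtracting, I first check that \eqref{eq-Wd-1-computei} is a faithful truncation of Theorem \ref{thm-Wdqleadingterm} at $d-1$. All the $q^{-2(d-1)-2}=q^{-2d}$ terms there have degree $\le -2d < -2d+3$, so they drop. The $q^{-(d-1)-2}=q^{-d-1}$ term gives
\[
\frac{3q^{-d-1}}{[1]_{q^{-1}}^2}\Big(d+2-2\sum_{i\geq 2}\frac{q^{-i}}{1-q^{-i}}\Big),
\]
and the constant part gives $\binom{d+1}{2}-3\sum_{i\ge 1}\frac{q^{-i}}{(1-q^{-i})^2}$. This matches.

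The subtraction is then mechanical, collecting like terms inside the common factor $([\infty]_{q^{-1}}!)^{-3}$:
\begin{itemize}
\item Constant: $\binom{d+1}{2}-d=\binom{d}{2}$.
\item $q^{-1}/[1]_{q^{-1}}$ term: $0-(-2)=+2$.
\item $q^{-d-1}/[1]_{q^{-1}}^2$ term: $3(d+2)-(d+2)=2(d+2)$.
\item Term $q^{-d}[\infty]_{q^{-1}}!/[1]_{q^{-1}}$: $0-(-2)=+2$.
\item Coefficient of $\sum_{i\geq2}$: $-\frac{6q^{-d-1}}{[1]^2}-\big(-2-\frac{2q^{-d-1}}{[1]^2}\big)=2-\frac{4q^{-d-1}}{[1]^2}$.
\item Eisenstein-type term: $-3\sum_{i\ge1}\frac{q^{-i}}{(1-q^{-i})^2}$, untouched.
\end{itemize}
Multiplying by $[1]_{q^{-1}}^{-2}$ gives exactly the stated formula.

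One point needs care. Lemma \ref{lem-AppC-1} is written with the factor $1/([\infty]_{q^{-1}})^3$, which I read as $1/([\infty]_{q^{-1}}!)^3$. With that reading the two expansions share the normalisation and the collection above is legitimate. The main obstacle is therefore bookkeeping: confirming that every discarded term in both inputs really lies below $q^{-2d+3}$, which holds for $d\ge 2$ as used in both inputs.
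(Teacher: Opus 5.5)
Your proposal is correct and is exactly the paper's argument: the paper substitutes \eqref{eq-Wd-1-computei} and Lemma~\ref{lem-AppC-1} into $\text{(i)} = [1]_{q^{-1}}^{-2}\big(W_{d-1}(q) - \sum_{d_2+d_3=d-1} \tW_{(d_2),(d_3)}\tW_{(0),(d_2)}\tW_{(0),(d_3)}\big)$ and collects terms. Your term-by-term subtraction checks out, as does your reading of $([\infty]_{q^{-1}})^3$ as $([\infty]_{q^{-1}}!)^3$. One small citation slip: Lemma~\ref{lem-gz-seriesmultres}(2) covers multiplication by $1-q^{-j}$, not division by it, so the correct justification for keeping the truncation under the prefactor $[1]_{q^{-1}}^{-2}$ is the other one you give, Proposition~\ref{prop-degLaurentprod} applied to a series of degree $0$.
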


\subsection{Leading terms of $I^{=1}(d)$ -- Case 2)}

Now we compute the leading terms of $\text{(ii)}$.
We already know that:
\begin{equation*}
\begin{split}
\text{(ii)} =& \sum_{\substack{d_1 + d_2 +d_3 =d \\ d_2,d_3\geq 0; \text{ } d_1\geq 2}}
\frac{q^{-(d_2+1)(d_1-1)}}{[1]_{q^{-1}} [d_2]_{q^{-1}}! [d_1]_{q^{-1}}! ([\infty]_{q^{-1}})^2}
T_{d_2}^x \frac{(q^{-d_3-1};q^{-1})_\infty}{(xq^{-d_3-1};q^{-1})_\infty}
T_{d_3}^x \frac{(q^{-d_1};q^{-1})_\infty}{(xq^{-d_1};q^{-1})_\infty} \\
=& \sum_{\substack{d_1 + d_2 +d_3 =d \\ d_2,d_3\geq 0; \text{ } d_1\geq 2}}
\frac{q^{-(d_2+1)(d_1-1)}}{[1]_{q^{-1}} [d_2]_{q^{-1}}! [d_1]_{q^{-1}}! }
\Big( \frac{1}{[\infty]_{q^{-1}}!}
\big( T_{d_2}^x \frac{(q^{-d_3-1};q^{-1})_\infty}{(xq^{-d_3-1};q^{-1})_\infty} -1 \big)
+ \frac{1}{[\infty]_{q^{-1}}!} \Big) \\
&\qquad\qquad
\cdot \Big( \frac{1}{[\infty]_{q^{-1}}!}
\big( T_{d_3}^x \frac{(q^{-d_1};q^{-1})_\infty}{(xq^{-d_1};q^{-1})_\infty} -1 \big)
+ \frac{1}{[\infty]_{q^{-1}}!} \Big).
\end{split}
\end{equation*}
One may decompose $\text{(ii)}$ into:
\begin{equation*}
\begin{split}
\text{(ii)} =&  \text{(a')} + \text{(b')} + \text{(c')} + \text{(d')},
\end{split}
\end{equation*}
where:
\begin{equation*}
\begin{split}
\text{(a')}
=& \sum_{\substack{d_1 + d_2 +d_3 =d \\ d_2,d_3\geq 0; \text{ } d_1\geq 2}}
\frac{q^{-(d_2+1)(d_1-1)}}{[1]_{q^{-1}} [d_2]_{q^{-1}}! [d_1]_{q^{-1}}! \cdot ([\infty]_{q^{-1}}!)^2} \\
&\qquad\qquad \cdot
\Big(
T_{d_2}^x \frac{(q^{-d_3-1};q^{-1})_\infty}{(xq^{-d_3-1};q^{-1})_\infty} -1 \Big)
\cdot \Big(
T_{d_3}^x \frac{(q^{-d_1};q^{-1})_\infty}{(xq^{-d_1};q^{-1})_\infty} -1 \Big),
\end{split}
\end{equation*}
and
\begin{equation*}
\begin{split}
&\text{(b')}
= \sum_{\substack{d_1 + d_2 +d_3 =d \\ d_2,d_3\geq 0; \text{ } d_1\geq 2}}
\frac{q^{-(d_2+1)(d_1-1)}}{[1]_{q^{-1}} [d_2]_{q^{-1}}! [d_1]_{q^{-1}}! \cdot ([\infty]_{q^{-1}}!)^2}
\cdot \Big(
T_{d_2}^x \frac{(q^{-d_3-1};q^{-1})_\infty}{(xq^{-d_3-1};q^{-1})_\infty} -1 \Big),\\
&\text{(c')}
= \sum_{\substack{d_1 + d_2 +d_3 =d \\ d_2,d_3\geq 0; \text{ } d_1\geq 2}}
\frac{q^{-(d_2+1)(d_1-1)}}{[1]_{q^{-1}} [d_2]_{q^{-1}}! [d_1]_{q^{-1}}! \cdot ([\infty]_{q^{-1}}!)^2} \cdot
\Big(
T_{d_3}^x \frac{(q^{-d_1};q^{-1})_\infty}{(xq^{-d_1};q^{-1})_\infty} -1 \Big), \\
& \text{(d')}= \sum_{\substack{d_1 + d_2 +d_3 =d \\ d_2,d_3\geq 0; \text{ } d_1\geq 2}}
\frac{q^{-(d_2+1)(d_1-1)}}{[1]_{q^{-1}} [d_2]_{q^{-1}}! [d_1]_{q^{-1}}! \cdot ([\infty]_{q^{-1}}!)^2}.
\end{split}
\end{equation*}

First we compute the leading terms of $\text{(a')}$.
One has:
\begin{equation*}
\begin{split}
& \deg
q^{-(d_2+1)(d_1-1)}
\Big(
T_{d_2}^x \frac{(q^{-d_3-1};q^{-1})_\infty}{(xq^{-d_3-1};q^{-1})_\infty} -1 \Big)
\Big(
T_{d_3}^x \frac{(q^{-d_1};q^{-1})_\infty}{(xq^{-d_1};q^{-1})_\infty} -1 \Big)\\
=&  -(d_2+1)(d_1-1) - (d_2+1)(d_3+1) -(d_3+1)d_1.
\end{split}
\end{equation*}
It is easy to see that for $d_1+d_2+d_3 =d$ and $d_1\geq 2$,
one always has:
\begin{equation*}
-(d_2+1)(d_1-1) - (d_2+1)(d_3+1) -(d_3+1)d_1
 < -2d-3.
\end{equation*}
Thus in the computation of leading terms of $\text{(a')}$ we only need to consider the case $d_2 = d_3 =0$,
and then by \eqref{eq-computed2=01} one has:
\begin{equation*}
\begin{split}
\text{(a')} =& 0
\big|_{q^{\geq -2d+3}}.
\end{split}
\end{equation*}

Then we compute $\text{(b')}$.
Similar to the case $\text{(a')}$,
by a simple degree-estimate we can see that
we only need to consider the case $d_2 = 0$ in the computation of $\text{(b')}$.
One has:
\begin{equation*}
\begin{split}
\text{(b')}
= &\sum_{\substack{d_1  +d_3 =d \\ d_3\geq 0; \text{ } d_1\geq 2}}
\frac{q^{-d_1+1}}{[1]_{q^{-1}}  [d_1]_{q^{-1}}! \cdot ([\infty]_{q^{-1}}!)^2}
\cdot \Big(
T_{0}^x \frac{(q^{-d_3-1};q^{-1})_\infty}{(xq^{-d_3-1};q^{-1})_\infty} -1 \Big)\\
=&  \frac{1}{[1]_{q^{-1}}\cdot [\infty]_{q^{-1}}! }
\sum_{\substack{d_1  +d_3 =d \\ d_3\geq 0; \text{ } d_1\geq 2}}
\frac{q^{-d_1+1}}{  [d_1]_{q^{-1}}! }
\cdot \Big(
\frac{1}{[d_3]_{q^{-1}}!} - \frac{1}{[\infty]_{q^{-1}}!}
\Big)\Big|_{q^{\geq -2d+3}}.
\end{split}
\end{equation*}
The summation in the right-hand side has already been calculated in \S \ref{sec-app-w3dcase2}.
In fact, by \eqref{eq-W3d-IIIIII} and \eqref{eq-W3d-IIsum-2} we know that:
\begin{equation*}
\begin{split}
& \sum_{\substack{d_1  +d_3 =d \\ d_3\geq 0; \text{ } d_1\geq 2}}
 \frac{q^{-d_1+1}}{[d_1]_{q^{-1}}!}
\Big( \frac{1}{[d_3]_{q^{-1}}!} - \frac{1}{[\infty]_{q^{-1}}!}\Big)\\
=& \bigg(
\frac{q^2 \cdot [1]_{q^{-1}}}{([\infty]_{q^{-1}}!)^2}
\Big( -\frac{(d+1)q^{-d-2}}{[1]_{q^{-1}}^2} + \frac{q^{-d-3}}{[1]_{q^{-1}}^3}
+ \frac{2q^{-d-2}}{[1]_{q^{-1}}^2} \sum_{i\geq 2} \frac{q^{-i}}{1-q^{-i}} \Big) \\
& -q\big( \frac{1}{[d]_{q^{-1}}!} - \frac{1}{[\infty]_{q^{-1}}!} \big)
-\frac{1}{[1]_{q^{-1}}} \big( \frac{1}{[d-1]_{q^{-1}}!} - \frac{1}{[\infty]_{q^{-1}}!} \big)
\bigg) \bigg|_{q^{\geq -2d+3}}.
\end{split}
\end{equation*}
Finally use \eqref{eq-comutedisumWd2},
and we conclude that:
\be
\begin{split}
\text{(b')}
= &\bigg( \frac{1}{([\infty]_{q^{-1}}!)^3}
\Big( -\frac{(d+1)q^{-d}}{[1]_{q^{-1}}^2} + \frac{q^{-d-1}}{[1]_{q^{-1}}^3}
+ \frac{2q^{-d}}{[1]_{q^{-1}}^2} \sum_{i\geq 2} \frac{q^{-i}}{1-q^{-i}} \Big) \\
& + \frac{(2-q^{-1})q^{-d}}{[1]_{q^{-1}}^3 \cdot ([\infty]_{q^{-1}}!)^2}
\bigg)\bigg|_{q^{\geq -2d+3}}.
\end{split}
\ee

As for $\text{(c')}$,
we have:
\begin{equation*}
\deg q^{-(d_2+1)(d_1-1)}\Big(
T_{d_3}^x \frac{(q^{-d_1};q^{-1})_\infty}{(xq^{-d_1};q^{-1})_\infty} -1 \Big)
= -(d+2 -d_1)(d_1-1) - (d_3+1),
\end{equation*}
and thus one only needs to consider the case $d_1=2$.
Then by \eqref{eq-computed1=01} one has:
\begin{equation*}
\begin{split}
\text{(c')}
=& \sum_{d_2 +d_3 =d -2 }
\frac{q^{-d_2 -1}}{[1]_{q^{-1}}^2 [2]_{q^{-1}} [d_2]_{q^{-1}}! \cdot [\infty]_{q^{-1}}!}
\Big( \big( 1+ \frac{q^{-d_3-1}}{[1]_{q^{-1}}}\big)\frac{1}{[d_3]_{q^{-1}}!}
 - \frac{1}{[\infty]_{q^{-1}}!} \Big) \\
=& \frac{q^{-d}}{[1]_{q^{-1}}^3 [2]_{q^{-1}}  \cdot [\infty]_{q^{-1}}!}
\sum_{d_2 +d_3 =d -2 } \frac{1}{[d_2]_{q^{-1}}![d_3]_{q^{-1}}!} \\
&+ \frac{1}{[1]_{q^{-1}}^2 [2]_{q^{-1}}  \cdot [\infty]_{q^{-1}}!}
\sum_{d_2 +d_3 =d -2 } \frac{q^{-d_2-1}}{[d_2]_{q^{-1}}!}
\big(\frac{1}{[d_3]_{q^{-1}}!}-\frac{1}{[\infty]_{q^{-1}}!} \big).
\end{split}
\end{equation*}
Again by \eqref{eq-W3d-IIIIII} and \eqref{eq-W3d-IIsum-2} we know that:
\begin{equation*}
\begin{split}
& \sum_{d_2 +d_3 =d -2 } \frac{q^{-d_2-1}}{[d_2]_{q^{-1}}!}
\big(\frac{1}{[d_3]_{q^{-1}}!}-\frac{1}{[\infty]_{q^{-1}}!} \big) \\
=&
\frac{ [1]_{q^{-1}}}{([\infty]_{q^{-1}}!)^2}
\Big( -\frac{(d-1)q^{-d}}{[1]_{q^{-1}}^2} + \frac{q^{-d-1}}{[1]_{q^{-1}}^3}
+ \frac{2q^{-d}}{[1]_{q^{-1}}^2} \sum_{i\geq 2} \frac{q^{-i}}{1-q^{-i}}
\Big) \Big|_{q^{\geq -2d+3}}.
\end{split}
\end{equation*}
And one has:
\begin{equation*}
\begin{split}
\sum_{d_2 +d_3 =d -2 } \frac{1}{[d_2]_{q^{-1}}![d_3]_{q^{-1}}!}
=& \sum_{d_2 +d_3 =d -2 }
\big( \frac{1}{[d_2]_{q^{-1}}!} - \frac{1}{[\infty]_{q^{-1}}!} \big)
\big( \frac{1}{[d_3]_{q^{-1}}!} - \frac{1}{[\infty]_{q^{-1}}!} \big) \\
& + \frac{d-1}{([\infty]_{q^{-1}}!)^2}
+ \frac{2}{[\infty]_{q^{-1}}!} \sum_{d_2 +d_3 =d -2 }
\big( \frac{1}{[d_2]_{q^{-1}}!} - \frac{1}{[\infty]_{q^{-1}}!} \big).
\end{split}
\end{equation*}
The first summation in the right-hand side is of degree $-d$,
and thus:
\begin{equation*}
\begin{split}
\sum_{d_2 +d_3 =d -2 } \frac{1}{[d_2]_{q^{-1}}![d_3]_{q^{-1}}!}
=&  \Big( \frac{d-1}{([\infty]_{q^{-1}}!)^2}
+ \frac{2}{[\infty]_{q^{-1}}!} \sum_{d_2 =0 }^{d-2}
\big( \frac{1}{[d_2]_{q^{-1}}!} - \frac{1}{[\infty]_{q^{-1}}!} \big) \Big)
\Big|_{q^{\geq -d+1}}.
\end{split}
\end{equation*}
Then by applying Lemma \ref{lem-sum1overd-inf} to this identity one obtains:
\be
\label{eq-sum1overd2d3}
\begin{split}
\sum_{d_2 +d_3 =d -2 } \frac{1}{[d_2]_{q^{-1}}![d_3]_{q^{-1}}!}
=&  \Big( \frac{d-1}{([\infty]_{q^{-1}}!)^2}
- \frac{2}{([\infty]_{q^{-1}}!)^2} \sum_{i\geq 1} \frac{q^{-i}}{1-q^{-i}} \Big)
\Big|_{q^{\geq -d+1}}.
\end{split}
\ee
(Here we use the truncation $q^{\geq -d+1}$ instead of $q^{\geq -d+3}$ for the convenience of the computations
in next subsections.)
Then we may conclude that:
\begin{equation*}
\begin{split}
\text{(c')}
=& \frac{q^{-d}}{[1]_{q^{-1}}^3 [2]_{q^{-1}}  \cdot [\infty]_{q^{-1}}!}
\Big( \frac{d-1}{([\infty]_{q^{-1}}!)^2}
- \frac{2}{([\infty]_{q^{-1}}!)^2} \sum_{i\geq 1} \frac{q^{-i}}{1-q^{-i}} \Big) \\
&+ \frac{1}{[1]_{q^{-1}} [2]_{q^{-1}}  \cdot ([\infty]_{q^{-1}}!)^3}
\Big( -\frac{(d-1)q^{-d}}{[1]_{q^{-1}}^2} + \frac{q^{-d-1}}{[1]_{q^{-1}}^3}
+ \frac{2q^{-d}}{[1]_{q^{-1}}^2} \sum_{i\geq 2} \frac{q^{-i}}{1-q^{-i}} \Big) \\
=& \Big( -\frac{q^{-d-1}}{[1]_{q^{-1}}^4 [2]_{q^{-1}}  \cdot ([\infty]_{q^{-1}}!)^3}
\Big) \Big|_{q^{\geq -2d+3}}.
\end{split}
\end{equation*}

Finally we compute the leading terms of $\text{(d')}$.
One has:
\begin{equation*}
\begin{split}
\text{(d')}= & \frac{1}{[1]_{q^{-1}}  ([\infty]_{q^{-1}}!)^2}
\sum_{\substack{d_1 + d_2 \leq d \\ d_2 \geq 0; \text{ } d_1\geq 2}}
\frac{q^{-(d_2+1)(d_1-1)}}{ [d_2]_{q^{-1}}! [d_1]_{q^{-1}}! } \\
=&\frac{1}{[1]_{q^{-1}}  ([\infty]_{q^{-1}}!)^2} \Big(
\frac{1}{[2]_{q^{-1}}!}\sum_{d_2=0}^{d-2} \frac{q^{-d_2-1}}{[d_2]_{q^{-1}}!}
+ \sum_{d_1 = 3}^d \frac{q^{-d_1+1}}{[d_1]_{q^{-1}}!}
+ \sum_{\substack{d_1 + d_2 \leq d \\ d_2 \geq 1; \text{ } d_1\geq 3}}
\frac{q^{-(d_2+1)(d_1-1)}}{ [d_2]_{q^{-1}}! [d_1]_{q^{-1}}! } \Big).
\end{split}
\end{equation*}
When $d_1+d_2 \geq d+1$ and $d_2\geq 1$, $d_1\geq 3$ one always has
$\deg q^{-(d_2+1)(d_1-1)} < -2d+3$,
and thus:
\begin{equation*}
\begin{split}
\text{(d')}
=&\frac{1}{[1]_{q^{-1}}  ([\infty]_{q^{-1}}!)^2} \Big(
\frac{1}{[2]_{q^{-1}}!}\sum_{d_2=0}^{d-2} \frac{q^{-d_2-1}}{[d_2]_{q^{-1}}!}
+ \sum_{d_1 = 3}^d \frac{q^{-d_1+1}}{[d_1]_{q^{-1}}!}
 + \sum_{d_1 =3}^\infty \sum_{d_2 =1}^\infty
\frac{q^{-(d_2+1)(d_1-1)}}{ [d_2]_{q^{-1}}!  [d_1]_{q^{-1}}! } \Big)
\Big|_{q^{\geq -2d+3}}.
\end{split}
\end{equation*}
We need to compute the three summations in the right-hand side.
First one has:
\begin{equation*}
\begin{split}
\sum_{d_2=0}^{d-2} \frac{q^{-d_2-1}}{[d_2]_{q^{-1}}!}
=& \sum_{d_2=0}^{d-2} q^{-d_2-1} \big(\frac{1}{[d_2]_{q^{-1}}!}-\frac{1}{[\infty]_{q^{-1}}!} \big)
+ \frac{q^{-1} -q^{-d}}{[1]_{q^{-1}} \cdot [\infty]_{q^{-1}}!} \\
=& \Big( \sum_{d_2=0}^\infty q^{-d_2-1} \big(\frac{1}{[d_2]_{q^{-1}}!}-\frac{1}{[\infty]_{q^{-1}}!} \big)
+ \frac{q^{-1} -q^{-d}}{[1]_{q^{-1}} \cdot [\infty]_{q^{-1}}!}
\Big)\Big|_{q^{\geq -2d+3}},
\end{split}
\end{equation*}
and by \eqref{eq-pochhammerseries} one has:
\begin{equation*}
\begin{split}
\sum_{d_2=0}^\infty q^{-d_2-1} \big(\frac{1}{[d_2]_{q^{-1}}!}-\frac{1}{[\infty]_{q^{-1}}!} \big)
=& \frac{q^{-1}}{(q^{-1};q^{-1})_\infty} -\frac{1}{[\infty]_{q^{-1}}!}\cdot\frac{q^{-1}}{[1]_{q^{-1}}}
= -\frac{q^{-2}}{[1]_{q^{-1}}\cdot [\infty]_{q^{-1}}!},
\end{split}
\end{equation*}
and thus:
\be
\label{eq-sumqd2-11overd2}
\begin{split}
\sum_{d_2=0}^{d-2} \frac{q^{-d_2-1}}{[d_2]_{q^{-1}}!}
=& \frac{q^{-1} -q^{-2} -q^{-d}}{[1]_{q^{-1}} \cdot [\infty]_{q^{-1}}!}
\Big|_{q^{\geq -2d+3}}.
\end{split}
\ee
Similarly, by the same method one can prove that:
\begin{equation*}
\begin{split}
\sum_{d_1 = 3}^d \frac{q^{-d_1+1}}{[d_1]_{q^{-1}}!}= \Big(
\frac{-1 +q -q^{-d}}{[1]_{q^{-1}} \cdot [\infty]_{q^{-1}}!} -\frac{q}{[2]_{q^{-1}}!}
\Big)\Big|_{q^{\geq -2d+3}},
\end{split}
\end{equation*}
and here we omit the details.
Now we deal with the last summation.
By \eqref{eq-pochhammerseries} one has:
\begin{equation*}
\begin{split}
\sum_{d_1 =3}^\infty \sum_{d_2 =1}^\infty
\frac{q^{-(d_2+1)(d_1-1)}}{ [d_2]_{q^{-1}}! [d_1]_{q^{-1}}! }
=& \sum_{d_1 = 3}^\infty \frac{q^{-d_1+1}}{[d_1]_{q^{-1}}!}
\sum_{d_2 = 1}^\infty
\frac{(q^{-d_1+1})^{d_2}}{[d_2]_{q^{-1}}!} \\
=& \frac{1}{[\infty]_{q^{-1}}!}\sum_{d_1=3}^\infty \frac{q^{-d_1+1}}{[d_1-1]_{q^{-1}}[d_1]_{q^{-1}}}
- \sum_{d_1 = 3}^\infty\frac{q^{-d_1+1}}{[d_1]_{q^{-1}}!}.
\end{split}
\end{equation*}
Here
\begin{equation*}
\begin{split}
\sum_{d_1=3}^\infty \frac{q^{-d_1+1}}{[d_1-1]_{q^{-1}}[d_1]_{q^{-1}}}
= \frac{1}{[1]_{q^{-1}}}\sum_{d_1=3}^\infty \Big(  \frac{1}{[d_1-1]_{q^{-1}}}-\frac{1}{[d_1]_{q^{-1}}} \Big)
= \frac{1}{[2]_{q^{-1}}!} -\frac{1}{[1]_{q^{-1}}},
\end{split}
\end{equation*}
and then:
\begin{equation*}
\begin{split}
\sum_{d_1 =3}^\infty \sum_{d_2 =1}^\infty
\frac{q^{-(d_2+1)(d_1-1)}}{ [d_2]_{q^{-1}}! [d_1]_{q^{-1}}! }
=& \frac{1}{[\infty]_{q^{-1}}!}\big( \frac{1}{[2]_{q^{-1}}!} -\frac{1}{[1]_{q^{-1}}} \big)
- \big( \frac{q}{[\infty]_{q^{-1}}!} - q-\frac{1}{[1]_{q^{-1}}} -\frac{q^{-1}}{[2]_{q^{-1}}!}\big) \\
=& \frac{1-q+q^{-1}}{[2]_{q^{-1}}! \cdot [\infty]_{q^{-1}}!} + \frac{q}{[2]_{q^{-1}}!}.
\end{split}
\end{equation*}
Then we can conclude that:
\begin{equation*}
\begin{split}
\text{(d')}=\frac{1}{[1]_{q^{-1}}^2 ([\infty]_{q^{-1}}!)^3}  \Big(
 \frac{q^{-1}}{[1]_{q^{-1}}} - \big(1+\frac{1}{[2]_{q^{-1}}!} \big)q^{-d}
\Big) \Big|_{q^{\geq -2d+3}}.
\end{split}
\end{equation*}

Now summarize the above results for the leading terms of $\text{(a')}$,
$\text{(b')}$,
$\text{(c')}$,
$\text{(d')}$,
and we finally obtain the following:
\begin{Proposition}
\label{prop-app-delta1ii}
The leading terms of $\text{(ii)}$ are given by:
\begin{equation*}
\begin{split}
\text{(ii)} =& \frac{1}{[1]_{q^{-1}}^2 \cdot ([\infty]_{q^{-1}}!)^3} \Big(
\frac{q^{-1}}{[1]_{q^{-1}}} -(d+2+\frac{1}{[2]_{q^{-1}}!})q^{-d}
+ (1-\frac{1}{[2]_{q^{-1}}!}) \frac{q^{-d-1}}{[1]_{q^{-1}}} \\
& \qquad\qquad\qquad
 +\frac{[\infty]_{q^{-1}}!}{[1]_{q^{-1}}} (2-q^{-1})q^{-d}
+2q^{-d} \sum_{i\geq 2}\frac{q^{-i}}{1-q^{-i}}
\Big) \Big|_{q^{\geq -2d+3}}.
\end{split}
\end{equation*}
\end{Proposition}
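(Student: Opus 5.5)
The statement to prove is Proposition~\ref{prop-app-delta1ii}. The decomposition $\text{(ii)} = \text{(a')}+\text{(b')}+\text{(c')}+\text{(d')}$ has already been set up, and each of the four pieces has been reduced to an explicit expression modulo $q^{<-2d+3}$. So the proof is an assembly step: sum the four truncated expressions and simplify to the claimed form. Since every piece is stated modulo $q^{\geq -2d+3}$, Lemma~\ref{lem-gz-seriesmultres} and Proposition~\ref{prop-degLaurentprod} let us add them directly. Multiplying by the common degree-$0$ factor $[1]_{q^{-1}}^2([\infty]_{q^{-1}}!)^3$ does not shift the truncation, so we may compare bracketed expressions.

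Concretely, I would pull out the prefactor $\frac{1}{[1]_{q^{-1}}^2([\infty]_{q^{-1}}!)^3}$ and rewrite each piece accordingly.
\begin{itemize}
\item $\text{(a')}$ contributes $0$.
\item $\text{(b')}$ contributes
\[
-(d+1)q^{-d}+\frac{q^{-d-1}}{[1]_{q^{-1}}}+2q^{-d}\sum_{i\geq2}\frac{q^{-i}}{1-q^{-i}}+\frac{(2-q^{-1})q^{-d}[\infty]_{q^{-1}}!}{[1]_{q^{-1}}}.
\]
\item $\text{(c')}$ contributes $-\frac{q^{-d-1}}{[1]_{q^{-1}}^2[2]_{q^{-1}}}$.
\item $\text{(d')}$ contributes $\frac{q^{-1}}{[1]_{q^{-1}}}-q^{-d}-\frac{q^{-d}}{[2]_{q^{-1}}!}$.
\end{itemize}
Adding, the coefficient of $q^{-d}$ becomes $-(d+2+\frac{1}{[2]_{q^{-1}}!})$. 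The Eisenstein-type sum, the $[\infty]_{q^{-1}}!$ term and $\frac{q^{-1}}{[1]_{q^{-1}}}$ carry over unchanged. The two $q^{-d-1}$ terms combine as
\[
\frac{q^{-d-1}}{[1]_{q^{-1}}}\Big(1-\frac{1}{[1]_{q^{-1}}[2]_{q^{-1}}}\Big)=\Big(1-\frac{1}{[2]_{q^{-1}}!}\Big)\frac{q^{-d-1}}{[1]_{q^{-1}}},
\]
using $[2]_{q^{-1}}!=[1]_{q^{-1}}[2]_{q^{-1}}$. This is exactly the claimed formula.

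The main obstacle is not this final sum but justifying the inputs, especially the reductions of $\text{(b')}$ and $\text{(c')}$ to results from \S\ref{sec-app-w3dcase2}.
\begin{itemize}
\item For $\text{(b')}$, the shift $d_1\ge2$ versus the sum in (II) over $d_1+d_3=d-1$ must be matched by reindexing. The boundary terms $d_1=0,1$ are then evaluated via \eqref{eq-comutedisumWd2}, i.e.\ $\frac{1}{[d]_{q^{-1}}!}-\frac{1}{[\infty]_{q^{-1}}!}$ modulo $q^{<-2d+3}$.
\item For $\text{(c')}$, one must check the cancellation of all $\sum_{i}$ terms and of the $(d-1)$ terms. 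That cancellation needs
\[
-2\sum_{i\geq1}\frac{q^{-i}}{1-q^{-i}}+2\sum_{i\geq2}\frac{q^{-i}}{1-q^{-i}}=-\frac{2q^{-1}}{[1]_{q^{-1}}},
\]
which combines with $\frac{q^{-d-1}}{[1]_{q^{-1}}^3}$ to give the stated $-\frac{q^{-d-1}}{[1]_{q^{-1}}^4[2]_{q^{-1}}}$ after the outer factor.
\item For $\text{(d')}$, I would verify the omitted sum $\sum_{d_1=3}^d q^{-d_1+1}/[d_1]_{q^{-1}}!$ via \eqref{eq-pochhammerseries} at $z=q$, subtracting the first three terms. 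I would also confirm that the terms with $d_1+d_2>d$ in the double sum have degree below $-2d+3$.
\end{itemize}
I would carry out these checks first, then do the final summation.
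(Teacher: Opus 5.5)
Your proposal is correct and follows the paper's own route: the same split $\text{(ii)}=\text{(a')}+\text{(b')}+\text{(c')}+\text{(d')}$, the same reductions of (b') and (c') to the computation of (II) in \S\ref{sec-app-w3dcase2} and to \eqref{eq-sum1overd2d3}, and the same final bookkeeping, which you carry out correctly (including $\frac{1}{[1]_{q^{-1}}}-\frac{1}{[1]_{q^{-1}}^2[2]_{q^{-1}}}=\bigl(1-\frac{1}{[2]_{q^{-1}}!}\bigr)\frac{1}{[1]_{q^{-1}}}$). There is one small slip in your side check of (d'): $\sum_{d_1} q^{-d_1+1}/[d_1]_{q^{-1}}!$ comes from \eqref{eq-pochhammerseries} at $z=q^{-1}$ (times $q$), not at $z=q$, where $(z;q^{-1})_\infty$ vanishes; and besides the first three terms you must also remove the tail $d_1>d$, which is exactly what produces the $-q^{-d}/([1]_{q^{-1}}[\infty]_{q^{-1}}!)$ contribution.
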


\subsection{Leading terms of $I^{=1}(d)$ -- Case 3) and summary}

Now in this subsection we compute the leading terms of $(iii)$.
Recall that:
\begin{equation*}
\begin{split}
& \text{(iii)} = \sum_{\substack{d_1 + d_2 +d_3 =d \\ d_2,d_3\geq 0; \text{ } d_1\geq 2}}
\frac{q^{-(d_2+d_3 +2)(d_1-1)}}{[d_2]_{q^{-1}}! [d_3]_{q^{-1}}! \cdot ([d_1]_{q^{-1}}!)^2} \cdot
\tW_{(d_2),(d_3)}(q),
\end{split}
\end{equation*}
Here for $d_1\geq 3$ one always has
\begin{equation*}
\deg q^{-(d_2+d_3 +2)(d_1-1)} = -(d+2-d_1)(d_1-1)\leq -2d+2,
\end{equation*}
and thus one only need to consider the case $d_1=2$:
\begin{equation*}
\begin{split}
& \text{(iii)} = \frac{q^{-d}}{([2]_{q^{-1}}!)^2}
\sum_{d_2 +d_3 =d-2}
\frac{1}{[d_2]_{q^{-1}}!\cdot [d_3]_{q^{-1}}! } \cdot
\frac{1}{[\infty]_{q^{-1}}!}
T_{d_3}^x \frac{(q^{-d_2 -1} ;q^{-1})_\infty}{(xq^{-d_2 -1} ;q^{-1})_\infty}
\Big|_{q^{\geq -2d+3}}.
\end{split}
\end{equation*}
Then apply the decomposition
\begin{equation*}
\frac{1}{[\infty]_{q^{-1}}!}
T_{d_3}^x \frac{(q^{-d_2 -1} ;q^{-1})_\infty}{(xq^{-d_2 -1} ;q^{-1})_\infty}
=\frac{1}{[\infty]_{q^{-1}}!} \big(
T_{d_3}^x \frac{(q^{-d_2 -1} ;q^{-1})_\infty}{(xq^{-d_2 -1} ;q^{-1})_\infty} -1\big)
+ \frac{1}{[\infty]_{q^{-1}}!},
\end{equation*}
and we obtain:
\begin{equation*}
\begin{split}
& \text{(iii)} = \frac{q^{-d}}{([2]_{q^{-1}}!)^2\cdot [\infty]_{q^{-1}}!} \Big(
\sum_{d_2 +d_3 =d-2}
\frac{1}{[d_2]_{q^{-1}}! [d_3]_{q^{-1}}! } \big(
T_{d_3}^x \frac{(q^{-d_2 -1} ;q^{-1})_\infty}{(xq^{-d_2 -1} ;q^{-1})_\infty} -1\big) \\
&\qquad\qquad\qquad\qquad\qquad\qquad
+ \sum_{d_2+d_3 = d-2} \frac{1}{[d_2]_{q^{-1}}! [d_3]_{q^{-1}}! } \Big)
\Big|_{q^{\geq -2d+3}}.
\end{split}
\end{equation*}
Here for $d_2+d_3 =d-2$ one always has:
\begin{equation*}
\deg \big(
T_{d_3}^x \frac{(q^{-d_2 -1} ;q^{-1})_\infty}{(xq^{-d_2 -1} ;q^{-1})_\infty} -1\big)
=-(d_2 +1)(d_3+1) <-d+3,
\end{equation*}
and thus:
\begin{equation*}
\begin{split}
& \text{(iii)} = \frac{q^{-d}}{([2]_{q^{-1}}!)^2\cdot [\infty]_{q^{-1}}!}
\sum_{d_2+d_3 = d-2} \frac{1}{[d_2]_{q^{-1}}! [d_3]_{q^{-1}}! }
\Big|_{q^{\geq -2d+3}}.
\end{split}
\end{equation*}
By \eqref{eq-sum1overd2d3} one concludes that:
\begin{Proposition}
The leading terms of $\text{(iii)}$ are given by:
\begin{equation*}
\begin{split}
& \text{(iii)} = \frac{1}{([2]_{q^{-1}}!)^2\cdot ([\infty]_{q^{-1}}!)^3}
\Big( (d-1)q^{-d}
- 2q^{-d} \sum_{i\geq 1} \frac{q^{-i}}{1-q^{-i}} \Big)
\Big|_{q^{\geq -2d+3}}.
\end{split}
\end{equation*}
\end{Proposition}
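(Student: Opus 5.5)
The plan follows the same pattern used for (i) and (ii). Since
\begin{equation*}
\deg q^{-(d_2+d_3+2)(d_1-1)} = -(d+2-d_1)(d_1-1)\leq -2d+2
\end{equation*}
for every $d_1\geq 3$, only the summand $d_1=2$ can contribute to the coefficients of $q^{\geq -2d+3}$. In that case the prefactor is exactly $q^{-d}/([2]_{q^{-1}}!)^2$. I would then use \eqref{eq-gzlem-5.6} to write
\begin{equation*}
\tW_{(d_2),(d_3)}(q)=\frac{1}{[\infty]_{q^{-1}}!}\,T^x_{d_3}\frac{(q^{-d_2-1};q^{-1})_\infty}{(xq^{-d_2-1};q^{-1})_\infty}\cdot\frac{1}{[d_2]_{q^{-1}}!},
\end{equation*}
and split off the term $T^x_{d_3}(\cdots)-1$. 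By \eqref{eq-gzlem-5.7} that term has degree $-(d_2+1)(d_3+1)\leq -(d-1)$. After multiplying by $q^{-d}$ it falls below $-2d+3$; to justify this I would use Proposition \ref{prop-degLaurentprod}, together with the fact that the remaining factors have degree $0$. What is left is
\begin{equation*}
\frac{q^{-d}}{([2]_{q^{-1}}!)^2\,[\infty]_{q^{-1}}!}\sum_{d_2+d_3=d-2}\frac{1}{[d_2]_{q^{-1}}!\,[d_3]_{q^{-1}}!}.
\end{equation*}

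The remaining sum is already computed in \eqref{eq-sum1overd2d3}, modulo $q^{\geq -d+1}$. Multiplying by the degree $-d$ factor $q^{-d}/([2]_{q^{-1}}!)^2[\infty]_{q^{-1}}!$ shifts this truncation to $q^{\geq -2d+1}$, which is stronger than the $q^{\geq -2d+3}$ we need. This is the reason that lemma was stated with the finer truncation. Substituting it in gives
\begin{equation*}
\frac{1}{([2]_{q^{-1}}!)^2([\infty]_{q^{-1}}!)^3}\Big((d-1)q^{-d}-2q^{-d}\sum_{i\geq1}\frac{q^{-i}}{1-q^{-i}}\Big)
\end{equation*}
modulo $q^{\geq -2d+3}$, which is the statement.

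The one step that needs care is the degree bookkeeping for the discarded terms. The terms with $d_1\geq3$ only reach $-2d+2$ when $d_1=3$, which is fine. The corrections $T^x-1$ with $d_2=0$ or $d_3=0$ have degree $-(d-1)$. These give total degree $-2d+1$ after the shift, so they are still discarded. Both checks are immediate from \eqref{eq-gzlem-5.7}, so I do not expect a real obstacle here.
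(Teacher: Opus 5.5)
Your proposal is correct and follows the paper's proof step for step. The degree bound reduces (iii) to $d_1=2$; you split $\tW_{(d_2),(d_3)}=\frac{1}{[\infty]_{q^{-1}}!}\bigl(T^x_{d_3}\frac{(q^{-d_2-1};q^{-1})_\infty}{(xq^{-d_2-1};q^{-1})_\infty}-1\bigr)+\frac{1}{[\infty]_{q^{-1}}!}$, drop the first piece because it has degree at most $-(d-1)$ before the factor $q^{-d}$, and conclude with \eqref{eq-sum1overd2d3}.

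One slip to correct: by \eqref{eq-gzlem-5.6}, $\tW_{(d_2),(d_3)}=\frac{1}{[\infty]_{q^{-1}}!}T^x_{d_3}\frac{(q^{-d_2-1};q^{-1})_\infty}{(xq^{-d_2-1};q^{-1})_\infty}$, without the extra factor $1/[d_2]_{q^{-1}}!$ you wrote. That factor already sits in the summand of (iii), and the corrected identity is the one your remaining sum $\sum_{d_2+d_3=d-2}1/([d_2]_{q^{-1}}!\,[d_3]_{q^{-1}}!)$ actually uses.
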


Now we plug the results of this proposition and Propositions \ref{prop-app-delta1i}, \ref{prop-app-delta1ii}
into \eqref{eq-appDelta=1-decomp},
and in this way we obtain:
\begin{Theorem}
\label{thm-I=1leading}
For $d\geq 2$,
the leading terms for $I^{=1}(d)$ are given by:
\be
\label{eq-I=1leading}
\begin{split}
I^{=1}(d)
=& \frac{ 3 \cdot (-1)^d q^{ \half d^2 - \frac{5}{2} d -2} }{[1]_{q^{-1}}^2 \cdot ([\infty]_{q^{-1}}!)^3}
\Big( \binom{d}{2}
+\frac{q^{-d}\cdot (5-6q^{-1} +2q^{-2} +6q^{-3}-6q^{-4}) }{[1]_{q^{-1}}^2 [2]_{q^{-1}}} \\
& + \frac{q^{-d}\cdot (3 +2q^{-1}+2q^{-3}+2q^{-4})}{[2]_{q^{-1}}^2}
\big( d-2\sum_{i\geq 2}\frac{q^{-i}}{1-q^{-i}}\big)
-2q^{-d}\cdot [\infty]_{q^{-1}}! \\
&+2\sum_{i\geq 2}\frac{q^{-i}}{1-q^{-i}} -3\sum_{i\geq 1}\frac{q^{-i}}{(1-q^{-i})^2}
\Big)\Big|_{q^{\geq \half (d^2-3d) -3d+1}}.
\end{split}
\ee
\end{Theorem}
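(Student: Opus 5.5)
The plan is to assemble the decomposition \eqref{eq-appDelta=1-decomp}, $I^{=1}(d)=3(-1)^d q^{\half d^2-\frac52 d-2}\big(\text{(i)}-2\,\text{(ii)}+\text{(iii)}\big)$, and substitute the three leading-term results already obtained: Proposition \ref{prop-app-delta1i} for (i), Proposition \ref{prop-app-delta1ii} for (ii), and the last proposition for (iii). Each of these holds modulo $q^{\geq -2d+3}$. The prefactor has degree $\half d^2-\frac52 d-2$, so by Lemma \ref{lem-gz-seriesmultres}(1) or Proposition \ref{prop-degLaurentprod} (multiplication by a monomial) the sum becomes exact modulo $q^{\geq \half d^2-\frac52 d-2-2d+3}=q^{\geq \half(d^2-3d)-3d+1}$. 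This is exactly the truncation claimed in \eqref{eq-I=1leading}.

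Next I would factor out the common $\frac{1}{[1]_{q^{-1}}^2([\infty]_{q^{-1}}!)^3}$ and collect terms. Proposition \ref{prop-app-delta1i} and Proposition \ref{prop-app-delta1ii} already carry this factor. For (iii), I rewrite $\frac{1}{([2]_{q^{-1}}!)^2}=\frac{1}{[1]_{q^{-1}}^2[2]_{q^{-1}}^2}$, and I use $\sum_{i\geq1}\frac{q^{-i}}{1-q^{-i}}=\frac{q^{-1}}{[1]_{q^{-1}}}+\sum_{i\geq2}\frac{q^{-i}}{1-q^{-i}}$ so that everything is expressed through $\sum_{i\geq2}$.

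The terms group as follows.

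\emph{Degree-zero part.} $\binom d2$ and $-3\sum_{i\geq 1} q^{-i}/(1-q^{-i})^2$ come only from (i). The $\frac{2q^{-1}}{[1]_{q^{-1}}}$ in (i) cancels against $-2\cdot\frac{q^{-1}}{[1]_{q^{-1}}}$ from (ii). The remaining $2\sum_{i\geq2}$ from (i) survives.

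\emph{Terms with $[\infty]_{q^{-1}}!$.} These are $\frac{2q^{-d}}{[1]_{q^{-1}}}$ from (i) and $-2\frac{(2-q^{-1})q^{-d}}{[1]_{q^{-1}}}$ from (ii), which combine to $-2q^{-d}[\infty]_{q^{-1}}!$.

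\emph{Terms with $q^{-d}\sum_{i\geq2}$.} The contributions are $-\frac{4q^{-d-1}}{[1]^2}$ from (i), $-4q^{-d}$ from $-2$(ii), and $-\frac{2q^{-d}}{[2]^2}$ from (iii). These should combine to $-2q^{-d}\frac{3+2q^{-1}+2q^{-3}+2q^{-4}}{[2]_{q^{-1}}^2}$. The coefficient of $d$ arises the same way, from $2$ in $-2$(ii) and $1/[2]^2$ in (iii), giving $(2[2]^2+1)/[2]^2$, whose numerator is $3-4q^{-2}+2q^{-4}$.

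\emph{Remaining rational terms.} I put all of these over $[1]^2[2]^2$ and aim to reproduce $\frac{5-6q^{-1}+\cdots}{[1]^2[2]}$.

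The main obstacle is that this bookkeeping does not obviously match the stated form. For example, the $q^{-d}\sum_{i\geq 2}$ coefficient $-4q^{-d-1}/[1]^2-4q^{-d}-2q^{-d}/[2]^2$ has to be compared with the target $-2q^{-d}(3+2q^{-1}+2q^{-3}+2q^{-4})/[2]^2$. These need not agree as exact rational functions; they only need to agree modulo $q^{\geq -2d+3}$, and here the terms are of size $q^{-d}$ times a series.

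My first move is therefore to reduce every $q^{-d}$-coefficient modulo $q^{\geq -d+3}$, i.e.\ keep only the coefficients of $q^0,\dots,q^{-(d-3)}$ inside the bracket, using Proposition \ref{prop-degLaurentprod}. The check then becomes: the difference of the two rational expressions, expanded in $q^{-1}$, has lowest order at least $q^{-(d-2)}$. With that freedom one can trade the factor $1/[1]^2$ against $1/[2]^2$ only up to terms of degree at most $-d+2$. If a genuine discrepancy remains, I would recheck the $\sum_{i\geq1}$ versus $\sum_{i\geq2}$ shifts in (ii) and (iii), since those are the likeliest sources of error. Finally, I would verify numerically against the direct definition \eqref{eq-def-I=k} for small $d$ (say $d=4,5,6$), expanding to the required order.
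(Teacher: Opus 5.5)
Your route is the paper's own: substitute Propositions \ref{prop-app-delta1i} and \ref{prop-app-delta1ii} and the proposition for (iii) into \eqref{eq-appDelta=1-decomp}, then shift the truncation $q^{\geq -2d+3}$ by the degree of the monomial prefactor. Your handling of the truncation, of the degree-zero part and of the $[\infty]_{q^{-1}}!$ terms is correct. However, the ``main obstacle'' you describe comes from a bookkeeping slip, and the fallback you propose would not work.

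\textbf{The slip.} In the coefficient of $d\,q^{-d}$ you omitted the contribution $2d\,q^{-d-1}/[1]_{q^{-1}}^2$, which sits inside the term $2(d+2)q^{-d-1}/[1]_{q^{-1}}^2$ of (i). Write $x=q^{-1}$. The correct coefficient is
\[
\frac{2x}{(1-x)^2}+2+\frac{1}{(1-x^2)^2}=\frac{3+2x+2x^3+2x^4}{(1-x^2)^2},
\]
which is exactly the target. Your numerator $3-4x^2+2x^4$ differs from it by $2x(1+x)^2$, which is precisely the missing term.

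\textbf{The $\sum_{i\geq 2}$ terms.} In (iii), first rewrite $\sum_{i\geq1}=\frac{x}{1-x}+\sum_{i\geq2}$. Then in each of (i), (ii), (iii) the terms $d\,q^{-d}$ and $q^{-d}\sum_{i\geq2}\frac{q^{-i}}{1-q^{-i}}$ occur only through the combination $d-2\sum_{i\geq2}\frac{q^{-i}}{1-q^{-i}}$. So the $q^{-d}\sum_{i\geq 2}$ coefficient is automatically $-2$ times the one above, and it equals $-2(3+2x+2x^3+2x^4)/(1-x^2)^2$ exactly.

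\textbf{The remaining rational terms.} These are
\[
\frac{4x}{(1-x)^2}+4+\frac{2}{(1-x)(1-x^2)}-\frac{2x}{1-x}+\frac{2x}{(1-x)^2(1-x^2)}-\frac{1}{(1-x^2)^2}-\frac{2x}{(1-x)(1-x^2)^2}.
\]
Over $(1-x)^3(1+x)^2$ their numerator is $5-x-4x^2+8x^3-6x^5=(5-6x+2x^2+6x^3-6x^4)(1+x)$. So they equal the stated $\frac{5-6x+2x^2+6x^3-6x^4}{(1-x)^2(1-x^2)}$ exactly, not merely modulo the truncation.

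\textbf{The fallback.} Every coefficient multiplying $q^{-d}$ or $d\,q^{-d}$ is a fixed series in $q^{-1}$, independent of $d$. Agreement through $q^{-(d-3)}$ for all large $d$ therefore already forces exact equality of these series. A genuine discrepancy could not be absorbed by ``trading'' $1/[1]_{q^{-1}}^2$ against $1/[2]_{q^{-1}}^2$, since their difference has nonzero low-order coefficients.

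The proof thus closes by exact rational-function identities, as in the paper, and the numerical check is not needed.
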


\subsection{Leading terms of $I^{=2}(d)$ -- Case 1)}

From now on we compute the leading terms of $I^{=2}(d)$.
By the definition \eqref{eq-def-I=k} one has:
\be
\label{eq-AppC-I=2decompJ}
I^{=2}(d) = 3\Big( (\text{j}_1) +(\text{j}_2)+ (\text{j}_3) \Big),
\ee
where:
\begin{equation*}
\begin{split}
& (\text{j}_1) =
\sum_{\substack{d_1+d_2+d_3=d\\ d_2,d_3\geq 0;\text{ }d_1\geq 4}}
(-1)^d q^{ \half (\kappa_{(d_1-2,2)}+\kappa_{(d_2)}+\kappa_{(d_3)})}
W_{(d_1-2,2),(d_2)}(q) W_{(d_1-2,2),(d_3)}(q) W_{(d_2),(d_3)}(q), \\
& (\text{j}_2) =
\sum_{\substack{d_1+d_2+d_3=d\\ d_2,d_3\geq 0;\text{ }d_1\geq 3}}
(-1)^d q^{ \half (\kappa_{(d_1-2,1^2)}+\kappa_{(d_2)}+\kappa_{(d_3)})}
W_{(d_1-2,1^2),(d_2)}(q) W_{(d_1-2,1^2),(d_3)}(q) W_{(d_2),(d_3)}(q), \\
\end{split}
\end{equation*}
and
\begin{equation*}
\begin{split}
(\text{j}_3) = &
\sum_{\substack{d_1+d_2+d_3=d\\ d_2,d_3\geq 2;\text{ }d_1\geq 0}}
(-1)^d q^{ \half (\kappa_{(d_1)}+\kappa_{(d_2-1,1)}+\kappa_{(d_3-1,1)})} \\
&\qquad\qquad
\cdot W_{(d_1),(d_2-1,1)}(q) W_{(d_1),(d_3-1,1)}(q) W_{(d_2-1,1),(d_3-1,1)}(q). \\
\end{split}
\end{equation*}

In the rest of this subsection we compute the leading terms $(\text{j}_1)|_{q^{\geq \half d(d-3) -3d+1}}$.
First by \eqref{eq-gz-183184} and Theorem \ref{thm-Delta2-expl} we have:
\begin{equation*}
\begin{split}
(\text{j}_1) =&
\sum_{\substack{d_1+d_2+d_3=d\\ d_2,d_3\geq 0;\text{ }d_1\geq 4}}
(-1)^d q^{ \half (d_1^2 -5d_1 +4 + d_2^2 -d_2 + d_3^2-d_3 )}
\cdot q^{d_2d_3-\half (d_2 + d_3)} \cdot \tW_{(d_2),(d_3)}(q) \\
&\cdot \Big(
\frac{q^{d_1d_2-\half d_1-\frac{5}{2} d_2 -2}}{[2]_{q^{-1}}!} \cdot \tW_{(d_1-1),(d_2)}(q)
- \frac{q^{-\frac{3}{2}d_1 +\half d_2 +1}}{[d_1-1]_{q^{-1}}!} \cdot \tW_{(2),(d_2)}(q)
\Big) \\
&\cdot \Big(
\frac{q^{d_1d_3-\half d_1-\frac{5}{2} d_3 -2}}{[2]_{q^{-1}}!} \cdot \tW_{(d_1-1),(d_3)}(q)
- \frac{q^{-\frac{3}{2}d_1 +\half d_3 +1}}{[d_1-1]_{q^{-1}}!} \cdot \tW_{(2),(d_3)}(q)
\Big).
\end{split}
\end{equation*}
Here $d_1+d_2+d_3 =d$, and thus:
\begin{equation*}
(\text{j}_1) = (-1)^d q^{\half d(d-7)-2} \cdot
\big( \text{(x)} - 2\cdot\text{(y)} + \text{(z)} \big),
\end{equation*}
where:
\begin{equation*}
\begin{split}
& \text{(x)} =
\sum_{\substack{d_1+d_2+d_3=d\\ d_2,d_3\geq 0;\text{ }d_1\geq 4}}
\frac{1}{([2]_{q^{-1}}!)^2}\cdot
\tW_{(d_2),(d_3)}(q)  \tW_{(d_1-1),(d_2)}(q) \tW_{(d_1-1),(d_3)}(q), \\
& \text{(y)} =
\sum_{\substack{d_1+d_2+d_3=d\\ d_2,d_3\geq 0;\text{ }d_1\geq 4}}
\frac{q^{-(d_1-3)(d_3+1)}}{[2]_{q^{-1}}! [d_1-1]_{q^{-1}}!} \cdot
\tW_{(d_2),(d_3)}(q)\tW_{(d_1-1),(d_2)}(q)\tW_{(2),(d_3)}(q), \\
&\text{(z)} =
\sum_{\substack{d_1+d_2+d_3=d\\ d_2,d_3\geq 0;\text{ }d_1\geq 4}}
\frac{q^{-(d_1-3)(d_2+d_3+2)}}{([d_1-1]_{q^{-1}}!)^2}\cdot
\tW_{(d_2),(d_3)} \tW_{(2),(d_2)}(q) \tW_{(2),(d_3)}(q).
\end{split}
\end{equation*}

First consider the leading terms of $\text{(z)}$.
For $d_1\geq 4$ one always has:
\begin{equation*}
-(d_1-3)(d_2+d_3+2) =-(d_1-3)(d+2-d_1) \leq -d+2,
\end{equation*}
and thus:
\be
\label{eq-appC-leadingz}
\text{(z)} = 0 \big|_{q^{\geq -d+3}}.
\ee

Now consider the leading terms of $\text{(x)}$.
One has:
\be
\label{eq-Idelta=2-xdecomp}
\begin{split}
\text{(x)} =&
\frac{1}{([2]_{q^{-1}}!)^2}\Big( W_{d-1}(q)
- \sum_{d_2+d_3 =d-1} \tW_{(d_2),(d_3)} \tW_{(0),(d_2)} \tW_{(0),(d_3)} \\
& - \sum_{d_2+d_3 =d-2} \tW_{(d_2),(d_3)} \tW_{(1),(d_2)} \tW_{(1),(d_3)}
- \sum_{d_2+d_3 =d-3} \tW_{(d_2),(d_3)} \tW_{(2),(d_2)} \tW_{(2),(d_3)}
\Big),
\end{split}
\ee
and by \eqref{eq-Wd-1-computei} and Lemma \ref{lem-AppC-1} we already know that:
\be
\label{eq-Idelta=2-xcomp1}
W_{d-1}(q) = \frac{1}{([\infty]_{q^{-1}}!)^3} \Big( \binom{d+1}{2}
 -3\sum_{i\geq 1} \frac{q^{-i}}{(1-q^{-i})^2}
\Big)\Big|_{q^{\geq -d+3}},
\ee
and
\be
\label{eq-Idelta=2-xcomp2}
\begin{split}
 \sum_{d_2+d_3 =d-1} \tW_{(d_2),(d_3)}(q) \tW_{(0),(d_2)}(q) \tW_{(0),(d_3)}(q)
= \frac{1}{([\infty]_{q^{-1}})^3}
\Big( d
-2 \sum_{i\geq 1}\frac{q^{-i}}{1-q^{-i}}
\Big)\Big|_{q^{\geq -d+3}}.
\end{split}
\ee
Moreover,
one has:
\begin{equation*}
\begin{split}
& \sum_{d_2+d_3 =d-2} \tW_{(d_2),(d_3)} \tW_{(1),(d_2)} \tW_{(1),(d_3)} \\
=&
\sum_{d_2+d_3 =d-2} \frac{1}{[d_2]_{q^{-1}}! [d_3]_{q^{-1}}!} \cdot \tW_{(d_2),(d_3)} \cdot
\big( 1+ \frac{q^{-d_2-1}}{[1]_{q^{-1}}} \big)\big( 1+ \frac{q^{-d_3-1}}{[1]_{q^{-1}}} \big) \\
=& \Big(
\sum_{d_2+d_3 =d-2} \frac{1}{[d_2]_{q^{-1}}! [d_3]_{q^{-1}}!} \cdot \tW_{(d_2),(d_3)} \cdot
\big( 1+ \frac{2 q^{-d_2-1}}{[1]_{q^{-1}}} \big)
\Big) \Big|_{q^{\geq -d+3}}.
\end{split}
\end{equation*}
Here
\be
\label{eq-tW23decomp}
\tW_{(d_2),(d_3)}(q) =
\frac{1}{[\infty]_{q^{-1}}!}\cdot \big(
T_{d_3}^x \frac{(q^{-d_2 -1} ;q^{-1})_\infty}{(xq^{-d_2 -1} ;q^{-1})_\infty} -1 \big)
+\frac{1}{[\infty]_{q^{-1}}!},
\ee
and
\begin{equation*}
\deg \big(
T_{d_3}^x \frac{(q^{-d_2 -1} ;q^{-1})_\infty}{(xq^{-d_2 -1} ;q^{-1})_\infty} -1 \big)
= -(d_2+1)(d_3+1) < -d+3,
\end{equation*}
and then we obtain:
\be
\label{eq-Idelta=2xx1sum}
\begin{split}
& \sum_{d_2+d_3 =d-2} \tW_{(d_2),(d_3)} \tW_{(1),(d_2)} \tW_{(1),(d_3)} \\
=& \Big(
\sum_{d_2+d_3 =d-2} \frac{1}{[d_2]_{q^{-1}}! [d_3]_{q^{-1}}!} \cdot
\frac{1}{[\infty]_{q^{-1}}!} \cdot
\big( 1+ \frac{2 q^{-d_2-1}}{[1]_{q^{-1}}} \big)
\Big) \Big|_{q^{\geq -d+3}}.
\end{split}
\ee
By \eqref{eq-sum1overd2d3} we already know:
\begin{equation*}
\begin{split}
\sum_{d_2 +d_3 =d -2 } \frac{1}{[d_2]_{q^{-1}}![d_3]_{q^{-1}}! \cdot [\infty]_{q^{-1}}! }
=&  \frac{1}{([\infty]_{q^{-1}}!)^3} \Big( d-1
- 2 \sum_{i\geq 1} \frac{q^{-i}}{1-q^{-i}} \Big)
\Big|_{q^{\geq -d+1}}.
\end{split}
\end{equation*}
Moreover,
\begin{equation*}
\begin{split}
\sum_{d_2 +d_3 =d -2 } \frac{2q^{-d_2-1}}{[d_2]_{q^{-1}}![d_3]_{q^{-1}}! }
=& \sum_{d_2 +d_3 =d -2 } \Big( \frac{2q^{-d_2-1}}{[d_2]_{q^{-1}}! }
\big( \frac{1}{[d_3]_{q^{-1}}!} - \frac{1}{[\infty]_{q^{-1}}!} \big)
+ \frac{2q^{-d_2-1}}{[d_2]_{q^{-1}}!  [\infty]_{q^{-1}}!} \Big)
\\
=& \Big( \sum_{d_2 +d_3 =d -2 }
\frac{2q^{-d_2-1}}{[d_2]_{q^{-1}}!  [\infty]_{q^{-1}}!}
\Big) \Big|_{q^{\geq -d+1}}.
\end{split}
\end{equation*}
and then by combining this with \eqref{eq-sumqd2-11overd2} we get:
\begin{equation*}
\begin{split}
\sum_{d_2 +d_3 =d -2 } \frac{2q^{-d_2-1}}{[d_2]_{q^{-1}}![d_3]_{q^{-1}}! }
=& \frac{2q^{-1}}{([\infty]_{q^{-1}}!)^2}
 \Big|_{q^{\geq -d+1}}.
\end{split}
\end{equation*}
Thus we have:
\be
\label{eq-Idelta=2-xcompxx}
\begin{split}
& \sum_{d_2+d_3 =d-2} \frac{1}{[d_2]_{q^{-1}}! [d_3]_{q^{-1}}!} \cdot
\frac{1}{[\infty]_{q^{-1}}!} \cdot
\big( 1+ \frac{2 q^{-d_2-1}}{[1]_{q^{-1}}} \big) \\
=&
\frac{1}{([\infty]_{q^{-1}}!)^3}\Big( d-1-2\sum_{i\geq 2}\frac{q^{-i}}{1-q^{-i}}
\Big) \Big|_{q^{\geq -d+1}},
\end{split}
\ee
and by plugging this into \eqref{eq-Idelta=2xx1sum} we obtain:
\be
\label{eq-Idelta=2-xcomp3}
\begin{split}
\sum_{d_2+d_3 =d-2} \tW_{(d_2),(d_3)} \tW_{(1),(d_2)} \tW_{(1),(d_3)}
= \frac{1}{([\infty]_{q^{-1}}!)^3}\Big( d-1-2\sum_{i\geq 2}\frac{q^{-i}}{1-q^{-i}}
\Big) \Big|_{q^{\geq -d+3}}.
\end{split}
\ee

Finally we need to compute the leading terms of:
\begin{equation*}
\begin{split}
& \sum_{d_2+d_3 =d-3} \tW_{(d_2),(d_3)} \tW_{(2),(d_2)} \tW_{(2),(d_3)} \\
=&
\sum_{d_2+d_3 =d-3} \frac{\tW_{(d_2),(d_3)} }{[d_2]_{q^{-1}}! [d_3]_{q^{-1}}!} \cdot
\big( 1+ \frac{q^{-d_2-1}}{[1]_{q^{-1}}} + \frac{q^{-2d_2-2}}{[2]_{q^{-1}}!}\big)
\big( 1+ \frac{q^{-d_3-1}}{[1]_{q^{-1}}} + \frac{q^{-2d_3-2}}{[2]_{q^{-1}}!}\big),
\end{split}
\end{equation*}
and again by degree reason and the decomposition \eqref{eq-tW23decomp} one obtains:
\begin{equation*}
\begin{split}
& \sum_{d_2+d_3 =d-3} \tW_{(d_2),(d_3)} \tW_{(2),(d_2)} \tW_{(2),(d_3)} \\
=&
\sum_{d_2+d_3 =d-3} \frac{\tW_{(d_2),(d_3)}}{[d_2]_{q^{-1}}! [d_3]_{q^{-1}}!} \cdot
\big( 1+ \frac{2 q^{-d_2-1}}{[1]_{q^{-1}}} + \frac{2 q^{-2d_2-2}}{[2]_{q^{-1}}!} \big)\\
=& \Big(
\sum_{d_2+d_3 =d-3} \frac{1 }{[d_2]_{q^{-1}}! [d_3]_{q^{-1}}!} \cdot \frac{1}{[\infty]_{q^{-1}}!}
\big( 1+ \frac{2 q^{-d_2-1}}{[1]_{q^{-1}}} + \frac{2 q^{-2d_2-2}}{[2]_{q^{-1}}!} \big)
\Big) \Big|_{q^{\geq -d+3}}.
\end{split}
\end{equation*}
From \eqref{eq-Idelta=2-xcompxx} we already know that:
\begin{equation*}
\begin{split}
\sum_{d_2+d_3 =d-3} \frac{1}{[d_2]_{q^{-1}}! [d_3]_{q^{-1}}! [\infty]_{q^{-1}}!}
\big( 1+ \frac{2 q^{-d_2-1}}{[1]_{q^{-1}}} \big)
=
\frac{1}{([\infty]_{q^{-1}}!)^3}\Big( d-2 -2\sum_{i\geq 2}\frac{q^{-i}}{1-q^{-i}}
\Big) \Big|_{q^{\geq -d+2}}.
\end{split}
\end{equation*}
Moreover,
one has:
\begin{equation*}
\begin{split}
\sum_{d_2+d_3 =d-3} \frac{q^{-2d_2-2}}{[d_2]_{q^{-1}}! [d_3]_{q^{-1}}!}
=&
\sum_{d_2+d_3 =d-3} \Big(
\frac{q^{-2d_2-2}}{[d_2]_{q^{-1}}!}
\big(  \frac{1}{[d_3]_{q^{-1}}!} - \frac{1}{[\infty]_{q^{-1}}!} \big)
+ \frac{q^{-2d_2-2}}{[d_2]_{q^{-1}}! [\infty]_{q^{-1}}!} \Big)\\
=&  \Big(\sum_{d_2+d_3 =d-3}
 \frac{q^{-2d_2-2}}{[d_2]_{q^{-1}}! [\infty]_{q^{-1}}!} \Big)
\Big|_{q^{\geq -d+3}},
\end{split}
\end{equation*}
and when restricting to $q^{\geq -d+3}$ one has
\begin{equation*}
\begin{split}
\sum_{d_2=0}^{d-3}
 \frac{q^{-2d_2-2}}{[d_2]_{q^{-1}}! }
=& \sum_{d_2=0}^{\infty}
 \frac{q^{-2d_2-2}}{[d_2]_{q^{-1}}! }
=\frac{q^{-2}}{(q^{-2};q^{-1})_\infty}
\Big|_{q^{\geq -d+3}},
\end{split}
\end{equation*}
therefore:
\begin{equation*}
\begin{split}
\sum_{d_2+d_3 =d-3} \frac{q^{-2d_2-2}}{[d_2]_{q^{-1}}! [d_3]_{q^{-1}}!}
=& \frac{[1]_{q^{-1}}\cdot q^{-2} }{([\infty]_{q^{-1}}!)^2}
\Big|_{q^{\geq -d+3}},
\end{split}
\end{equation*}
and then:
\be
\label{eq-Idelta=2-xcomp4}
\begin{split}
 \sum_{d_2+d_3 =d-3} \tW_{(d_2),(d_3)} \tW_{(2),(d_2)} \tW_{(2),(d_3)}
=&
\frac{1}{([\infty]_{q^{-1}}!)^3}\Big( d-2 -2\sum_{i\geq 3}\frac{q^{-i}}{1-q^{-i}}
\Big) \Big|_{q^{\geq -d+2}}.
\end{split}
\ee
Now we plug \eqref{eq-Idelta=2-xcomp1},
\eqref{eq-Idelta=2-xcomp2},
\eqref{eq-Idelta=2-xcomp3},
and \eqref{eq-Idelta=2-xcomp4}
into \eqref{eq-Idelta=2-xdecomp} and obtain:
\be
\label{eq-appC-leadingx}
\begin{split}
\text{(x)} =& \frac{1}{([2]_{q^{-1}}!)^2 \cdot ([\infty]_{q^{-1}}!)^3}\Big(
\binom{d+1}{2}-3d+3 + \frac{2q^{-1}}{[1]_{q^{-1}}} -\frac{2q^{-2}}{[2]_{q^{-1}}}\\
&\qquad\qquad\qquad\qquad
 + 6\sum_{i\geq 2} \frac{q^{-i}}{1-q^{-i}}
-3\sum_{i\geq 1} \frac{q^{-i}}{(1-q^{-i})^2}
\Big)\Big|_{q^{\geq -d+3}}.
\end{split}
\ee

Finally we compute the leading terms of $\text{(y)}$.
By decomposing the factor $\tW_{(d_2),(d_3)}(q)$ in the definition
of $\text{(y)}$ as \eqref{eq-tW23decomp},
we see that:
\begin{equation*}
\begin{split}
& \text{(y)} =\Big(
\sum_{\substack{d_1+d_2+d_3=d\\ d_2,d_3\geq 0;\text{ }d_1\geq 4}}
\frac{q^{-(d_1-3)(d_3+1)}}{[2]_{q^{-1}}! [d_1-1]_{q^{-1}}!} \cdot
\frac{1}{[\infty]_{q^{-1}}!}\cdot
\tW_{(d_1-1),(d_2)}(q)\tW_{(2),(d_3)}(q)
\Big) \Big|_{q^{\geq -d+3}}
\end{split}
\end{equation*}
by degree reason.
And similarly,
by the decomposition
\begin{equation*}
\tW_{(d_1-1),(d_2)}(q) =
\frac{1}{[\infty]_{q^{-1}}!}\cdot \big(
T_{d_1-1}^x \frac{(q^{-d_2 -1} ;q^{-1})_\infty}{(xq^{-d_2 -1} ;q^{-1})_\infty} -1 \big)
+\frac{1}{[\infty]_{q^{-1}}!},
\end{equation*}
and degree reason one obtains:
\begin{equation*}
\begin{split}
 \text{(y)} =&
\sum_{\substack{d_1+d_2+d_3=d\\ d_2,d_3\geq 0;\text{ }d_1\geq 4}}
\frac{q^{-(d_1-3)(d_3+1)}}{[2]_{q^{-1}}! [d_1-1]_{q^{-1}}!} \cdot
\frac{1}{([\infty]_{q^{-1}}!)^2}\cdot \tW_{(2),(d_3)}(q)\\
=& \frac{1}{[2]_{q^{-1}}!\cdot ([\infty]_{q^{-1}}!)^2}
\sum_{d_1 =4}^d \sum_{d_3 = 0}^{d-d_1} \frac{q^{-(d_1-3)(d_3+1)}}{[d_3]_{q^{-1}}! \cdot [d_1-1]_{q^{-1}}!}
\big( 1 +\frac{q^{-d_3-1}}{[1]_{q^{-1}}} + \frac{q^{-2d_3-2}}{[2]_{q^{-1}}!}
\big) \Big|_{q^{\geq -d+3}}.
\end{split}
\end{equation*}
When $d_1\geq 4$ and $d_1+d_3 \geq d+1$,
one always has:
\begin{equation*}
-(d_1-3)(d_3+1) \leq -d+2,
\end{equation*}
and thus:
\begin{equation*}
\begin{split}
 \text{(y)} =
 \frac{1}{[2]_{q^{-1}}!\cdot ([\infty]_{q^{-1}}!)^2}
\sum_{d_1 =4}^\infty \sum_{d_3 = 0}^{\infty} \frac{q^{-(d_1-3)(d_3+1)}}{[d_3]_{q^{-1}}! \cdot [d_1-1]_{q^{-1}}!}
\big( 1 +\frac{q^{-d_3-1}}{[1]_{q^{-1}}} + \frac{q^{-2d_3-2}}{[2]_{q^{-1}}!}
\big) \Big|_{q^{\geq -d+3}}.
\end{split}
\end{equation*}
And by \eqref{eq-pochhammerseries} one has:
\begin{equation*}
\begin{split}
& \sum_{d_1 =4}^\infty \sum_{d_3 = 0}^{\infty} \frac{q^{-(d_1-3)(d_3+1)}}{[d_3]_{q^{-1}}! \cdot [d_1-1]_{q^{-1}}!}
\big( 1 +\frac{q^{-d_3-1}}{[1]_{q^{-1}}} + \frac{q^{-2d_3-2}}{[2]_{q^{-1}}!}
\big) \\
=& \sum_{d_1=4}^\infty \frac{1}{[d_1-1]_{q^{-1}}!}
\Big( \frac{q^{-(d_1-3)}}{(q^{-(d_1-3)};q^{-1})_\infty}
+ \frac{q^{-(d_1-2)}}{[1]_{q^{-1}} (q^{-(d_1-2)};q^{-1})_\infty}
+ \frac{q^{-(d_1-1)}}{[2]_{q^{-1}}! \cdot(q^{-(d_1-1)};q^{-1})_\infty}
\Big) \\
=& \sum_{d_1=4}^\infty \frac{1}{ [\infty]_{q^{-1}}!}
\Big( \frac{q^{-(d_1-3)}}{[d_1-1]_{q^{-1}}[d_1-2]_{q^{-1}}[d_1-3]_{q^{-1}}}
 + \frac{q^{-(d_1-2)}}{[1]_{q^{-1}}\cdot [d_1-1]_{q^{-1}}[d_1-2]_{q^{-1}}} \\
&\qquad\qquad\qquad
+ \frac{q^{-(d_1-1)}}{[2]_{q^{-1}}! \cdot [d_1-1]_{q^{-1}}} \Big).
\end{split}
\end{equation*}
Here for $|q^{-1}|<1$ one has:
\begin{equation*}
\sum_{d_1=4}^\infty
\frac{q^{-(d_1-2)}}{ [d_1-1]_{q^{-1}}[d_1-2]_{q^{-1}}}
=-\sum_{d_1 =4}^\infty \frac{1}{[1]_{q^{-1}}}\big(
\frac{1}{[d_1-1]_{q^{-1}}} - \frac{1}{[d_1-2]_{q^{-1}}}
\big)
= \frac{q^{-2}}{[2]_{q^{-1}}!},
\end{equation*}
and similarly,
\be
\label{eq-1over3prod-split}
\begin{split}
& \sum_{d_1=4}^\infty
\frac{q^{-(d_1-3)}}{[d_1-1]_{q^{-1}}[d_1-2]_{q^{-1}}[d_1-3]_{q^{-1}}} \\
=& -\sum_{d_1=4}^\infty \frac{1}{[2]_{q^{-1}}}\big(
\frac{1}{[d_1-1]_{q^{-1}}[d_1-2]_{q^{-1}}} - \frac{1}{[d_1-2]_{q^{-1}}[d_1-3]_{q^{-1}}}
\big)
= \frac{q^{-1}+q^{-2}-q^{-3}}{[1]_{q^{-1}}\cdot [2]_{q^{-1}}^2}.
\end{split}
\ee
Therefore (after some simplifications) we obtain:
\begin{equation*}
\begin{split}
\text{(y)} =
\frac{1}{([2]_{q^{-1}}!)^2 ([\infty]_{q^{-1}}!)^3}
\Big( \frac{q^{-1}}{[1]_{q^{-1}}} + \sum_{i\geq 2} \frac{q^{-i}}{1-q^{-i}}
\Big)\Big|_{q^{\geq -d+3}}.
\end{split}
\end{equation*}

Now by this equality and \eqref{eq-appC-leadingz},
\eqref{eq-appC-leadingx}, we finally obtain:
\begin{Proposition}
\label{Prop-I=2-case1}
For $d\geq 4$,
the leading terms of $(\text{j}_1)$ are given by:
\be
\begin{split}
(\text{j}_1) =& \frac{(-1)^d q^{\half d(d-7)-2}}{([2]_{q^{-1}}!)^2 \cdot ([\infty]_{q^{-1}}!)^3}
\cdot \Big(
\binom{d+1}{2}-3d+3  -\frac{2q^{-2}}{[2]_{q^{-1}}} \\
&\qquad\qquad\qquad\qquad
 + 4\sum_{i\geq 2} \frac{q^{-i}}{1-q^{-i}}
-3 \sum_{i\geq 1} \frac{q^{-i}}{(1-q^{-i})^2}
\Big)\Big|_{q^{\geq \half d(d-3)-3d+1}}.
\end{split}
\ee
\end{Proposition}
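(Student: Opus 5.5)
The proof will just assemble the three pieces already computed in this subsection. By Theorem \ref{thm-Delta2-expl} and \eqref{eq-gz-183184} we have the exact decomposition
\begin{equation*}
(\text{j}_1) = (-1)^d q^{\half d(d-7)-2}\big(\text{(x)} - 2\,\text{(y)} + \text{(z)}\big).
\end{equation*}
Since $\half d(d-7)-2 = \half d(d-3) -2d-2$, the truncation $q^{\geq \half d(d-3)-3d+1}$ of $(\text{j}_1)$ only requires the coefficients of $q^{-D}$ with $0\le D\le d-3$ in $\text{(x)}-2\text{(y)}+\text{(z)}$. In other words, we need this combination modulo $q^{<-d+3}$. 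By Lemma \ref{lem-gz-seriesmultres} and Proposition \ref{prop-degLaurentprod}, multiplying by the monomial prefactor shifts a truncation $|_{q^{\geq -d+3}}$ exactly to $|_{q^{\geq \half d(d-3)-3d+1}}$. So it suffices to have each of (x), (y), (z) modulo $q^{\geq -d+3}$.

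The three inputs are as follows. First, \eqref{eq-appC-leadingz} gives $\text{(z)}=0|_{q^{\geq -d+3}}$, because $-(d_1-3)(d+2-d_1)\le -d+2$ for $4\le d_1\le d$. Second, \eqref{eq-appC-leadingx} gives (x) with the bracket
\begin{equation*}
\binom{d+1}{2}-3d+3+\frac{2q^{-1}}{[1]_{q^{-1}}}-\frac{2q^{-2}}{[2]_{q^{-1}}}+6\sum_{i\ge2}\frac{q^{-i}}{1-q^{-i}}-3\sum_{i\ge1}\frac{q^{-i}}{(1-q^{-i})^2}
\end{equation*}
over $([2]_{q^{-1}}!)^2([\infty]_{q^{-1}}!)^3$. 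Third, the telescoping computation just done gives (y) with bracket $\frac{q^{-1}}{[1]_{q^{-1}}}+\sum_{i\ge2}\frac{q^{-i}}{1-q^{-i}}$ over the same denominator.

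Forming $\text{(x)}-2\text{(y)}$, the $\frac{2q^{-1}}{[1]_{q^{-1}}}$ terms cancel. The coefficient of $\sum_{i\ge2}\frac{q^{-i}}{1-q^{-i}}$ becomes $6-2=4$. The remaining terms are $\binom{d+1}{2}-3d+3-\frac{2q^{-2}}{[2]_{q^{-1}}}-3\sum_{i\ge1}\frac{q^{-i}}{(1-q^{-i})^2}$. This is exactly the bracket in the statement. Multiplying back by $(-1)^dq^{\half d(d-7)-2}$ then gives the claimed formula.

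The only step needing care is the bookkeeping of truncation ranges. The auxiliary identities \eqref{eq-Idelta=2-xcomp1}--\eqref{eq-Idelta=2-xcomp4} were proved with slightly different cutoffs ($q^{\geq -d+1}$, $q^{\geq -d+2}$, $q^{\geq -d+3}$). I will check that each holds at least at $q^{\geq -d+3}$, which is immediate since a weaker cutoff is implied by a stronger one. I will also check that the finite-to-infinite extensions of sums used for (y) are valid for $d\ge4$. The hypothesis $d_1\ge4$ in $(\text{j}_1)$ requires $d\geq 4$, which is why that assumption appears in the statement.
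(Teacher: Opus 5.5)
Your proposal is correct and is the paper's own argument. The paper likewise writes $(\text{j}_1)=(-1)^d q^{\half d(d-7)-2}\big(\text{(x)}-2\,\text{(y)}+\text{(z)}\big)$, invokes \eqref{eq-appC-leadingz}, \eqref{eq-appC-leadingx} and the telescoped value of (y), and the cancellation of $\frac{2q^{-1}}{[1]_{q^{-1}}}$ together with $6-2=4$ gives exactly the stated bracket. The real content is in the evaluations of (x) and (y), which you cite rather than re-derive; they are correct modulo $q^{<-d+3}$, which is all that is needed (and all the paper actually establishes), so nothing is missing.
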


\subsection{Leading terms of $I^{=2}(d)$ -- Case 2)}

In this subsection we compute the leading terms of $(\text{j}_2)$.
By \eqref{eq-gz-183184} and Theorem \ref{thm-Delta2-expl} one has:
\begin{equation*}
\begin{split}
(\text{j}_2) =&
\sum_{\substack{d_1+d_2+d_3=d\\ d_2,d_3\geq 0;\text{ }d_1\geq 3}}
(-1)^d q^{ \half (d_1^2-5d_1 + d_2^2-d_2+ d_3^2-d_3)}
\cdot q^{d_2 d_3-\half (d_2+d_3)} \cdot \tW_{(d_2),(d_3)}(q)
\\
&\cdot
\Big( \frac{q^{d_1d_2-\half d_1 - \frac{5}{2}d_2 -3}}{[2]_{q^{-1}}!} \tW_{(d_1-3),(d_2-1)}(q)
+ q^{-\half d_1 -\half d_2 -3}\cdot
\frac{[d_1-1]_{q^{-1}} [d_1-2]_{q^{-1}}}{[2]_{q^{-1}}! \cdot [d_1]_{q^{-1}}! \cdot [d_2]_{q^{-1}}!} \Big)\\
&\cdot
\Big( \frac{q^{d_1d_3-\half d_1 - \frac{5}{2}d_3 -3}}{[2]_{q^{-1}}!} \tW_{(d_1-3),(d_3-1)}(q)
+ q^{-\half d_1 -\half d_3 -3}\cdot
\frac{[d_1-1]_{q^{-1}} [d_1-2]_{q^{-1}}}{[2]_{q^{-1}}! \cdot [d_1]_{q^{-1}}! \cdot [d_3]_{q^{-1}}!} \Big),
\end{split}
\end{equation*}
where we use the notation $\tW_{(d_1-3),(-1)} = 0$.
Then:
\be
\label{eq-AppC5-J2decomp}
(\text{j}_2) = (-1)^d \cdot q^{\half d(d-7) -6} \cdot
\big( \text{(x')} +2\cdot \text{(y')} +\text{(z')} \big),
\ee
where:
\begin{equation*}
\begin{split}
&\text{(x')} = \frac{1}{([2]_{q^{-1}}! )^2}\sum_{\substack{d_1+d_2+d_3=d\\ d_2,d_3\geq 1;\text{ }d_1\geq 3}}
\tW_{(d_2),(d_3)} \tW_{(d_1-3),(d_2-1)} \tW_{(d_1-3),(d_3-1)}, \\
&\text{(y')} = \frac{1}{([2]_{q^{-1}}! )^2}
\sum_{\substack{d_1+d_2+d_3=d\\ d_2\geq 1;\text{ }d_3\geq 0;\text{ }d_1\geq 3}}
q^{-(d_1-2) d_3} \cdot \frac{[d_1-1]_{q^{-1}} [d_1-2]_{q^{-1}}}{ [d_1]_{q^{-1}}! \cdot [d_3]_{q^{-1}}!}
\tW_{(d_2),(d_3)} \tW_{(d_1-3),(d_2-1)}, \\
& \text{(z')} = \sum_{\substack{d_1+d_2+d_3=d\\ d_2,d_3\geq 0;\text{ }d_1\geq 3}}
q^{-(d_1-2)(d_2+d_3)} \cdot
\frac{[d_1-1]_{q^{-1}}^2 [d_1-2]_{q^{-1}}^2 }
{([2]_{q^{-1}}!)^2 \cdot ([d_1]_{q^{-1}}!)^2 \cdot [d_2]_{q^{-1}}!\cdot [d_3]_{q^{-1}}!}
\tW_{(d_2),(d_3)}.
\end{split}
\end{equation*}

First consider the leading terms of $\text{(z')}$.
One only needs to consider the special case $d_1=d$ and $d_2=d_3=0$, since
\begin{equation*}
\deg q^{-(d_1-2)(d_2+d_3)} = -(d_1-2)(d-d_1) \leq -d+3
\end{equation*}
if $d_1 \geq 1$.
Therefore:
\begin{equation*}
\begin{split}
\text{(z')} =
\frac{[d-1]_{q^{-1}}^2 [d-2]_{q^{-1}}^2 }
{([2]_{q^{-1}}!)^2 \cdot ([d]_{q^{-1}}!)^2 }
\Big|_{q^{\geq -d+7}}.
\end{split}
\end{equation*}
Moreover,
one has:
\begin{equation*}
\begin{split}
[d-1]_{q^{-1}}^2 [d-2]_{q^{-1}}^2 = 1 \big|_{q^{\geq -d+7}},\qquad\qquad
\frac{1}{([d]_{q^{-1}}!)^2}
= \frac{1}{([\infty]_{q^{-1}}!)^2} \Big|_{q^{\geq -d+7}},
\end{split}
\end{equation*}
and thus:
\be
\label{eq-zprime-leading}
\begin{split}
\text{(z')} =
\frac{1 }
{([2]_{q^{-1}}!)^2 \cdot ([\infty]_{q^{-1}}!)^2 }
\Big|_{q^{\geq -d+7}}.
\end{split}
\ee

Next we consider the leading terms of $\text{(y')}$.
Similar to the discussions in \S \ref{sec-appB1-gz},
we decompose each factor in the summand as follows:
\begin{equation*}
\begin{split}
\text{(x')} =\frac{1}{([2]_{q^{-1}}! )^2}
 \sum_{\substack{d_1+d_2+d_3=d\\ d_2,d_3\geq 1;\text{ }d_1\geq 3}}&
\Big(\frac{1}{[\infty]_{q^{-1}}!} \big(
T_{d_2}^x \frac{(q^{-d_3 -1} ;q^{-1})_\infty}{(xq^{-d_3 -1} ;q^{-1})_\infty} -1
\big) + \frac{1}{[\infty]_{q^{-1}}!} \Big) \\
&\cdot \Big(\frac{1}{[\infty]_{q^{-1}}!} \big(
T_{d_1 -3}^x \frac{(q^{-d_2} ;q^{-1})_\infty}{(xq^{-d_2} ;q^{-1})_\infty} -1
\big) + \frac{1}{[\infty]_{q^{-1}}!} \Big) \\
&\cdot \Big(\frac{1}{[\infty]_{q^{-1}}!} \big(
T_{d_1 -3}^x \frac{(q^{-d_3} ;q^{-1})_\infty}{(xq^{-d_3} ;q^{-1})_\infty} -1
\big) + \frac{1}{[\infty]_{q^{-1}}!} \Big),
\end{split}
\end{equation*}
and then by \eqref{eq-gzlem-5.7} and a simple degree estimate
we obtain (here we omit the details):
\be
\label{eq-appc-decompxprime}
\begin{split}
\text{(x')} = &
\frac{1}{( [2]_{q^{-1}}! )^2\cdot([\infty]_{q^{-1}}!)^3} \bigg( \binom{d-3}{2} +
\sum_{\substack{d_1+d_2+d_3=d\\ d_2,d_3\geq 1;\text{ }d_1\geq 3}} \Big(
\big(
T_{d_2}^x \frac{(q^{-d_3 -1} ;q^{-1})_\infty}{(xq^{-d_3 -1} ;q^{-1})_\infty} -1
\big) \\
& + \big(
T_{d_1 -3}^x \frac{(q^{-d_2} ;q^{-1})_\infty}{(xq^{-d_2} ;q^{-1})_\infty} -1
\big) + \big(
T_{d_1 -3}^x \frac{(q^{-d_3} ;q^{-1})_\infty}{(xq^{-d_3} ;q^{-1})_\infty} -1
\big)  \Big) \bigg) \bigg|_{q^{\geq -d+7}}.
\end{split}
\ee
Now consider the summation:
\begin{equation*}
\begin{split}
\sum_{\substack{d_1+d_2+d_3=d\\ d_2,d_3\geq 1;\text{ }d_1\geq 3}}
\big(
T_{d_2}^x \frac{(q^{-d_3 -1} ;q^{-1})_\infty}{(xq^{-d_3 -1} ;q^{-1})_\infty} -1
\big) =
\sum_{\substack{d_2+d_3 \leq d-3 \\ d_2,d_3\geq 1}}
\big(
T_{d_2}^x \frac{(q^{-d_3 -1} ;q^{-1})_\infty}{(xq^{-d_3 -1} ;q^{-1})_\infty} -1
\big).
\end{split}
\end{equation*}
If $d_2 + d_3\geq d-2$,
one has:
\begin{equation*}
\begin{split}
\deg
\big(
T_{d_2}^x \frac{(q^{-d_3 -1} ;q^{-1})_\infty}{(xq^{-d_3 -1} ;q^{-1})_\infty} -1
\big) =-(d_2+1)(d_3+1) \leq -2d+4,
\end{split}
\end{equation*}
and thus:
\begin{equation*}
\begin{split}
\sum_{\substack{d_1+d_2+d_3=d\\ d_2,d_3\geq 1;\text{ }d_1\geq 3}}
\big(
T_{d_2}^x \frac{(q^{-d_3 -1} ;q^{-1})_\infty}{(xq^{-d_3 -1} ;q^{-1})_\infty} -1
\big) =
\sum_{d_2 =1}^\infty \sum_{d_3=1}^\infty
\big(
T_{d_2}^x \frac{(q^{-d_3 -1} ;q^{-1})_\infty}{(xq^{-d_3 -1} ;q^{-1})_\infty} -1
\big)
\Big|_{q^{\geq -d+3}}.
\end{split}
\end{equation*}
(Here we use the truncation $q^{\geq -d+3}$ instead of $q^{\geq -d+7}$ for the convenience in the next subsection.)
By \eqref{eq-W2infty-def} and \eqref{eq-W2infty=} one already knows that:
\be
\label{eq-appC-doublesuminf}
\sum_{d_2 =0}^\infty \sum_{d_3=0}^\infty
\big(
T_{d_2}^x \frac{(q^{-d_3 -1} ;q^{-1})_\infty}{(xq^{-d_3 -1} ;q^{-1})_\infty} -1
\big) =
-\sum_{i\geq 1} \frac{q^{-i}}{(1-q^{-i})^2},
\ee
and then we have:
\begin{equation*}
\begin{split}
& \sum_{\substack{d_1+d_2+d_3=d\\ d_2,d_3\geq 1;\text{ }d_1\geq 3}}
\big(
T_{d_2}^x \frac{(q^{-d_3 -1} ;q^{-1})_\infty}{(xq^{-d_3 -1} ;q^{-1})_\infty} -1
\big)\\ = &
\Big( -\sum_{i\geq 1} \frac{q^{-i}}{(1-q^{-i})^2}
- \big(
T_{0}^x \frac{(q^{-1} ;q^{-1})_\infty}{(xq^{ -1} ;q^{-1})_\infty} -1
\big)
- 2\sum_{d_2 =1}^\infty
\big(
T_{d_2}^x \frac{(q^{-1} ;q^{-1})_\infty}{(xq^{ -1} ;q^{-1})_\infty} -1
\big) \Big)
\Big|_{q^{\geq -d+3}}.
\end{split}
\end{equation*}
Then apply \eqref{eq-computed1=01} and \eqref{eq-gz-prop5.8}, and one obtains:
\be
\label{eq-Compxprime-1}
\begin{split}
& \sum_{\substack{d_1+d_2+d_3=d\\ d_2,d_3\geq 1;\text{ }d_1\geq 3}}
\big(
T_{d_2}^x \frac{(q^{-d_3 -1} ;q^{-1})_\infty}{(xq^{-d_3 -1} ;q^{-1})_\infty} -1
\big)\\
=& \Big( [\infty]_{q^{-1}}! -1
-\sum_{i\geq 1} \frac{q^{-i}}{(1-q^{-i})^2}
+ 2\sum_{i\geq 1} \frac{q^{-i}}{1-q^{-i}}
\Big)
\Big|_{q^{\geq -d+3}}.
\end{split}
\ee

Next we compute the remaining two summations in \eqref{eq-appc-decompxprime}:
\begin{equation*}
\begin{split}
\sum_{\substack{d_1+d_2+d_3=d\\ d_2,d_3\geq 1;\text{ }d_1\geq 3}}  \big(
T_{d_1 -3}^x \frac{(q^{-d_2} ;q^{-1})_\infty}{(xq^{-d_2} ;q^{-1})_\infty} -1
\big) =
\sum_{\substack{d_1+d_2+d_3=d\\ d_2,d_3\geq 1;\text{ }d_1\geq 3}}  \big(
T_{d_1 -3}^x \frac{(q^{-d_2} ;q^{-1})_\infty}{(xq^{-d_3} ;q^{-1})_\infty} -1
\big).
\end{split}
\end{equation*}
Similarly by degree reason and \eqref{eq-appC-doublesuminf} one has:
\begin{equation*}
\begin{split}
\sum_{\substack{d_1+d_2+d_3=d\\ d_2,d_3\geq 1;\text{ }d_1\geq 3}}  \big(
T_{d_1 -3}^x \frac{(q^{-d_2} ;q^{-1})_\infty}{(xq^{-d_2} ;q^{-1})_\infty} -1
\big) =&
\sum_{d_1 = 3}^\infty \sum_{d_2 =1}^\infty \big(
T_{d_1 -3}^x \frac{(q^{-d_2} ;q^{-1})_\infty}{(xq^{-d_2} ;q^{-1})_\infty} -1
\big) \\
=& -\sum_{i\geq 1} \frac{q^{-i}}{(1-q^{-i})^2} \Big|_{q^{\geq -d+7}}.
\end{split}
\end{equation*}
Now plug these results into \eqref{eq-appc-decompxprime},
and then we obtain the leading terms of $\text{(x')}$:
\be
\label{eq-xprime-leading}
\begin{split}
\text{(x')} = &
\frac{1}{([2]_{q^{-1}}!)^2 \cdot([\infty]_{q^{-1}}!)^3} \Big( \binom{d-3}{2} + ([\infty]_{q^{-1}}! -1) \\
&\qquad\qquad\qquad
+2\sum_{i\geq 1}\frac{q^{-i}}{1-q^{-i}} -3\sum_{i\geq 1}\frac{q^{-i}}{(1-q^{-i})^2}
\Big)  \Big|_{q^{\geq -d+7}}.
\end{split}
\ee

Finally we compute the leading terms of $\text{(y')}$.
Decompose the factor $\tW_{(d_1-3),(d_2-1)}$ in $\text{(y')}$ as:
\begin{equation*}
\tW_{(d_1-3),(d_2-1)} = \frac{1}{[\infty]_{q^{-1}}!}
\big( T_{d_1 -3}^x \frac{(q^{-d_2} ;q^{-1})_\infty}{(xq^{-d_2} ;q^{-1})_\infty} -1 \big)
+ \frac{1}{[\infty]_{q^{-1}}!},
\end{equation*}
and then we have the degree estimate (Notice here $d_1\geq 3$ and $d_2\geq 1$):
\begin{equation*}
\deg q^{-(d_1-2)d_3}
\big( T_{d_1 -3}^x \frac{(q^{-d_2} ;q^{-1})_\infty}{(xq^{-d_2} ;q^{-1})_\infty} -1 \big)
= -(d_1-2)(d_2+d_3)
\leq -d+2
\end{equation*}
we obtain
\begin{equation*}
\begin{split}
&\text{(y')} = \frac{1}{([2]_{q^{-1}}! )^2}
\sum_{\substack{d_1+d_2+d_3=d\\ d_2\geq 1;\text{ }d_3\geq 0;\text{ }d_1\geq 3}}
q^{-(d_1-2) d_3} \cdot \frac{[d_1-1]_{q^{-1}} [d_1-2]_{q^{-1}}}{ [d_1]_{q^{-1}}! \cdot [d_3]_{q^{-1}}!}
\tW_{(d_2),(d_3)}\cdot \frac{1}{[\infty]_{q^{-1}}!}
\Big|_{q^{\geq -d+7}}. \\
\end{split}
\end{equation*}
Then we apply the decomposition:
\begin{equation*}
\tW_{(d_2),(d_3)} = \frac{1}{[\infty]_{q^{-1}}!}
\big( T_{d_2}^x \frac{(q^{-d_3-1} ;q^{-1})_\infty}{(xq^{-d_3-1} ;q^{-1})_\infty} -1 \big)
+ \frac{1}{[\infty]_{q^{-1}}!}.
\end{equation*}
In this case one has:
\begin{equation*}
\deg q^{-(d_1-2) d_3} \cdot
\big( T_{d_2}^x \frac{(q^{-d_3-1} ;q^{-1})_\infty}{(xq^{-d_3-1} ;q^{-1})_\infty} -1 \big)
= -d_3 (d-1-d_3) -(d_2 +1),
\end{equation*}
and thus by degree reason one only needs to consider the case $d_3 =0$.
In this way one obtains:
\be
\label{eq-AppC-decompyprime}
\begin{split}
\text{(y')} =& \frac{1}{([2]_{q^{-1}}!)^2  ([\infty]_{q^{-1}}!)^2} \Big(
\sum_{\substack{d_1+d_2 =d\\ d_2\geq 1;\text{ }d_1\geq 3}}
 \frac{[d_1-1]_{q^{-1}} [d_1-2]_{q^{-1}}}{ [d_1]_{q^{-1}}! }
\big( T_{d_2}^x \frac{(q^{-1} ;q^{-1})_\infty}{(xq^{-1} ;q^{-1})_\infty} -1 \big)  \\
& \qquad\qquad\qquad\qquad
+ \sum_{\substack{d_1+d_2+d_3=d\\ d_2\geq 1;\text{ }d_3\geq 0;\text{ }d_1\geq 3}}
q^{-(d_1-2) d_3} \cdot \frac{[d_1-1]_{q^{-1}} [d_1-2]_{q^{-1}}}{ [d_1]_{q^{-1}}! \cdot [d_3]_{q^{-1}}!}
\Big)
\Big|_{q^{\geq -d+7}}. \\
\end{split}
\ee
Consider the first summation in the right-hand side.
Notice that:
\begin{equation*}
[d_1-1]_{q^{-1}} [d_1-2]_{q^{-1}} =
1-q^{-d_1+1} -q^{-d_1+2} +q^{-2d_1+3},
\end{equation*}
and
\begin{equation*}
\begin{split}
\deg \Big( (-q^{-d_1+1} -q^{-d_1+2} +q^{-2d_1+3})
\big( T_{d_2}^x \frac{(q^{-1} ;q^{-1})_\infty}{(xq^{-1} ;q^{-1})_\infty} -1 \big)
\Big) <-d+2
\end{split}
\end{equation*}
for $d_1 +d_2 =d$,
and thus:
\be
\label{eq-AppC-yprime1stsum}
\begin{split}
&\sum_{\substack{d_1+d_2 =d\\ d_2\geq 1;\text{ }d_1\geq 3}}
 \frac{[d_1-1]_{q^{-1}} [d_1-2]_{q^{-1}}}{ [d_1]_{q^{-1}}! }
\big( T_{d_2}^x \frac{(q^{-1} ;q^{-1})_\infty}{(xq^{-1} ;q^{-1})_\infty} -1 \big) \\
=& \sum_{\substack{d_1+d_2 =d\\ d_2\geq 1;\text{ }d_1\geq 3}}
 \frac{1}{ [d_1]_{q^{-1}}! }
\big( T_{d_2}^x \frac{(q^{-1} ;q^{-1})_\infty}{(xq^{-1} ;q^{-1})_\infty} -1 \big)
\Big|_{q^{\geq -d+7}}.
\end{split}
\ee
Then by the decomposition
\begin{equation*}
\frac{1}{[d_1]_{q^{-1}}!} =
\big(\frac{1}{[d_1]_{q^{-1}}!} -\frac{1}{[\infty]_{q^{-1}}!} \big)
+ \frac{1}{[\infty]_{q^{-1}}!}
\end{equation*}
and by simple degree estimates one knows that
the right-hand side of \eqref{eq-AppC-yprime1stsum} equals to:
\begin{equation*}
\begin{split}
\frac{1}{[\infty]_{q^{-1}}!}
\sum_{d_2 = 1}^{d-3}
\big(\frac{[\infty]_{q^{-1}}!}{[d_2]_{q^{-1}}!}- 1 \big)
=  \sum_{d_2 = 1}^{\infty}
\big(\frac{1}{[d_2]_{q^{-1}}!} -\frac{1}{[\infty]_{q^{-1}}!} \big)
\Big|_{q^{\geq -d+7}},
\end{split}
\end{equation*}
where we have also used \eqref{eq-computed1=01}.
And then by \eqref{eq-gz-prop5.8} one gets:
\be
\label{eq-AppC-decompyprime-2}
\begin{split}
&\sum_{\substack{d_1+d_2 =d\\ d_2\geq 1;\text{ }d_1\geq 3}}
 \frac{[d_1-1]_{q^{-1}} [d_1-2]_{q^{-1}}}{ [d_1]_{q^{-1}}! }
\big( T_{d_2}^x \frac{(q^{-1} ;q^{-1})_\infty}{(xq^{-1} ;q^{-1})_\infty} -1 \big) \\
=& \frac{1}{[\infty]_{q^{-1}}!}
\Big( 1-[\infty]_{q^{-1}}! -\sum_{i\geq 1} \frac{q^{-i}}{1-q^{-i}}
\Big) \Big|_{q^{\geq -d+7}}.
\end{split}
\ee

Now consider the second summation in the right-hand side of \eqref{eq-AppC-decompyprime}.
One has:
\be
\label{eq-AppC5-2ndsum}
\begin{split}
&\sum_{\substack{d_1+d_2+d_3=d\\ d_2\geq 1;\text{ }d_3\geq 0;\text{ }d_1\geq 3}}
q^{-(d_1-2) d_3} \cdot \frac{[d_1-1]_{q^{-1}} [d_1-2]_{q^{-1}}}{ [d_1]_{q^{-1}}! \cdot [d_3]_{q^{-1}}!} \\
=& \sum_{d_1 = 3}^{d-1} \sum_{d_3 =0}^{d-1-d_1}
\Big( \frac{q^{-(d_1-2) d_3} }{ [d_1]_{q^{-1}}! \cdot [d_3]_{q^{-1}}!}
- \frac{ (1+q^{-1}) \cdot q^{-(d_1-2) (d_3+1)} }{ [d_1]_{q^{-1}}! \cdot [d_3]_{q^{-1}}!}
+ \frac{q^{-(d_1-2) (d_3+2)-1} }{ [d_1]_{q^{-1}}! \cdot [d_3]_{q^{-1}}!} \Big).
\end{split}
\ee
Decompose the right-hand side into $\text{(A)}+\text{(B)}+\text{(C)}$ where:
\begin{equation*}
\begin{split}
&\text{(A)} =\sum_{d_1 = 3}^{d-1} \sum_{d_3 =0}^{d-1-d_1}
\frac{q^{-(d_1-2) d_3} }{ [d_1]_{q^{-1}}! \cdot [d_3]_{q^{-1}}!},\\
&\text{(B)} = -(1+q^{-1}) \cdot \sum_{d_1 = 3}^{d-1} \sum_{d_3 =0}^{d-1-d_1}
\frac{  q^{-(d_1-2) (d_3+1)} }{ [d_1]_{q^{-1}}! \cdot [d_3]_{q^{-1}}!},\\
&\text{(C)} = q^{-1} \cdot \sum_{d_1 = 3}^{d-1} \sum_{d_3 =0}^{d-1-d_1}
\frac{q^{-(d_1-2) (d_3+2)} }{ [d_1]_{q^{-1}}! \cdot [d_3]_{q^{-1}}!}.
\end{split}
\end{equation*}
First we compute the leading terms of $\text{(A)}$.
One has:
\begin{equation*}
\begin{split}
\text{(A)} =&\sum_{d_1 = 3}^{d-1} \sum_{d_3 =1}^{d-1-d_1}
\frac{q^{-(d_1-2) d_3} }{ [d_1]_{q^{-1}}! \cdot [d_3]_{q^{-1}}!}
+ \sum_{d_1=3}^{d-1} \frac{1}{[d_1]_{q^{-1}}!} \\
=& \Big( \sum_{d_1 = 3}^{\infty} \sum_{d_3 =1}^{\infty}
\frac{q^{-(d_1-2) d_3} }{ [d_1]_{q^{-1}}! \cdot [d_3]_{q^{-1}}!}
+ \sum_{d_1=3}^{d-1} \frac{1}{[d_1]_{q^{-1}}!}
\Big) \Big|_{q^{\geq -d+7}},
\end{split}
\end{equation*}
since when $d_1+d_3\geq d$ and $d_1 \geq 3$,  $d_3\geq 1$ one always has
$\deg  q^{-(d_1-2) d_3} \leq -d+3$.
Here we have:
\begin{equation*}
\begin{split}
\sum_{d_1=3}^{d-1} \frac{1}{[d_1]_{q^{-1}}!}
=& \frac{d-3}{[\infty]_{q^{-1}}!} +
\sum_{d_1=3}^{d-1} \big( \frac{1}{[d_1]_{q^{-1}}!} - \frac{1}{[\infty]_{q^{-1}}!} \big) \\
=& \Big(\frac{d-3}{[\infty]_{q^{-1}}!} +
\sum_{d_1=3}^{\infty} \big( \frac{1}{[d_1]_{q^{-1}}!} - \frac{1}{[\infty]_{q^{-1}}!} \big)
\Big)\Big|_{q^\geq -d+7},
\end{split}
\end{equation*}
Moreover,
by \eqref{eq-pochhammerseries} one has:
\begin{equation*}
\begin{split}
& \sum_{d_1 = 3}^{\infty} \sum_{d_3 =1}^{\infty}
\frac{q^{-(d_1-2) d_3} }{ [d_1]_{q^{-1}}! [d_3]_{q^{-1}}!} \\
=& \frac{1}{[\infty]_{q^{-1}}!}
\sum_{d_1=3}^\infty \big(  \frac{1}{[d_1]_{q^{-1}}[d_1-1]_{q^{-1}}[d_1-2]_{q^{-1}}} -1\big)
- \sum_{d_1=3}^\infty \big(
\frac{1}{[d_1]_{q^{-1}}! } -\frac{1}{[\infty]_{q^{-1}}!}\big),
\end{split}
\end{equation*}
and thus:
\begin{equation*}
\begin{split}
\text{(A)} =&\sum_{d_1 = 3}^{d-1} \sum_{d_3 =1}^{d-1-d_1}
\frac{q^{-(d_1-2) d_3} }{ [d_1]_{q^{-1}}! \cdot [d_3]_{q^{-1}}!}
+ \sum_{d_1=3}^{d-1} \frac{1}{[d_1]_{q^{-1}}!} \\
=& \Big(  \frac{d-3}{[\infty]_{q^{-1}}!} +
\frac{1}{[\infty]_{q^{-1}}!}
\sum_{d_1=3}^\infty \big(  \frac{1}{[d_1]_{q^{-1}}[d_1-1]_{q^{-1}}[d_1-2]_{q^{-1}}} -1\big)
\Big) \Big|_{q^{\geq -d+7}},
\end{split}
\end{equation*}
The summand in the right-hand side admits the following partial fraction decomposition:
\begin{equation*}
\begin{split}
& \frac{1}{[d_1]_{q^{-1}}[d_1-1]_{q^{-1}}[d_1-2]_{q^{-1}}} -1 \\
=& \frac{1}{[2]_{q^{-1}}!}\cdot  \frac{1}{1-q^{-d_1+2}}
-\frac{q^{-1}}{[1]_{q^{-1}}^2}\cdot \frac{1}{1-q^{-d_1+1}}
+ \frac{q^{-3}}{[2]_{q^{-1}}!} \cdot \frac{1}{1-q^{-d_1}}-1\\
=& \frac{1}{[2]_{q^{-1}}!}\cdot  \frac{q^{-d_1+2}}{1-q^{-d_1+2}}
-\frac{q^{-1}}{[1]_{q^{-1}}^2}\cdot \frac{q^{-d_1+1}}{1-q^{-d_1+1}}
+ \frac{q^{-3}}{[2]_{q^{-1}}!} \cdot \frac{q^{-d_1}}{1-q^{-d_1}},
\end{split}
\end{equation*}
and then we may obtain:
\begin{equation*}
\begin{split}
\text{(A)}
=&  \frac{d-3}{[\infty]_{q^{-1}}!} +
\frac{1}{[\infty ]_{q^{-1}}!} \Big(
\frac{1}{[2]_{q^{-1}}!} \sum_{i\geq 1}\frac{q^{-i}}{1-q^{-i}}
-\frac{q^{-1}}{[1]_{q^{-1}}^2} \sum_{i\geq 2}\frac{q^{-i}}{1-q^{-i}}
+ \frac{q^{-3}}{[2]_{q^{-1}}!} \sum_{i\geq 3}\frac{q^{-i}}{1-q^{-i}}\Big) \\
=& \frac{1}{[\infty]_{q^{-1}}!} \Big(
d-3 +\frac{q^{-1}}{[1]_{q^{-1}}^2 [2]_{q^{-1}}} -\frac{q^{-5}}{[1]_{q^{-1}} [2]_{q^{-1}}^2}
+ \sum_{i\geq 2}\frac{q^{-i}}{1-q^{-i}}
\Big) \Big|_{q^{\geq -d+7}}.
\end{split}
\end{equation*}

Next we compute the leading terms of $\text{(B)}$.
Again by a degree estimate and \eqref{eq-pochhammerseries} one has:
\begin{equation*}
\begin{split}
\text{(B)} =& -(1+q^{-1}) \cdot \sum_{d_1 = 3}^\infty \sum_{d_3 =0}^\infty
\frac{  q^{-(d_1-2) (d_3+1)} }{ [d_1]_{q^{-1}}! \cdot [d_3]_{q^{-1}}!} \\
=&
- \frac{(1+q^{-1})}{[\infty]_{q^{-1}}!}  \sum_{d_1 = 3}^\infty
\frac{q^{-d_1 +2}}{[d_1]_{q^{-1}}[d_1-1]_{q^{-1}}[d_1-2]_{q^{-1}}}
\Big|_{q^{\geq -d+7}},
\end{split}
\end{equation*}
and then by the splitting \eqref{eq-1over3prod-split} one obtains:
\begin{equation*}
\begin{split}
\text{(B)}
=&
- \frac{q^{-1}+ 2q^{-2}-q^{-4}}{[1]_{q^{-1}} [2]_{q^{-1}}^2 \cdot [\infty]_{q^{-1}}!}
\Big|_{q^{\geq -d+7}} ,
\end{split}
\end{equation*}
As for $\text{(C)}$,
similarly one has:
\begin{equation*}
\begin{split}
\text{(C)} = q^{-1}  \sum_{d_1 = 3}^\infty \sum_{d_3 =0}^\infty
\frac{q^{-(d_1-2) (d_3+2)} }{ [d_1]_{q^{-1}}!  [d_3]_{q^{-1}}!}
= \frac{q^{-1}}{[\infty]_{q^{-1}}!} \sum_{d_1=3}^\infty
\frac{q^{-2d_1 +4}}{[d_1]_{q^{-1}}[d_1-1]_{q^{-1}}[d_1-2]_{q^{-1}}}
\Big|_{q^{\geq -d+7}},
\end{split}
\end{equation*}
and by the splitting
\begin{equation*}
\begin{split}
\frac{q^{-2d_1 +4}}{[d_1]_{q^{-1}}[d_1-1]_{q^{-1}}[d_1-2]_{q^{-1}}}
= -\frac{1}{[2]_{q^{-1}}}
\big(
\frac{q^{-2d_1+2}}{[d_1]_{q^{-1}}[d_1-1]_{q^{-1}}}
-\frac{q^{-2d_1+4}}{[d_1-1]_{q^{-1}}[d_1-2]_{q^{-1}}}
\big)
\end{split}
\end{equation*}
one obtains:
\begin{equation*}
\begin{split}
\text{(C)}
=& \frac{q^{-3}}{[1]_{q^{-1}} [2]_{q^{-1}}^2 \cdot [\infty]_{q^{-1}}!}
\Big|_{q^{\geq -d+7}}.
\end{split}
\end{equation*}

Now plug the above results for $\text{(A)}$, $\text{(B)}$, $\text{(C)}$ into \eqref{eq-AppC5-2ndsum},
and we obtain:
\begin{equation*}
\begin{split}
&\sum_{\substack{d_1+d_2+d_3=d\\ d_2\geq 1;\text{ }d_3\geq 0;\text{ }d_1\geq 3}}
q^{-(d_1-2) d_3} \cdot \frac{[d_1-1]_{q^{-1}} [d_1-2]_{q^{-1}}}{ [d_1]_{q^{-1}}! \cdot [d_3]_{q^{-1}}!} \\
=& \frac{1}{[\infty]_{q^{-1}}!} \Big(
d-3 -\frac{q^{-2}}{[2]_{q^{-1}}}
+ \sum_{i\geq 2} \frac{q^{-i}}{1-q^{-i}}
\Big) \Big|_{q^{\geq -d+7}},
\end{split}
\end{equation*}
and then by combining this with \eqref{eq-AppC-decompyprime}, \eqref{eq-AppC-decompyprime-2}
one obtains:
\begin{equation*}
\begin{split}
&\text{(y')} =
\frac{1}{([2]_{q^{-1}}!)^2 \cdot ([\infty]_{q^{-1}}!)^3} \Big(
-[\infty]_{q^{-1}}! +(d-2)
-\frac{q^{-1}}{[1]_{q^{-1}}} -\frac{q^{-2}}{[2]_{q^{-1}}}
\Big) \Big|_{q^{\geq -d+7}}.
\end{split}
\end{equation*}

Plug this and \eqref{eq-xprime-leading}, \eqref{eq-zprime-leading} into \eqref{eq-AppC5-J2decomp},
and we finally obtain:
\begin{Proposition}
\label{Prop-I=2-case2}
For $d\geq 3$,
the leading terms of $(\text{j}_2)$ are given by:
\begin{equation*}
\begin{split}
(\text{j}_2) =& \frac{(-1)^d \cdot q^{\half d(d-7) -6}}{([2]_{q^{-1}}!)^2 \cdot ([\infty]_{q^{-1}}!)^3} \Big(
\binom{d-1}{2}
-\frac{2(q^{-1} +2q^{-2})}{ [2]_{q^{-1}}}\\
&\qquad\qquad
+2\sum_{i\geq 1}\frac{q^{-i}}{1-q^{-i}} -3\sum_{i\geq 1}\frac{q^{-i}}{(1-q^{-i})^2}
\Big)  \Big|_{q^{\geq \half (d^2-3d) -3d+1}}.
\end{split}
\end{equation*}
\end{Proposition}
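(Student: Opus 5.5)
The statement to prove is Proposition \ref{Prop-I=2-case2}. The reduction \eqref{eq-AppC5-J2decomp} is already in place, and so are the leading terms of the three pieces: \eqref{eq-xprime-leading} for $\text{(x')}$, the final formula for $\text{(y')}$, and \eqref{eq-zprime-leading} for $\text{(z')}$. All three are valid modulo $q^{\geq -d+7}$. What remains is to assemble them and to check that the truncation is the right one.

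\textbf{Truncation.} The overall prefactor in \eqref{eq-AppC5-J2decomp} is $(-1)^d q^{\half d(d-7)-6}$. Since
$$\half d(d-7)-6 = \half d(d-3)-2d-6,$$
the window $q^{\geq \half d(d-3)-3d+1}$ for $(\text{j}_2)$ corresponds to the window $q^{\geq -d+7}$ for $\text{(x')}+2\,\text{(y')}+\text{(z')}$. This is exactly the truncation used for each of the three pieces. By Proposition \ref{prop-degLaurentprod}, with the monomial $q^{\half d(d-7)-6}$ as the multiplier, the three truncated identities transfer directly to $(\text{j}_2)$.

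\textbf{Assembly.} I will factor out $\frac{1}{([2]_{q^{-1}}!)^2([\infty]_{q^{-1}}!)^3}$ and collect terms.
\begin{itemize}
\item $\text{(z')}$ contributes $[\infty]_{q^{-1}}!$.
\item $\text{(x')}$ contributes
$$\binom{d-3}{2}+[\infty]_{q^{-1}}!-1+2\sum_{i\geq1}\frac{q^{-i}}{1-q^{-i}}-3\sum_{i\geq1}\frac{q^{-i}}{(1-q^{-i})^2}.$$
\item $2\,\text{(y')}$ contributes
$$-2[\infty]_{q^{-1}}!+2(d-2)-\frac{2q^{-1}}{[1]_{q^{-1}}}-\frac{2q^{-2}}{[2]_{q^{-1}}}.$$
\end{itemize}
The three $[\infty]_{q^{-1}}!$ terms cancel, since $1+1-2=0$. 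For the constants, $\binom{d-3}{2}-1+2(d-2)=\binom{d-1}{2}$.

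For the rational part, write $\frac{2q^{-1}}{[1]_{q^{-1}}}=\frac{2q^{-1}(1+q^{-1})}{[2]_{q^{-1}}}$. Then
$$\frac{2q^{-1}}{[1]_{q^{-1}}}+\frac{2q^{-2}}{[2]_{q^{-1}}}=\frac{2(q^{-1}+2q^{-2})}{[2]_{q^{-1}}},$$
which is the rational term in the statement. The two Eisenstein-type sums pass through unchanged, so the sum is exactly the bracket in the statement.

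\textbf{Main obstacle.} The real work lies upstream, in the omitted degree estimates behind \eqref{eq-appc-decompxprime} and behind the reduction of $\text{(y')}$ to $d_3=0$. These need $-(d_1-2)(d_2+d_3)\leq -d+2$ and the $T^x$-degree bounds from \eqref{eq-gzlem-5.7}. I would recheck them, together with the boundary cases of small $d$ (where $d\geq3$ is needed for the sums to be nonempty). Granting those, the proposition follows from the bookkeeping above.
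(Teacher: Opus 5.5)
Your proposal is correct and matches how the paper concludes: the paper's proof computes (x'), (y'), (z') modulo $q^{\geq -d+7}$ and then substitutes them into \eqref{eq-AppC5-J2decomp}. Your truncation shift, the cancellation of the $[\infty]_{q^{-1}}!$ terms, the identity $\binom{d-3}{2}-1+2(d-2)=\binom{d-1}{2}$ and the recombination of the rational terms all check out. The upstream degree estimates you flag do hold; the sharp bound for $-(d_1-2)(d_2+d_3)$ is $-d+3$, attained at $d_1=3$, not $-d+2$, but anything below $-d+7$ suffices. For $d\leq 6$ the window $q^{\geq -d+7}$ lies above the degree-$0$ bracket, so the statement is vacuous there.
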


\subsection{Leading terms of $I^{=2}(d)$ -- Case 3) and summary}

Now we compute the leading terms of $(\text{j}_3)$.
By \eqref{eq-gzlem6.7} and Theorem \ref{thm-Delta2-expl} one has:
\begin{equation*}
\begin{split}
(\text{j}_3) = &
\sum_{\substack{d_1+d_2+d_3=d\\ d_2,d_3\geq 2;\text{ }d_1\geq 0}}
\frac{(-1)^d q^{ \half d(d-7) -2}}{[1]_{q^{-1}}^2} \cdot
\prod_{i=2}^3 \Big( \frac{\tW_{(d_1),(d_i-1)}}{[1]_{q^{-1}}} -
\frac{q^{-(d_1+1)(d_i-1)}}{[d_1]_{q^{-1}}! [d_i]_{q^{-1}}!} \Big) \\
&\cdot \Big( (1-q^{-1}+q^{-2}) \tW_{(d_2-2),(d_3-2)}
+ q^{-(d_2-1)(d_3-1)-1} \cdot \frac{[d_2-1]_{q^{-1}}  [d_3-1]_{q^{-1}}}{ [d_2]_{q^{-1}}! \cdot [d_3]_{q^{-1}}! }
\Big).
\end{split}
\end{equation*}
By a simple degree estimate (here we omit the details)
one may see that:
\be
\label{eq-Appc6leadingJ3}
\begin{split}
(\text{j}_3) = & \frac{(-1)^d q^{ \half d(d-7) -2}}{[1]_{q^{-1}}^2}
\Big( \text{(X)} + \text{(Y)} - 2\cdot \text{(Z)} \Big)
\Big|_{q^{\geq \half (d^2-3d) -3d+1}},
\end{split}
\ee
where:
\be
\label{eq-AppC-defXYZ}
\begin{split}
&\text{(X)} =
\sum_{\substack{d_1+d_2+d_3=d\\ d_2,d_3\geq 2;\text{ }d_1\geq 0}}
\frac{1-q^{-1}+q^{-2}}{({[1]_{q^{-1}}})^2}
\cdot \tW_{(d_1),(d_2-1)}
\tW_{(d_1),(d_3-1)}
\tW_{(d_2-2),(d_3-2)},\\
&\text{(Y)}= \sum_{\substack{d_1+d_2+d_3=d\\ d_2,d_3\geq 2;\text{ }d_1\geq 0}}
\frac{q^{-(d_2-1)(d_3-1)-1}}{({[1]_{q^{-1}}})^2}
\cdot \frac{[d_2-1]_{q^{-1}}  [d_3-1]_{q^{-1}}}{ [d_2]_{q^{-1}}! \cdot [d_3]_{q^{-1}}! }
\cdot \tW_{(d_1),(d_2-1)}
\tW_{(d_1),(d_3-1)},\\
&\text{(Z)} =
\sum_{\substack{d_1+d_2+d_3=d\\ d_2,d_3\geq 2;\text{ }d_1\geq 0}}
\frac{1-q^{-1}+q^{-2}}{{[1]_{q^{-1}}}}
\cdot \frac{q^{-(d_1+1)(d_2-1)}}{[d_1]_{q^{-1}}! [d_2]_{q^{-1}}!}
\cdot
\tW_{(d_1),(d_3-1)}
\tW_{(d_2-2),(d_3-2)}.
\end{split}
\ee

The computation of the leading terms of $\text{(X)}$ is similar to the case of $\text{(x')}$
in last subsection.
By decomposing each factor $\tW_{(m),(n)}$ into:
\be
\label{eq-decomptWmn}
\tW_{(m),(n)} = \frac{1}{[\infty]_{q^{-1}}!}
\big( T_{m}^x \frac{(q^{-n -1} ;q^{-1})_\infty}{(xq^{-n -1} ;q^{-1})_\infty} -1\big)
+ \frac{1}{[\infty]_{q^{-1}}!}
\ee
and a simple degree estimate
(see \eqref{eq-gzlem-5.7}, and here we omit the details),
we obtain:
\be
\label{eq-appC6-Xdecomp}
\begin{split}
&\text{(X)} =\frac{1-q^{-1}+q^{-2}}{({[1]_{q^{-1}}})^2 \cdot([\infty]_{q^{-1}}!)^3}
\bigg( \binom{d-2}{2}+
\sum_{\substack{d_1+d_2+d_3=d\\ d_2,d_3\geq 2;\text{ }d_1\geq 0}} \Big(
\big( T_{d_1}^x \frac{(q^{-d_2} ;q^{-1})_\infty}{(xq^{-d_2} ;q^{-1})_\infty} -1 \big) \\
&\qquad
+ \big( T_{d_1}^x \frac{(q^{-d_3} ;q^{-1})_\infty}{(xq^{-d_3} ;q^{-1})_\infty} -1 \big)
+ \big( T_{d_2-2}^x \frac{(q^{-d_3+1} ;q^{-1})_\infty}{(xq^{-d_3+1} ;q^{-1})_\infty} -1 \big)
\Big) \bigg) \bigg|_{q^{\geq -d+3}}.
\end{split}
\ee
First consider the summations:
\begin{equation*}
\begin{split}
\sum_{\substack{d_1+d_2+d_3=d\\ d_2,d_3\geq 2;\text{ }d_1\geq 0}}
\big( T_{d_1}^x \frac{(q^{-d_2} ;q^{-1})_\infty}{(xq^{-d_2} ;q^{-1})_\infty} -1 \big)
= \sum_{\substack{d_1+d_2+d_3=d\\ d_2,d_3\geq 2;\text{ }d_1\geq 0}}
\big( T_{d_1}^x \frac{(q^{-d_3} ;q^{-1})_\infty}{(xq^{-d_3} ;q^{-1})_\infty} -1 \big).
\end{split}
\end{equation*}
One has:
\begin{equation*}
\begin{split}
& \sum_{\substack{d_1+d_2+d_3=d\\ d_2,d_3\geq 2;\text{ }d_1\geq 0}}
\big( T_{d_1}^x \frac{(q^{-d_2} ;q^{-1})_\infty}{(xq^{-d_2} ;q^{-1})_\infty} -1 \big) \\
=& \sum_{d_2=2}^{d-2 } \big( T_{0}^x \frac{(q^{-d_2} ;q^{-1})_\infty}{(xq^{-d_2} ;q^{-1})_\infty} -1 \big)
+ \sum_{\substack{d_1+d_2+d_3=d\\ d_2,d_3\geq 2;\text{ }d_1\geq 1}}
\big( T_{d_1}^x \frac{(q^{-d_2} ;q^{-1})_\infty}{(xq^{-d_2} ;q^{-1})_\infty} -1 \big).
\end{split}
\end{equation*}
and the second summation in the right-hand side is exactly \eqref{eq-Compxprime-1}.
Moreover, by \eqref{eq-computed2=01} and degree reason we know that
the first summation in the right-hand side is:
\begin{equation*}
\begin{split}
\sum_{d_2=2}^{d-2 } \big( T_{0}^x \frac{(q^{-d_2} ;q^{-1})_\infty}{(xq^{-d_2} ;q^{-1})_\infty} -1 \big)
= \sum_{d_2=2}^{d-2 } \big( \frac{[\infty]_{q^{-1}}!}{[d_2-1]_{q^{-1}}!} -1\big)
= \sum_{d_2=2}^{\infty } \big( \frac{[\infty]_{q^{-1}}!}{[d_2-1]_{q^{-1}}!} -1\big)
\Big|_{q^{\geq -d+3}},
\end{split}
\end{equation*}
and then by \eqref{eq-gz-prop5.8} we obtain:
\begin{equation*}
\begin{split}
\sum_{d_2=2}^{d-2 } \big( T_{0}^x \frac{(q^{-d_2} ;q^{-1})_\infty}{(xq^{-d_2} ;q^{-1})_\infty} -1 \big)
= \Big( -[\infty]_{q^{-1}}! +1 -\sum_{i\geq 1}\frac{q^{-i}}{1-q^{-i}}
\Big)
\Big|_{q^{\geq -d+3}}.
\end{split}
\end{equation*}
Then by this equality and \eqref{eq-Compxprime-1} we get:
\begin{equation*}
\begin{split}
& \sum_{\substack{d_1+d_2+d_3=d\\ d_2,d_3\geq 2;\text{ }d_1\geq 0}}
\big( T_{d_1}^x \frac{(q^{-d_2} ;q^{-1})_\infty}{(xq^{-d_2} ;q^{-1})_\infty} -1 \big)
=\Big( \sum_{i\geq 1} \frac{q^{-i}}{1-q^{-i}}
-\sum_{i\geq 1} \frac{q^{-i}}{(1-q^{-i})^2}
\Big)
\Big|_{q^{\geq -d+3}}.
\end{split}
\end{equation*}

Now consider the second summation in the right-hand side of \eqref{eq-appC6-Xdecomp}.
Denote $k_i = d_i -2$ for $i=2,3$,
and then by a simple degree estimate one has:
\begin{equation*}
\begin{split}
\sum_{\substack{d_1+d_2+d_3=d\\ d_2,d_3\geq 2;\text{ }d_1\geq 0}}
\big( T_{d_2-2}^x \frac{(q^{-d_3+1} ;q^{-1})_\infty}{(xq^{-d_3+1} ;q^{-1})_\infty} -1 \big)
= &
\sum_{\substack{ k_2 + k_3 \leq d-4 \\ k_2,k_3\geq 0}}
\big( T_{k_2}^x \frac{(q^{-k_3-1} ;q^{-1})_\infty}{(xq^{-k_3-1} ;q^{-1})_\infty} -1 \big) \\
=& \sum_{k_2=0}^\infty \sum_{k_3=0}^\infty
\big( T_{k_2}^x \frac{(q^{-k_3-1} ;q^{-1})_\infty}{(xq^{-k_3-1} ;q^{-1})_\infty} -1 \big)
\Big|_{q^{\geq -d+3}}.
\end{split}
\end{equation*}
And then by \eqref{eq-W2infty-def}, \eqref{eq-W2infty=} we obtain:
\begin{equation*}
\begin{split}
\sum_{\substack{d_1+d_2+d_3=d\\ d_2,d_3\geq 2;\text{ }d_1\geq 0}}
\big( T_{d_2-2}^x \frac{(q^{-d_3+1} ;q^{-1})_\infty}{(xq^{-d_3+1} ;q^{-1})_\infty} -1 \big)
= -\sum_{i\geq 1} \frac{q^{-i}}{(1-q^{-i})^2}
\Big|_{q^{\geq -d+3}}.
\end{split}
\end{equation*}
And now we can conclude that:
\be
\label{eq-appC6-Xleading}
\begin{split}
&\text{(X)} =\frac{1-q^{-1}+q^{-2}}{({[1]_{q^{-1}}})^2 \cdot([\infty]_{q^{-1}}!)^3}
\Big( \binom{d-2}{2}+
2 \sum_{i\geq 1} \frac{q^{-i}}{1-q^{-i}}
-3\sum_{i\geq 1} \frac{q^{-i}}{(1-q^{-i})^2}
\Big)
\Big|_{q^{\geq -d+3}}.
\end{split}
\ee

Then we compute the leading terms of $\text{(Z)}$.
Again apply the decomposition \eqref{eq-decomptWmn} to all the factors
of the form $\tW_{(m),(n)}$ in the definition \eqref{eq-AppC-defXYZ} of $\text{(Y)}$,
and then by a simple degree estimate
(see \eqref{eq-gzlem-5.7}, and here we omit the details) one obtains:
\begin{equation*}
\begin{split}
&\text{(Z)} =
\sum_{\substack{d_1+d_2+d_3=d\\ d_2,d_3\geq 2;\text{ }d_1\geq 0}}
\frac{1-q^{-1}+q^{-2}}{{[1]_{q^{-1}}}}
\cdot \frac{q^{-(d_1+1)(d_2-1)}}{[d_1]_{q^{-1}}! [d_2]_{q^{-1}}!}
\cdot \frac{1}{([\infty]_{q^{-1}}!)^2}
\Big|_{q^{\geq -d+3}}.
\end{split}
\end{equation*}
Moreover,
when $d_1+d_2 \geq d-1$ and $d_1\geq 0$, $d_2\geq 2$ one has
$\deg q^{-(d_1+1)(d_2-1)} \leq -d+2$,
and thus:
\begin{equation*}
\begin{split}
&\text{(Z)} =
\frac{1-q^{-1}+q^{-2}}{{[1]_{q^{-1}}} \cdot ([\infty]_{q^{-1}}!)^2}
\sum_{d_2 = 2}^\infty \sum_{d_1=0}^\infty
 \frac{q^{-(d_1+1)(d_2-1)}}{[d_1]_{q^{-1}}! [d_2]_{q^{-1}}!}
\Big|_{q^{\geq -d+3}}.
\end{split}
\end{equation*}
And then by \eqref{eq-pochhammerseries} we get:
\begin{equation*}
\begin{split}
\text{(Z)} = &
\frac{1-q^{-1}+q^{-2}}{{[1]_{q^{-1}}} \cdot ([\infty]_{q^{-1}}!)^2}
\sum_{d_2 = 2}^\infty \frac{q^{-d_2 +1}}{[d_2]_{q^{-1}}!} \cdot
\frac{1}{(q^{-d_2+1};q^{-1})_\infty} \\
=& \frac{1-q^{-1}+q^{-2}}{{[1]_{q^{-1}}} \cdot ([\infty]_{q^{-1}}!)^3}
\sum_{d_2=2}^\infty \frac{q^{-d_2+1}}{[d_2]_{q^{-1}}[d_2-1]_{q^{-1}}}
\Big|_{q^{\geq -d+3}}.
\end{split}
\end{equation*}
Then by the splitting
\begin{equation*}
\frac{q^{-d_2+1}}{[d_2]_{q^{-1}}[d_2-1]_{q^{-1}}} =
-\frac{1}{[1]_{q^{-1}}} \big(
\frac{1}{[d_2]_{q^{-1}}} - \frac{1}{[d_2-1]_{q^{-1}}}
\big)
\end{equation*}
one concludes that:
\be
\label{eq-appC6-Zleading}
\begin{split}
\text{(Z)} = &
\frac{q^{-1}(1-q^{-1}+q^{-2})}{{([1]_{q^{-1}}})^3 \cdot ([\infty]_{q^{-1}}!)^3}
\Big|_{q^{\geq -d+3}}.
\end{split}
\ee

Finally we compute the leading terms of $\text{(Y)}$.
Similar to the discussions for $\text{(Z)}$,
apply the decomposition \eqref{eq-decomptWmn} to all the factors
of the form $\tW_{(m),(n)}$ in the definition \eqref{eq-AppC-defXYZ} of $\text{(Y)}$,
and then by a simple degree estimate we obtain:
\begin{equation*}
\begin{split}
\text{(Y)}= \sum_{\substack{d_1+d_2+d_3=d\\ d_2,d_3\geq 2;\text{ }d_1\geq 0}}
\frac{q^{-(d_2-1)(d_3-1)-1}}{({[1]_{q^{-1}}})^2}
\cdot \frac{[d_2-1]_{q^{-1}}  [d_3-1]_{q^{-1}}}{ [d_2]_{q^{-1}}! \cdot [d_3]_{q^{-1}}! }
\cdot \frac{1}{([\infty]_{q^{-1}}!)^2}
\Big|_{q^{\geq -d+3}}.
\end{split}
\end{equation*}
Moreover,
when $d_2+d_3\geq d+1$ and $d_2,d_3\geq 2$ one has
$\deg q^{-(d_2-1)(d_3-1)-1} \leq -d+1$,
and thus:
\be
\label{eq-AppC6Ysum}
\begin{split}
\text{(Y)}= \frac{1}{({[1]_{q^{-1}}})^2 ([\infty]_{q^{-1}}!)^2}
\sum_{d_2=2}^\infty \sum_{d_3=2}^\infty
q^{-(d_2-1)(d_3-1)-1 } \cdot
\frac{[d_2-1]_{q^{-1}}  [d_3-1]_{q^{-1}}}{ [d_2]_{q^{-1}}! \cdot [d_3]_{q^{-1}}! }
\Big|_{q^{\geq -d+3}}.
\end{split}
\ee
Now we compute the summation in the right-hand side.
By \eqref{eq-pochhammerseries} one knows that:
\begin{equation*}
\begin{split}
\sum_{n=1}^\infty \frac{[n]_{q^{-1}}}{[n+1]_{q^{-1}}!}z^n
=& \sum_{n=1}^\infty \frac{z^n }{[n+1]_{q^{-1}}!}
-\frac{(q^{-1}z)^n}{[n+1]_{q^{-1}}!} \\
=& \big(\frac{\frac{1}{(z;q^{-1})_\infty} -1 }{z} -\frac{1}{[1]_{q^{-1}}}\big)
- \big(\frac{\frac{1}{(q^{-1}z;q^{-1})_\infty} -1 }{q^{-1}z} -\frac{1}{[1]_{q^{-1}}}\big).
\end{split}
\end{equation*}
Notice that:
\begin{equation*}
\frac{1}{(q^{-1}z;q^{-1})_\infty} =
\frac{1-z}{(z;q^{-1})_\infty},
\end{equation*}
and then we may obtain:
\begin{equation*}
\begin{split}
\sum_{n=1}^\infty \frac{[n]_{q^{-1}}}{[n+1]_{q^{-1}}!} z^n
=& \frac{(z-1+q^{-1})\cdot \frac{1}{(z;q^{-1})_\infty} +1-q^{-1} }{q^{-1}z}.
\end{split}
\end{equation*}
Therefore one has:
\be
\label{eq-AppC6Ysum-1}
\begin{split}
& \sum_{d_2=2}^\infty \sum_{d_3=2}^\infty
q^{-(d_2-1)(d_3-1)-1} \cdot
\frac{[d_2-1]_{q^{-1}}  [d_3-1]_{q^{-1}}}{ [d_2]_{q^{-1}}! \cdot [d_3]_{q^{-1}}! } \\
=& q^{-1} \cdot \sum_{d_2=2}^\infty \frac{[d_2-1]_{q^{-1}}}{[d_2]_{q^{-1}}!}
\cdot \frac{(q^{-d_2+1}-1+q^{-1})\cdot \frac{1}{(q^{-d_2+1};q^{-1})_\infty} +1-q^{-1} }{q^{-1}\cdot q^{-d_2+1}} \\
=& \sum_{d_2=2}^\infty q^{d_2 -1} \Big(
\frac{q^{-d_2+1} - [1]_{q^{-1}}}{[d_2]_{q^{-1}} \cdot [\infty]_{q^{-1}}!}
+ \frac{[1]_{q^{-1}} \cdot [d_2-1]_{q^{-1}}}{ [d_2]_{q^{-1}}!}
\Big).
\end{split}
\ee
This summation can be decomposed in the following way:
\be
\label{eq-decompY-ab}
\begin{split}
& \sum_{d_2=2}^\infty q^{d_2-1} \Big(
\frac{q^{-d_2+1} - [1]_{q^{-1}}}{[d_2]_{q^{-1}} \cdot [\infty]_{q^{-1}}!}
+ \frac{[1]_{q^{-1}} \cdot [d_2-1]_{q^{-1}}}{ [d_2]_{q^{-1}}!}
\Big) = \alpha(q) + \beta(q),
\end{split}
\ee
where:
\begin{equation*}
\begin{split}
&\alpha(q) = \frac{[1]_{q^{-1}}}{[\infty]_{q^{-1}}!} \sum_{d_2=2}^\infty
(q^{d_2-1}-1)\big( \frac{[\infty]_{q^{-1}}!}{[d_2]_{q^{-1}}!} -1 + \frac{q^{-d_2-1}}{[1]_{q^{-1}}} \big),\\
&\beta(q) = \frac{1}{[\infty]_{q^{-1}}!} \sum_{d_2=2}^\infty
\big(\frac{q^{-d_2} -q^{-d_2 -1} +q^{-d_2 -2}}{[d_2]_{q^{-1}}} + q^{-d_2 -1} \big)
\end{split}
\end{equation*}
Here the summand in $\alpha(q)$ involves positive powers of $q$,
and thus we need first to check the well-definedness of $\alpha(q)$.
One has:
\begin{equation*}
\frac{[\infty]_{q^{-1}}!}{[d_2]_{q^{-1}}!} -1
=\prod_{j=d_2+1}^\infty (1-q^{-j}) -1
= -\sum_{i=d_2 +1}^\infty q^{-i} + O(q^{-2d_2-3})
= -\frac{q^{-d_2-1}}{[1]_{q^{-1}}} + O(q^{-2d_2-3}),
\end{equation*}
and thus:
\begin{equation*}
\deg (q^{d_2-1}-1)\big( \frac{[\infty]_{q^{-1}}!}{[d_2]_{q^{-1}}!} -1 + \frac{q^{-d_2-1}}{[1]_{q^{-1}}} \big)
\leq -d_2-4,
\end{equation*}
and thus $\alpha(q)$ is well-defined as a formal Laurent series in $q$.

Next we compute the leading terms of $\alpha(q)$ and $\beta(q)$.
It is clear that:
\begin{equation*}
\begin{split}
&\beta(q) = \frac{1}{[\infty]_{q^{-1}}!}
\Big( -q^{-1}+
(1-q^{-1}+q^{-2})\sum_{i\geq 1} \frac{q^{-i}}{1-q^{-i}}
\Big).
\end{split}
\end{equation*}
As for $\alpha(q)$,
one has:
\be
\label{eq-Appalpha-decomp}
\begin{split}
&\alpha(q) = \frac{[1]_{q^{-1}}}{[\infty]_{q^{-1}}!} \Big( \sum_{d_2=2}^\infty
q^{d_2-1}\big( \frac{[\infty]_{q^{-1}}!}{[d_2]_{q^{-1}}!} -1 + \frac{q^{-d_2-1}}{[1]_{q^{-1}}} \big)
-\sum_{d_2=2}^\infty
\big( \frac{[\infty]_{q^{-1}}!}{[d_2]_{q^{-1}}!} -1 + \frac{q^{-d_2-1}}{[1]_{q^{-1}}} \big)
\Big).
\end{split}
\ee
Let $z$ be a formal variable,
then by \eqref{eq-pochhammerseries} one has:
\begin{equation*}
\begin{split}
\sum_{d_2=0}^\infty
z^{d_2} \cdot
\big( \frac{[\infty]_{q^{-1}}!}{[d_2]_{q^{-1}}!} -1 + \frac{q^{-d_2-1}}{[1]_{q^{-1}}} \big)
=&\frac{[\infty]_{q^{-1}}!}{(z;q^{-1})_\infty} -\frac{1}{1-z}
+ \frac{q^{-1}}{[1]_{q^{-1}}} \cdot \frac{1}{1-zq^{-1}} \\
=& \frac{\frac{[\infty]_{q^{-1}}!}{(zq^{-1};q^{-1})_\infty} - 1}{1-z}
+ \frac{q^{-1}}{[1]_{q^{-1}}} \cdot \frac{1}{1-zq^{-1}}.
\end{split}
\end{equation*}
Let $z\to 1$,
and one obtains:
\be
\label{eq-Appalphaleading-1}
\begin{split}
\sum_{d_2=0}^\infty
\big( \frac{[\infty]_{q^{-1}}!}{[d_2]_{q^{-1}}!} -1 + \frac{q^{-d_2-1}}{[1]_{q^{-1}}} \big)
=& - \frac{d}{dz} \big( \frac{[\infty]_{q^{-1}}!}{(zq^{-1};q^{-1})_\infty} - 1 \big)\big|_{z=1}
+ \frac{q^{-1}}{[1]_{q^{-1}}} \cdot \frac{1}{1-q^{-1}} \\
=& -\sum_{i\geq 1}\frac{q^{-i}}{1-q^{-i}} + \frac{q^{-1}}{[1]_{q^{-1}}^2}.
\end{split}
\ee
Moreover,
denote:
\begin{equation*}
f(z) = \frac{ (z;q^{-1})_\infty }{1-zq^{-1}}
=(1-z)\cdot \prod_{j\geq 2} (1-zq^{-j}),
\end{equation*}
then one has:
\begin{equation*}
\begin{split}
\sum_{d_2=0}^\infty
z^{d_2} \cdot
\big( \frac{[\infty]_{q^{-1}}!}{[d_2]_{q^{-1}}!} -1 + \frac{q^{-d_2-1}}{[1]_{q^{-1}}} \big)
=&\frac{[\infty]_{q^{-1}}!}{ (1-zq^{-1}) f(z)} -\frac{1}{1-z}
+ \frac{q^{-1}}{[1]_{q^{-1}}} \cdot \frac{1}{1-zq^{-1}} \\
=& \frac{\frac{[\infty]_{q^{-1}}!}{f(z)} + \frac{q^{-1}}{[1]_{q^{-1}}} }{1-zq^{-1}}
-\frac{1}{1-z},
\end{split}
\end{equation*}
and then taking $z\to q$ gives:
\be
\label{eq-Appalphaleading-2}
\begin{split}
\sum_{d_2=0}^\infty
q^{d_2} \cdot
\big( \frac{[\infty]_{q^{-1}}!}{[d_2]_{q^{-1}}!} -1 + \frac{q^{-d_2-1}}{[1]_{q^{-1}}} \big)
=& \frac{ \frac{d}{dz}\big( \frac{[\infty]_{q^{-1}}!}{f(z)} + \frac{q^{-1}}{[1]_{q^{-1}}} \big) \big|_{z=q}}
{-q^{-1}}
-\frac{1}{1-q} \\
=& \frac{q^{-1}}{[1]_{q^{-1}}} \sum_{i\geq 1} \frac{q^{-i}}{1-q^{-i}}
-\frac{q^{-2}}{[1]_{q^{-1}}^2}.
\end{split}
\ee
Now plug \eqref{eq-Appalphaleading-1} and \eqref{eq-Appalphaleading-2} into \eqref{eq-Appalpha-decomp},
and we obtain:
\begin{equation*}
\begin{split}
&\alpha(q) = \frac{[1]_{q^{-1}}}{[\infty]_{q^{-1}}!} \Big(
[1]_{q^{-1}}\cdot [\infty]_{q^{-1}}! -\frac{2q^{-2}}{[1]_{q^{-1}}^2} -[1]_{q^{-1}}
+\frac{1-q^{-1}+q^{-2}}{[1]_{q^{-1}}} \sum_{i\geq 1} \frac{q^{-i}}{1-q^{-i}}
\Big).
\end{split}
\end{equation*}
And by combining the results for $\alpha(q)$, $\beta(q)$ with
\eqref{eq-AppC6Ysum}, \eqref{eq-AppC6Ysum-1}, \eqref{eq-decompY-ab},
we obtain:
\begin{equation*}
\begin{split}
\text{(Y)} = &\frac{1}{[1]_{q^{-1}}^2 ([\infty]_{q^{-1}}!)^3 } \Big(
[1]_{q^{-1}}^2 \cdot [\infty]_{q^{-1}}!
-\frac{1-2q^{-1} +4q^{-2} -q^{-3}}{[1]_{q^{-1}}} \\
& \qquad\qquad\qquad\qquad
+2(1-q^{-1}+q^{-2})\sum_{i\geq 1}\frac{q^{-i}}{1-q^{-i}}
\Big) \Big|_{q\geq -d+3}.
\end{split}
\end{equation*}

Now we can plug this equality and \eqref{eq-appC6-Xleading}, \eqref{eq-appC6-Zleading}
into \eqref{eq-Appc6leadingJ3},
and in this way we obtain:
\begin{Proposition}
\label{Prop-I=2-case3}
For $d\geq 4$,
the leading terms of $(\text{j}_3)$ are given by:
\begin{equation*}
\begin{split}
(\text{j}_3) = & \frac{(-1)^d q^{ \half d(d-7) -2}}{[1]_{q^{-1}}^4 ([\infty]_{q^{-1}}!)^3} \bigg(
[1]_{q^{-1}}^2 \cdot [\infty]_{q^{-1}}!
- \frac{1}{[1]_{q^{-1}}} \big(1+2q^{-2}+q^{-3} \big) \\
&+ (1-q^{-1}+q^{-2}) \Big( \binom{d-2}{2}+
4 \sum_{i\geq 1} \frac{q^{-i}}{1-q^{-i}}
-3\sum_{i\geq 1} \frac{q^{-i}}{(1-q^{-i})^2}
\Big)
\bigg)\bigg|_{q^{\geq \half (d^2-3d) -3d+1}}.
\end{split}
\end{equation*}
\end{Proposition}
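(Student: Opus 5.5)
The plan is to assemble Proposition \ref{Prop-I=2-case3} directly from \eqref{eq-Appc6leadingJ3} by inserting the three leading-term formulas already obtained: \eqref{eq-appC6-Xleading} for $\text{(X)}$, \eqref{eq-appC6-Zleading} for $\text{(Z)}$, and the displayed formula for $\text{(Y)}$. Before doing so, one point about \eqref{eq-Appc6leadingJ3} has to be checked. The prefactor $(-1)^d q^{\half d(d-7)-2}$ sits exactly $2d+2$ below $\deg I(d)=\half d(d-3)$. Hence the truncation $q^{\geq \half(d^2-3d)-3d+1}$ for $(\text{j}_3)$ corresponds to $q^{\geq -d+3}$ for the bracket $\text{(X)}+\text{(Y)}-2\text{(Z)}$. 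This matches the truncation used for each of the three pieces, so by Proposition \ref{prop-degLaurentprod} the pieces may simply be added.

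For the reduction \eqref{eq-Appc6leadingJ3} itself, I would justify the "simple degree estimate" as follows. Expand the product of the two factors $\big(\tW_{(d_1),(d_i-1)}/[1]_{q^{-1}} - q^{-(d_1+1)(d_i-1)}/([d_1]_{q^{-1}}![d_i]_{q^{-1}}!)\big)$ and the last factor of Theorem \ref{thm-Delta2-expl} (3). The cross terms that contain both correction factors $q^{-(d_1+1)(d_i-1)}$ for $i=2,3$ have degree at most $-(d_1+1)(d_2+d_3-2)\le -(d-2)$. The same bound holds for terms that pair $q^{-(d_2-1)(d_3-1)}$ with a correction factor. After multiplying by $q^{-2}/[1]^2$ and shifting, all of these fall below the cutoff. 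What survives is $\text{(X)}$, $\text{(Y)}$, and the two symmetric copies of $\text{(Z)}$; the symmetry $d_2\leftrightarrow d_3$ accounts for the factor $2$.

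The remaining work is algebra. I would put the result over the common denominator $[1]_{q^{-1}}^4([\infty]_{q^{-1}}!)^3$. The term $\text{(X)}$ contributes $[1]^2(1-q^{-1}+q^{-2})\big(\binom{d-2}{2}+2\sum\frac{q^{-i}}{1-q^{-i}}-3\sum\frac{q^{-i}}{(1-q^{-i})^2}\big)$. The term $\text{(Y)}$ contributes $[1]^2\big([1]^2[\infty]!-\frac{1-2q^{-1}+4q^{-2}-q^{-3}}{[1]}+2(1-q^{-1}+q^{-2})\sum\frac{q^{-i}}{1-q^{-i}}\big)$. The term $-2\text{(Z)}$ contributes $-2q^{-1}(1-q^{-1}+q^{-2})[1]$.

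Collecting, the two Lambert sums combine into $4\sum\frac{q^{-i}}{1-q^{-i}}$ with coefficient $(1-q^{-1}+q^{-2})$. The rational terms combine as $-(1-2q^{-1}+4q^{-2}-q^{-3})-2q^{-1}(1-q^{-1}+q^{-2})(1-q^{-1})$, which should equal $-(1+2q^{-2}+q^{-3})$ up to the normalization written in the proposition.

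The main obstacle is this last bookkeeping: powers of $[1]_{q^{-1}}$ must be tracked consistently between the individual pieces (whose prefactors carry $[1]^2$ versus $[1]^3$) and the stated form, which factors out $[1]^{-4}$ overall. I would verify the rational-term identity by expanding both sides in $q^{-1}$ to low order. If a discrepancy appears, I would recheck the normalization in \eqref{eq-Appc6leadingJ3}, since $\text{(X)}$, $\text{(Y)}$ and $\text{(Z)}$ are defined with different powers of $[1]_{q^{-1}}$.
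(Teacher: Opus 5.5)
Your route is the paper's: reduce $(\text{j}_3)$ to $\text{(X)}+\text{(Y)}-2\,\text{(Z)}$ via \eqref{eq-Appc6leadingJ3}, note that the shift by $2d+2$ turns the cutoff into $q^{\geq -d+3}$ for the bracket, and add the three leading-term formulas. Your justification of the degree estimate that the paper omits is correct. Terms with both corrections have degree $\leq -(d_1+1)(d-d_1-2)\leq -(d-2)$ for $0\leq d_1\leq d-4$. Terms pairing $q^{-(d_2-1)(d_3-1)-1}$ with a correction have degree $\leq -(d_2-1)(d-d_2)-1\leq -(d-1)$. The two $\text{(Z)}$-type terms agree by $\tW_{(m),(n)}=\tW_{(n),(m)}$.

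What fails is the last line. You write the rational terms as $-(1-2q^{-1}+4q^{-2}-q^{-3})-2q^{-1}(1-q^{-1}+q^{-2})(1-q^{-1})$. That expression equals $-1-3q^{-3}+2q^{-4}$, which is not $-(1+2q^{-2}+q^{-3})$ times any power of $[1]_{q^{-1}}$. So your low-order check would report a discrepancy, and re-examining \eqref{eq-Appc6leadingJ3} would not resolve it, because that normalization is fine.

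The stray factor $(1-q^{-1})$ contradicts your own, correct, list of contributions. Over $[1]_{q^{-1}}^4([\infty]_{q^{-1}}!)^3$, the rational part of $\text{(Y)}$ gives $-[1]_{q^{-1}}(1-2q^{-1}+4q^{-2}-q^{-3})$, and $-2\,\text{(Z)}$ gives $-2q^{-1}(1-q^{-1}+q^{-2})[1]_{q^{-1}}$. Both carry exactly one factor $[1]_{q^{-1}}$, and
\[
(1-2q^{-1}+4q^{-2}-q^{-3})+2q^{-1}(1-q^{-1}+q^{-2})=1+2q^{-2}+q^{-3}.
\]
These numerators belong to the bracket $\text{(X)}+\text{(Y)}-2\,\text{(Z)}$. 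The prefactor $1/[1]_{q^{-1}}^2$ from \eqref{eq-Appc6leadingJ3} therefore makes the total denominator $[1]_{q^{-1}}^6([\infty]_{q^{-1}}!)^3$. Dividing numerator and denominator by $[1]_{q^{-1}}^2$ yields exactly $[1]_{q^{-1}}^2[\infty]_{q^{-1}}!-\frac{1+2q^{-2}+q^{-3}}{[1]_{q^{-1}}}+(1-q^{-1}+q^{-2})\big(\binom{d-2}{2}+4\sum_{i\geq1}\frac{q^{-i}}{1-q^{-i}}-3\sum_{i\geq1}\frac{q^{-i}}{(1-q^{-i})^2}\big)$ over $[1]_{q^{-1}}^4([\infty]_{q^{-1}}!)^3$, which is the proposition. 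With this correction your argument coincides with the paper's.
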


Now plug the results of
Propositions \ref{Prop-I=2-case1}, \ref{Prop-I=2-case2}, \ref{Prop-I=2-case3} into \eqref{eq-AppC-I=2decompJ},
and we finally obtain the leading terms of $I^{=2}(d)$:
\begin{Theorem}
\label{thm-I=2leading}
For $d\geq 4$,
the leading terms of $I^{=2}(d)$ are given by:
\begin{equation*}
\begin{split}
I^{=2}(d) =&
\frac{3\cdot (-1)^d q^{\half d(d-7)-2}}{([2]_{q^{-1}}!)^2 \cdot ([\infty]_{q^{-1}}!)^3}
\Big( [2]_{q^{-1}}^2 \cdot [\infty ]_{q^{-1}}!
+ \frac{5+4q^{-1} -5q^{-2}-4q^{-3} -q^{-4}-3q^{-6}}{[2]_{q^{-1}}} \\
&+ (2+q^{-1}+q^{-3}+2q^{-4})\binom{d}{2}
-(4+2q^{-1} +2q^{-3}+3q^{-4}) \big( d-2\sum_{i\geq 2} \frac{q^{-i}}{1-q^{-i}} \big) \\
& -(6+3q^{-1}+3q^{-3}+6q^{-4}) \sum_{i\geq 1} \frac{q^{-i}}{(1-q^{-i})^2}
\Big) \Big|_{q^{\geq \half d(d-3) -3d+1}}.
\end{split}
\end{equation*}
\end{Theorem}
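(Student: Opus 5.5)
The statement follows by assembling the three propositions just proved. By \eqref{eq-AppC-I=2decompJ} we have $I^{=2}(d)=3\big((\text{j}_1)+(\text{j}_2)+(\text{j}_3)\big)$. Propositions \ref{Prop-I=2-case1}, \ref{Prop-I=2-case2}, \ref{Prop-I=2-case3} give each summand modulo $q^{<\half d(d-3)-3d+1}$, and the truncations agree, so the sum is determined in the required range. All three prefactors are $(-1)^d q^{\half d(d-7)-2}/([\infty]_{q^{-1}}!)^3$, except that $(\text{j}_2)$ carries an extra $q^{-4}$. I would therefore factor out the common prefactor
\[
\frac{3(-1)^d q^{\half d(d-7)-2}}{([2]_{q^{-1}}!)^2([\infty]_{q^{-1}}!)^3}
\]
and rewrite each bracket over this common denominator. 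For $(\text{j}_3)$, use $[1]_{q^{-1}}^4$ versus $([2]_{q^{-1}}!)^2=[1]_{q^{-1}}^2[2]_{q^{-1}}^2$, so multiply its bracket by $[2]_{q^{-1}}^2/[1]_{q^{-1}}^2=(1+q^{-1})^2$. Proposition \ref{prop-degLaurentprod} justifies moving these Laurent factors through the truncation.

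Next, collect terms by type.
\begin{itemize}
\item The coefficient of $\binom{d}{2}$: rewrite $\binom{d+1}{2}-3d+3$, $q^{-4}\binom{d-1}{2}$ and $(1+q^{-1})^2(1-q^{-1}+q^{-2})\binom{d-2}{2}$ as combinations of $\binom{d}{2}$, $d$ and constants. Note that $(1+q^{-1})^2(1-q^{-1}+q^{-2})=(1+q^{-1})(1+q^{-3})=1+q^{-1}+q^{-3}+q^{-4}$, so the $\binom{d}{2}$ coefficients add to $2+q^{-1}+q^{-3}+2q^{-4}$, as claimed.
\item The Eisenstein coefficient: the coefficients of $\sum_{i\ge 1}q^{-i}/(1-q^{-i})^2$ add similarly to $-3(2+q^{-1}+q^{-3}+2q^{-4})$.
\item The $[\infty]_{q^{-1}}!$ term comes only from $(\text{j}_3)$ and becomes $[2]_{q^{-1}}^2[\infty]_{q^{-1}}!$.
\end{itemize}

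The remaining pieces are the linear-in-$d$ parts, the sums $\sum_{i\ge 1}$ or $\sum_{i\ge 2}$ of $q^{-i}/(1-q^{-i})$, and the rational constants. I would convert every $\sum_{i\geq 1}\frac{q^{-i}}{1-q^{-i}}$ into $\frac{q^{-1}}{1-q^{-1}}+\sum_{i\ge 2}$, which moves a rational piece into the constant. The first thing to verify is that the $d$-linear terms and the $\sum_{i\ge2}$ terms combine exactly into $-(4+2q^{-1}+2q^{-3}+3q^{-4})\big(d-2\sum_{i\geq2}\frac{q^{-i}}{1-q^{-i}}\big)$.

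The main obstacle is the final bookkeeping: all the rational leftovers must simplify to $\frac{5+4q^{-1}-5q^{-2}-4q^{-3}-q^{-4}-3q^{-6}}{[2]_{q^{-1}}}$. These leftovers are $-\frac{2q^{-2}}{[2]}$, $-\frac{2(q^{-1}+2q^{-2})q^{-4}}{[2]}$, the $-(1+q^{-1})^2(1+2q^{-2}+q^{-3})/[1]$ term, the pieces generated by the conversion above, and the constants left from rewriting the binomials. I would put them all over $[1]_{q^{-1}}[2]_{q^{-1}}$, expand the numerator polynomially, and check that it factors so that $[1]_{q^{-1}}$ cancels. Any discrepancy would point to a slip in a single proposition, most likely the constant in Proposition \ref{Prop-I=2-case3}, and I would recheck it by expanding low-order coefficients for small $d$.
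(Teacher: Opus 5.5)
Your proposal is correct and is the same argument as the paper, which obtains the theorem by substituting Propositions \ref{Prop-I=2-case1}, \ref{Prop-I=2-case2} and \ref{Prop-I=2-case3} into \eqref{eq-AppC-I=2decompJ}. The bookkeeping you flag as the main obstacle does close: the $d$-linear and $\sum_{i\geq 2}$ terms combine exactly as you predict, and the rational leftovers together with the binomial constants $6+3q^{-1}+3q^{-3}+4q^{-4}$ sum to $\frac{5+4q^{-1}-5q^{-2}-4q^{-3}-q^{-4}-3q^{-6}}{[2]_{q^{-1}}}$, so no proposition needs rechecking.
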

Then by this theorem, Theorem \ref{thm-I=1leading},
Theorem \ref{thm-Wdqleadingterm} and \eqref{eq-relnI=0Wd},
we finally obtain:
\begin{equation*}
\begin{split}
I(d)
=& \frac{ 3 \cdot (-1)^d q^{ \half d(d-3)} }{ ([\infty]_{q^{-1}}!)^3}
\bigg( \frac{1}{3}  \binom{d+2}{2}
+ \frac{q^{-d-2}}{[1]_{q^{-1}}^2}\cdot \big(\frac{ d^2}{2} + \frac{ d}{2} +3 \big) \\
&-\Big( 1 + \frac{3q^{-d-2}} {[1]_{q^{-1}}^2} +
\frac{ q^{-2d-2}(6+3q^{-1} +3q^{-3} +6q^{-4})}{([2]_{q^{-1}}!)^2}
\Big)\sum_{i\geq 1}\frac{q^{-i}}{(1-q^{-i})^2} \\
& + \frac{ q^{-2d-2} ( 2+q^{-1} +q^{-3}+2q^{-4} )}{([2]_{q^{-1}}!)^2}
 \big( \frac{ d^2}{2} - \frac{ d}{2} +6\big)
\bigg)\bigg|_{q^{\geq \half (d^2-3d) -3d+1}},
\end{split}
\end{equation*}
for $d\geq 4$,
and this is Theorem \ref{thm-main1-leadI}.

\vspace{.25in}

{\em Acknowledgements.}
The authors thank Professor Jian Zhou for the collaboration on some related works.
ZW is partly supported by the National Natural Science Foundation of China (No. 12601113), and the Fundamental Research Funds for the Central Universities (No. 2025BRSXB001).
CY is partly supported by the National Natural Science Foundation of China (No. 12401079) and Natural Science Foundation of Hubei (No. 2026AFA008).

\end{document}